\documentclass[11pt]{article}
\usepackage[margin=.8in]{geometry}
\usepackage{mathtools,amsthm,amssymb}
\usepackage{mathrsfs}
\usepackage[colorlinks, linkcolor=red, citecolor=blue]{hyperref}
\usepackage{xcolor,graphicx}
\usepackage{tikz}
\usepackage{tikz-cd}
\usepackage{booktabs,array}
\usepackage{enumerate}
\usepackage{microtype}

\theoremstyle{plain}
\newtheorem{theorem}{Theorem}[section]
\newtheorem{proposition}[theorem]{Proposition}
\newtheorem{corollary}[theorem]{Corollary}
\newtheorem{lemma}[theorem]{Lemma}
\newtheorem{conjecture}[theorem]{Conjecture}

\theoremstyle{definition}
\newtheorem{definition}[theorem]{Definition}
\newtheorem{problem}[theorem]{Problem}
\newtheorem{example}[theorem]{Example}
\newtheorem{remark}[theorem]{Remark}

\renewcommand{\mathcal}{\mathscr}
\newcommand{\ds}{\displaystyle}

\newcommand{\LL}{\mathcal{L}}    
\newcommand{\GD}{\mathsf{GD}}    
\newcommand{\BZ}{\mathcal{B}}    
\newcommand{\cA}{\mathcal{A}}    
\newcommand{\cF}{\mathcal{F}}    
\newcommand{\cC}{\mathcal{C}}    
\newcommand{\cD}{\mathcal{D}}
\newcommand{\cE}{\mathcal{E}}    
\newcommand{\cG}{\mathcal{G}}    
\newcommand{\cU}{\mathcal{U}}    
\newcommand{\cV}{\mathcal{V}}    
\newcommand{\fC}{\mathfrak C}    
\newcommand{\fT}{\mathfrak T}    
\newcommand{\ba}{\mathbf{a}}
\newcommand{\bb}{\mathbf{b}}
\newcommand{\bc}{\mathbf{c}}
\newcommand{\be}{\mathbf{e}}
\newcommand{\bq}{\mathbf{q}}
\newcommand{\bu}{\mathbf{u}}
\newcommand{\bv}{\mathbf{v}}
\newcommand{\bw}{\mathbf{w}}
\newcommand{\bx}{\mathbf{x}}
\newcommand{\by}{\mathbf{y}}
\newcommand{\bz}{\mathbf{z}}
\newcommand{\bOne}{\mathbf{1}}

\newcommand{\EA}{\mathsf{EA}} 
\newcommand{\XB}{\mathsf{XB}} 
\newcommand{\Hil}{\operatorname{Hilb}}
\newcommand{\ST}{\operatorname{ST}} 
\newcommand{\mult}{\operatorname{mult}}
\newcommand{\wt}{\mathsf{wt}}
\newcommand{\eps}{\varepsilon}
\newcommand{\kp}{\varkappa}
\newcommand{\ext}{\mathrm{ext}}
\newcommand{\FF}{\mathbb F}
\newcommand{\ZZ}{\mathbb Z}
\newcommand{\QQ}{\mathbb Q}
\newcommand{\RR}{\mathbb R}

\numberwithin{equation}{section}
\allowdisplaybreaks[2]
\title{The Loopy Polynomial: from Tutte's Universal $V$-Function to Bizonotopal Geometry}

\author{
  Anatol Kirillov\thanks{Yanqi Lake Beijing Institute of Mathematical Sciences
    and Applications, Huairou District, Beijing, China,
    \texttt{kirillov@bimsa.cn}}
  \quad
  Gleb Nenashev\thanks{Department of Mathematics and Computer Science,
    St.~Petersburg State University, St.~Petersburg, 199178, Russia,
    \texttt{glebnen@gmail.com}}
  \quad
  Boris Shapiro\thanks{Department of Mathematics, Stockholm University,
    SE-106 91 Stockholm, Sweden, and Department of Mathematics with Computer
    Science, Guangdong Technion--Israel Institute of Technology, Shantou,
    China, \texttt{shapiro@math.su.se}}
  \quad
  Arkady Vaintrob\thanks{Department of Mathematics, University of Oregon,
    Eugene, OR 97403, USA, \texttt{vaintrob@uoregon.edu}}
}

\date{}

\begin{document}

\maketitle
\begin{abstract}
We study the loopy polynomial $\LL_G(t,\bx)$, a multivariate graph invariant arising from bizonotopal graph algebras and defined by deletion and \emph{loopy contraction},
in which the endpoints of a non-loop edge are identified while the edge itself is retained as a loop.
We show that it contains the Tutte polynomial and admits a similar but more refined spanning-forest activity expansion.
We prove that $\LL_G$ determines Stanley's chromatic symmetric function for all graphs, the degree sequence, the complete induced edge count profile, and the independence polynomial for loopless graphs, and the clique and matching polynomials for simple graphs.
The loopy polynomial has multiple connections with other multivariate generalizations of the Tutte polynomial.
Separating the size and the external activity of each forest component leads to a refined loopy polynomial $\widehat{\LL}_G$,
which we show to be equivalent to the extended $U$-polynomial of Noble and Welsh and to the extended version of Brylawski's polychromate.
Different specializations of this common refinement give the ordinary $U$-polynomial, Tutte's universal $V$-function,
and Stanley's Tutte symmetric function, thus placing these invariants into a single framework.

We conjecture that $\LL_G$ and the $U$-polynomial have the same distinguishing power on simple graphs, and verify this for all simple graphs on at most eleven vertices.
Simplicity is essential: we found two loopless multigraphs on five vertices with equal $U$-polynomials but distinct loopy polynomials.
They also have distinct extended $U$-polynomials, so the ordinary $U$-polynomial does not determine the extended one on loopless multigraphs.  This solves an open problem posed by Merino and Noble.

For the score polytope $P_G$, the independence polytope of the score polymatroid of the external bizonotopal algebra, loopy deletion-contraction lifts from the lattice-point enumerator to the polytope itself.
This leads to a collection of forest-indexed geometric parking complexes, whose lattice points partition those of $P_G$, and which are piecewise-linearly parametrized by products of intervals whose lengths
are the component weights of the forest expansion of $\LL_G$.
\end{abstract}

\tableofcontents
\medskip
{\small
\noindent\textbf{2020 Mathematics Subject Classification.}
05C31 (primary); 05B35, 05C30, 05C60, 05E05, 52B40 (secondary).

\smallskip
\noindent\textbf{Keywords.}
Graph polynomials, $U$-polynomial, polychromate, chromatic symmetric function,
Tutte symmetric function, score polymatroid, bizonotopal algebra.
}

\section{Introduction}\label{sec:intro}
The loopy polynomial $\LL_G$ considered in this paper arose in the study of bizonotopal graphical algebras~\cite{KNSV}.  Our purpose is to place it within the classical family of
Tutte-type graph invariants and to explain the polyhedral meaning of the component statistic that it records.
Two themes run through the paper. First is a comparison with classical graph invariants:
Tutte's universal $V$-function~\cite{Tu1},
the $U$-polynomial of Noble and Welsh~\cite{NW}, Brylawski's polychromate~\cite{Bry},
Stanley's chromatic and Tutte symmetric functions~\cite{Sta,Sta2}, and other invariants.
Second is a geometric realization of the data contained in $\LL_G$ in the score polymatroid
of the external bizonotopal algebra.

The starting point is a modification of ordinary graph contraction.
Given a graph $G$ (possibly with loops and multiple edges) and a non-loop edge $e$,
we identify the endpoints of $e$ but, instead of deleting $e$ as in ordinary contraction,
retain it as a loop at the new vertex.  We denote the resulting graph by $G/e$ and call
this operation \emph{loopy contraction}.  It was introduced in~\cite{KNSV},
where Hilbert series of bizonotopal graph algebras were shown to satisfy the corresponding
deletion-contraction recurrence. Figure~\ref{fig:opers} illustrates deletion,
ordinary contraction, and loopy contraction.

\begin{figure}[ht]
\centering
 \parbox[b]{0.8in}{   \centering
  \begin{tikzpicture} [scale=.65]
    \coordinate (A) at (1, 0);
    \coordinate (B) at (2, 0);
    \coordinate (E) at (0, 0.5);
    \coordinate (F) at (0, 0);
    \coordinate (G) at (0, -0.5);
    \coordinate (H) at (3, 0.5);
    \coordinate (I) at (3, -0.5);
    \coordinate (M) at (1.5, 0);
    \draw (A) -- (B);
    \draw (A) -- (E);
    \draw (A) -- (F);
    \draw (A) -- (G);
    \draw (H) -- (B);
    \draw (I) -- (B);
    \filldraw[black] (A) circle (2pt) node[below] {\small $u$};
    \filldraw[black] (B) circle (2pt) node[below] {\small $v$};
    \draw (M) circle (0pt) node[above] {\small $e$};
  \end{tikzpicture}
\\
$G$         } \qquad
 \parbox[b]{0.8in}{   \centering
  \begin{tikzpicture} [scale=.65]
     \coordinate (A) at (1, 0);
     \coordinate (B) at (2, 0);
     \coordinate (E) at (0, 0.5);
     \coordinate (F) at (0, 0);
     \coordinate (G) at (0, -0.5);
     \coordinate (H) at (3, 0.5);
     \coordinate (I) at (3, -0.5);
     \coordinate (M) at (1.5, 0);
     \draw (A) -- (E);
     \draw (A) -- (F);
     \draw (A) -- (G);
     \draw (H) -- (B);
     \draw (I) -- (B);
     \filldraw[black] (A) circle (2pt) node[below] {\small $u$};
     \filldraw[black] (B) circle (2pt) node[below] {\small $v$};
   \end{tikzpicture}
   \\
   $G-e$  }   \qquad
 \parbox[b]{0.8in}{   \centering
   \begin{tikzpicture}  [scale=.65]
     \coordinate (A) at (1, 0);
     \coordinate (E) at (0, 0.5);
     \coordinate (F) at (0, 0);
     \coordinate (G) at (0, -0.5);
     \coordinate (H) at (2, 0.5);
     \coordinate (I) at (2, -0.5);
     \coordinate (M) at (1, 0);
     \draw (A) -- (E);
     \draw (A) -- (F);
     \draw (A) -- (G);
     \draw (H) -- (A);
     \draw (I) -- (A);
     \filldraw[black] (A) circle (2pt) node[below] {\small $w$};
   \end{tikzpicture}
   \\
   $G_e$ }  \qquad
 \parbox[b]{0.8in}{   \centering
   \begin{tikzpicture}[scale=.65]
    \coordinate (A) at (1, 0);
    \coordinate (E) at (0, 0.5);
    \coordinate (F) at (0, 0);
    \coordinate (G) at (0, -0.5);
    \coordinate (H) at (2, 0.5);
    \coordinate (I) at (2, -0.5);
    \coordinate (M) at (1, .7);
    \draw (A) -- (E);
    \draw (A) -- (F);
    \draw (A) -- (G);
    \draw (H) -- (A);
    \draw (I) -- (A);
    \draw (A) to[out=40,in=140,distance=1.6cm] (A);
    \filldraw[black] (A) circle (2pt) node[below] {\small $w$};
    \draw (M) circle (0pt) node[above] {\small $e$};
  \end{tikzpicture}
     \\      $G/e$    }
   \caption{Deletion, ordinary contraction, and loopy contraction of a
   non-loop edge $e$.}
   \label{fig:opers}
\end{figure}

Our main object is the multivariate invariant associated with this operation.

\begin{definition}\label{def:main}
The \emph{loopy polynomial} of a graph $G=(V,E)$ is the polynomial
\[
   \LL_G(t,\bx)=\LL_G(t,x_0,x_1,\ldots)
   \in\ZZ[t,x_0,x_1,\ldots]
\]
determined by the following properties:
\begin{enumerate}[(i)]
\item for every non-loop edge $e\in E$,
\begin{equation}\label{eq:loopy}
   \LL_G=\LL_{G/e}+t\,\LL_{G-e};
\end{equation}
\item if $G_1$ and $G_2$ are disjoint graphs, then
\[
   \LL_{G_1\sqcup G_2}=\LL_{G_1}\LL_{G_2};
\]
\item if $L_m$ denotes the graph with one vertex and $m$ loops, then
\begin{equation}\label{eq:init}
   \LL_{L_m}=x_m.
\end{equation}
\end{enumerate}
\end{definition}
In Section~\ref{sec:properties} we prove that this is well-defined and that $\LL_G$,
like the Tutte polynomial,  has a forest activity expansion.
Fix a linear order on $E(G)$ and for a component $T$ of a spanning forest $F$,
let $\eps_F(T)$ be the number of externally active
edges assigned to $T$ and $\ds \eps(F): = \sum_T\eps_F(T)$.  Then
\begin{equation}\label{eq:forest-intro}
   \LL_G(t,\bx)  =  \sum_{F\in\cF(G)}
t^{|E(G)|-|F|-\eps(F)}\prod_{T\in c(F)} x_{|E(T)|+\eps_F(T)}.
\end{equation}
The right-hand side is independent of the chosen order.  In particular, the component statistic
$ |E(T)|+\eps_F(T) $ contains both size and activity information.
In Section~\ref{sec:properties} we show that the ordinary Tutte polynomial
is a specialization: if $n=|V(G)|$ and $k(G)$ is the number of connected components, then
\begin{equation}\label{eq:tutte-from-L-intro}
T_G(X,Y) = (X-1)^{-k(G)}Y^{-n} \LL_G\bigl(1,(X-1)Y,(X-1)Y^2,(X-1)Y^3,\ldots\bigr).
\end{equation}
The loopy polynomial is nevertheless nonmatroidal.
For instance, it distinguishes the path $P_4$ from the star $K_{1,3}$, although both have Tutte polynomial $X^3$.
Moreover, $\LL_G$ is irreducible exactly when $G$ is connected,
and its irreducible factorization recovers the loopy polynomials of the connected components.

Comparing $\LL_G$ with classical graph polynomials
in Sections~\ref{sec:vertex-deletion} and~\ref{sec:relations},
we prove that $\LL_G$ determines Stanley's chromatic symmetric function~\cite{Sta} for all graphs and the matching polynomial for simple graphs.
We show that for loopless graphs $\LL_G$ satisfies the \emph{vertex-deletion identity}
\begin{equation}\label{eq:vertex-deletion-intro}
  \frac{\partial}{\partial x_0}\LL_G(t,\bx)
  =
  \sum_{v\in V(G)}t^{d(v)}\LL_{G-v}(t,\bx)
\end{equation}
which determines the degree sequence.
Its higher derivatives recover the \emph{induced edge count profile}
\[
  N_G(s,j) = |\{S\subseteq V(G):|S|=s,\ e_G(S)=j\}|,
\]
and hence the independence polynomial.
For simple graphs it also recovers the clique polynomial.

The forest activity expansion~\eqref{eq:forest-intro} connects $\LL_G$ with other classical
invariants: Tutte's universal $V$-function $\cV_G$~\cite{Tu1},
Brylawski's polychromate $\eta_G$~\cite{Bry}, Stanley's Tutte symmetric function $\XB_G$~\cite{Sta2}, and the $U$-polynomial of Noble and Welsh~\cite{NW}.
Wang and Sachs~\cite{WS}, and later Bollob\'{a}s, Pebody and Riordan~\cite{BPR},
gave a spanning forest expansion of $\cV_G$ with components weighted by external activity.
We prove in Section~\ref{subsec:loopy-V} that
\[
  \LL_G(1,\bx)    =    \left.   \cV_{G^\circ}(\bv)  \right|_{v_r=x_{r-1}\ (r\ge 1)},
\]
where $G^\circ$ is the graph obtained from $G$ by adding a new loop at every vertex.
Thus the \emph{ungraded loopy polynomial} $\LL_G(1,\bx)$ is simply the relabeled
universal $V$-function of the loop-augmented graph.
The grading adds no new information: we show in Section~\ref{sec:properties}
that $\LL_G(t,\bx)$ and $\LL_G(1,\bx)$ determine one another.

A second comparison comes from separating the two component statistics in \eqref{eq:forest-intro}.
In Section~\ref{sec:refined} we introduce the \emph{refined loopy polynomial}
\[
   \widehat{\LL}_G(\bu)
   =
   \sum_{F\in\cF(G)}
   \prod_{T\in c(F)}u_{|V(T)|,\eps_F(T)}.
 \]
 and prove that it
is related to the extended $U$-polynomial $U^\ext$ by an invertible componentwise binomial transform.
By Merino and Noble~\cite{MN}, $U^\ext$ is in turn
equivalent to the extended polychromate.
Thus $\widehat{\LL}_G$ determines the ordinary $U$-polynomial, Brylawski's polychromate, and Stanley's Tutte symmetric function; these three invariants are equivalent by~\cite{NW,Sar,MN,Nob}.
The resulting picture of dependencies is
\begin{equation}\label{eq:common-diagram-intro}
\begin{array}{ccccc}
&& U_G^{\ext}\ \longleftrightarrow\ \widehat{\LL}_G
   \ \longleftrightarrow\ \eta_G^{\ext} &&\\[2mm]
& \swarrow & \downarrow & \searrow &\\[-1mm]
U_G\ \longleftrightarrow\ \eta_G\ \longleftrightarrow\ \XB_G
& & \cV_G & & \LL_G .
\end{array}
\end{equation}
The three downward specializations retain different parts of the componentwise data: the $U$-specialization remembers component sizes and the total cyclomatic weight, the universal $V$-function remembers activity, and the loopy specialization keeps the diagonal combination $s-1+r$.

On forests all of the above invariants have the same distinguishing power.
\medskip

Because $\widehat{\LL}_G$ is equivalent to the extended $U$-polynomial, it determines the \emph{block statistics} of $G$: for every $b$, the joint
distribution of block sizes and internal edge counts over the partitions of $V(G)$ into $b$ parts (Section~\ref{subsec:block-statistics}).  Several
invariants studied independently are graded pieces of these data.
The two-block layer is Crew's generalized degree polynomial~\cite{Crew} or,
equivalently, Markstr\"om's homomorphism polynomial $P_2$~\cite{Mar} which also determines
the bivariate Ising polynomial of Andr\'en and Markstr\"om~\cite{AM} (Section~\ref{subsec:ising}).
Consequently $\widehat{\LL}_G$ determines all three.
Thus a conjecture of Crew that $U_G$ determines the generalized degree polynomial
and a problem of Markstr\"om on whether $U_G$ determines $P_2$
both follow, for simple graphs, if the ordinary $U$-polynomial determines the extended
one---the problem of Merino and Noble, discussed below. The boundary is sharp:
$\widehat{\LL}_G$ does not determine $P_3$. Also $\widehat{\LL}_G$ determines
the degree sequence of an arbitrary graph, loops included (Section~\ref{subsec:degree-loops}).

Adding loops to a graph can be viewed as weighting its vertices by nonnegative integers,
so the loopy polynomial of a looped graph serves as a vertex-weighted invariant of the underlying
loopless one and thus no separate weighted theory is needed (Section~\ref{sec:loops-weights}).
The two extremes of this operation, uniform weights and a single unit weight, differ sharply.
Adding enough loops uniformly at every vertex recovers $\widehat{\LL}_G$, so the question whether
$\LL_G$ determines  $\widehat{\LL}_G$
(Problem~\ref{prob:extended} below) becomes a question about $\LL$ alone.
Adding a single loop at a single vertex already produces data that $\widehat{\LL}_G$ does not determine.
\medskip

 These comparisons lead to our \emph{equivalence conjecture}.
 For simple graphs we expect
\begin{equation}\label{eq:main-conjecture-intro}
   \LL_G=\LL_H
   \quad\Longleftrightarrow\quad
   U_G=U_H
   \quad\Longleftrightarrow\quad
   \eta_G=\eta_H
   \quad\Longleftrightarrow\quad
   \XB_G=\XB_H.
\end{equation}
In Section~\ref{sec:tutte-symmetric} we give computer-assisted verification of this conjecture for
all simple graphs on at most eleven vertices.
We also verified that, up to eleven vertices, the ordinary and extended $U$-polynomials
have the same distinguishing power,
extending Markstr\"om's verification through ten vertices~\cite{Mar}.
The restriction to simple graphs is essential for
\eqref{eq:main-conjecture-intro}: loops already give a three-vertex
counterexample, and we exhibit a pair of loopless multigraphs on five vertices
with equal $U$-polynomials but distinct loopy and extended $U$-polynomials.
The latter solves the open problem of Merino and Noble~\cite{MN}.  By contrast, in
all $48\,229\,871$ connected loopless multigraphs tested, equality of loopy polynomials
also forced equality of refined loopy polynomials.  Whether this implication
holds in general is our Problem~\ref{prob:extended}.

We also present in  Section~\ref{sec:structural-evidence} four structural
results bearing on the conjecture:
the collision classes are closed under complementation and under joins,
which accounts for much of the shape of the census;
the implication $U_G=U_H \Leftarrow \LL_G=\LL_H$
is restated as a compatibility problem for block transforms;
the unicyclic case is reduced to an injectivity problem for a specialization
of the rooted $U$-polynomial~\cite{ADZ}, and settled unconditionally when the cycle is long relative to the graph;
and the normalized loopy polynomial satisfies the same triangular modular four-term
relation~\eqref{eq:normalized-4term} as Stanley's Tutte symmetric function~\cite{OS}.

Our second main contribution, presented in Section~\ref{sec:polyhedral}, is polyhedral.
For $S\subseteq V(G)$ let $\kp_G(S)$ be the number
of edges incident to at least one vertex of $S$ and consider the \emph{score polytope}
\[
   P_G=
   \{\ba\in\RR_{\ge 0}^{V(G)}:\ba(S)\le\kp_G(S)
      \text{ for every }S\subseteq V(G)\},
\]
 the independence polytope of the score polymatroid of~\cite{KNSV}.

We show that loopy deletion-contraction has a direct geometric realization as an operation on
$P_G$. For an oriented non-loop edge $e$, deletion is a translate of $P_{G-e}$ inside $P_G$,
and loopy contraction is the subcomplex swept
out by the upper endpoints of the fibers of the projection $P_G\to P_{G/e}$.
Thus the recursion~\eqref{eq:loopy} defining $\LL_G$ has two realizations: on polynomials and on polytopes.
Iterating the one-step
construction produces pairwise disjoint forest-indexed geometric parking complexes $\fC_F\subseteq P_G$,
piecewise integral-affinely parametrized by the boxes
\[
   \prod_{T\in c(F)}[0,|E(T)|+\eps_F(T)].
\]
Their lattice points partition $P_G\cap\ZZ^{V(G)}$, while the complement of their union is lattice-free.
Their generating function
\[
   \sum_{\ba\in P_G\cap\ZZ^{V(G)}}q^{|\ba|}
   =
   \LL_G(q,1,1+q,1+q+q^2,\ldots),
\]
is the external bizonotopal Hilbert series of~\cite{KNSV}.  More importantly, it explains the component weight in~\eqref{eq:forest-intro}: the number
$|E(T)|+\eps_F(T)$ attached to a forest component is the length of an actual interval factor.
This connects the two halves of the paper.
The combination of size and activity that
separates $\LL_G$ from the other specializations of the common refinement is exactly the combination that carries polyhedral meaning.
The same submodular function $\kp_G$ governs
both sides: it is the exponent in the higher vertex-deletion identity of
Section~\ref{sec:vertex-deletion} and the rank function defining $P_G$.

In Section~\ref{sec:outlook} we discuss further questions.
The computational method is described in
Appendix~\ref{sec:the-computation}.

\paragraph{Conventions and notation.}
Throughout, by ``graph'' we understand a finite undirected multigraph.
Loops and multiple edges are allowed unless explicitly stated otherwise, and multiple edges are regarded as distinct.
We write $V(G)$ and $E(G)$ for the vertex and edge sets, $n(G)$ for the number of vertices,
$m(G)$ for the number of edges (with loops and parallel copies counted with multiplicity),
$c(G)$ for the set of connected components, and $k(G)$ for the number of connected components.
The rank and cyclomatic number (the first Betti number) are
\[
   r(G)=n(G)-k(G), \qquad \nu(G)=m(G)-r(G)=m(G)-n(G)+k(G).
\]
When the graph is fixed we usually abbreviate these to $n, m, k, r, \nu$.
For $v\in V(G)$ let $\ell_G(v)$ be the number of loops at $v$, and let $\ell(G)$
be the total number of loops of $G$.  For $S\subseteq V(G)$ we write $e_G(S)$ for
the number of edges of $G$ induced on $S$, that is, with both endpoints in $S$,
and $\kp_G(S)$ for the number of edges having at least one endpoint in $S$.

For $e\in E(G)$, we write $e=uv$ to indicate that $e$ has endpoints $u,v\in V(G)$.
The notation $G-e$ means deletion of that particular edge. If $e$ is a non-loop edge,
$G/e$ denotes its loopy contraction, whereas $G_e$ denotes ordinary contraction.
For a vertex $v$, $G-v$ denotes \emph{vertex deletion}: the graph obtained by removing $v$
together with all edges incident to $v$.  We write $\overline G$ for the complement of a simple graph $G$.
If $e_1,\ldots,e_k$ are specified edges, the notation
\[
   G+\{e_1,\ldots,e_k\}
\]
means the graph obtained from $G$ by adjoining these edges as new copies.
Thus, if an edge with the same endpoints is already present in $G$, it is not identified with the new edge.
On edge sets the operation is understood as multiset union.  The same convention applies
when several of the added edges have the same endpoints.

A \emph{spanning forest} of $G$ means any spanning acyclic subgraph, not necessarily maximal.
We identify a spanning forest with its edge set and write $\cF(G)$ for the set of spanning
forests and $c(F)$ for the set of connected components of $(V(G),F)$.  Loops are never edges of a forest.

Whenever activity is used, the edge set is equipped with a linear order and we
use the least-edge convention.  For a spanning forest $F$ and an edge $e\notin
F$, we call $e$ \emph{externally active} relative to $F$ if either $e$ is a
loop, or the endpoints of $e$ lie in the same component of $F$ and $e$ is the
least edge of the unique cycle contained in $F\cup\{e\}$.  Thus a loop is a
one-edge cycle and is externally active relative to every spanning forest,
whereas an edge joining two distinct components of $F$ closes no cycle and is
never active.  An externally active edge is \emph{assigned} to the component
of $F$ containing its endpoints.  For $T\in c(F)$ we write $\eps_F(T)$
for the number of externally active edges assigned to $T$, and
\[
   \eps(F)=\sum_{T\in c(F)}\eps_F(T).
\]
When the ambient graph has to be indicated we write $\eps_G(F)$ for the external activity
of $F$ computed in $G$.  The \emph{weight} $\wt_G(F)$ of a spanning forest
is defined in~\eqref{eq:forest-weight}. It is the contribution of $F$ to the forest expansion of $\LL_G$.

For graph invariants $P$ and $Q$ on a class $\cG$, we say that they have
the same distinguishing power, or induce the same fibers, if $P_G=P_H$
if and only if $Q_G=Q_H$ for all $G,H\in\cG$.  When an explicit invertible specialization or
coefficient transformation is available, we state this stronger relation separately.

We write $\RR^I$ for the real vector space with coordinates indexed by $I$, and $\ZZ^I \subseteq \RR^I$
for its integer lattice. Thus $\ZZ^{V}$ has coordinates labeled by the vertices of $G$,
and coordinates of $\RR^{c(F)}$ are  labeled by the components of a spanning forest $F$.

For a vector $\ba=(a_v)_{v\in V}$ and $S\subseteq V$, we write
$\ba(S):=\sum_{v\in S}a_v$ and $|\ba|:=\ba(V)=\sum_{v\in V}a_v$.
For a polynomial or formal series $P$ and a monomial $M$, the standard notation
$[M]P$ denotes the coefficient of $M$ in $P$.  We write $p_k$ for the $k$th power-sum symmetric function.

Below $T_G$ denotes the ordinary Tutte polynomial,
$\cV_G$ Tutte's 1947 universal $V$-function, $U_G$ the Noble--Welsh $U$-polynomial,
and $U_G^{\ext}$ its extended version.
We use $\eta_G$ and $\eta_G^{\ext}$ for the ordinary and extended polychromates,
$X_G$ for Stanley's chromatic symmetric function, and $\XB_G$ for Stanley's Tutte symmetric function.
The refined loopy polynomial introduced in Section~\ref{sec:refined} is denoted $\widehat{\LL}_G$.
Finally, $\cA_G^e,\cA_G^c$ denote the external and central graphical zonotopal algebras,
and $\BZ_G^e,\BZ_G^c$ the external and central bizonotopal algebras of~\cite{KNSV}.

Substitutions involving negative powers of $t$ are understood in the Laurent polynomial ring.
In every formula with such a substitution, the indicated overall power of $t$ restores an ordinary polynomial.

\section{The loopy polynomial: definition and basic properties}\label{sec:properties}

\subsection{Loopy deletion-contraction and well-definedness}\label{subsec:well-definedness}

The loopy deletion-contraction relation~\eqref{eq:loopy} gives a recursive
procedure for computing $\LL_G$: choose a non-loop edge, apply
\eqref{eq:loopy}, and continue until all remaining edges are loops.  The same
loopy deletion-contraction recurrence occurs in~\cite[Theorem~3.5]{KNSV} for
the Hilbert series $h_G^{(r)}(t)$ of the bizonotopal algebras, for every
$r\ge 0$.  Here the variables $x_0,x_1,\ldots$ are independent, so we first
verify directly that the resulting multivariate recursion is well defined.

\begin{proposition}\label{prop:well-def}
The polynomial obtained from Definition~\ref{def:main} is independent of all
choices made in the loopy deletion-contraction recursion.
\end{proposition}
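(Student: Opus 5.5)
Two routes are available: a direct commutation argument by induction, or an explicit closed formula shown to satisfy (i)--(iii). I would use the induction, since it needs nothing beyond the definition.

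\emph{Setup.} Induct on the number $N(G)$ of non-loop edges of $G$, which strictly decreases under both $G-e$ and $G/e$. Define $\LL_G$ by applying the recursion along some choice of non-loop edges until only loops remain. Then evaluate a loop-only graph by (ii) and (iii): such a graph is a disjoint union of bouquets $L_{m_1},\dots,L_{m_n}$, one per vertex, so $\LL_G=\prod_v x_{\ell_G(v)}$, which is unambiguous. Property (ii) must also be shown compatible with (i). Since every non-loop edge lies in a single component, I would take the recursive evaluation as primary. Then (ii) follows by induction: deleting or loopy-contracting an edge of $G_1$ in $G_1\sqcup G_2$ gives $(G_1-e)\sqcup G_2$ and $(G_1/e)\sqcup G_2$, and at the base both sides are the same product over vertices. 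Isomorphism invariance is clear, because the recursion only depends on the labelled structure.

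\emph{Inductive step.} Assume that for every graph $H$ with $N(H)<N(G)$, all recursive evaluations of $H$ agree. By this hypothesis, the value computed from $G$ depends only on the first edge $e$ chosen, so it suffices to show that two first choices $e\neq f$ give the same result. Expand once more on the other edge in each subgraph. Here $f$ is still a non-loop edge of $G-e$ and of $G/e$, unless $f$ is parallel to $e$, in which case $f$ becomes a loop in $G/e$; symmetrically for $e$. In the generic case (not parallel) we get
\[
\LL_{(G/e)/f}+t\LL_{(G/e)-f}+t\LL_{(G-e)/f}+t^2\LL_{(G-e)-f},
\]
and symmetrically with $e,f$ swapped. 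The four pairs of graphs are literally equal, since deletion and loopy contraction of distinct edges commute. The one check needed is $(G/e)/f=(G/f)/e$: both identify the three or four endpoints and retain $e,f$ as loops.

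\emph{Parallel case (main obstacle).} Let $e,f$ be parallel with endpoints $u,v$. Then $G/e$ has $f$ as a loop, so expansion along $e$ gives
\[
\LL_{G/e}+t\LL_{G/f-e}+t^2\LL_{G-e-f},
\]
where I used $(G-e)/f=G/f-e$. Expansion along $f$ gives $\LL_{G/f}+t\LL_{G/e-f}+t^2\LL_{G-e-f}$. Now $G/e$ and $G/f$ are isomorphic: both are $G$ with $u,v$ merged and two extra loops, with the same number of loops at the merged vertex. Likewise $G/f-e\cong G/e-f$, since both have one new loop there. The values depend only on the isomorphism class, because the induction makes evaluation choice-free on smaller graphs; this is exactly the point where the loop-count labelling of (iii) is used. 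So the two expansions agree. This completes the induction and proves Proposition~\ref{prop:well-def}.
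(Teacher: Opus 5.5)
Your proposal is correct and follows essentially the same argument as the paper: induction on the number of non-loop edges, checking that the two orders of expansion give canonically isomorphic graphs when $e,f$ are not parallel, and using $G/e\cong G/f$ and $(G-e)/f\cong(G-f)/e$ when they are parallel. Your added verification that multiplicativity (ii) is compatible with the recursion is a detail the paper leaves implicit.
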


\begin{proof}
We use induction on the number $N$ of non-loop edges.  There is nothing to
prove for $N=0$.  Suppose $N>0$, and let $e$ and $f$ be two non-loop edges
which could be chosen as the edge of the recursion.

Assume first that $e$ and $f$ are not parallel.  After applying loopy
deletion-contraction first to $e$ and then to $f$, we obtain
\[
 \LL_{(G/e)/f}
 +t\LL_{(G/e)-f}
 +t\LL_{(G-e)/f}
 +t^2\LL_{G-e-f}.
\]
Applying the recursion in the opposite order gives
\[
 \LL_{(G/f)/e}
 +t\LL_{(G/f)-e}
 +t\LL_{(G-f)/e}
 +t^2\LL_{G-e-f}.
\]
The four graphs in the first expression are canonically isomorphic, respectively, to the four graphs in the second expression.  Indeed, deletion and loopy contraction commute for distinct non-parallel edges, and the two
loopy contractions identify the same vertices and retain both contracted edges as loops.
All terms occurring after these first two operations have fewer than $N$ non-loop edges, so the induction hypothesis applies to their further evaluation.

Suppose now that $e$ and $f$ are parallel.  Loopy contraction of either one turns the other into a loop, so it is no longer processed, and loops are never chosen by the recursion.  Expanding first in $e$ gives
\[
 \LL_{G/e}
 +t\LL_{(G-e)/f}
 +t^2\LL_{G-e-f},
\]
whereas expanding first in $f$ gives
\[
 \LL_{G/f}
 +t\LL_{(G-f)/e}
 +t^2\LL_{G-e-f}.
\]
Again the corresponding graphs are canonically isomorphic: $G/e\cong G/f$ and $(G-e)/f\cong(G-f)/e$.
The induction hypothesis completes the argument.
\end{proof}
\subsection{The forest expansion and Tutte specialization} \label{subsec:forest-expansion}

We next give a spanning-forest description of $\LL_G$.  Recall that the activity expansion of Tutte polynomial
is indexed by maximal spanning forests and records internal and external activity globally (see for example~\cite{Tu2,EM}).
The expansion~\eqref{eq:forests} below instead runs over all spanning forests and distributes
external activity among their connected components.
Fix a linear order on $E(G)$. Recall that an edge $e\in E(G)\setminus F$ is called \emph{externally active}
relative to the spanning forest $F$ if it is a loop, or if its endpoints lie
in the same component of $F$ and $e$ is the least edge of the unique cycle in
$F\cup\{e\}$.  In that case $e$ is assigned to the component containing its endpoints.
Denote by $\EA(F)$ the set of all externally active edges of $F$, by $\eps_F(T)$ the number of
externally active edges assigned to $T\in c(F)$ and by
$\ds \eps(F)=\sum_{T\in c(F)}\eps_F(T)=|\EA(F)|  $
their total.  Two features distinguish our expansion from the classical convention: it
runs over \emph{all} spanning forests, not only the maximal ones, and the activity is recorded
per component rather than globally.
\medskip

For a spanning forest $F$ of a graph $G$ with ordered $E(G)$ we write
\begin{equation}\label{eq:forest-weight}
 \wt_G(F)  :=  t^{|E(G)|-|F|-\eps(F)}
 \prod_{T\in c(F)} x_{|E(T)|+\eps_F(T)}
\end{equation}
for the \emph{weight} of $F$ in $G$. Here $|E(T)|=|V(T)|-1$, since the components of a forest are trees.

\begin{theorem}[The forest expansion]\label{thm:forests-sum}
For every graph $G$ and every ordering of $E(G)$,
\begin{equation}\label{eq:forests}
  \LL_G(t,\bx) = \sum_{F\in\cF(G)}  t^{|E(G)|-|F|-\eps(F)}
  \prod_{T\in c(F)}  x_{|E(T)|+\eps_F(T)}.
\end{equation}
That is, $\ds \LL_G=\sum_{F\in\cF(G)}\wt_G(F)$.
In particular, the right-hand side is independent of the edge order.
\end{theorem}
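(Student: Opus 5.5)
Fix an edge order and write $\Phi_G:=\sum_{F\in\cF(G)}\wt_G(F)$. I will show that $\Phi_G$ satisfies the three defining properties (i)--(iii) of Definition~\ref{def:main}. Since Proposition~\ref{prop:well-def} shows that these properties determine a unique polynomial, this gives $\Phi_G=\LL_G$, and order-independence follows for free.

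Properties (ii) and (iii) are immediate. For (iii), the only spanning forest of $L_m$ is the empty one. All $m$ loops are externally active and assigned to the single component, so the weight is $t^{m-0-m}x_{0+m}=x_m$. For (ii), spanning forests of $G_1\sqcup G_2$ are pairs of forests, components are disjoint unions, and activity is computed inside each cycle. Hence activity is unaffected by the relative order of edges from different parts, and the exponents of $t$ add, so $\Phi$ is multiplicative.

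The main step is (i). The freedom in the order is essential here: I choose the non-loop edge $e=uv$ to be the \emph{largest} edge in the order, and I must check the recursion for this $e$ only. This suffices because by Proposition~\ref{prop:well-def} the value of $\LL$ can be computed expanding any edge first. More precisely, induct on the number of non-loop edges. For a graph with at least one non-loop edge, the largest non-loop edge exists (loops may be moved to the top without changing anything, since loops are always active), and I prove $\Phi_G=\Phi_{G/e}+t\Phi_{G-e}$ with the orders on $G-e$ and $G/e$ induced from $G$, with $e$ kept in place as a loop in $G/e$. The induction hypothesis then identifies the right side with $\LL_{G/e}+t\LL_{G-e}$.

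Now split $\cF(G)$ by whether $e\in F$.
\begin{itemize}
\item \emph{Forests $F\not\ni e$.} These are exactly the spanning forests of $G-e$. Since $e$ is the largest edge, it is never the least edge of a cycle with other edges, so $e$ is never active. Also, activities of the other edges in $G$ and in $G-e$ agree, because their fundamental cycles avoid $e$. Hence $\wt_G(F)=t\,\wt_{G-e}(F)$, the factor $t$ coming from the extra non-forest inactive edge.
\item \emph{Forests $F\ni e$.} These biject with spanning forests $F'=F\setminus e$ of $G/e$ via contracting $u,v$. Components correspond, with the component $T\ni e$ mapping to $T'$ where $|E(T)|=|E(T')|+1$. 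In $G/e$ the loop $e$ is active and assigned to $T'$, compensating the lost forest edge.
\end{itemize}

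The delicate check is that every other edge $f\notin F$ is active for $F$ in $G$ if and only if it is active for $F'$ in $G/e$. Fundamental cycles through $e$ in $G$ become cycles in $G/e$ with $e$ removed. Since $e$ is the maximum edge, the minimum of the cycle is unchanged. Edges parallel to $e$ become loops in $G/e$, hence active. In $G$, such an edge $f$ forms the 2-cycle $\{e,f\}$, where $f<e$ is least, so it is active as well, consistently.

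Counting $t$-exponents, $|E(G)|-|F|-\eps_G(F)=|E(G/e)|-|F'|-\eps_{G/e}(F')$, and the $x$-indices match componentwise. This bijection bookkeeping for activity under contraction is the main obstacle, and it is exactly why $e$ must be taken maximal.
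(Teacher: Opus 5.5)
Your proposal is correct and follows essentially the same route as the paper's proof. Both take the largest non-loop edge $e$, split the spanning forests according to whether they contain $e$, and check that external activities and component assignments are preserved under deletion and loopy contraction, including the parallel-edge case. Both then conclude via Proposition~\ref{prop:well-def} by induction on the number of non-loop edges.

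The differences are cosmetic. You verify multiplicativity of $\Phi$ for general disjoint unions, which is harmless but not needed, since the recursion reduces everything to loop-only graphs. You also move the loops to the top of the order, which is unnecessary because a loop never lies on the fundamental cycle of a non-loop edge.
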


\begin{proof}
 Let $\Phi_G$ denote the right-hand side of~\eqref{eq:forests}, and let $e$ be the largest non-loop edge of $G$.
 We partition the spanning forests of $G$ according to whether they contain $e$.

Suppose first that $e\notin F$.  Then $F$ is a spanning forest of $G-e$.
Moreover, $e$ is not externally active relative to $F$: if its endpoints lie
in the same component of $F$, the cycle in $F\cup\{e\}$ contains another
edge, and $e$, being the largest edge, cannot be its least edge.
The external activities and their component assignments are therefore exactly
the same in $G$ and in $G-e$.  Since
$|E(G)|=|E(G-e)|+1$, the forests omitting $e$ contribute  $ t\,\Phi_{G-e}$.

Now suppose that $e\in F$.  Contracting $e$ gives a bijection
\[
   F\longmapsto F/e:=F\setminus\{e\}
\]
from spanning forests of $G$ containing $e$ to spanning forests of $G/e$.
Recall that loopy contraction retains $e$ as a loop.  Thus $e$ is externally
active in $G/e$ relative to $F/e$.

All other external activities are unchanged.  Indeed, if $f\in E(G)\setminus F$, then its fundamental cycle is either unaffected by the
contraction or is obtained by deleting $e$ from the corresponding cycle in
$G$.  Since $e$ is the largest edge, deleting it does not change whether
$f$ is the least edge of that cycle.  In particular, if $f$ is parallel to
$e$, then $f<e$ and $f$ is active both before contraction, as the least edge
of the two-edge cycle, and after contraction, when it becomes a loop.
Component assignments are also unchanged.

Let $T$ be the component of $F$ containing $e$.  In $F/e$ it is replaced by a component $T/e$ with
\[
   |V(T/e)|=|V(T)|-1,
   \qquad
   \eps_{F/e}(T/e)=\eps_F(T)+1.
\]
Hence
\[
 |V(T/e)|-1+\eps_{F/e}(T/e)  =  |E(T)|+\eps_F(T),
\quad |F/e|=|F|-1, \quad \text{and}  \quad  \eps(F/e)=\eps(F)+1,
\]
while $|E(G/e)|=|E(G)|$.  Therefore the power of $t$ is unchanged:
\[
 |E(G/e)|-|F/e|-\eps(F/e)  =  |E(G)|-|F|-\eps(F).
\]
It follows that the forests containing $e$ contribute exactly $\Phi_{G/e}$ giving
$ \Phi_G=\Phi_{G/e}+t\Phi_{G-e}. $

Finally, if $G$ has only loop edges, each of them is externally active for the unique spanning forest, so
\[
   \Phi_G=\prod_{v\in V(G)}x_{\ell_G(v)},
\]
agreeing with the initial
conditions and multiplicativity in Definition~\ref{def:main}.  By
Proposition~\ref{prop:well-def}, $\Phi_G=\LL_G$.
\end{proof}

\begin{remark}\label{rem:V-forest-history}
A closely related componentwise spanning-forest activity expansion occurs in
the theory of Tutte's universal $V$-function.  Wang and Sachs~\cite{WS}
expressed the general $V$-function as a sum over spanning forests in which
each component is weighted by the number of externally active edges associated
with it, see also Bollob\'as, Pebody and Riordan~\cite{BPR}.
In Section~\ref{sec:relations} we make this connection precise.  If $G^\circ$ is
obtained from $G$ by adding one loop at every vertex, then the specialization at $t=1$ of~\eqref{eq:forests} is exactly the corresponding universal $V$-function expansion for $G^\circ$, after a relabeling of variables.  Thus
Theorem~\ref{thm:forests-sum} may also be viewed as the graded loopy form of
this classical expansion.
\end{remark}

Theorem~\ref{thm:forests-sum} represents $\LL_G$ as a sum over spanning forests, whereas the
classical invariants with which we compare it in
Sections~\ref{sec:relations} and~\ref{sec:refined} --- the $U$-polynomial,
the polychromate and the chromatic symmetric function --- are sums over arbitrary subsets of $E(G)$.
These two summations are reconciled by a
decomposition of the Boolean lattice of edge sets, in which each spanning
forest defines an interval of subsets having the same components as itself.
We will use it below, in the proof
that the Tutte polynomial is a specialization of $\LL_G$, and three more times in Sections~\ref{sec:relations} and~\ref{sec:refined}.
On each occasion it is what turns a sum over
subsets into a sum over forests.
For a spanning forest $F$ we denote by $\EA(F)$ the set of its externally active edges.

\begin{lemma}\label{lem:activity-intervals}
For any edge order the Boolean lattice $2^{E(G)}$ is the disjoint union of the intervals
\begin{equation}\label{eq:activity-intervals}
  [F,F\cup\operatorname{EA}(F)]
  :=
  \{F\cup J:J\subseteq\operatorname{EA}(F)\},
  \qquad F\in\mathcal F(G).
\end{equation}
\end{lemma}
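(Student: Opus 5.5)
The plan is to give an explicit inverse map: to every subset $A\subseteq E(G)$ attach a canonical spanning forest $F(A)$, and show that $A\in[F,F\cup\EA(F)]$ holds exactly when $F=F(A)$. Loops are always externally active and never lie in a forest, so they only enlarge every interval by the free factor $2^{\{\text{loops}\}}$. I would therefore first reduce to the case where $G$ is loopless, or simply carry the loops along throughout.

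For $A\subseteq E(G)$, let $F(A)$ be the spanning forest obtained by deleting from $A$ every edge that is the least edge of some cycle contained in $A$. Loops count as one-edge cycles and multiple edges as two-edge cycles. Equivalently, $F(A)$ is the greedy maximal spanning forest of $A$ built by scanning the edges of $A$ in \emph{decreasing} order: an edge is kept unless it closes a cycle with edges already kept, which are all larger. The first step is the standard matroid fact that these two descriptions agree. The second step checks that $F(A)$ is acyclic and spans the components of $A$. Acyclicity holds because any cycle in $F(A)$ would have its least edge removed. Spanning holds because the greedy description produces a basis of the cycle matroid restricted to $A$.

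Next I would show that $A\setminus F(A)\subseteq\EA(F(A))$. Take a removed edge $f$. By the greedy description, $f$ closes a cycle with larger edges of $F(A)$. Since $F(A)$ is a forest, that cycle is the fundamental cycle of $f$ with respect to $F(A)$, and $f$ is its least edge. Hence $f$ is externally active, and so every $A$ lies in the interval indexed by $F(A)$.

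The main obstacle is disjointness, i.e.\ uniqueness. Suppose $A=F\cup J$ with $J\subseteq\EA(F)$; I must show $F=F(A)$. Each $f\in J$ is the least edge of its fundamental cycle $C_f\subseteq F\cup\{f\}\subseteq A$, so $f$ is the least edge of a cycle in $A$ and is removed. This gives $F(A)\subseteq F$. Now $F$ and $F(A)$ are both spanning forests of $A$ that span the same components, because every edge of $J$ closes a cycle in $F$. They are therefore bases of the same flat of equal rank, and the inclusion forces $F(A)=F$. Alternatively, one can check directly that no edge of $F$ is the least edge of a cycle in $A$: such a cycle, written in the cycle space as a symmetric difference of the fundamental cycles $C_f$ for $f\in J$, always has its least element among the $f$'s. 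The rank argument is cleaner, and I would use it.
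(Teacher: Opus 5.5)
Your proof is correct, and it follows a different route from the paper. The paper inducts on the number of non-loop edges. The largest non-loop edge $e$ is never externally active, so the subsets avoiding $e$ are partitioned by the intervals of $G-e$. The subsets containing $e$ correspond to subsets of the ordinary contraction $G_e$, where edges parallel to $e$ become loops and, because $e$ is largest, all other activities are unchanged.

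You instead write down the inverse map. The forest $F(A)$, obtained from $A$ by discarding the least edge of every cycle in $A$ (equivalently, the largest-first greedy maximal forest of $(V(G),A)$), is the unique forest whose interval contains $A$. Uniqueness comes from $F(A)\subseteq F$ together with the fact that both are maximal forests of $(V(G),A)$, so both have $n-k\bigl(V(G),A\bigr)$ edges.

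This is a Crapo-style argument, and it buys several things. It is non-recursive and works verbatim for the independent sets of any matroid. It names the indexing forest explicitly. It also shows directly that $A$ and $F(A)$ have the same components, which is exactly what Corollary~\ref{cor:tutte-all-forests}, Proposition~\ref{prop:U-forest} and Theorem~\ref{thm:refined-extended-U} later use.

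Its one outside ingredient is that your two descriptions of $F(A)$ agree, which you cite as standard. It is worth writing out, since it takes two lines. Suppose $f$ is the least edge of a cycle $C\subseteq A$. Each other edge of $C$ is either kept or has its endpoints joined by kept edges larger than itself, hence larger than $f$. So the endpoints of $f$ are already joined by kept edges when $f$ is scanned, and $f$ is rejected. The converse direction is immediate.

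The paper's induction avoids this lemma and parallels the deletion-contraction proof of Theorem~\ref{thm:forests-sum}. The cost is that the partition is described only implicitly, and that activity preservation under $G-e$ and $G_e$ has to be checked.
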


\begin{proof}
  Use induction on the number of non-loop edges.  A loop is externally active for every forest and may be either present or absent independently,
 so loops cause no difficulty.  Let $e$ be the largest non-loop edge.  Since our active edge is the least edge in its cycle, $e$ is never externally active.
The subsets not containing $e$ are partitioned by the intervals for $G-e$.
The subsets containing $e$ correspond, after contraction of $e$, to the subsets of $G_e$, and forests containing $e$ correspond to forests of $G_e$.
Because $e$ is largest, contraction preserves the external activity of every other edge.
The induction hypothesis therefore gives the required disjoint partition in both classes.
\end{proof}

The forest expansion~\eqref{eq:forests} also gives a direct specialization of $\LL_G$ to the Tutte polynomial $T_G$.
We use the rank-nullity form of the Tutte polynomial,  see~\cite{Tu2,EM}. Let
$r(A)=n(G)-k\bigl(V(G),A\bigr)$ be the rank of a
spanning subgraph with edge set $A\subseteq E(G)$.
Then
\begin{equation}\label{eq:tutte-rank-nullity}
  T_G(X,Y)  =   \sum_{A\subseteq E(G)}
  (X-1)^{\,r(E)-r(A)}\,(Y-1)^{\,|A|-r(A)} .
\end{equation}
This is multiplicative over connected components, and a loop contributes a factor $Y$.

First we use Lemma~\ref{lem:activity-intervals} to write $T_G$ as a sum over \emph{all} spanning forests, weighted by external activity.

\begin{corollary}[All-forest form of the Tutte polynomial]
\label{cor:tutte-all-forests}
For every graph $G$ and every ordering of $E(G)$,
\begin{equation}\label{eq:tutte-all-forests}
   (X-1)^{k}\,T_G(X,Y)
   =
   \sum_{F\in\cF(G)}(X-1)^{|c(F)|}\,Y^{\eps(F)} .
\end{equation}
\end{corollary}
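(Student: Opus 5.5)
Start from the rank-nullity form \eqref{eq:tutte-rank-nullity}, multiply by $(X-1)^{k}$, and regroup the sum over subsets $A\subseteq E(G)$ using Lemma~\ref{lem:activity-intervals}. Since $r(E)=n-k$ and $r(A)=n-k(V,A)$, we get
\[
(X-1)^k T_G(X,Y)=\sum_{A\subseteq E(G)}(X-1)^{k(V,A)}(Y-1)^{|A|-r(A)},
\]
so the sum is now expressed through the number of components of $(V,A)$ and the nullity of $A$. By Lemma~\ref{lem:activity-intervals}, each $A$ can be written uniquely as $A=F\cup J$ with $F\in\cF(G)$ and $J\subseteq\EA(F)$. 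The plan is to sum over each interval $[F,F\cup\EA(F)]$ separately.

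The key observation is that adding externally active edges to $F$ does not change the connected components. A loop is attached at a single vertex. Any other active edge has both endpoints in one component of $F$. Hence $k(V,F\cup J)=|c(F)|$, and $r(F\cup J)=r(F)=|F|$ because $F$ is acyclic. The nullity is therefore $|F\cup J|-r(F\cup J)=|J|$.

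The contribution of the interval indexed by $F$ is then
\[
(X-1)^{|c(F)|}\sum_{J\subseteq\EA(F)}(Y-1)^{|J|}=(X-1)^{|c(F)|}\,Y^{\eps(F)},
\]
by the binomial theorem, using $|\EA(F)|=\eps(F)$. Summing over $F\in\cF(G)$ gives \eqref{eq:tutte-all-forests}.

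No step here is a serious obstacle. The only point needing care is that the intervals of Lemma~\ref{lem:activity-intervals} cover all subsets, including those containing loops. The lemma already states this for arbitrary graphs, and loops count as active edges, so the computation applies without change.
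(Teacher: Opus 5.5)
Your proposal is correct and follows the paper's own proof: you group the rank--nullity sum over the activity intervals of Lemma~\ref{lem:activity-intervals}, observe that adding active edges merges no components so the nullity is $|J|$, and sum $(Y-1)^{|J|}$ over $J\subseteq\EA(F)$ to get $Y^{\eps(F)}$. The only difference is cosmetic: you multiply by $(X-1)^{k}$ before regrouping, while the paper does so after.
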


\begin{proof}
  Group the summands of~\eqref{eq:tutte-rank-nullity} by the intervals of Lemma~\ref{lem:activity-intervals}.
 Let $A=F\cup S$ with $S\subseteq\EA(F)$.  An externally active edge has both endpoints in one
component of $F$, so adding the edges of $S$ merges no components and $k\bigl(V(G),A\bigr)=|c(F)|$.  Hence
\[
   r(A)=n-|c(F)|=|F|,  \qquad  |A|-r(A)=|S|,
   \qquad    r(E)-r(A)=|c(F)|-k .
\]
The interval corresponding to $F$ therefore contributes
\[
(X-1)^{\,|c(F)|-k} \sum_{S\subseteq\EA(F)}(Y-1)^{|S|}
 =(X-1)^{\,|c(F)|-k}\,Y^{\eps(F)},
\]
since $\eps(F)=|\EA(F)|$.  Summing over $F\in\cF(G)$ and multiplying by~$(X-1)^{k}$  gives~\eqref{eq:tutte-all-forests}.
\end{proof}

\begin{corollary}[Tutte specialization]\label{cor:tutte-specialization}
Let $n=|V(G)|$, $m=|E(G)|$, and $k=k(G)$.  Then
\begin{equation}\label{eq:tutte-specialization}
\LL_G\bigl(  t,(X-1)Y,(X-1)tY^2,(X-1)t^2Y^3,\ldots \bigr)
= t^m(X-1)^kY^nT_G(X,Y).
\end{equation}
In particular,
\begin{equation}\label{eq:tutte-specialization-t1}
T_G(X,Y) = (X-1)^{-k}Y^{-n}
\LL_G\bigl(1,(X-1)Y,(X-1)Y^2,\ldots\bigr).
\end{equation}
\end{corollary}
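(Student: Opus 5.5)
We substitute $x_j=(X-1)t^{j-1}Y^{j+1}$ (so $x_0=(X-1)t^{-1}Y$; note the displayed substitution lists $x_0=(X-1)Y$, $x_1=(X-1)tY^2$, i.e.\ $x_j=(X-1)t^{j}Y^{j+1}$) into the forest expansion of Theorem~\ref{thm:forests-sum} and compare termwise with Corollary~\ref{cor:tutte-all-forests}. The plan is to evaluate the weight $\wt_G(F)$ of each spanning forest under this substitution and show it equals $t^m Y^n (X-1)^{|c(F)|}Y^{\eps(F)}$. Summing over $F$ then gives $t^mY^n(X-1)^kT_G(X,Y)$ by~\eqref{eq:tutte-all-forests}.

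With $x_j=(X-1)t^{j}Y^{j+1}$, a component $T$ with $j_T=|E(T)|+\eps_F(T)$ contributes $(X-1)\,t^{j_T}Y^{j_T+1}$. Taking the product over $T\in c(F)$ gives
\[
(X-1)^{|c(F)|}\,t^{\sum_T j_T}\,Y^{\sum_T(j_T+1)}.
\]
Here $\sum_T j_T=|F|+\eps(F)$, since the edges of $F$ are distributed among its components and every externally active edge is assigned to exactly one component. Also $\sum_T (j_T+1)=\sum_T |V(T)|+\eps(F)=n+\eps(F)$.

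Combined with the prefactor $t^{m-|F|-\eps(F)}$ in~\eqref{eq:forest-weight}, the powers of $t$ add up to exactly $t^m$, independent of $F$. This cancellation is the whole point of the choice of $t$-grading in the substitution. We are left with $t^mY^n(X-1)^{|c(F)|}Y^{\eps(F)}$, as claimed.

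Setting $t=1$ gives~\eqref{eq:tutte-specialization-t1} after dividing by $(X-1)^kY^n$; this is legitimate as an identity of rational functions, or one can read it as polynomial division since the left side is divisible. The only real obstacle is bookkeeping: checking that each externally active edge, including loops, is counted in exactly one component, and that $|E(T)|+1=|V(T)|$. Both facts are immediate from the definitions.
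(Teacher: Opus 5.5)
Your argument is correct and is essentially the paper's own proof: substitute $x_i=(X-1)t^iY^{i+1}$ into the forest expansion, note that each forest weight collapses to $t^m(X-1)^{|c(F)|}Y^{n+\eps(F)}$, and sum using Corollary~\ref{cor:tutte-all-forests}. Delete the opening substitution $x_j=(X-1)t^{j-1}Y^{j+1}$, since it would leave $t^{m-|c(F)|}$ instead of $t^m$; the exponent $t^{j}$ you actually compute with is the right one.
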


\begin{proof}
Substitute $ x_i=(X-1)t^iY^{i+1}$ in~\eqref{eq:forests}.
Then the weight~\eqref{eq:forest-weight} of a spanning forest $F$ becomes
\[
t^{m-|F|-\varepsilon(F)}  \prod_{T\in c(F)}
 \left((X-1)t^{|E(T)|+\varepsilon_F(T)}
 Y^{|V(T)|+\varepsilon_F(T)}\right)
=  t^m(X-1)^{|c(F)|}Y^{n+\varepsilon(F)}
\]
because $\ds  \sum_{T\in c(F)}|V(T)|=n $ and $\ds  \sum_{T\in c(F)}(|V(T)|-1)=|F| $.

Now summing over $F$  and applying  Corollary~\ref{cor:tutte-all-forests} we obtain~\eqref{eq:tutte-specialization}.
Setting $t=1$ gives~ \eqref{eq:tutte-specialization-t1}.
\end{proof}

The simplest special case of Theorem~\ref{thm:forests-sum} is worth recording separately.

\begin{corollary}\label{cor:forest-L}
  If $G$ is a forest, then
  \begin{equation}
    \label{eq:loopy-forests}
    \LL_G(t,\bx)  =  \sum_{A\subseteq E(G)}
    t^{|E(G)|-|A|}  \prod_{T\in c(A)}x_{|V(T)|-1}.
  \end{equation}
\end{corollary}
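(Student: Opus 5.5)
This is a direct specialization of Theorem~\ref{thm:forests-sum}. The plan is to show that when $G$ is a forest, every summand of \eqref{eq:forests} has zero external activity, so that \eqref{eq:forests} becomes \eqref{eq:loopy-forests} term by term.

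Let $G$ be a forest. By our conventions it has no loops, because a loop is a cycle. Fix any linear order on $E(G)$. The spanning forests of $G$ are exactly the edge subsets $A\subseteq E(G)$, since every subset of an acyclic edge set is acyclic. So the index set $\cF(G)$ in \eqref{eq:forests} is just $2^{E(G)}$.

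Next, fix $A\subseteq E(G)$ and an edge $f\in E(G)\setminus A$. We check that $f$ is never externally active.
\begin{enumerate}[(a)]
\item The edge $f$ is not a loop.
\item If both endpoints of $f$ lay in one component of $A$, then $A\cup\{f\}$ would contain a cycle. This contradicts the acyclicity of $G$.
\end{enumerate}
Hence $\EA(A)=\emptyset$. It follows that $\eps_A(T)=0$ for every $T\in c(A)$, and $\eps(A)=0$.

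Substituting into the weight \eqref{eq:forest-weight} gives
\[
\wt_G(A)=t^{|E(G)|-|A|}\prod_{T\in c(A)}x_{|E(T)|}.
\]
Each component $T$ is a tree, so $|E(T)|=|V(T)|-1$. Summing over $A\subseteq E(G)$ yields \eqref{eq:loopy-forests}.

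There is no real obstacle. The only point needing care is confirming that acyclicity of $G$ rules out both kinds of externally active edges, loops and cycle-closing edges; (a) and (b) cover exactly these two cases.
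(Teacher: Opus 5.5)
Your proposal is correct and matches the paper's proof: both note that for a forest every omitted edge joins distinct components of $A$, so $\eps(A)=0$ for every $A\subseteq E(G)$ and Theorem~\ref{thm:forests-sum} reduces directly to \eqref{eq:loopy-forests}. Your extra remarks, that the spanning forests are exactly all subsets of $E(G)$ and that $G$ has no loops, are the implicit details the paper leaves unstated.
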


\begin{proof}
If $G$ is a forest and $A\subseteq E(G)$, then every edge of   $E(G)\setminus A$ has its endpoints in different components of $A$.
  Thus there are no externally active edges, and the result follows from Theorem~\ref{thm:forests-sum}.
\end{proof}

\subsection{Degree, extremal terms, and multiplicativity}\label{sec:corols}

We collect several immediate consequences of the forest expansion.  For a
monomial in the variables $\bx$, its \emph{ordinary $\bx$-degree} means the
total degree obtained by assigning degree $1$ to every $x_i$.

\begin{corollary}\label{th:cor}
  Let $n=|V(G)|$, $m=|E(G)|$, and
let $ \nu(G)=m-n+k(G)$ be the cyclomatic number of $G$.
Denote by $\cA_G^e, \ \cA_G^c, \ \BZ_G^e$ and $\BZ_G^c$  the external and central graphical zonotopal algebras (see~\cite{PS,HR}) and
the external and central bizonotopal algebras of~\cite{KNSV} respectively.
\begin{enumerate}[(1)]
\item If $G$ is loopless, then the unique term of $\LL_G$ of $x_0$-degree $n$ is \   $ t^m x_0^n$.

\item If $G$ is simple, then
\[
   [x_0^{n-2}x_1]\,\LL_G(t,\bx)=m\,t^{m-1}.
\]

\item With a grading
  $ \deg t=1, \ \deg x_i=i,$
the polynomial $\LL_G$ is homogeneous of degree $m$.  In other words,
\begin{equation}\label{eq:scaling-L}
   \LL_G(t,\bx)  =
  t^m\LL_G\bigl(1,x_0,t^{-1}x_1,t^{-2}x_2,\ldots\bigr).
\end{equation}

\item The number of edges $m(G)$ is recoverable from the ungraded polynomial
  $\LL_G(1,\bx)$  as
the maximum weighted $\bx$-degree with $\deg x_i=i$.  Thus polynomials  $\LL_G(t,\bx)$ and $\LL_G(1,\bx)$ determine one another.

\item The maximum  $\bx$-degree of a monomial occurring in $\LL_G$ is $n$, and the minimum is $k$.
 In particular, $\LL_G$ determines $n$ and $k$ and detects whether $G$ is connected.

\item The Hilbert series of $\cA_G^e$ is
\[
 \Hil(\cA_G^e;t)  = \LL_G(t,1,1,\ldots)  =
 t^{\nu(G)}T_G(1+t,t^{-1}).
\]

\item If $G$ is connected and $\LL_G^{(1)}$ is the part of $\LL_G$ of ordinary $\bx$-degree $1$,
  then the Hilbert series of $\cA_G^c$ is
\[
 \Hil(\cA_G^c;t)
 =
 \left.\LL_G^{(1)}(t,\bx)\right|_{x_i=1}
 =
 t^{\nu(G)}T_G(1,t^{-1}).
\]

\item The Hilbert series of $\BZ_G^e$ is obtained by the substitution $x_i\longmapsto 1+t+\cdots+t^i$, that is \begin{equation}\label{eq:external-biz-specialization}
 \Hil(\BZ_G^e;t)
 =
 \LL_G\bigl(t,1,1+t,1+t+t^2,\ldots\bigr).
\end{equation}

\item The Hilbert series of $\BZ_G^c$ is obtained by
\   $  x_0\longmapsto0, \quad
  x_i\longmapsto1+t+\cdots+t^{i-1} \quad(i\ge 1)
$
, that is
\begin{equation}\label{eq:central-biz-specialization}
 \Hil(\BZ_G^c;t)
 =
 \LL_G\bigl(t,0,1,1+t,1+t+t^2,\ldots\bigr).
\end{equation}
\end{enumerate}
\end{corollary}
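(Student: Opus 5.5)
Almost every item follows from the forest expansion of Theorem~\ref{thm:forests-sum}, read as a sum of weights $\wt_G(F)$. The only remaining inputs are the all-forest Tutte form (Corollary~\ref{cor:tutte-all-forests}) and the Hilbert series recursions of~\cite{KNSV}.

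For (1), a monomial has $\bx$-degree $|c(F)|$, and $|c(F)|=n$ forces $F=\emptyset$. With no loops, no edge is active for the empty forest, so the weight is $t^m x_0^n$. For (2), $x_0^{n-2}x_1$ needs $n-1$ components with one component of index $1$. That happens either when $F$ is a single edge $e$ with $\eps_F=0$, or when $F=\emptyset$ and some singleton has one active loop. The second case is excluded for simple graphs. A single edge has no active edge precisely because there are no parallel edges, so each of the $m$ edges contributes $t^{m-1}$. For (3), the weighted degree of $\wt_G(F)$ is $m-|F|-\eps(F)+\sum_T(|E(T)|+\eps_F(T))=m$, and this gives \eqref{eq:scaling-L}. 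For (4), set $t=1$. Every monomial then has weighted degree exactly $m$, so $m$ is read off, and \eqref{eq:scaling-L} reconstructs $\LL_G(t,\bx)$ from $\LL_G(1,\bx)$. One has to check that distinct monomials of $\LL_G(t,\bx)$ never merge at $t=1$. This holds by homogeneity, since the $t$-exponent equals $m$ minus the weighted $\bx$-degree. For (5), the $\bx$-degree is $|c(F)|$, which ranges from $n$ (at $F=\emptyset$) down to $k$ (at a spanning tree of each component). All coefficients are positive, so nothing cancels.

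For (6), substitute $x_i=1$ to get $\sum_F t^{m-|F|-\eps(F)}$. Compare this with Corollary~\ref{cor:tutte-specialization} at $X-1=1$, $Y=t^{-1}$: setting $x_i=t^{i}t^{-i-1}=t^{-1}$ there is not the same substitution. Instead I would argue directly from Corollary~\ref{cor:tutte-all-forests} with $X=1+t$, $Y=t^{-1}$. Its right side is $\sum_F t^{|c(F)|-\eps(F)}$, and $|c(F)|=n-|F|$, so
\[
t^{m-n}\sum_F t^{|c(F)|-\eps(F)} = t^{m-n+k}T_G(1+t,t^{-1}).
\]
Together with the known formula $\Hil(\cA^e_G;t)=t^{\nu}T_G(1+t,t^{-1})$ of~\cite{HR}, this gives (6). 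For (7), the $\bx$-degree-$1$ part consists of the spanning trees. Summing over them gives $\sum_{F \text{ tree}}t^{m-(n-1)-\eps(F)}=t^{\nu}T_G(1,t^{-1})$ by the classical activity expansion; equivalently, take the $(X-1)^1$ coefficient in Corollary~\ref{cor:tutte-all-forests}. This is then combined with the known central Hilbert series $t^{\nu}T_G(1,t^{-1})$.

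Items (8) and (9) use~\cite[Theorem~3.5]{KNSV}, which says that $\Hil(\BZ^e_G)$ and $\Hil(\BZ^c_G)$ satisfy the same loopy deletion--contraction recurrence \eqref{eq:loopy} and are multiplicative over disjoint unions. By Proposition~\ref{prop:well-def}, any specialization of $\LL_G$ that satisfies the recurrence and matches the initial conditions coincides with the invariant. So the work is to check the values on $L_m$: a single vertex with $m$ loops should have external bizonotopal Hilbert series $1+t+\cdots+t^m$ and central series $1+\cdots+t^{m-1}$ (and $0$ for $m=0$). I expect this base case to be the main obstacle. I would extract it from the definitions in~\cite{KNSV}, where for $L_m$ the algebra is presumably $\QQ[x]/(x^{m+1})$ in the external case, respectively $(x^{m})$ in the central case. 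If~\cite{KNSV} instead states the Hilbert series only through a score-polytope count, I would fall back on the lattice-point generating function of $P_{L_m}=[0,m]$.
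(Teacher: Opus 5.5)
Your arguments for (1)--(3) and (5)--(9) follow the paper's route, but your proof of (4) has a false step. After setting $t=1$ it is not true that every monomial has weighted $\bx$-degree exactly $m$. The monomial of a forest $F$ has weighted $\bx$-degree $|F|+\eps(F)$, and this is smaller than $m$ as soon as some edge outside $F$ is not externally active. Already $\LL_{P_2}(1,\bx)=x_1+x_0^2$ has monomials of weighted degrees $1$ and $0$.

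Homogeneity (3), together with nonnegativity of the $t$-exponent, only gives the upper bound $|F|+\eps(F)\le m$. To read $m$ off $\LL_G(1,\bx)$ you must also show the bound is attained, i.e.\ that some spanning forest has every edge outside it externally active. This is the missing ingredient, and the paper supplies it as follows. Scan the non-loop edges from largest to smallest, keeping an edge whenever it creates no cycle. A rejected edge closes a cycle with previously kept, hence larger, edges. That cycle is its fundamental cycle in the final forest, so the rejected edge is the least edge there and is active; loops are always active. For this forest $|F|+\eps(F)=m$. Since all coefficients are nonnegative, $m$ is then exactly the maximal weighted $\bx$-degree in $\LL_G(1,\bx)$. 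Alternatively, the $t^0$ part $\LL_G(0,\bx)=\prod_i x_{m_i}$, obtained by contracting every non-loop edge, is nonzero. Only once $m$ is known does your observation that the $t$-exponent equals $m$ minus the weighted $\bx$-degree reconstruct $\LL_G(t,\bx)$ via \eqref{eq:scaling-L}.

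Two smaller points. In (2), the case ``$F=\varnothing$ with a singleton carrying one loop'' never arises for any graph. The empty forest has $n$ components and so contributes in $\bx$-degree $n$, not $n-1$. Only single-edge forests matter, and your conclusion stands. In (8)--(9), you do not need to compute the base case yourself: the paper takes both the recurrence and the initial values $1+t+\cdots+t^i$ and $1+t+\cdots+t^{i-1}$ on $L_i$ from \cite[Theorem~3.6]{KNSV}. Your fallback via $P_{L_m}=[0,m]$ would in any case only cover the external algebra.
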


\begin{proof}
 For~(1), the empty forest has $n$ singleton components and, since $G$ is loopless, no external activity.  Thus it contributes $t^m x_0^n$ to~\eqref{eq:forests}.  Every nonempty
forest has a nonsingleton component and therefore has smaller $x_0$-degree.

For~(2), a term $x_0^{n-2}x_1$ can only come from a spanning forest consisting
of one edge and $n-2$ isolated vertices.  Since $G$ is simple, such a forest
has no external activity.  There are exactly $m$ choices of the edge, each
contributing $t^{m-1}x_0^{n-2}x_1$.

For~(3), every term indexed by a spanning forest $F$ has total weighted degree
\[
|E(G)|-|F|-\eps(F)  +\sum_{T\in c(F)}
 \bigl(|E(T)|+\eps_F(T)\bigr)
= m-|F|-\eps(F)+|F|+\eps(F) = m.
\]
Equivalently, the weighted $\bx$-degree of the forest monomial is
$|F|+\eps(F)$, which gives~\eqref{eq:scaling-L} term by term.

For~(4), it remains to see that $m$ itself is recoverable from
$\LL_G(1,\bx)$.
Construct a maximal spanning forest greedily by processing the non-loop edges
from largest to smallest and adding an edge whenever it does not create a
cycle.  Any omitted non-loop edge is then the least edge of its fundamental cycle, and any loop is externally active.  Thus every edge outside this
 forest is externally active and so
  $ |F|+\eps(F)=m.$
Thus a monomial of weighted $\bx$-degree $m$ occurs in $\LL_G(1,\bx)$.
All forest monomials have weighted degree at most $m$, so $m$ is exactly the
maximum such degree.  The scaling identity therefore reconstructs the full
polynomial from its specialization at $t=1$.

For~(5), the ordinary $\bx$-degree of the term associated with $F$ is
$|c(F)|$.  Its largest possible value is $n$, attained only by the empty
forest, while its smallest possible value is $k(G)$, attained by every maximal
spanning forest.

For~(6), setting all $x_i=1$ in~\eqref{eq:forests} gives
\[
   \sum_{F\in\cF(G)}
   t^{m-|F|-\eps(F)},
\]
which is the standard activity expression for the external zonotopal Hilbert
series~see~\cite{PSS,HR}, equivalently
\[
   t^{m-r(G)}T_G(1+t,t^{-1}).
\]

For~(7), when $G$ is connected, the terms of ordinary $\bx$-degree $1$ are
precisely those indexed by spanning trees.  Setting the remaining $x_i$ equal
to $1$ gives the standard activity expression
\[
   t^{m-n+1}T_G(1,t^{-1})
\]
for the central zonotopal Hilbert series, see~\cite{PS,HR}.

Finally,~(8) and~(9) follow from \cite[Theorem~3.6]{KNSV}.  The central and external bizonotopal Hilbert
series satisfy the same loopy deletion-contraction recurrence and multiplicativity as $\LL_G$,
while on the one-vertex graph $L_i$ with $i$ loops their initial values are, respectively,
\[
   1+t+\cdots+t^{i-1}
   \qquad\text{and}\qquad
   1+t+\cdots+t^i.
\]
The substitutions in~\eqref{eq:central-biz-specialization} and
\eqref{eq:external-biz-specialization} therefore give exactly these Hilbert
series.
\end{proof}

The extremal parts of the forest expansion record a little more
of the elementary structure of the graph than the preceding corollary makes
explicit.

\begin{proposition}[Extremal terms]\label{prop:extremal-terms}
  Let $m_i$ be the edge number of the $i$th connected component of $G$.  Then
\begin{equation}\label{eq:L-at-zero-components}
   \LL_G(0,\bx)=\prod_{i=1}^{k}x_{m_i}.
\end{equation}
Moreover, the unique term of the maximal $\bx$-degree $n(G)$ is
\begin{equation}\label{eq:max-x-degree-term}
   t^{m(G)-\ell(G)}\prod_{v\in V(G)}x_{\ell_G(v)}.
\end{equation}
Thus $\LL_G$ determines the order $n$ and size $m$ of $G$, the number  of components $k$, the rank $r$, the multiset of edge counts
$\{m(G_i)\}_{i=1}^k$, and the multiset
$\{\ell_G(v):v\in V(G)\}$ of loop multiplicities at the vertices.
\end{proposition}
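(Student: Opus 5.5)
Both claims come from the forest expansion of Theorem~\ref{thm:forests-sum}. For $\LL_G(0,\bx)$ I keep only the forests $F$ with $m-|F|-\eps(F)=0$, that is, the forests for which every edge outside $F$ is externally active. Such an $F$ must be a maximal spanning forest: if some non-loop edge outside $F$ joined two components of $F$, it would not be active. So $F$ restricts to a spanning tree in each component $G_i$. The count of such forests is then read off from Corollary~\ref{cor:tutte-all-forests}, or more simply from the standard fact that exactly one spanning tree has zero internal activity. Fix the edge order. The forests whose complement is entirely externally active are exactly the maximal forests with every edge outside $F$ active. A maximal forest with no internally active edge has that property, and there is exactly one such forest per component. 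This is the greedy forest built in the proof of Corollary~\ref{th:cor}(4), which processes edges from largest to smallest.

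To get uniqueness cleanly, I will use multiplicativity to reduce to a connected $G$. I then specialize Corollary~\ref{cor:tutte-specialization} at $t=0$. On the left side, $x_i=(X-1)t^iY^{i+1}$ kills every variable except $x_0$, so that route is unhelpful. Instead I argue directly, by induction along the recursion: $\LL_G(0,\bx)=\LL_{G/e}(0,\bx)$ from \eqref{eq:loopy}. Loopy contraction preserves both the edge count and connectivity, and it reduces $G$ to $L_m$ with $\LL_{L_m}=x_m$. This gives $\LL_G(0,\bx)=x_m$ for connected $G$, and the product formula follows by multiplicativity (ii). This recursive argument is the cleanest, and it avoids any activity bookkeeping.

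For the top $\bx$-degree term, ordinary $\bx$-degree equals $|c(F)|$, so the only forest attaining degree $n$ is the empty forest, as in Corollary~\ref{th:cor}(5). For the empty forest, the only externally active edges are the loops, each assigned to its own vertex. Hence $\eps(\emptyset)=\ell(G)$ and each singleton component $\{v\}$ receives $x_{\ell_G(v)}$. This gives \eqref{eq:max-x-degree-term}.

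For the consequences, $n$ and $k$ come from Corollary~\ref{th:cor}(5), and $r=n-k$. The multiset $\{m_i\}$ is read from the indices in \eqref{eq:L-at-zero-components}, and $m=\sum m_i$. The multiset of loop multiplicities is read from the indices of the monomial \eqref{eq:max-x-degree-term}. The only subtle step is checking that $G/e$ stays connected with the same number of edges; both properties are immediate from the definition of loopy contraction.
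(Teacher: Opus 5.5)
The argument you actually commit to is exactly the paper's proof, and it is correct: setting $t=0$ in \eqref{eq:loopy} gives $\LL_G(0,\bx)=\LL_{G/e}(0,\bx)$, you contract a connected $G$ down to $L_m$, apply multiplicativity, and observe that the empty forest is the only forest of $\bx$-degree $n$. You should delete the abandoned first paragraph, however, because its claims are false. For a connected $G$ the number of spanning trees with zero internal activity is $T_G(0,1)$, which is $0$ whenever $G$ has a bridge and $6$ for $K_4$; moreover, zero internal activity does not force every outside edge to be externally active. The fact that would make that route work is that the greedy forest is the \emph{unique} maximal forest all of whose outside edges are active.
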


\begin{proof}
Suppose first that $G$ is connected and has $m$ edges.  At $t=0$ the deletion
term in loopy deletion-contraction disappears.  Contracting the non-loop
edges successively therefore reduces $G$ to the one-vertex graph $L_m$ and
gives $\LL_G(0,\bx)=x_m$.  Applying multiplicativity to the connected
components gives~\eqref{eq:L-at-zero-components}.

For the second assertion notice that a forest term has ordinary $\bx$-degree equal to
its number of components.  Degree $n(G)$ can therefore occur only for the empty forest.
Relative to the empty forest every loop is externally active and is assigned to its vertex, while no non-loop edge is externally active.
Therefore the empty forest contributes exactly
\[
   t^{m(G)-\ell(G)}\prod_{v\in V(G)}x_{\ell_G(v)},
\]
and no other forest contributes in that degree.  The stated elementary data
are now read from~\eqref{eq:L-at-zero-components},
\eqref{eq:max-x-degree-term}, and Corollary~\ref{th:cor}(3)--(4).
\end{proof}

The next result, besides being a useful structural fact in its own right,  will
also be needed in Section~\ref{sec:tutte-symmetric}, where it allows the comparison of multiplicative invariants to be reduced to connected graphs.

\begin{proposition}[Connectedness and irreducibility]
\label{prop:L-irreducible}
A graph $G$ is connected if and only if $\LL_G(t,\bx)$ is irreducible in
$\ZZ[t,x_0,x_1,\ldots]$.
\end{proposition}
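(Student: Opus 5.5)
If $G$ is disconnected, multiplicativity gives $\LL_G=\LL_{G_1}\LL_{G_2}$ for a splitting $G=G_1\sqcup G_2$ into nonempty graphs. By Corollary~\ref{th:cor}(5), each factor has minimum $\bx$-degree $k(G_i)\ge1$, so neither factor is a unit. Hence $\LL_G$ is reducible.

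For the converse, let $G$ be connected. The plan is to exploit the grading by ordinary $\bx$-degree, which is the number of components of the forest. By Corollary~\ref{th:cor}(5), every monomial of $\LL_G$ has $\bx$-degree between $1$ and $n$. Moreover, the degree-$1$ part is nonzero, since spanning trees exist, and the degree-$n$ part is the single term $t^{m-\ell}\prod_v x_{\ell_G(v)}$ of Proposition~\ref{prop:extremal-terms}.

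Suppose $\LL_G=PQ$ with $P,Q$ nonunits. The $\bx$-degree is additive for products, so the lowest homogeneous parts multiply and the top parts multiply. Since the minimum degree of $\LL_G$ is $1$, one factor, say $Q$, has minimum $\bx$-degree $0$. Then $P$ has minimum degree $1$.

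Next consider $\LL_G(0,\bx)=x_m$ from \eqref{eq:L-at-zero-components}. We get $P(0,\bx)Q(0,\bx)=x_m$, so one of these equals $\pm1$ and the other equals $\pm x_m$. I would use this alongside the homogeneity of Corollary~\ref{th:cor}(3). Every irreducible factor of a weighted-homogeneous polynomial is weighted homogeneous (with $\deg t=1$, $\deg x_i=i$), so $P$ and $Q$ are homogeneous of weighted degrees $a+b=m$.

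Suppose $Q(0,\bx)=\pm1$. Then $Q$ has a constant term, so its weighted degree is $b=0$. Since $t$ has weight $1$, $Q$ is then a polynomial in $x_0$ alone. Write $Q=q(x_0)$ with $q$ nonconstant, since $Q$ is a nonunit; primitivity issues over $\ZZ$ are handled by the content, discussed at the end. Then $q(x_0)$ divides $\LL_G$.

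I would rule this out by specializing $x_i=0$ for all $i\ge1$. By the forest expansion, $\LL_G(t,x_0,0,0,\dots)$ only receives contributions from forests all of whose components have $|E(T)|+\eps_F(T)=0$. For $n\ge2$ or with loops present, this forces the empty forest on a loopless $G$, so the specialization is $t^m x_0^n$ or $0$. Then $q(x_0)\mid t^mx_0^n$ forces $q=\pm x_0^j$. But $x_0$ does not divide $\LL_G$, because the $x_m$ term at $t=0$ has no $x_0$. This contradicts $q$ being a nonconstant divisor.

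Suppose instead $P(0,\bx)=\pm1$. Then $P$ has weighted degree $0$, so $P$ is a polynomial in $x_0$. This again gives a nonconstant $x_0$-polynomial divisor, and the same argument applies.

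Finally, content: the coefficient of $x_m$ at $t=0$ is $1$, so $\LL_G$ is primitive and integer constant factors are excluded.

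The step I expect to be delicate is the homogeneity claim for factors and the $x_0$-only divisor exclusion when loops are present. There I would substitute $x_i=0$ for $i\ge1$ together with $t=0$, or argue directly via Gauss' lemma over $\ZZ[x_0]$. The degree-$0$ factor must divide every coefficient polynomial, and the coefficient of $x_m$ is $t^0\cdot1$.
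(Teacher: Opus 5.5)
Your proof is correct in substance, and it takes a different route from the paper.

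The paper first uses weighted homogeneity and $\LL_G(0,\bx)=x_m$ to show that, for $m>0$, the only monomial of $\LL_G$ containing $x_m$ is $x_m$ itself. Hence $\LL_G=x_m+Q(t,x_0,\ldots,x_{m-1})$ is monic of degree one in $x_m$. In any factorization $AB$, one factor $A$ is then free of $x_m$ and satisfies $A\cdot[x_m]B=1$; the case $m=0$ is handled separately.

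You instead use the graded-domain fact that every factor of a weighted-homogeneous element is weighted homogeneous. The $t=0$ specialization then forces one factor to have weight $0$, so it lies in $\ZZ[x_0]$. Both arguments rest on the same two facts. Your route treats $m=0$ uniformly; the paper's needs only the degree in a single variable and avoids the graded-factor lemma.

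Your finish is more roundabout than it needs to be. Once $Q\in\ZZ[x_0]$, it does not involve $t$, so $Q=Q(0,\bx)=\pm1$ is a unit and you are done. Your assumption that $q$ is nonconstant is already contradictory at that point.

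The specialization $x_i\mapsto 0$ ($i\ge1$) in your main text only works for loopless $G$ with $m\ge1$:
\begin{itemize}
\item When $G$ has loops, the specialization is $0$, so $q\mid 0$ tells you nothing.
\item For the one-vertex graph, your claim that $x_0\nmid\LL_G$ is false, since $\LL_G=x_0$.
\end{itemize}
Your fallback repairs the looped case, where $m\ge1$: the coefficient of $x_m$ in $\LL_G$, viewed over $\ZZ[t,x_0]$, equals $1$, so any divisor in $\ZZ[x_0]$ divides $1$. This is essentially the paper's key observation.

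The $\bx$-degree bookkeeping (minimum degrees $0$ and $1$, top-degree parts) is never used and can be dropped. In the disconnected direction you explicitly check that the factors are nonunits, which the paper leaves implicit.
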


\begin{proof}
If $G$ is disconnected, multiplicativity gives a nontrivial factorization of
$\LL_G$ into the loopy polynomials of its connected components.

Conversely, suppose that $G$ is connected and let $m=m(G)$.  If $m=0$, then
$G$ is a single isolated vertex and $\LL_G=x_0$.  Assume $m>0$.  By
Proposition~\ref{prop:extremal-terms},
\[
   \LL_G(0,\bx)=x_m.
\]
By Corollary~\ref{th:cor}(3), $\LL_G$ is homogeneous of weighted degree $m$
for $\deg t=1$ and $\deg x_i=i$.
So no variable $x_i$ with $i>m$ occurs,
and any monomial containing $x_m$ can contain no positive power of $t$ and no
$x_i$ with $i>0$ other than $x_m$.  A factor $x_0^j$ is not excluded by
homogeneity, since $\deg x_0=0$, but the specialization at $t=0$ rules it out:
it shows that the only such monomial is $x_m$ itself, with coefficient $1$.  Thus
\begin{equation}\label{eq:connected-monic-form}
   \LL_G=x_m+Q(t,x_0,\ldots,x_{m-1}).
\end{equation}
Regarded as a polynomial in $x_m$ over
$R=\ZZ[t,x_0,\ldots,x_{m-1}]$, it is therefore monic of degree one.
If $\LL_G=AB$, one factor, say $A$, is independent of $x_m$.  Comparing the
coefficients of $x_m$ gives $A\,[x_m]B=1$ in $R$.  Hence $A$ is a unit, so
$\LL_G$ is irreducible.
\end{proof}

\begin{corollary}[Component factorization]\label{cor:L-component-factorization}
If  $G_1,\ldots,G_k$ are the connected components of $G$, then
\[
   \LL_G=\prod_{i=1}^k\LL_{G_i}
\]
is the irreducible factorization of $\LL_G$, up to the order of the factors.
In particular, $\LL_G$ determines the multiset
$\{\LL_{G_1},\ldots,\LL_{G_k}\}$ and hence the multisets of the orders and
sizes of the connected components, with the corresponding loop data from
Proposition~\ref{prop:extremal-terms}.
\end{corollary}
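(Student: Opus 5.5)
The factorization itself is just multiplicativity, property (ii) of Definition~\ref{def:main}, applied to the disjoint union $G=G_1\sqcup\cdots\sqcup G_k$. Each $G_i$ is connected, so by Proposition~\ref{prop:L-irreducible} each $\LL_{G_i}$ is irreducible in $\ZZ[t,x_0,x_1,\ldots]$. This ring is a polynomial ring over $\ZZ$ in countably many variables. Any given finite set of polynomials lies in a finitely generated polynomial subring, which is a UFD, and irreducibility and factorizations are unchanged by enlarging the set of variables. Hence the product is the irreducible factorization, unique up to order and units.

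To read off the multiset $\{\LL_{G_1},\ldots,\LL_{G_k}\}$ exactly, not just up to sign, I would use the normal form~\eqref{eq:connected-monic-form}. Every $\LL_{G_i}$ has nonnegative coefficients by Theorem~\ref{thm:forests-sum}, and its specialization at $t=0$ is the single monomial $x_{m_i}$, with coefficient $+1$. The units of $\ZZ[t,\bx]$ are $\pm1$, so each irreducible factor is determined up to sign, and the sign is fixed by requiring that the factor's specialization at $t=0$ be a monomial with coefficient $+1$. The multiplicities of the factors come from unique factorization. So the multiset of factors is an invariant of $\LL_G$.

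Finally, apply Proposition~\ref{prop:extremal-terms} to each factor $\LL_{G_i}$ separately. It recovers $n(G_i)$, $m(G_i)$, and the multiset of loop multiplicities $\{\ell_{G_i}(v):v\in V(G_i)\}$. Collecting these over all factors gives the multisets of orders and sizes of the components, together with the loop data attached to each component.

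The only step needing care is the transfer of unique factorization to infinitely many variables and the normalization of units. Both are routine, so no step is a genuine obstacle.
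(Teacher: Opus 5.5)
Your proposal is correct and follows the paper's argument: multiplicativity gives the factorization, Proposition~\ref{prop:L-irreducible} gives irreducibility of each factor, and unique factorization in a finitely generated polynomial subring gives uniqueness. Your explicit normalization of the units $\pm1$ through the $t=0$ specialization $\LL_{G_i}(0,\bx)=x_{m_i}$ is a detail the paper leaves implicit, and you handle it correctly.
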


\begin{proof}
Each factor is irreducible by Proposition~\ref{prop:L-irreducible}.  Since
only finitely many variables occur, uniqueness of factorization in the ordinary finite polynomial ring containing $\LL_G$ proves the claim.
\end{proof}

\subsection{The \texorpdfstring{$4$}{4}-term relation and  multistars}
\label{sec:4-terms}

\subsubsection{The \texorpdfstring{$4$}{4}-term relation}
\label{subsubsec:four-term}
Let $u,v,w$ be three distinct vertices and write
\[
   e_1=uv,    \qquad    e_2=vw,    \qquad    e_3=uw.
\]
Recall the convention that
$G+\{e_1,\ldots,e_s\}$ means that new copies of the indicated edges are
adjoined, even if edges with the same endpoints are already present in $G$.

\begin{figure}[ht]
\centering
\begin{tikzpicture}[scale=.72,
  vertex/.style={circle,fill=black,inner sep=1.6pt}]

\begin{scope}[xshift=0cm]
 \node[vertex,label=below:$u$] (u) at (0,0) {};
 \node[vertex,label=below:$v$] (v) at (1.3,0) {};
 \node[vertex,label=above:$w$] (w) at (.65,1.1) {};
 \draw (u)--(v);
 \draw (u)--(w);
 \node at (.65,-.95) {\small $G+\{e_1,e_3\}$};
\end{scope}

\begin{scope}[xshift=3.7cm]
 \node[vertex,label=below:$u$] (u) at (0,0) {};
 \node[vertex,label=below:$v$] (v) at (1.3,0) {};
 \node[vertex,label=above:$w$] (w) at (.65,1.1) {};
 \draw (u)--(v);
 \node at (.65,-.95) {\small $G+\{e_1\}$};
\end{scope}

\begin{scope}[xshift=7.1cm]
 \node[vertex,label=below:$u$] (u) at (0,0) {};
 \node[vertex,label=below:$v$] (v) at (1.3,0) {};
 \node[vertex,label=above:$w$] (w) at (.65,1.1) {};
 \draw (v)--(w);
 \draw (u)--(w);
 \node at (.65,-.95) {\small $G+\{e_2,e_3\}$};
\end{scope}

\begin{scope}[xshift=10.8cm]
 \node[vertex,label=below:$u$] (u) at (0,0) {};
 \node[vertex,label=below:$v$] (v) at (1.3,0) {};
 \node[vertex,label=above:$w$] (w) at (.65,1.1) {};
 \draw (v)--(w);
 \node at (.65,-.95) {\small $G+\{e_2\}$};
\end{scope}

\end{tikzpicture}
\caption{The four local edge configurations in
Theorem~\ref{thm:4term}.  The common graph $G$ is suppressed.}
\label{fig:4term}
\end{figure}

\begin{theorem}\label{thm:4term}
For the four graphs shown in Figure~\ref{fig:4term} we have
\begin{equation}\label{eq:4term}
\LL_{G+\{e_1,e_3\}}-t\,\LL_{G+\{e_1\}}
=
\LL_{G+\{e_2,e_3\}}-t\,\LL_{G+\{e_2\}}.
\end{equation}
\end{theorem}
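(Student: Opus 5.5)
Apply the loopy deletion-contraction relation~\eqref{eq:loopy} to the edge $e_3=uw$ in the two graphs on the left and on the right of~\eqref{eq:4term}. This is legitimate by Proposition~\ref{prop:well-def}, since $e_3$ is a non-loop edge. On the left,
\[
\LL_{G+\{e_1,e_3\}}=\LL_{(G+\{e_1,e_3\})/e_3}+t\,\LL_{G+\{e_1\}},
\]
so the left-hand side of~\eqref{eq:4term} equals $\LL_{(G+\{e_1,e_3\})/e_3}$. In the same way the right-hand side equals $\LL_{(G+\{e_2,e_3\})/e_3}$. The theorem therefore reduces to showing that these two loopy contractions are isomorphic graphs.

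For this we compare the two graphs directly. Loopy contraction of $e_3$ identifies $u$ and $w$ into a single vertex $z$ and keeps $e_3$ as a loop at $z$. The underlying contraction of $G$ along $u\sim w$ is the same in both cases, and so is the retained loop $e_3$. The only remaining difference is the image of the extra edge. In the first graph, $e_1=uv$ becomes an edge $zv$. In the second, $e_2=vw$ also becomes an edge $zv$. So the identity map on vertices, with $e_1\mapsto e_2$ and all other edges fixed, is an isomorphism of multigraphs (parallel copies are regarded as distinct, which is harmless here). Since $\LL$ is an isomorphism invariant, the identity follows.

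The only point needing care is bookkeeping rather than mathematics. The convention that $G+\{\cdot\}$ adjoins new copies guarantees that $e_3$ is a genuinely new non-loop edge, so~\eqref{eq:loopy} applies to it, and that $(G+\{e_1,e_3\})-e_3=G+\{e_1\}$ exactly, even if $G$ already contains edges between $u$ and $w$. I would state this explicitly. No further obstacle is expected.
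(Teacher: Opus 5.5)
Your proposal is correct and matches the paper's proof: apply loopy deletion--contraction to the newly adjoined copy of $e_3$ on each side, which reduces \eqref{eq:4term} to $(G+\{e_1,e_3\})/e_3\cong(G+\{e_2,e_3\})/e_3$. Both graphs have the new copy of $e_1$ or $e_2$ becoming a corresponding edge between the merged vertex and $v$, and $e_3$ becoming a loop there.
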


\begin{proof}
Apply loopy deletion-contraction to the newly added copy of $e_3$ in
$G+\{e_1,e_3\}$.  This gives
\[
\LL_{G+\{e_1,e_3\}}-t\,\LL_{G+\{e_1\}}
=
\LL_{(G+\{e_1,e_3\})/e_3}.
\]
Similarly,
\[
\LL_{G+\{e_2,e_3\}}-t\,\LL_{G+\{e_2\}}
=
\LL_{(G+\{e_2,e_3\})/e_3}.
\]
After contracting $e_3=uw$, the vertices $u$ and $w$ are identified.
Thus the new edge $e_1=uv$ in the first graph and the new edge
$e_2=vw$ in the second become corresponding edges between the
merged vertex and $v$, while $e_3$ becomes a loop at the merged vertex.
Edges already in $G$ are transformed identically.  So
\[
   (G+\{e_1,e_3\})/e_3
   \cong
   (G+\{e_2,e_3\})/e_3,
\]
which proves~\eqref{eq:4term}.
\end{proof}

It is useful to record the normalized form of the relation~\eqref{eq:4term}.  Define
\[
   \overline{\LL}_G:=t^{-|E(G)|}\LL_G.
\]
Since the four graphs in~\eqref{eq:4term} differ from $G$ by one or two
new edges, Theorem~\ref{thm:4term} is equivalent to
\begin{equation}\label{eq:normalized-4term}
\overline{\LL}_{G+\{e_1,e_3\}}
-\overline{\LL}_{G+\{e_1\}}
=
\overline{\LL}_{G+\{e_2,e_3\}}
-\overline{\LL}_{G+\{e_2\}}.
\end{equation}
This is the triangular modular relation of Orellana and Scott~\cite{OS},
the relation satisfied also by the chromatic and Tutte symmetric functions.  We return to this comparison in
Section~\ref{sec:tutte-symmetric}.

\begin{remark}\label{rem:vassiliev-4term}
The $4$-term relation of Theorem~\ref{thm:4term} should not be confused with
a different $4$-term relation on graphs arising from the theory of Vassiliev knot invariants, see~\cite{Li,La}.
The latter arises from the $4$-term relation for chord diagrams by passing to their intersection graphs.
In graph-theoretic form it involves two Vassiliev moves on a pair of vertices: toggling their adjacency and, in the second move, swapping the adjacency of
one of them to the other neighbors of the second.
By contrast, Theorem~\ref{thm:4term} compares four graphs obtained by adjoining edges among
three fixed vertices.
It is easy to see that the normalized loopy polynomial fails to satisfy the Vassiliev
relation already for the graphs $P_3$ and $C_3$.

Nevertheless, both relations are local four-term linear identities on graphs,
and their similarity may have a deeper explanation.  In particular, the $U$-polynomial of Noble and Welsh~\cite{NW}, which is closely related to the
loopy polynomial, was introduced in part to incorporate weighted chromatic
invariants arising from the work on Vassiliev
invariants~\cite{CDL}.  It would be interesting to understand whether there is a direct
transformation or a common framework relating the two kinds of $4$-term relations.
\end{remark}

\subsubsection{Reduction to multistars}
\label{subsubsec:multistars}

The $4$-term relation of Theorem~\ref{thm:4term} gives
a useful tool for computing $\LL_G$ recursively, different from the deletion-contraction formula. It allows to reduce the computation to multistars.

\begin{definition}
\label{def:multistar}
A graph $G$ is called a
\emph{multistar centered at $v\in V(G)$} if every non-loop edge is incident to $v$ and
$H$ has at most one multiple edge.
Loops may occur at arbitrary vertices, see Figure~\ref{fig:multistar}.
\end{definition}

\begin{figure}[ht]
\centering
\begin{tikzpicture}[scale=.9,
  vertex/.style={circle,fill=black,inner sep=1.8pt}]
\node[vertex,label=below:$v$] (v) at (0,0) {};
\node[vertex,label=left:$u_1$] (u1) at (-2,0.8) {};
\node[vertex,label=above:$u_2$] (u2) at (-.7,1.8) {};
\node[vertex,label=above:$u_3$] (u3) at (.9,1.7) {};
\node[vertex,label=right:$u_4$] (u4) at (2,0.5) {};

\draw (v) to[bend left=14] (u1);
\draw (v) -- (u1);
\draw (v) to[bend right=14] (u1);
\draw (v) -- (u2);
\draw (v) -- (u3);
\draw (v) -- (u4);
\end{tikzpicture}
\caption{A multistar centered at $v$.  Loops, which may occur at any vertex, are not shown.}
\label{fig:multistar}
\end{figure}

\begin{proposition}[Reduction to multistars]\label{prop:4term-reduction}
  Let $H$ be a connected graph with $m$ non-loop edges.
 Then there exist a multistar $H^\ast$, graphs
$K_1,\ldots,K_N$ with $m-1$ non-loop edges, and signs $\sigma_i\in\{1,-1\}$ such that
\begin{equation}\label{eq:multistar-reduction}
 \LL_H  = \LL_{H^\ast} + t\sum_{i=1}^N\sigma_i\LL_{K_i}.
\end{equation}
Thus, the $4$-term relation, multiplicativity, and the values on multistars determine $\LL_G$ for every graph $G$.
\end{proposition}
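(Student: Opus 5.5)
The plan is to use the $4$-term relation of Theorem~\ref{thm:4term} as a local ``edge-sliding'' move. In the form
\[
\LL_{G+\{e_1,e_3\}}=\LL_{G+\{e_2,e_3\}}+t\bigl(\LL_{G+\{e_1\}}-\LL_{G+\{e_2\}}\bigr),
\]
it replaces an edge $e_1=uv$ by $e_2=vw$ whenever another edge $e_3=uw$ is present. Up to an error $t(\cdots)$, this means an endpoint of an edge may be slid along an adjacent edge. The two error graphs $G+\{e_1\}$ and $G+\{e_2\}$ are obtained by deleting $e_3$, so they have one fewer non-loop edge, namely $m-1$. Loops are never touched, so they simply ride along.

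Fix a vertex $v$ and a spanning tree $S$ of $H$. For a non-loop edge $f=ab$ of $H$ not incident to $v$, let $d(f)$ be the distance in $S$ from the edge to $v$, that is, the smaller of $\operatorname{dist}_S(a,v)$ and $\operatorname{dist}_S(b,v)$. Say $a$ is the endpoint closer to $v$, and let $c$ be the neighbour of $a$ on the $S$-path from $a$ to $v$, so the edge $g=ac\in S$ exists. Apply the relation with $u=b$, the middle vertex equal to $a$, $w=c$, $e_3=g$, $e_1=f=ab$ and $e_2=ac$. This replaces $f$ by a parallel copy of $g$. The error terms are $t(\LL_{H-g}-\LL_{H-g-f+ac})$, two graphs with $m-1$ non-loop edges, which are added to the list $K_i$ with signs $\pm1$.

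Alternatively, slide the other way, with $e_3=g=ac$, $e_1=ab$, $e_2=cb$. Then $f=ab$ becomes $cb$, whose endpoint $c$ is closer to $v$. Iterating this second move pulls every edge toward $v$. The spanning tree $S$ itself must also be moved into a star, and this is done by sliding as well: an $S$-edge $ab$ with $a$ closer is slid along $ac$ to $cb$. The resulting graph stays connected, and the new $S$ (with $ab$ replaced by $cb$) is again a spanning tree with strictly smaller total depth. So I will not insist that $S$ be fixed. Instead I use the potential
\[
\Phi(H)=\sum_{f}\min\operatorname{dist}(\text{endpoint of }f,\,v),
\]
with distances measured in the current graph. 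Each slide strictly decreases the distance term of the moved edge and does not increase any other edge's term, since the connectivity through $c$ is preserved. The process terminates, and at the end every non-loop edge is incident to $v$.

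This produces a graph in which all non-loop edges meet $v$, possibly with several multiple edges $vu_i$. To reach at most one multiple edge, I apply the relation once more with $e_1=vu_i$, $e_2=u_iu_j$ and $e_3=vu_j$. This slides a copy of $vu_i$ onto $u_iu_j$, after which a second slide along $vu_j$ moves it to $vu_j$. Each step costs error terms with $m-1$ non-loop edges. Repeating concentrates all multiplicities onto a single neighbour, giving the multistar $H^\ast$.

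The final claim follows by induction on the number of non-loop edges. Multiplicativity reduces to connected graphs, and the $K_i$ have fewer non-loop edges; if a $K_i$ is disconnected, its factors do too. The base case consists of graphs with only loops, which are multistars (or handled by~\eqref{eq:init}).

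The main obstacle is the bookkeeping that guarantees termination and that every error graph has exactly $m-1$ non-loop edges. The latter holds automatically, because both error terms delete $e_3$ and keep one new edge. For termination, the distance potential must be shown to be monotone even though sliding can change distances for other edges. I would handle this by always sliding an edge of maximal distance, so that the multiset of distances decreases lexicographically.
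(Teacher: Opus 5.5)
Your architecture matches the paper's: edge slides from Theorem~\ref{thm:4term} whose correction graphs omit the pivot and so have $m-1$ non-loop edges, moving every non-loop edge onto $v$, concentrating multiplicities, and inducting via multiplicativity. However, the multiplicity step fails as written. In a slide, $e_1$ and $e_2$ share a vertex, and their other endpoints are exactly the endpoints of the pivot $e_3$. So the moved edge's endpoint travels along the pivot to the pivot's other end.

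Your first slide, turning a copy of $vu_i$ into $u_iu_j$ with pivot $vu_j$, is correct. But a second slide of $u_iu_j$ along $vu_j$ carries its endpoint $u_j$ back to $v$ and returns the edge to $vu_i$. It is the inverse of the first slide and achieves nothing. To reach $vu_j$ you must move the endpoint $u_i$ to $v$, and the pivot for that is a \emph{remaining copy} of $vu_i$. This is the one place where the hypothesis that $vu_i$ has multiplicity at least two enters, and it is why only excess copies can be transferred; a simple spoke cannot be moved this way. Your proposal never invokes this hypothesis, whereas the paper's proof uses it explicitly.

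The same confusion about the pivot affects your first stage-one move. The move $ab\mapsto ac$ with pivot $ac$ is not an instance of the relation, since it would need the pivot $bc$; you do abandon it for the correct move $ab\mapsto cb$ along $ac$. With that move your distance potential is sound, provided $c$ is taken on a shortest $a$--$v$ path in the current graph. It should not be taken on the path in an arbitrary spanning tree $S$, whose next vertex need not be closer to $v$ in graph distance. The argument then runs as follows. A shortest path to $v$ that uses $ab$ must traverse it from $b$ to $a$, and it can be rerouted through $cb$ without getting longer. Hence all distances to $v$ are pointwise non-increasing, the moved edge's term drops by at least one, and the lexicographic fallback is unnecessary.

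The paper needs none of this bookkeeping. Connectivity gives a path $v-w-u$ with $wu$ not incident to $v$, and sliding $wu$ along $vw$ puts it directly on $v$. So the number of non-loop edges avoiding $v$ decreases by one per step, and connectivity survives via $w-v-u$.
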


\begin{proof}
  The basic move is the following consequence of Theorem~\ref{thm:4term}.
  Suppose in a graph $H$ a vertex $w$ is connected
to two other vertices $v$ and $u$ by edges
$b=wu$ and $c=wv$. Let $a=vu$ be a new edge.
Consider graphs $ H_0=H-\{b,c\}$ and $H'=H_0+\{a,c\}$.
  Applying~\eqref{eq:4term} to the triple $u,v,w$ gives
  \begin{equation}\label{eq:edge-slide}
    \LL_H
    =
    \LL_{H'}
    +
    t\bigl(\LL_{H_0+b}-\LL_{H_0+a}\bigr).
  \end{equation}
  We refer to the passage from $H$ to $H'$ as sliding the edge $b=wu$ across the edge $c=vw$ to the edge $a=vu$.  Notice that $H$ and $H'$ have the
  same number of non-loop edges, whereas graphs $H_0+a$ and $H_0+b$ in~\eqref{eq:edge-slide} have one fewer non-loop edge.
\medskip

Pick a vertex $v\in V(H)$.
We first move all non-loop edges onto $v$.  If some non-loop edge is not
incident to $v$, then, since $H$ is connected, there exists a path $v-w-u$
whose second edge $b=wu$ is not incident to $v$.  Let $c=vw$.  Applying the edge slide~\eqref{eq:edge-slide}
replaces $b=wu$ in $H$ by the new edge $a=vu$
decreasing the number of non-loop edges not incident to $v$ by one and adding two correction terms have $m-1$ non-loop edges.
Repeating this operation gives
\begin{equation}\label{eq:star-reduction}
   \LL_H
   =
   \LL_{H_1}
   +
   t\sum_i\sigma_i\LL_{K_i},
\end{equation}
where every non-loop edge of $H_1$ is incident to $v$ and every $K_i$ has $m-1$ non-loop edges.

It remains to concentrate all excess multiplicity at one neighbor.  Let $u$ be a vertex adjacent to $v$.
Suppose another vertex $u'\ne u$ is connected to $v$ by at least two parallel edges.
We transfer one copy of $vu'$ to $vu$ by two edge slides.

First slide one copy of $vu'$ across $uv$ replacing that copy of $vu'$ a new edge $uu'$.
Since at least one copy of $vu'$ remains, we can now
slide the new edge $uu'$ across a remaining copy of $vu'$ to a new copy of $vu$.
This decreases the multiplicity of the edge $vu'$ by one, while every correction term produced
by either slide has $m-1$ non-loop edges.
Repeating this two-slide operation for every $u'\ne u$ with
$\mult(vu')>1$ produces a multistar $H^\ast$ centered at $v$.
Collecting all correction terms gives~\eqref{eq:multistar-reduction}.

For a disconnected graph we apply the argument to its components.
Since every correction graph has fewer non-loop edges, the final assertion follows
by induction on their number using multiplicativity.
\end{proof}

The loopy polynomial of multistars can be evaluated explicitly using the forest expansion.

\begin{proposition}[Loopy polynomial of a multistar]
\label{prop:multistar-formula}
Let $H$ be a multistar centered at $v$
with other vertices $u_1,\ldots,u_d$ such that
$v$ is connected to $u_1$ by $r\ge 1$ edges
and to $u_i$, for $2\leq i\leq d$, by a single edge $vu_i$.
Let $\lambda_0$ be the number of loops at $v$ and $\lambda_i$ the number of loops at $u_i$.  For $S\subseteq I:=\{2,\ldots,d\}$ define
\[
   a(S):=\lambda_0+|S|+\sum_{i\in S}\lambda_i.
\]
Then
\begin{equation}\label{eq:multistar-formula}
\LL_M(t,\bx)=t^{r+d-1}\sum_{S\subseteq I}t^{-|S|}
\prod_{i\in I\setminus S}x_{\lambda_i}
\left(x_{a(S)}x_{\lambda_1}+\sum_{j=1}^r t^{-j}x_{a(S)+\lambda_1+j}\right).
\end{equation}
For a one-vertex multistar with $\lambda_0$ loops this gives $\LL_M=x_{\lambda_0}$, in agreement with~\eqref{eq:init}.
\end{proposition}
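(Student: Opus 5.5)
The plan is to read \eqref{eq:multistar-formula} directly off the forest expansion of Theorem~\ref{thm:forests-sum}, with a convenient edge order. Since the right-hand side of \eqref{eq:forests} does not depend on the order, I am free to choose it. I order the $r$ parallel edges between $v$ and $u_1$ as $f_1<f_2<\cdots<f_r$. The remaining edges (the single edges $vu_i$ and all loops) may be placed arbitrarily. Write $\Lambda$ for the total number of loops, so that $|E(H)|=r+(d-1)+\Lambda$.

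First I classify the spanning forests. Since $H$ has no cycles other than loops and the $2$-cycles among the $f_j$, a spanning forest $F$ is determined by two pieces of data: the set $S\subseteq I$ of indices $i$ for which the edge $vu_i$ lies in $F$, and a choice of either no edge between $v$ and $u_1$ or exactly one edge $f_j$.

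Next I compute the external activity in each case.
\begin{itemize}
\item Every loop is active and is assigned to the component of its vertex.
\item A missing edge $vu_i$ with $i\notin S$ joins two different components, so it is never active.
\item If no $f_j$ lies in $F$, the parallel edges also join distinct components, so none of them is active.
\item If $f_j\in F$, then an edge $f_l$ with $l\ne j$ closes the $2$-cycle $\{f_l,f_j\}$. It is active exactly when $l<j$ (least-edge convention). So there are $j-1$ active parallel edges, all assigned to the component of $v$.
\end{itemize}

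Now I assemble the weights \eqref{eq:forest-weight}. Each $u_i$ with $i\in I\setminus S$ is an isolated vertex carrying its $\lambda_i$ loops, so it contributes $x_{\lambda_i}$. In the case with no $f_j$ the component of $v$ has $|S|$ edges and activity $\lambda_0+\sum_{i\in S}\lambda_i$. This gives the factor $x_{a(S)}$, together with $x_{\lambda_1}$ for the isolated vertex $u_1$. In the case with $f_j$ the component of $v$ has $|S|+1$ edges and activity $\lambda_0+\sum_{i\in S}\lambda_i+\lambda_1+(j-1)$. Its index is therefore $a(S)+\lambda_1+j$.

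For the power of $t$, all $\Lambda$ loops are active, so they cancel between $|E(H)|$ and $\eps(F)$. What remains is $r+d-1-|S|$ in the first case and $r+d-1-(|S|+1)-(j-1)=r+d-1-|S|-j$ in the second. Summing over $S$ and $j$ then yields exactly \eqref{eq:multistar-formula}.

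Finally, the one-vertex check is immediate from \eqref{eq:init}. The only step needing care is the activity count for the parallel class, and choosing the order $f_1<\cdots<f_r$ makes it transparent. There is also a small notational point: the statement's multistar is written $M$ in the formula but $H$ in the hypothesis. The degenerate case $d=0$, where $r$ is vacuous, should be handled separately as the one-vertex case rather than through the sum.
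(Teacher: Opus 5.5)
Your proposal is correct and is essentially the paper's own proof: both read the formula off the forest expansion of Theorem~\ref{thm:forests-sum}, split the forests by the set $S$ of simple spokes and by whether one parallel edge (the paper's $b_j$, your $f_j$) is included, and count exactly $j-1$ active parallel edges in the latter case. Your side remark is also apt: \eqref{eq:multistar-formula} presupposes $d\ge 1$ and $r\ge 1$, so the one-vertex case does not literally specialize from it and is best taken directly from \eqref{eq:init}.
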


\begin{proof}
  Apply Theorem~\ref{thm:forests-sum}.
  Let $b_1<\ldots<b_r$ be the ordering of parallel edges.
  A spanning forest of $H$ contains an arbitrary subset
\[
F_S=\{vu_i : i \in S\}, \qquad   S \subseteq I,
\]
of the simple spokes, and contains either none or exactly one of the parallel edges $b_1,\ldots,b_r$.

If no $b_j$ is chosen, then $u_1$ is a singleton component.
No edge in the parallel collection is externally active, while all loops are externally active.
The resulting term is
\[
 t^{\,r+d-1-|S|}
 x_{a(S)}x_{\lambda_1}
 \prod_{i\in I\setminus S}x_{\lambda_i}.
\]

If $b_j$ is chosen, then precisely
$  b_1,\ldots,b_{j-1}$
of the parallel edges are externally active: each forms a two-edge
cycle with $b_j$ and is the least edge of that cycle.
Thus the central component contributes
  $ x_{a(S)+\lambda_1+j},$
and the power of $t$ is  $r+d-1-|S|-j.$
Summing these terms over $S$ and $j$ gives
\eqref{eq:multistar-formula}.
\end{proof}

\section{Examples and explicit formulas}\label{sec:examples}

We collect here the values of $\LL_G$ for a few families for which the forest expansion can
be evaluated in closed form.  We end with the smallest example showing that
$\LL_G$ contains genuinely nonmatroidal information.

\begin{proposition}[Two-vertex graphs]\label{prop:multiedge}
Let $G_m$ have two vertices joined by $m$ multiple edges.  Then
\begin{equation}\label{eq:multiedge}
 \LL_{G_m}=x_m+tx_{m-1}+\cdots+t^{m-1}x_1+t^mx_0^2 .
\end{equation}
In particular $\LL_{P_2}=x_1+tx_0^2$.  Compare with
Proposition~\ref{prop:multistar-formula}, of which this is the case of a single
leaf, and Example~\ref{ex:loop-necessary}, where the two-vertex multigraphs
occur inside a looped path.
\end{proposition}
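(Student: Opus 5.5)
The quickest route is to apply the forest expansion of Theorem~\ref{thm:forests-sum} directly. Order the $m$ parallel edges as $b_1<b_2<\cdots<b_m$. Since $G_m$ has two vertices and no loops, its spanning forests are the empty forest and the $m$ single-edge forests $\{b_j\}$. So the sum in~\eqref{eq:forests} has $m+1$ terms, and each can be evaluated directly.

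Consider first the empty forest. It has two singleton components. No edge is externally active, because each $b_j$ joins distinct components and $G_m$ has no loops. Its weight is therefore $t^{m}x_0^2$.

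Now consider the forest $\{b_j\}$. It is a single component $T$ with $|E(T)|=1$. Each $b_i$ with $i\ne j$ closes the two-edge cycle $\{b_i,b_j\}$. By the least-edge convention, $b_i$ is active exactly when $i<j$, so $\eps_F(T)=j-1$. The weight is
\[
t^{m-1-(j-1)}x_{1+(j-1)}=t^{m-j}x_j .
\]
Summing over $j=1,\dots,m$ gives $x_m+tx_{m-1}+\cdots+t^{m-1}x_1$. Adding the empty-forest term $t^mx_0^2$ yields~\eqref{eq:multiedge}.

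For $m=1$ the formula specializes to $\LL_{P_2}=x_1+tx_0^2$. For the comparison with Proposition~\ref{prop:multistar-formula}, take $d=1$, $r=m$, $I=\emptyset$ and all $\lambda_i=0$. The formula there then reads $t^{m}\bigl(x_0x_0+\sum_j t^{-j}x_j\bigr)$, which is the same expression.

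As a cross-check, the result also follows by induction on $m$ from loopy deletion-contraction~\eqref{eq:loopy}. Contracting one edge gives $L_m$, with value $x_m$, and deleting it gives $G_{m-1}$, so $\LL_{G_m}=x_m+t\LL_{G_{m-1}}$ with base case $\LL_{G_0}=x_0^2$. There is no real obstacle in either argument; the only point needing care is the activity count $j-1$, which comes from the least-edge convention.
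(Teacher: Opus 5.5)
Your proof is correct and matches the paper's argument: the forest expansion runs over the empty forest and the $m$ single-edge forests, and the least-edge convention gives $\eps_F=j-1$ for $\{b_j\}$, hence the weight $t^{m-j}x_j$. Your two extra checks are also correct: the case $d=1$, $r=m$ of the multistar formula, and the recursion $\LL_{G_m}=x_m+t\,\LL_{G_{m-1}}$, which uses $G_m/e\cong L_m$.
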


\begin{proof}
A spanning forest is either empty or a single edge.  For $F=\varnothing$ the
two vertices are separate components, so no edge has both endpoints in one
component and $\eps(F)=0$.  The weight is $t^{m}x_0^2$.  If
$F=\{e_j\}$, where $e_j$ is the $j$-th edge in the chosen order, then for
$e\ne e_j$ the unique cycle of $F\cup\{e\}$ is $\{e_j,e\}$, so $e$ is
externally active exactly when $e<e_j$.  Hence
$\eps_F=j-1$ and the weight is $t^{m-j}x_{j}$.  Summing over
$j=1,\dots,m$ gives~\eqref{eq:multiedge}.
\end{proof}

\begin{proposition}[Paths]\label{prop:path}
For the path graph $P_m$ on $m$ vertices, put
\[
 \Lambda_m=\left\{(a_0,\ldots,a_{m-1})\in\ZZ_{\ge 0}^{m}:
 \sum_{i=0}^{m-1}(i+1)a_i=m\right\},\qquad |\ba|=\sum_i a_i.
\]
Then
\begin{equation}\label{eq:path}
 \LL_{P_m}(t,\bx)=
 \sum_{\ba\in\Lambda_m}
 \binom{|\ba|}{a_0,\ldots,a_{m-1}}
 t^{|\ba|-1}\prod_{i=0}^{m-1}x_i^{a_i}.
\end{equation}
\end{proposition}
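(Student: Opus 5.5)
Since $P_m$ is a forest, Corollary~\ref{cor:forest-L} applies directly:
\[
\LL_{P_m}(t,\bx)=\sum_{A\subseteq E(P_m)}t^{(m-1)-|A|}\prod_{T\in c(A)}x_{|V(T)|-1}.
\]
The plan is to reparametrize the subsets $A$ by compositions of $m$. Label the vertices $1,\ldots,m$ in path order. A subset $A$ of the $m-1$ edges is determined by the edges it omits, and the omitted edges cut the vertex sequence into consecutive blocks. These blocks are exactly the components of $(V,A)$. This gives a bijection between subsets $A$ and compositions $(c_1,\ldots,c_p)$ of $m$, where $c_j$ is the size of the $j$th block. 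The component of size $c_j$ contributes the factor $x_{c_j-1}$.

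Next we compute the power of $t$. The number of omitted edges is $p-1$, so $(m-1)-|A|=p-1$. We now record a composition by its type $\ba$, where $a_i$ is the number of blocks of size $i+1$. Then $\sum_i (i+1)a_i=m$, so $\ba\in\Lambda_m$. The number of blocks is $p=|\ba|$, so the $t$-exponent equals $|\ba|-1$. The monomial attached to the composition is $\prod_i x_i^{a_i}$, and it depends only on the type.

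It remains to count the compositions of each fixed type $\ba$. They are the orderings of a multiset with $a_i$ copies of the part $i+1$. Their number is the multinomial coefficient $\binom{|\ba|}{a_0,\ldots,a_{m-1}}$. Summing over $\ba\in\Lambda_m$ gives~\eqref{eq:path}.

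Every step is elementary. The only point needing care is the bijection between edge subsets and compositions, including the boundary case $A=E(P_m)$, which corresponds to the single-part composition $(m)$ and contributes $x_{m-1}$ with $t^0$. As a check, for $m=2$ the formula gives $x_1+tx_0^2$, in agreement with Proposition~\ref{prop:multiedge}.
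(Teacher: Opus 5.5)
Your proof is correct and follows essentially the paper's argument: the paper likewise uses that every edge subset of $P_m$ is a spanning forest with $\eps(F)=0$, so only the components matter. It then identifies the component orders with compositions of $m$ and counts the compositions of each type by the multinomial coefficient, exactly as you do via Corollary~\ref{cor:forest-L}.
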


\begin{proof}
Every subset of $E(P_m)$ is a spanning forest, and no nonforest edge lies on
a cycle, so $\eps(F)=0$ for all $F$.  Deleting $m-1-|F|$ of the $m-1$
edges leaves $m-|F|$ components whose orders form a composition of $m$ into
$m-|F|$ parts. Writing $a_i$ for the number of components with $i+1$ vertices
gives $|\ba|=m-|F|$ and $\sum_i(i+1)a_i=m$.  The number of compositions with
these part multiplicities is the multinomial coefficient
$\ds \binom{|\ba|}{a_0,\ldots,a_{m-1}}$, and the weight of each such forest is
$t^{(m-1)-|F|}\prod_i x_i^{a_i}=t^{|\ba|-1}\prod_i x_i^{a_i}$.
\end{proof}

For example,
  \begin{equation}
    \label{eq:path4}
 \LL_{P_3}=x_2+2tx_0x_1+t^2x_0^3,
 \qquad
 \LL_{P_4}=x_3+2t x_0x_2+t x_1^2+3t^2x_0^2x_1+t^3x_0^4.
\end{equation}

The next family is the smallest one in which external activity contributes (it is also the case $\eps(F)\le1$ of the unicyclic analysis of
Section~\ref{subsec:unicyclic}).

\begin{proposition}[Cycles]\label{prop:cycle}
For the simple cycle $C_m$, $m\ge 3$, let
\[
 \Lambda'_m = \left\{(a_0,\ldots,a_{m-2})\in\ZZ_{\ge 0}^{m-1}:
 \sum_{i=0}^{m-2}(i+1)a_i=m,\quad |\ba|\ge 2\right\}
\qquad \text{and} \qquad
 \gamma_{\ba}=\frac{m}{|\ba|}
 \binom{|\ba|}{a_0,\ldots,a_{m-2}}.
\]
Then
\begin{equation}\label{eq:circle}
 \LL_{C_m}=x_m+(m-1)tx_{m-1}
 +\sum_{\ba\in\Lambda'_m}\gamma_{\ba}t^{|\ba|}
 \prod_{i=0}^{m-2}x_i^{a_i}.
\end{equation}
\end{proposition}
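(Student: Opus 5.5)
We apply Theorem~\ref{thm:forests-sum} to $C_m$, fixing the cyclic order of the edges $e_1<e_2<\cdots<e_m$ around the cycle. A spanning forest is any proper subset $F\subsetneq E(C_m)$, so we split the sum according to $|F|$.

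\emph{Case $|F|=m-1$.} Here $F$ is a spanning path, obtained by removing one edge $e_j$. The omitted edge $e_j$ closes the unique cycle, which consists of all $m$ edges. Hence $e_j$ is externally active exactly when $j=1$.
\begin{itemize}
\item For $j=1$, the weight is $t^{m-(m-1)-1}x_{m-1+1}=x_m$.
\item For each of the $m-1$ values $j\ge 2$, the weight is $t^{1}x_{m-1}$.
\end{itemize}
Together these give $x_m+(m-1)t\,x_{m-1}$.

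\emph{Case $|F|\le m-2$.} At least two edges are omitted. The forest then has $|c(F)|=m-|F|\ge2$ components, each a path (an arc of the cycle). No omitted edge has both endpoints in the same component: an edge joining two vertices of one arc would close the whole cycle, forcing the arc to contain all other edges. So $\eps(F)=0$. The weight is $t^{m-|F|}\prod_T x_{|V(T)|-1}=t^{|\ba|}\prod_i x_i^{a_i}$, where $a_i$ counts components with $i+1$ vertices. Thus $|\ba|=m-|F|\ge 2$, $\sum_i(i+1)a_i=m$, and each component has at most $m-1$ vertices, so $i\le m-2$. This matches the index set $\Lambda'_m$.

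It remains to count, for fixed $\ba\in\Lambda'_m$, the forests of this type. Such a forest is a decomposition of the cyclically labelled vertex set into $|\ba|$ consecutive arcs whose sizes have multiplicities $a_i$. We count pairs (forest, distinguished component) in two ways.
\begin{itemize}
\item Starting from the forest: each forest has exactly $|\ba|$ components, so each is counted $|\ba|$ times.
\item Starting from the distinguished arc: choose its starting vertex in $m$ ways, reading the cycle in the positive direction. Then the sequence of arc sizes read from that vertex is an arbitrary composition of $m$ with part multiplicities $\ba$. There are $\binom{|\ba|}{a_0,\ldots,a_{m-2}}$ such compositions, and the correspondence is bijective.
\end{itemize}
Dividing gives the count $\gamma_{\ba}=\frac{m}{|\ba|}\binom{|\ba|}{a_0,\ldots,a_{m-2}}$. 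This double count is the only real subtlety, since the cyclic symmetry rules out the naive linear count used in Proposition~\ref{prop:path}. Summing the two cases yields~\eqref{eq:circle}.
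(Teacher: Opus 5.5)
Your proof is correct and follows the paper's argument. You split into $|F|=m-1$, where only the forest omitting the least edge has an active edge, and $|F|\le m-2$, where $\eps(F)=0$ and the components are arcs counted by cyclic compositions. Your double count (forest, distinguished arc) just makes explicit the count $\frac{m}{p}\binom{p}{a_0,\ldots,a_{m-2}}$ that the paper asserts without justification.
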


\begin{proof}
The spanning forests of $C_m$ are the proper subsets of $E(C_m)$.  If
$|F|=m-1$, the single nonforest edge $e$ closes the whole cycle, so it is
externally active precisely when it is the least edge.  This gives $x_m$ for
that one forest and $tx_{m-1}$ for each of the remaining $m-1$.  If
$|F|\le m-2$, then at least two edges are missing, every nonforest edge joins
two distinct components, and $\eps(F)=0$.  The components are the arcs
cut out by the $p:=m-|F|$ missing edges, so their orders form a \emph{cyclic}
composition of $m$ into $p$ parts, of which there are
$\ds \frac mp \binom{p}{a_0,\ldots,a_{m-2}} = \gamma_{\ba}$, each contributes
$t^{m-|F|}\prod_i x_i^{a_i}=t^{|\ba|}\prod_i x_i^{a_i}$.
\end{proof}

In particular,
\[ \LL_{C_3}=x_3+2tx_2+3t^2x_0x_1+t^3x_0^3,
 \qquad
 \LL_{C_4}=x_4+3tx_3+2t^2x_1^2+4t^2x_0x_2
 +4t^3x_0^2x_1+t^4x_0^4.
\]

\begin{proposition}[Stars with loops]\label{prop:star}
Let $S_{m,\ell}$ be a star with $m$ edges and $\ell$ loops at its center.
Then
\begin{equation}\label{eq:star}
\ds \LL_{S_{m,\ell}}=
 \sum_{i=0}^{m}\binom mi(tx_0)^i x_{\ell+m-i},
\end{equation}
and in particular $\ds \LL_{S_m}=\sum_{i=0}^{m}\binom mi(tx_0)^i x_{m-i}$.
The case $\ell=0$, $m=3$ is used in Remark~\ref{rem:nonmatroidal-example}.
\end{proposition}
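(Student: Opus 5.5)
The plan is to evaluate the forest expansion of Theorem~\ref{thm:forests-sum} directly; no recursion is needed. Let $c$ be the center of $S_{m,\ell}$ and $v_1,\dots,v_m$ the leaves. Since $S_{m,\ell}$ has no non-loop cycles, its spanning forests are exactly the arbitrary subsets $A$ of the $m$ spokes. For such an $A$ with $|A|=m-i$, the components are the subtree $T_0$ spanned by $c$ and the $m-i$ chosen leaves, together with $i$ isolated leaves.

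Next I determine the external activities. Every omitted spoke joins $T_0$ to a singleton, so it closes no cycle and is never active. Each of the $\ell$ loops is externally active and is assigned to the component containing $c$. Hence $\eps_A(T_0)=\ell$, every singleton has activity $0$, and $\eps(A)=\ell$.

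Now I read off the weight \eqref{eq:forest-weight}. We have $|E(S_{m,\ell})|=m+\ell$ and $|A|=m-i$, so the exponent of $t$ is
\[
(m+\ell)-(m-i)-\ell=i .
\]
The central component contributes $x_{|E(T_0)|+\ell}=x_{m-i+\ell}$, and each isolated leaf contributes $x_0$. So the weight of $A$ is $(tx_0)^i x_{\ell+m-i}$. There are $\binom mi$ choices of $A$ with $|A|=m-i$, and summing over $i$ gives \eqref{eq:star}.

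Setting $\ell=0$ yields the formula for $\LL_{S_m}$. As a sanity check, the case $m=1$, $\ell=0$ gives $x_1+tx_0^2$, which matches Proposition~\ref{prop:multiedge}.

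The only step that needs any care is the assignment of the loops to the component of the center for every $A$, including $A=\varnothing$. Everything else is routine bookkeeping.
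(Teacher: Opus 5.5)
Your proposal is correct and follows the paper's proof essentially verbatim: you take spanning forests to be subsets of the spokes, assign all $\ell$ loops (and no omitted spokes) as active edges of the center component, and read off the weight $t^{i}x_0^{i}x_{\ell+m-i}$ from the forest expansion, with $\binom mi$ forests for each $i$. The $m=1$, $\ell=0$ sanity check against $\LL_{P_2}=x_1+tx_0^2$ is a nice touch.
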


\begin{proof}
A spanning forest is a subset $F$ of the $m$ star edges.  It has one component
containing the center, with $1+|F|$ vertices, and $m-|F|$ isolated leaves.
Each of the $\ell$ loops is a one-edge cycle and is therefore externally
active and is assigned to the center component. A nonforest star edge joins
two distinct components and is not active.  Hence
$\eps_F(\text{center})=\ell$ and the weight is
$t^{(m+\ell)-|F|-\ell}x_{|F|+\ell}x_0^{m-|F|}$.  Setting $i=m-|F|$ and summing
over the $\binom mi$ choices gives~\eqref{eq:star}.
\end{proof}

Alternatively, \eqref{eq:star} can be obtained as the specialization of the
general multistar formula~\eqref{eq:multistar-formula} with $r=1$, $d=m$,
$\lambda_0=\ell$, and $\lambda_i=0$ for every leaf.

\begin{remark}[A nonmatroidal  comparison]\label{rem:nonmatroidal-example}
The path and star formulas give a simple illustration of the genuinely
nonmatroidal information contained in the loopy polynomial.  The two four-vertex trees,
the path $P_4$ and the star $S_{3,0}=K_{1,3}$ have the same Tutte polynomial,
whereas from~\eqref{eq:path4} and~\eqref{eq:star} we have
\[
  \LL_{P_4}=x_3+2t x_0x_2+t x_1^2+3t^2x_0^2x_1+t^3x_0^4 \quad \text{and} \quad \LL_{K_{1,3}}
 =x_3+3t x_0x_2
  +3t^2x_0^2x_1+t^3x_0^4.
\]
Thus $\LL$ distinguishes these two graphs.  This example also reflects the
degree-sequence result of Theorem~\ref{thm:L-degree-sequence}: the two degree
generating polynomials are
\[
 \sum_{v\in V(P_4)} t^{d(v)}=2t+2t^2,
 \qquad
 \sum_{v\in V(K_{1,3})} t^{d(v)}=3t+t^3.
\]
\end{remark}

\section{Vertex deletion and subgraph profiles}\label{sec:vertex-deletion}

The forest expansion singles out the behavior of one of the variables of $\LL_G$.  For a loopless graph the variable $x_0$ marks isolated vertices, so
differentiating with respect to it amounts to deleting a vertex, and iterating gives access to statistics of induced subgraphs.

\subsection{The vertex-deletion identity}\label{subsec:degree-sequence}

For a loopless graph, the variable $x_0$ has a particularly simple meaning in
the forest expansion: a component contributes $x_0$ if and only if it is an
isolated vertex.  Marking such a component
leads to a \emph{vertex-deletion identity}.

\begin{proposition}[Vertex-deletion identity]\label{prop:vertex-deletion}
Let $G$ be a loopless graph.  Then
\begin{equation}\label{eq:vertex-deletion}
   \frac{\partial}{\partial x_0}\LL_G(t,\bx)
   =
   \sum_{v\in V(G)}
   t^{d(v)}\LL_{G-v}(t,\bx),
\end{equation}
where $d(v)$ counts multiple edges with multiplicity.
\end{proposition}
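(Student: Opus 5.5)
We use the forest expansion of Theorem~\ref{thm:forests-sum}, with a fixed linear order on $E(G)$ restricted to each $E(G-v)$. Since $G$ is loopless, every component of a spanning forest carries at least $|E(T)|$ in its index, and loops are the only source of activity on singletons. Hence a component contributes $x_0$ exactly when it is an isolated vertex $\{v\}$ of $F$. Differentiating a monomial $\prod_T x_{|E(T)|+\eps_F(T)}$ in $x_0$ therefore yields the sum, over the singleton components $\{v\}$ of $F$, of the same monomial with that one factor $x_0$ removed. This turns the left side of~\eqref{eq:vertex-deletion} into a sum over pairs $(F,v)$ with $F\in\cF(G)$ and $v$ an isolated vertex of $F$.

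The key bijection is $(F,v)\mapsto(v,F)$, with $F$ now regarded as a spanning forest of $G-v$. It is a bijection onto the pairs with $v\in V(G)$ and $F\in\cF(G-v)$, because a forest of $G$ isolating $v$ uses no edge at $v$. We must then check that the weights match.

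\emph{Components and activity.} The components of $F$ in $G-v$ are those of $F$ in $G$ other than $\{v\}$. An edge $f$ not incident to $v$ has the same fundamental cycle in both graphs, so it has the same activity and the same assignment. An edge incident to $v$ joins the singleton $\{v\}$ to another component, so it is never active; loops at $v$ do not exist. Hence $\eps_G(F)=\eps_{G-v}(F)$ componentwise, and the $\bx$-monomials agree after removing $x_0$.

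\emph{Powers of $t$.} The remaining task, which is the only real bookkeeping, is to compare exponents. We have $|E(G)|-|F|-\eps(F)=d(v)+\bigl(|E(G-v)|-|F|-\eps(F)\bigr)$, since $|E(G)|=|E(G-v)|+d(v)$ with parallel edges counted with multiplicity. This produces exactly the factor $t^{d(v)}$.

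Summing over $v$ and applying Theorem~\ref{thm:forests-sum} to each $G-v$ gives~\eqref{eq:vertex-deletion}. I do not expect a genuine obstacle. The main care point is the looplessness hypothesis, which ensures that $x_0$ marks exactly the isolated vertices: a vertex carrying loops would give $x_{\ell}$ with $\ell>0$, and a nontrivial tree always has positive index.
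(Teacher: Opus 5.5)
Your proposal is correct and follows essentially the same route as the paper: you differentiate the forest expansion, use looplessness to identify $x_0$ with isolated vertices, and send each (forest, marked isolated vertex) pair to a spanning forest of $G-v$ with unchanged activity and an extra factor $t^{d(v)}$. You also handle the multiplicity from several isolated vertices the same way the paper does.
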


\begin{proof}
  Differentiate the forest expansion~\eqref{eq:forests}.
Since $G$ is loopless, a component of a spanning forest contributes $x_0$ precisely when
it consists of a single isolated vertex $v$.  After marking this singleton
component and deleting it, what remains is an arbitrary spanning forest $F'$ of $G-v$.

No edge incident with $v$ is externally active with respect to
$\{v\}\sqcup F'$, since its endpoints lie in different components.
The external activity of every remaining edge is exactly its external activity
in $G-v$.  Passing from $G$ to $G-v$ removes precisely $d(v)$ edges, and therefore
\[
   \wt_G(\{v\}\sqcup F')
   =
   x_0\,t^{d(v)}\wt_{G-v}(F').
\]
Summing over $v$ and $F'$ gives~\eqref{eq:vertex-deletion}.
If a forest has several isolated vertices, it occurs once for each choice of the marked
singleton, exactly as required by differentiation.
\end{proof}

The identity becomes especially useful after one simple specialization.

\begin{lemma}\label{lem:constant-specialization}
For every graph $G$ with $n$ vertices,
\begin{equation}\label{eq:constant-specialization}
   \left.
      \LL_G(t,\bx)
   \right|_{x_0=x_1=\cdots=(1-t)^{-1}}
   =(1-t)^{-n}.
\end{equation}
\end{lemma}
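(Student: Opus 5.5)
I will prove \eqref{eq:constant-specialization} by induction on the number of non-loop edges, using loopy deletion-contraction \eqref{eq:loopy} together with multiplicativity. The idea is to check that the function $G\mapsto(1-t)^{-n(G)}$ satisfies the same recursion and initial conditions as $\LL_G$ after the substitution. Write $s=(1-t)^{-1}$, and let $\Psi_G$ denote $\LL_G$ evaluated at $x_i=s$ for all $i$. This is a well-defined element of the ring of formal power series (or rational functions) in $t$, because $\LL_G$ is a polynomial.

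\emph{Base case.} If $G$ has only loops, then by multiplicativity and \eqref{eq:init}
\[
\Psi_G=\prod_{v}s=s^{n}.
\]
This holds regardless of the loop multiplicities, which is exactly why the substitution makes all $x_i$ equal.

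\emph{Inductive step.} Let $e$ be a non-loop edge. The loopy contraction $G/e$ has $n-1$ vertices, and $G-e$ has $n$ vertices. Both have fewer non-loop edges than $G$, so the induction hypothesis applies to each. Then \eqref{eq:loopy} gives
\[
\Psi_G=\Psi_{G/e}+t\,\Psi_{G-e}=s^{n-1}+t\,s^{n}=s^{n}\bigl((1-t)+t\bigr)=s^{n},
\]
as required.

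The only point needing care is to justify that specializing commutes with the recursion. This is immediate, since specialization is a ring homomorphism and Proposition~\ref{prop:well-def} guarantees that $\LL_G$ is well defined. There is no real obstacle here.

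As a cross-check, the forest expansion \eqref{eq:forests} gives the same answer. It yields
\[
\Psi_G=\sum_F t^{m-|F|-\eps(F)}s^{n-|F|},
\]
and by Corollary~\ref{cor:tutte-all-forests} this equals a Tutte evaluation that simplifies to $s^n$. The recursion argument above is cleaner, however.
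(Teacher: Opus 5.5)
Your argument is correct and is essentially the paper's proof: induction on the number of non-loop edges, with the loop-only base case giving $(1-t)^{-n}$ by multiplicativity and the initial condition, and the inductive step $s^{n-1}+t\,s^{n}=s^{n}$ coming from loopy deletion-contraction. Your forest-expansion cross-check also goes through, since the substitution is the Tutte evaluation at $X=(1-t)^{-1}$, $Y=t^{-1}$, which lies on the hyperbola $(X-1)(Y-1)=1$, where the Tutte polynomial has a closed form that simplifies to $s^n$.
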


\begin{proof}
Work over $\QQ(t)$ and substitute $c=(1-t)^{-1}$.  If $G$ has no non-loop
edges, multiplicativity and the defining initial condition give $c^n$.  If $e$ is a non-loop edge, induction and loopy deletion-contraction give
\[
   c^{n-1}+t c^n
   =c^n\bigl((1-t)+t\bigr)
   =c^n.
\]
\end{proof}
\subsection{The degree sequence}\label{subsec:degree-sequence-thm}

\begin{theorem}\label{thm:L-degree-sequence}
The loopy polynomial determines the degree sequence of every loopless graph.  More precisely, if $G$ has $n$ vertices, then
\begin{equation}\label{eq:degree-generating-function}
   \sum_{v\in V(G)}t^{d(v)}
   =
   (1-t)^{n-1}
   \left.
      \frac{\partial}{\partial x_0}\LL_G(t,\bx)
   \right|_{x_0=x_1=\cdots=(1-t)^{-1}}.
\end{equation}
\end{theorem}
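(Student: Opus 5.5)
The plan is to combine the vertex-deletion identity of Proposition~\ref{prop:vertex-deletion} with the constant specialization of Lemma~\ref{lem:constant-specialization}. Since $G$ is loopless, Proposition~\ref{prop:vertex-deletion} gives the polynomial identity
\[
   \frac{\partial}{\partial x_0}\LL_G(t,\bx)=\sum_{v\in V(G)}t^{d(v)}\LL_{G-v}(t,\bx)
\]
in $\ZZ[t,x_0,x_1,\ldots]$. Both sides are polynomials, so we may substitute $x_0=x_1=\cdots=c:=(1-t)^{-1}$, working in $\QQ(t)$ or in $\ZZ[[t]]$. The substitution commutes with the already-computed derivative, because it is applied after differentiation.

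Each $G-v$ is again a graph, with $n-1$ vertices; it is loopless, though Lemma~\ref{lem:constant-specialization} holds for all graphs anyway. That lemma gives $\LL_{G-v}|_{x_i=c}=(1-t)^{-(n-1)}$. The right-hand side therefore becomes $(1-t)^{-(n-1)}\sum_v t^{d(v)}$, and multiplying by $(1-t)^{n-1}$ yields \eqref{eq:degree-generating-function}. One edge case needs a check: when $n=1$, $G-v$ is the empty graph, whose loopy polynomial is the empty product $1=(1-t)^0$. This is consistent with multiplicativity and with the lemma read at $n=0$.

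Finally, the left-hand side of \eqref{eq:degree-generating-function} is a function of $\LL_G$ alone, since $n$ is read off from $\LL_G$ by Corollary~\ref{th:cor}(5). Hence $\LL_G$ determines the polynomial $\sum_v t^{d(v)}$, whose coefficients give the multiset of degrees.

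There is no serious obstacle here. The only point needing care is that the specialization is a legitimate ring homomorphism from $\ZZ[t,\bx]$ to $\QQ(t)$. Since only finitely many $x_i$ occur in any $\LL_G$, the substitution is an honest evaluation and applies term by term to both sides of the identity.
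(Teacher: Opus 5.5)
Your proposal is correct and follows essentially the same route as the paper: apply Lemma~\ref{lem:constant-specialization} to each $\LL_{G-v}$ in the vertex-deletion identity~\eqref{eq:vertex-deletion}, using that every $G-v$ has $n-1$ vertices. One slip in your last paragraph: it is the right-hand side of~\eqref{eq:degree-generating-function}, not the left, that is manifestly a function of $\LL_G$ once $n$ is read off via Corollary~\ref{th:cor}(5).
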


\begin{proof}
Apply Lemma~\ref{lem:constant-specialization} to every term on the right-hand
side of~\eqref{eq:vertex-deletion}.  Since each $G-v$ has $n-1$ vertices,
\[
   \left.
      \frac{\partial}{\partial x_0}\LL_G(t,\bx)
   \right|_{x_i=(1-t)^{-1}}
   =
   (1-t)^{-(n-1)}
   \sum_{v\in V(G)}t^{d(v)},
\]
giving~\eqref{eq:degree-generating-function}.  The coefficient of $t^d$ in this polynomial
is the number of vertices of degree $d$.
\end{proof}

Theorem~\ref{thm:L-degree-sequence} illustrates a useful feature of the loopy polynomial that is already visible in its forest expansion.
For a loopless graph, $x_0$ marks singleton forest components and no other components.
Differentiation with respect to $x_0$ therefore singles out a vertex, while the power of $t$ records the number of edges incident to it.
The degree generating function is obtained by summing over the remaining forest data through the constant specialization of Lemma~\ref{lem:constant-specialization}.

The loopless hypothesis is essential for this argument.
If a vertex carries loops, its singleton forest component contributes a variable $x_r$
with $r>0$, and the same variable can also arise from a nonsingleton component.
Thus no single variable uniformly marks vertices.
Multiple edges, by contrast, cause no difficulty in the argument.
A refined version of the loopy polynomial will later remove this ambiguity,
see Theorem~\ref{thm:degree-sequence-loops}.

Proposition~\ref{prop:vertex-deletion} is the first case of the higher vertex-deletion identity below.
Marking several singleton components at once recovers considerably more than the degree sequence,
including the complete induced edge count profile.

\subsection{Higher vertex deletion}\label{subsec:induced-edge-profile}

The preceding degree-sequence result is the first case of a hierarchy of
vertex-deletion identities.  Higher derivatives with respect to $x_0$ mark
several singleton components simultaneously and recover considerably more
information than the degree sequence.

Recall that for $S\subseteq V(G)$, $\kp_G(S)$ is the number of edges of $G$
having at least one endpoint in $S$, each edge counted once, equivalently
$\kp_G(S)=m(G)-m(G-S)$.

\begin{proposition}[Higher vertex-deletion identity]
\label{prop:higher-vertex-deletion}
Let $G$ be a loopless graph on $n$ vertices.  For $0\le k\le n$,
\begin{equation}\label{eq:higher-vertex-deletion}
   \frac{\partial^k}{\partial x_0^k}\LL_G(t,\bx)
   =
   k!\sum_{\substack{S\subseteq V(G)\\ |S|=k}}
      t^{\kp_G(S)}\LL_{G-S}(t,\bx).
\end{equation}
\end{proposition}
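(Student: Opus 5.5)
The plan is to argue exactly as in the proof of Proposition~\ref{prop:vertex-deletion}, but marking $k$ singleton components at once. Alternatively, one can iterate Proposition~\ref{prop:vertex-deletion} $k$ times by induction on $k$; I would present the direct forest argument and mention the induction as a check.

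Start from the forest expansion of Theorem~\ref{thm:forests-sum}, $\LL_G=\sum_{F}\wt_G(F)$. Since $G$ is loopless, a component $T$ of $F$ contributes $x_0$ exactly when it is an isolated vertex, because $|E(T)|+\eps_F(T)=0$ forces $|V(T)|=1$ and $\eps_F(T)=0$. Conversely, an isolated vertex has no loops, so its activity is zero. Hence if $F$ has $j$ isolated vertices, $\wt_G(F)$ contains $x_0^j$ times a monomial free of $x_0$. Applying $\partial^k/\partial x_0^k$ multiplies this by $j(j-1)\cdots(j-k+1)=k!\binom{j}{k}$. So the left side equals
\[
k!\sum_F\sum_{S}\wt_G(F)/x_0^{k},
\]
where $S$ ranges over $k$-subsets of the isolated vertices of $F$.

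Next, reindex this double sum by pairs $(S,F')$ with $|S|=k$ and $F'\in\cF(G-S)$, via $F=F'$ viewed in $G$ with the vertices of $S$ added as singletons. This is a bijection: given $(F,S)$, the forest $F$ contains no edge at $S$, so it restricts to a spanning forest of $G-S$, and conversely. The key step is the weight identity
\[
\wt_G(F)=x_0^{k}\,t^{\kp_G(S)}\,\wt_{G-S}(F').
\]
To verify it, observe that every edge meeting $S$ joins a singleton of $S$ to a different component (no loops), so it is not externally active. Every edge of $G-S$ has the same fundamental cycle, and hence the same activity and the same component assignment, in $G$ as in $G-S$; this holds for any fixed edge order restricted to $G-S$. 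Thus $\eps_G(F)=\eps_{G-S}(F')$, $|F|=|F'|$, and $|E(G)|-|E(G-S)|=\kp_G(S)$. This gives the power $t^{\kp_G(S)}$, while the $x$-monomials agree except for the $k$ extra factors $x_0$.

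Summing over $S$ and $F'$ and applying Theorem~\ref{thm:forests-sum} to each $G-S$ yields \eqref{eq:higher-vertex-deletion}. The case $k=0$ is trivial, and for $k=n$ the graph $G-V$ is empty with $\LL=1$ and $\kp_G(V)=m$, consistent with Corollary~\ref{th:cor}(1).

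The only point requiring care is the combinatorial factor $k!\binom{j}{k}$. It confirms that each forest is counted once per choice of marked $k$-set, so ordered markings are not needed. Everything else is routine.
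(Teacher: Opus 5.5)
Your proposal is correct and is essentially the paper's own proof: differentiate the forest expansion, identify each pair $(F,S)$, with $S$ a $k$-set of singleton components of $F$, with a pair $(S,F')$ with $F'\in\cF(G-S)$, verify $\wt_G(F)=x_0^k\,t^{\kp_G(S)}\wt_{G-S}(F')$, and match the factor $k!\binom{j}{k}$ with the $\binom{j}{k}$ admissible choices of $S$. The inductive alternative you mention also works, since $\kp_G(S)+d_{G-S}(v)=\kp_G(S\cup\{v\})$ for $v\notin S$ and each $k$-set arises from exactly $k$ choices of the last vertex.
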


\begin{proof}
Differentiate the forest expansion~\eqref{eq:forests}.  Since $G$ is
loopless, a component of a spanning forest contributes $x_0$ if and only if
it is a singleton vertex.  Thus a term contributing to the $k$th derivative
amounts to a spanning forest together with a choice of $k$ singleton
components.

Fix a set $S\subseteq V(G)$ of $k$ such vertices.  Removing the singleton
components indexed by $S$ gives an arbitrary spanning forest $F'$ of $G-S$.
Conversely, every spanning forest $F'$ of $G-S$ gives the spanning forest $F=F'\sqcup\{\{v\}:v\in S\}$  of $G$.

No edge incident with $S$ is externally active with respect to $F$, since such an edge has its endpoints in distinct components of $F$.
Every edge whose endpoints lie in $V(G)\setminus S$ has exactly the same external activity status
with respect to $F$ as it has with respect to $F'$ in $G-S$. Therefore   $\eps_G(F)=\eps_{G-S}(F'),$
and the componentwise activities of all components inherited from $F'$ are unchanged.  Moreover,
\[
 m(G)-m(G-S)=\kp_G(S)    \quad \text{and} \quad    |F|=|F'|.
\]
The corresponding weights satisfy
\[
   \wt_G(F)
   =
   x_0^k\,t^{\kp_G(S)}\wt_{G-S}(F').
\]
If $F$ has $a$ singleton components, then
\[
   \frac{\partial^k}{\partial x_0^k}x_0^a
   =k!\binom ak x_0^{a-k}.
\]
On the right-hand side of~\eqref{eq:higher-vertex-deletion}, the same forest
is counted once for each of the $\binom ak$ admissible choices of $S$.
This proves the identity.
\end{proof}

\begin{remark}[The bridge to the polyhedral half]\label{rem:kappa-bridge}
The exponent $\kp_G(S)$ in~\eqref{eq:higher-vertex-deletion} is exactly the
submodular rank function whose polymatroid carries the bizonotopal geometry of
Section~\ref{sec:polyhedral}.  The same edge-incidence statistic therefore
appears twice in this paper, and for different reasons: here it is forced on us
by iterated vertex deletion on the loopy side, and in
Section~\ref{sec:polyhedral} it is the rank function
$\ba(S)\le\kp_G(S)$ defining the score polytope $P_G$.  This is the first
point of contact between the combinatorial and the polyhedral halves of the
paper.  The second is Corollary~\ref{cor:polytope-specialization}, where the
component weight $|E(T)|+\eps_F(T)$ of the forest
expansion~\eqref{eq:forests} turns out to be the length of an interval factor
in a polyhedral decomposition of $P_G$.
\end{remark}

Combining Proposition~\ref{prop:higher-vertex-deletion} with the constant
specialization of Lemma~\ref{lem:constant-specialization} gives a generating
function for the values of $\kp_G$ on subsets of fixed cardinality.

\subsection{Induced-edge and \texorpdfstring{$\kp$}{kappa}-profiles}\label{subsec:induced-profiles}

\begin{corollary}\label{cor:kappa-profile}
Let $G$ be a loopless graph on $n$ vertices.  Then, for $0\le k\le n$,
\begin{equation}\label{eq:kappa-profile}
   \sum_{\substack{S\subseteq V(G)\\ |S|=k}}
      t^{\kp_G(S)}
   =
   \frac{(1-t)^{n-k}}{k!}
   \left.
      \frac{\partial^k}{\partial x_0^k}\LL_G(t,\bx)
   \right|_{x_0=x_1=\cdots=(1-t)^{-1}} .
\end{equation}
\end{corollary}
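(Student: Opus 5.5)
The plan is to combine the higher vertex-deletion identity of Proposition~\ref{prop:higher-vertex-deletion} with the constant specialization of Lemma~\ref{lem:constant-specialization}. This is exactly the pattern used for Theorem~\ref{thm:L-degree-sequence}, which is the case $k=1$. Fix $k$ with $0\le k\le n$.

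First, take the identity
\[
   \frac{\partial^k}{\partial x_0^k}\LL_G(t,\bx)
   =k!\sum_{|S|=k}t^{\kp_G(S)}\LL_{G-S}(t,\bx)
\]
of Proposition~\ref{prop:higher-vertex-deletion}. This is an identity of polynomials in $\ZZ[t,x_0,x_1,\ldots]$, so it survives any substitution. Then substitute $x_0=x_1=\cdots=(1-t)^{-1}$ on both sides, working in $\QQ(t)$ (or in $\ZZ[[t]]$).

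Only finitely many $x_i$ occur in either side, so the substitution is a ring homomorphism. It can therefore be applied after differentiation on the left and termwise on the right.

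Each $G-S$ has $n-k$ vertices. Lemma~\ref{lem:constant-specialization} applies to every graph, and in particular to $G-S$, which is loopless. It gives $\LL_{G-S}\mapsto(1-t)^{-(n-k)}$, so the right-hand side becomes
\[
   k!\,(1-t)^{-(n-k)}\sum_{|S|=k}t^{\kp_G(S)}.
\]
Multiplying both sides by $(1-t)^{n-k}/k!$ yields \eqref{eq:kappa-profile}.

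There is no real obstacle here. The only points needing care are the boundary cases. For $k=n$, $G-S$ is the empty graph, and its loopy polynomial is $1$ by multiplicativity, consistent with $(1-t)^0$. For $k=0$, the statement reduces to Lemma~\ref{lem:constant-specialization} itself. One should also note that the loopless hypothesis is needed only to invoke Proposition~\ref{prop:higher-vertex-deletion}.
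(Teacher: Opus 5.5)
Your proposal is correct and is essentially the paper's proof. The paper likewise substitutes the uniform constant specialization of Lemma~\ref{lem:constant-specialization} into the higher vertex-deletion identity \eqref{eq:higher-vertex-deletion}, using that every $G-S$ has $n-k$ vertices. Your remarks on the boundary cases $k=0$ and $k=n$ are correct but not needed.
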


\begin{proof}
  Every graph $G-S$ occurring in~\eqref{eq:higher-vertex-deletion} has $n-k$ vertices,
so Lemma~\ref{lem:constant-specialization} gives
\[
   \left.
      \LL_{G-S}(t,\bx)
   \right|_{x_i=(1-t)^{-1}}
   =(1-t)^{-(n-k)}.
\]
Substitution in~\eqref{eq:higher-vertex-deletion} gives
\eqref{eq:kappa-profile}.
\end{proof}

The crucial point is that the specialization in Lemma~\ref{lem:constant-specialization} is uniform in $S$: every
$\LL_{G-S}$ with $|S|=k$ becomes the same value $(1-t)^{-(n-k)}$.  Thus one never has to disentangle the individual
polynomials $\LL_{G-S}$ occurring in the weighted sum
\eqref{eq:higher-vertex-deletion}. After specialization the sum simply collapses to the left-hand side of~\eqref{eq:kappa-profile}.

Recall that $e_G(S)$ denotes the number of edges of the induced subgraph
$G[S]$, counted with multiplicity.  For $0\le s\le n$ and $j\ge 0$ put
\begin{equation}\label{eq:induced-edge-array}
   N_G(s,j)
   :=
   \#\{S\subseteq V(G): |S|=s,\ e_G(S)=j\}.
\end{equation}
We call the numerical array $\bigl(N_G(s,j)\bigr)_{s,j}$ the
\emph{induced edge count profile} of $G$.  Equivalently, for each $s$ it is
encoded by the polynomial
\begin{equation}\label{eq:induced-edge-polynomial}
   \cE_{G,s}(q)
   :=
   \sum_{\substack{S\subseteq V(G)\\ |S|=s}}q^{e_G(S)}
   =\sum_{j\ge 0}N_G(s,j)q^j.
\end{equation}

\begin{theorem}\label{thm:induced-edge-profile}
The loopy polynomial of a loopless graph determines its
induced edge count profile.  More explicitly, define
\begin{equation}\label{eq:Delta-definition}
   \Delta_{G,k}(t)
   :=
   \frac{(1-t)^{n-k}}{k!}
   \left.
      \frac{\partial^k}{\partial x_0^k}\LL_G(t,\bx)
   \right|_{x_0=x_1=\cdots=(1-t)^{-1}} .
\end{equation}
Then
\begin{equation}\label{eq:induced-edge-from-L}
   \cE_{G,s}(q)
   =
   q^{m(G)}\Delta_{G,n-s}(q^{-1}),
   \qquad 0\le s\le n.
\end{equation}
\end{theorem}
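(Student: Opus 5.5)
By Corollary~\ref{cor:kappa-profile}, $\Delta_{G,k}(t)$ is exactly the $\kp$-profile generating polynomial
\[
   \Delta_{G,k}(t)=\sum_{\substack{S\subseteq V(G)\\ |S|=k}} t^{\kp_G(S)} .
\]
So the plan is to convert $\kp$ on sets of size $k$ into induced edge counts on their complements of size $n-k$. The key combinatorial identity is that an edge fails to be counted by $\kp_G(S)$ exactly when both of its endpoints lie in $V(G)\setminus S$. Hence
\[
   \kp_G(S)=m(G)-m(G-S)=m(G)-e_G\bigl(V(G)\setminus S\bigr),
\]
which is already recorded in the paper as $\kp_G(S)=m(G)-m(G-S)$, since $G-S=G[V\setminus S]$. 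Multiple edges are counted with multiplicity on both sides, and the graph is loopless, so no special treatment is needed.

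Next, I would reindex the sum by $S':=V(G)\setminus S$. This is a bijection between $k$-subsets and $s$-subsets with $s=n-k$. It gives
\[
   \Delta_{G,n-s}(t)=\sum_{|S'|=s} t^{\,m(G)-e_G(S')} .
\]
Substituting $t=q^{-1}$ and multiplying by $q^{m(G)}$ yields $\sum_{|S'|=s}q^{e_G(S')}=\cE_{G,s}(q)$, which is~\eqref{eq:induced-edge-from-L}.

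Finally, I would note why this shows that $\LL_G$ determines the profile. The quantity $m(G)$ is determined by $\LL_G$ (Corollary~\ref{th:cor}(4) or Proposition~\ref{prop:extremal-terms}), and $n$ is determined as well. Each $\Delta_{G,k}$ is computed from $\LL_G$ by differentiation and specialization, so every $N_G(s,j)$ is read off as a coefficient.

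There is no real obstacle here. The only point requiring care is to check that the specialization in~\eqref{eq:Delta-definition} is legitimate as a polynomial identity. Since Corollary~\ref{cor:kappa-profile} already shows that $\Delta_{G,k}$ is a polynomial in $t$ of degree at most $m(G)$, the substitution $t=q^{-1}$ followed by multiplication by $q^{m(G)}$ produces a genuine polynomial in $q$.
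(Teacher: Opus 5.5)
Your proposal is correct and follows essentially the same route as the paper: invoke Corollary~\ref{cor:kappa-profile} to write $\Delta_{G,n-s}(t)$ as a sum of $t^{\kp_G(R)}$ over $(n-s)$-subsets $R$, use $\kp_G(R)=m(G)-e_G(V(G)\setminus R)$ to pass to complements, and then substitute $t=q^{-1}$ and multiply by $q^{m(G)}$. Your added remarks that $m(G)$ is determined by $\LL_G$ and that the result is a genuine polynomial in $q$ are harmless supplements to the same argument.
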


\begin{proof}
By Corollary~\ref{cor:kappa-profile},
\[
   \Delta_{G,n-s}(t)
   =
   \sum_{\substack{R\subseteq V(G)\\ |R|=n-s}}
      t^{\kp_G(R)}.
\]
Put $S=V(G)\setminus R$.  Then $|S|=s$, and an edge is not incident with
$R$ precisely when both of its endpoints lie in $S$.  Hence
\[
   \kp_G(R)=m(G)-e_G(S).
\]
Therefore
\[
   \Delta_{G,n-s}(t)
   =
   \sum_{|S|=s}t^{m(G)-e_G(S)}.
\]
Replacing $t$ by $q^{-1}$ and multiplying by $q^{m(G)}$ gives
\eqref{eq:induced-edge-from-L}.
\end{proof}

Several familiar graph invariants are immediate consequences.

\begin{corollary}\label{cor:independence-clique}
Let $G$ be a loopless graph.
\begin{enumerate}[(1)]
\item The loopy polynomial determines the independence polynomial
  \begin{equation}
    \label{eq:independence}
   I_G(z)=\sum_{s=0}^{n} i_s(G)z^s,
 \end{equation}
where $i_s(G)$ is the number of independent sets of cardinality $s$.  Indeed,
\begin{equation}\label{eq:independence-from-edge-profile}
   i_s(G)=[q^0]\cE_{G,s}(q).
\end{equation}

\item If $G$ is simple, then $\LL_G$ determines the clique polynomial.  The
number $c_s(G)$ of $s$-vertex cliques is
\begin{equation}\label{eq:cliques-from-edge-profile}
   c_s(G)
   =[q^{\binom{s}{2}}]\cE_{G,s}(q).
\end{equation}

\item If $G$ is simple, then $\LL_G$ determines the induced edge count
profile of the complement.  More precisely,
\begin{equation}\label{eq:complement-edge-profile}
   \cE_{\overline G,s}(q)
   =
   q^{\binom{s}{2}}\cE_{G,s}(q^{-1}).
\end{equation}
\end{enumerate}
\end{corollary}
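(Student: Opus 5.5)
All three parts will be read off from Theorem~\ref{thm:induced-edge-profile}. That theorem shows the whole array $\bigl(N_G(s,j)\bigr)_{s,j}$, equivalently every polynomial $\cE_{G,s}(q)$ for $0\le s\le n$, is an explicit function of $\LL_G$. The quantities $n$ and $m(G)$ entering~\eqref{eq:induced-edge-from-L} are themselves determined by $\LL_G$ (Corollary~\ref{th:cor}(4)--(5)). So it suffices to express each invariant in the statement through these polynomials.

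For (1), a set $S$ is independent exactly when the induced subgraph $G[S]$ has no edges. Because $G$ is loopless, this is the same as $e_G(S)=0$, and a single vertex is never excluded by a loop. Hence $i_s(G)=N_G(s,0)=[q^0]\cE_{G,s}(q)$, which is~\eqref{eq:independence-from-edge-profile}, and summing over $s$ recovers $I_G(z)$.

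For (2), simplicity of $G$ gives $e_G(S)\le\binom{s}{2}$, with equality if and only if every pair in $S$ is joined by exactly one edge, that is, $S$ is a clique. Therefore $c_s(G)=N_G(s,\binom s2)$, which is~\eqref{eq:cliques-from-edge-profile}. Without simplicity this fails, since parallel edges can reach $\binom s2$ without $S$ being a clique. This is the only point where the hypothesis is needed.

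For (3), again for simple $G$, each $s$-subset $S$ satisfies $e_{\overline G}(S)=\binom s2-e_G(S)$. Summing $q^{e_{\overline G}(S)}=q^{\binom s2}(q^{-1})^{e_G(S)}$ over $|S|=s$ gives~\eqref{eq:complement-edge-profile}.

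I expect no step to be a genuine obstacle. The only care needed is bookkeeping: confirming that $n$ and $m$ are recoverable from $\LL_G$, so the formulas depend only on $\LL_G$, and making the loopless and simple hypotheses explicit in (1) and (2)--(3), respectively.
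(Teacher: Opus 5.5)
Your proposal is correct and follows the paper's proof essentially verbatim. It reads each part off the induced edge count profile of Theorem~\ref{thm:induced-edge-profile} using the characterizations $e_G(S)=0$, $e_G(S)=\binom{s}{2}$ (for simple $G$), and $e_{\overline G}(S)=\binom{s}{2}-e_G(S)$. Your extra check that $n$ and $m$ are recoverable from $\LL_G$ is correct and only makes explicit what the paper leaves implicit.
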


\begin{proof}
A set $S$ is independent precisely when $e_G(S)=0$, which proves~(1).
For a simple graph, an $s$-vertex set induces $\binom{s}{2}$ edges precisely
when it is a clique, proving~(2).  Finally, (3) follows, since
$   e_{\overline G}(S)=\binom{s}{2}-e_G(S) $
for every $s$-vertex set $S$.
\end{proof}

Part~(3) gives a partial answer to Problem~\ref{prob:complement}: the loopy polynomial
determines the complete induced edge count profile of the
complement, even though it is not known whether it determines $\LL_{\overline G}$ itself.

It is useful to compare these conclusions with the corresponding information
in the $U$-polynomial, the polychromate, and the Tutte symmetric
function.  The independence polynomial $I_G$~\eqref{eq:independence} is already known to be determined by
$U_G$.  Noble and Welsh~\cite[Theorem~5.2]{NW} (see also~\cite[Theorem~26.37]{Nob}) proved that the stability polynomial
\[
  A_G(p)  =
  \sum_{\substack{S\subseteq V(G)\\S\text{ independent}}}
  p^{|S|}(1-p)^{n-|S|} =(1-p)^n I_G\!\left(\frac{p}{1-p}\right),
\]
the probabilistic normalization of $I_G$, is the specialization
\[
  A_G(p)  =
  \left.
    U_G(\bz,y)\right|_{y=0,\ z_1=1,\ z_j=-(-p)^j\ (j\ge 2)}.
  \]

Combining this with the fact that every monomial of
$U_G$ has weighted $z$-degree $n$, we obtain the direct specialization
\begin{equation}\label{eq:independence-from-U}
 I_G(z) = \left.   U_G(\bz,y) \right|_{y=0,\ z_1=1+z,\ z_j=(-1)^{j+1}z^j\ (j\ge 2)}.
\end{equation}
For simple graphs the clique numbers are likewise directly visible in
$U_G$, see~\cite[Proposition~26.38]{Nob}.

The comparison with the polychromate is sharper.  For $2\le s\le n$,
\begin{equation}\label{eq:edge-profile-from-polychromate}
   [q_s q_1^{\,n-s}]\,\eta_G(\bq,y)
   =
   \cE_{G,s}(y).
\end{equation}
Indeed, a partition of type $(s,1^{n-s})$ with $s\ge 2$ is uniquely determined
by its $s$-element block, and, because $G$ is loopless, its internal edges are
exactly the edges induced by that block.  Thus Theorem~\ref{thm:induced-edge-profile}
shows that $\LL_G$ directly recovers the entire family of polychromate
coefficients of partition type $(s,1^{n-s})$, not merely the coefficient of
type $(n-1,1)$ used above to recover the degree sequence.  By the equivalence
$\eta_G\longleftrightarrow U_G$ of Sarmiento~\cite{Sar} (and its extension in
Merino--Noble~\cite{MN}), this also gives another route from $U_G$ to the
full induced edge count profile.

The range in~\eqref{eq:edge-profile-from-polychromate} is genuinely
$2\le s\le n$.  At $s=1$ the indicated monomial becomes $q_1^n$, whose
coefficient in $\eta_G$ is $1$, rather than $\cE_{G,1}(y)=n$.
The entry $s=0$ is also not represented by such a partition monomial.
Of course these two profile entries are already fixed by
$n$, namely $\cE_{G,0}(y)=1$ and $\cE_{G,1}(y)=n$.

By contrast, the analogous formula for Stanley's Tutte symmetric function
does hold already at $s=1$.  In the normalization of
\eqref{eq:XB-coloring},
\begin{equation}\label{eq:edge-profile-from-XB}
   \cE_{G,s}(y)
   =
   \frac{1}{(n-s)!}
   [m_{(s,1^{n-s})}]\,\XB_G(y-1;\bw),
   \qquad 1\le s\le n.
\end{equation}
For $s\ge 2$, the repeated color singles out the $s$-vertex color class, the remaining colors may be assigned in $(n-s)!$ ways,
and the monochromatic edges are precisely the edges induced by that class.
At $s=1$ one checks the formula separately:
\[
   [m_{(1^n)}]\XB_G=n!,
   \qquad
   \frac{n!}{(n-1)!}=n=\cE_{G,1}(y).
\]
At $y=0$, formula~\eqref{eq:edge-profile-from-XB} reduces to the familiar
recovery of independent sets from $X_G=\XB_G(-1;\bw)$.

Thus the full part of partition type $(s,1^{n-s})$ that is transparent for
$\eta_G$ and $\XB_G$ is recovered from $\LL_G$ by higher vertex deletion.

\section{Relations with other graph polynomials}\label{sec:relations}

The spanning-forest expansion of  Theorem~\ref{thm:forests-sum} places the
loopy polynomial in a classical circle of ideas going back to Tutte's 1947 paper~\cite{Tu1}.
In this section we make the connection precise, comparing $\LL_G$ in turn with Tutte's universal $V$-function, the $U$-polynomial
of Noble and Welsh, and Brylawski's polychromate.  What the comparisons have in common is that each of these invariants is a different compression of
the same componentwise statistic $\bigl(|V(T)|,\eps_F(T)\bigr)$.  The object
that keeps both coordinates is the refined loopy polynomial of Section~\ref{sec:refined}.

\subsection{Tutte's universal \texorpdfstring{$V$}{V}-function}\label{subsec:universal-V}

We first recall the form of Tutte's universal $V$-function that is most
convenient for comparison with the loopy polynomial.
Let $L_r$ denote, as before, the graph consisting of one vertex
with $r$ loops.  A $V$-function with values in a commutative ring $R$ is a multiplicative graph invariant
$V$ satisfying
\begin{equation}\label{eq:V-deletion-contraction}
   V_G=V_{G-e}+V_{G_e}
\end{equation}
for every non-loop edge $e$, where $G_e$ denotes ordinary contraction.
Tutte proved that the values of such a function on the graphs $L_r$ may be
prescribed arbitrarily and determine the function uniquely~\cite{Tu1}.
Equivalently, there is a universal polynomial
  $ \cV_G(\bv)=\cV_G(v_0,v_1,\ldots)$
characterized by~\eqref{eq:V-deletion-contraction}, multiplicativity, and the initial conditions
  $\cV_{L_r}(\bv)=v_r.$

Bollob\'as, Pebody and Riordan~\cite{BPR} later strengthened this universality
statement: for connected graphs it is enough to impose
deletion-contraction only on edges that are neither bridges nor loops.  The
resulting invariant still factors through Tutte's universal $V$-function.

Tutte expressed the universal $V$-function as a sum over spanning
subgraphs.  It will be useful to distinguish this set of coordinates from the
terminal-value coordinates $v_r$.  Put
\begin{equation}\label{eq:Tutte-Z}
   Z_G(\bz)
   :=
   \sum_{A\subseteq E(G)}
   \prod_{D\in c(A)} z_{\nu(D)},
   \qquad
   \nu(D):=|E(D)|-|V(D)|+1.
\end{equation}
Thus each component of a spanning subgraph contributes a variable indexed by
its cyclomatic number.

The relation between the two coordinate systems is a binomial transform.
Indeed,
\begin{equation}\label{eq:V-Z-binomial}
   Z_G(\bz)
   =
   \left.
      \cV_G(\bv)
   \right|_{
      v_r=\sum_{j=0}^r\binom{r}{j}z_j
   },
\end{equation}
and hence, by binomial inversion,
\begin{equation}\label{eq:Z-V-binomial}
   \cV_G(\bv)
   =
   \left.
      Z_G(\bz)
   \right|_{
      z_r=\sum_{j=0}^r(-1)^{r-j}\binom{r}{j}v_j
   }.
\end{equation}
This change of coordinates already appears in Tutte's construction of the universal graph ring in~\cite{Tu1}.

The activity form of the universal $V$-function was made explicit by Wang
and Sachs~\cite{WS}.  An equivalent expansion was later obtained by
Bollob\'as, Pebody and Riordan~\cite{BPR}.

\begin{theorem}[Wang--Sachs~\cite{WS}, Bollob\'as--Pebody--Riordan~\cite{BPR}]
\label{thm:V-forest}
For every ordering of $E(G)$,
\begin{equation}\label{eq:V-forest}
   \cV_G(\bv)
   =
   \sum_{F\in\cF(G)}
   \prod_{T\in c(F)}v_{\eps_F(T)}.
\end{equation}
In particular, the right-hand side is independent of the edge order.
\end{theorem}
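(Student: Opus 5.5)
The plan is to repeat the proof of Theorem~\ref{thm:forests-sum}, with ordinary contraction in place of loopy contraction. Let $\Psi_G$ denote the right-hand side of~\eqref{eq:V-forest}. We show that $\Psi_G$ is multiplicative, satisfies~\eqref{eq:V-deletion-contraction}, and takes the value $v_r$ on $L_r$. Tutte's uniqueness theorem~\cite{Tu1} then gives $\Psi_G=\cV_G$. Since $\cV_G$ does not depend on any ordering, this also shows that the right-hand side is independent of the edge order.

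The base case and multiplicativity are immediate. On $L_r$ the only spanning forest is empty, every loop is externally active and is assigned to the single vertex, so $\Psi_{L_r}=v_r$. For a graph all of whose edges are loops, $\Psi_G=\prod_v v_{\ell_G(v)}$. Spanning forests of a disjoint union are pairs of spanning forests of the two parts, and activities and assignments are computed separately in each part, so $\Psi_{G_1\sqcup G_2}=\Psi_{G_1}\Psi_{G_2}$.

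For the recursion, fix the order and let $e$ be the largest non-loop edge. We split $\cF(G)$ according to whether $e\in F$.

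If $e\notin F$, then $e$ is never externally active. Either its endpoints lie in different components, or it is not the least edge of its fundamental cycle, because that cycle contains a smaller edge of $F$. All other activities and their assignments agree in $G$ and $G-e$, so these forests contribute $\Psi_{G-e}$.

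If $e\in F$, the map $F\mapsto F\setminus\{e\}$ is a bijection onto $\cF(G_e)$, where $G_e$ is the ordinary contraction. For $f\notin F$, its fundamental cycle in $G_e$ is the cycle in $G$ with $e$ removed. Since $e$ is maximal, $f$ is least in one cycle iff it is least in the other. An edge parallel to $e$ becomes a loop in $G_e$; it is smaller than $e$, so it is active both before and after contraction. Component assignments correspond under the bijection. The one difference from the loopy case is that $e$ itself disappears instead of becoming an active loop. Hence the component containing $e$ keeps the same $\eps$, and the weight $\prod_T v_{\eps_F(T)}$, which ignores component sizes, is unchanged. These forests therefore contribute $\Psi_{G_e}$, giving $\Psi_G=\Psi_{G-e}+\Psi_{G_e}$.

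The only delicate point is well-definedness. The recursion has been verified only for the largest non-loop edge of a fixed order. Tutte's uniqueness argument, however, allows deletion–contraction on any non-loop edge.

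To handle this, we take as the inductive statement that $\Psi_G=\cV_G$ for every ordering of $E(G)$, and induct on the number of non-loop edges. Given $G$ and an ordering, we apply the recursion at the largest non-loop edge $e$ of that ordering. This expresses $\Psi_G$ as $\Psi_{G-e}+\Psi_{G_e}$, each computed with the induced order. Both graphs have fewer non-loop edges, so by induction each equals $\cV$. Since $\cV_G=\cV_{G-e}+\cV_{G_e}$ for any non-loop edge $e$, we get $\Psi_G=\cV_G$.

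Alternatively, one can group the subsets in~\eqref{eq:Tutte-Z} by the intervals of Lemma~\ref{lem:activity-intervals}. Each component then contributes $\sum_j\binom{\eps_F(T)}{j}z_j$, and~\eqref{eq:Z-V-binomial} gives the same identity directly.
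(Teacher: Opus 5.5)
The paper gives no proof of this statement. It is quoted from Wang--Sachs and Bollob\'as--Pebody--Riordan and then used as an input, in Proposition~\ref{prop:loopy-V} and in~\eqref{eq:refined-to-V}, so your argument supplies a proof the paper omits.

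Your proposal is correct. The main route is the ordinary-contraction analogue of the proof of Theorem~\ref{thm:forests-sum}. The one change you identify is that $e$ disappears in $G_e$ instead of surviving as an active loop, so no component index shifts. This is the same phenomenon as the isomorphism $G^\circ_e\cong(G/e)^\circ$ noted after Proposition~\ref{prop:loopy-V}.

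The paper closes its loopy argument with its own well-definedness result, Proposition~\ref{prop:well-def}. You instead induct on the number of non-loop edges over all orderings. You use only the existence of $\cV_G$ satisfying~\eqref{eq:V-deletion-contraction} at every non-loop edge. This is exactly what is needed, because the recursion for $\Psi_G$ has been checked only at the largest non-loop edge. It also makes your multiplicativity check superfluous, since the loop-only base case is computed directly.

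Your alternative route is the one the paper's own tools suggest. It is the proof of Theorem~\ref{thm:refined-extended-U} with the size index forgotten, as the paper remarks after that theorem. It gives order-independence for free, because $Z_G$ is defined without an order, and it needs no induction. However, it rests on the binomial relation~\eqref{eq:V-Z-binomial}, which the paper only attributes to Tutte. To make it self-contained, you should add three observations:
\begin{enumerate}
\item $Z_G$ satisfies~\eqref{eq:V-deletion-contraction}, because contracting a non-loop edge of a component lowers its order and size by one and so preserves $\nu$.
\item $Z_{L_r}=\sum_j\binom rj z_j$.
\item The substitution is unitriangular, so equality after substitution gives equality before.
\end{enumerate}
With either argument, the paper could have derived Theorem~\ref{thm:V-forest} internally rather than citing it.
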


Theorem~\ref{thm:V-forest} is the classical precursor of
Theorem~\ref{thm:forests-sum}.  It already distributes external
activity among the individual components of an arbitrary spanning forest,
rather than recording only the total activity of a maximal spanning forest.
The loopy polynomial arises from the same expansion after a simple graph
operation that inserts the missing component-size information.

\subsection{The loopy polynomial as a shifted \texorpdfstring{$V$}{V}-function}\label{subsec:loopy-V}

For a graph $G$, let $G^\circ$ be obtained by adding a new loop at every
vertex of $G$.  We call these the \emph{baseline loops}.  The following
identity gives a direct relation between the loopy polynomial and Tutte's
universal $V$-function.

\begin{proposition}\label{prop:loopy-V}
Let $m=|E(G)|$.  Then
\begin{equation}\label{eq:loopy-V-t1}
   \LL_G(1,\bx)
   =
   \left.
      \cV_{G^\circ}(\bv)
   \right|_{v_r=x_{r-1}\ (r\ge 1)}.
\end{equation}
More generally,
\begin{equation}\label{eq:loopy-V}
   \LL_G(t,\bx)
   =
   t^m
   \left.
      \cV_{G^\circ}(\bv)
   \right|_{v_r=t^{1-r}x_{r-1}\ (r\ge 1)}.
\end{equation}
\end{proposition}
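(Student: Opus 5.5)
The plan is to compare the two forest expansions term by term: Theorem~\ref{thm:forests-sum} for $\LL_G$ and Theorem~\ref{thm:V-forest} (Wang--Sachs) for $\cV_{G^\circ}$. Both are sums over spanning forests, and loops are never forest edges, so $\cF(G^\circ)=\cF(G)$ canonically. Fix a linear order on $E(G)$ and extend it to $E(G^\circ)$ in any way, for instance by placing the baseline loops last. The order does not matter, because a loop is externally active relative to every forest no matter where it sits in the order.

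For a fixed forest $F$ and a component $T\in c(F)$, we compute $\eps^{G^\circ}_F(T)$. The non-loop edges of $G^\circ$ are those of $G$, and their fundamental cycles and relative order are unchanged, so their activity status and component assignment agree with those in $G$. The loops of $G$ are active in both graphs. The only new contributions are the $|V(T)|$ baseline loops at the vertices of $T$, each active and assigned to $T$. Hence
\[
\eps^{G^\circ}_F(T)=\eps_F(T)+|V(T)|=\eps_F(T)+|E(T)|+1.
\]
Therefore the Wang--Sachs term of $F$ is $\prod_T v_{|E(T)|+\eps_F(T)+1}$. Under $v_r=x_{r-1}$ this becomes $\prod_T x_{|E(T)|+\eps_F(T)}$, which is exactly the $t=1$ specialization of $\wt_G(F)$. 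Summing over $F$ gives \eqref{eq:loopy-V-t1}.

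For the graded version, the substitution $v_r=t^{1-r}x_{r-1}$ turns the term of $F$ into $\prod_T t^{-|E(T)|-\eps_F(T)}x_{|E(T)|+\eps_F(T)}=t^{-|F|-\eps(F)}\prod_T x_{|E(T)|+\eps_F(T)}$. Multiplying by $t^m$ recovers $\wt_G(F)$, which gives \eqref{eq:loopy-V}. Alternatively, one can deduce \eqref{eq:loopy-V} from \eqref{eq:loopy-V-t1} together with the scaling identity \eqref{eq:scaling-L}.

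The only delicate point is the bookkeeping of the baseline loops: one must check that they add exactly $|V(T)|$ to each component's activity and leave all other activities unchanged. This holds because loops lie on no fundamental cycle of a non-loop edge.
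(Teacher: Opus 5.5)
Your proposal is correct and follows the paper's proof: identify $\cF(G^\circ)=\cF(G)$, observe that the baseline loops raise each component's activity by exactly $|V(T)|$, apply the Wang--Sachs expansion, and track the power of $t$ under $v_r=t^{1-r}x_{r-1}$. Your alternative derivation of \eqref{eq:loopy-V} from \eqref{eq:loopy-V-t1} via the scaling identity \eqref{eq:scaling-L} is also valid.
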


\begin{proof}
The spanning forests of $G^\circ$ are exactly the spanning forests of $G$.
Every loop is externally active, and the baseline loop at a vertex of
a component $T$ is assigned to $T$.  Hence the number of externally active
edges assigned to $T$, when the forest is regarded as a forest of $G^\circ$, is
\[
   |V(T)|+\eps_F(T).
\]
Applying Theorem~\ref{thm:V-forest} therefore gives
\[
   \cV_{G^\circ}(\bv)
   =
   \sum_{F\in\cF(G)}
   \prod_{T\in c(F)}
   v_{|V(T)|+\eps_F(T)}.
\]
The substitution $v_r=x_{r-1}$ gives~\eqref{eq:loopy-V-t1}.
For the second identity, substitute instead
$v_r=t^{1-r}x_{r-1}$.  The resulting power of
$t$, before multiplication by $t^m$, is
\[
   \sum_{T\in c(F)}
      \bigl(1-|V(T)|-\eps_F(T)\bigr)
   =|c(F)|-|V(G)|-\eps(F)
   =-|F|-\eps(F).
\]
Thus~\eqref{eq:loopy-V} is exactly the forest expansion
\eqref{eq:forests}.
\end{proof}

There is also a recursive way to see~\eqref{eq:loopy-V-t1}.  If $e=uv$ is
a non-loop edge of $G$, ordinary contraction of $e$ in $G^\circ$ leaves the
two baseline loops at $u$ and $v$ as two loops at the contracted vertex.  On
the other hand, loopy contraction of $e$ in $G$ leaves $e$ itself as a loop,
and passing to $(G/e)^\circ$ adds one baseline loop at the new vertex.  Thus
\[
   G^\circ_e\cong (G/e)^\circ.
\]
In this sense the baseline loops convert ordinary contraction into loopy contraction.

Proposition~\ref{prop:loopy-V} identifies the ungraded loopy polynomial with a classical invariant evaluated on a simple modification of the graph.

What is special about the loopy polynomial is therefore not the existence
of a deletion-contraction invariant, but the particular way in which component size and activity are combined.
In Section~\ref{sec:refined} we return to this point and introduce a refinement that keeps these two
quantities separate.

\subsection{The  \texorpdfstring{$U$}{U}-polynomial}
\label{subsec:ordinary-U-polychromate}

The ordinary $U$-polynomial of Noble and Welsh~\cite{NW,Nob} is
\begin{equation}\label{eq:ordinary-U}
   U_G(\bz,y)
   =
   \sum_{A\subseteq E(G)}
   \left(\prod_{D\in c(A)}z_{|V(D)|}\right)
   (y-1)^{\sum_{D\in c(A)}\nu(D)}.
\end{equation}
The activity intervals of Lemma~\ref{lem:activity-intervals} put it into
spanning-forest form.

\begin{proposition}\label{prop:U-forest}
For every graph $G$,
\begin{equation}\label{eq:U-forest}
   U_G(\bz,y)
   =
   \sum_{F\in\cF(G)}
   y^{\eps(F)}
   \prod_{T\in c(F)}z_{|V(T)|}.
\end{equation}
\end{proposition}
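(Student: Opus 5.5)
The plan is to imitate the proof of Corollary~\ref{cor:tutte-all-forests}: fix an edge order and group the summands of~\eqref{eq:ordinary-U} according to the decomposition $2^{E(G)}=\bigsqcup_{F\in\cF(G)}[F,F\cup\EA(F)]$ of Lemma~\ref{lem:activity-intervals}. Every subset $A\subseteq E(G)$ is then written uniquely as $A=F\cup S$, with $F$ a spanning forest and $S\subseteq\EA(F)$. It therefore suffices to show that the total contribution of each interval is $y^{\eps(F)}\prod_{T\in c(F)}z_{|V(T)|}$.

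The first step is to compare the components of $A=F\cup S$ with those of $F$. Each externally active edge, whether a loop or a non-loop edge closing a cycle in $F$, has both endpoints in a single component of $F$. Adding the edges of $S$ therefore merges nothing. The components $D\in c(A)$ correspond bijectively to the trees $T\in c(F)$, with $V(D)=V(T)$. This gives
\[
   \prod_{D\in c(A)}z_{|V(D)|}=\prod_{T\in c(F)}z_{|V(T)|},
\]
which does not depend on $S$.

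The second step computes the cyclomatic exponent. For the component $D$ corresponding to $T$, let $S_T$ be the set of edges of $S$ assigned to $T$. Then $|E(D)|=|V(T)|-1+|S_T|$, so $\nu(D)=|S_T|$, and summing over components gives $\sum_{D\in c(A)}\nu(D)=|S|$. The contribution of the interval is therefore
\[
   \prod_{T\in c(F)}z_{|V(T)|}\sum_{S\subseteq\EA(F)}(y-1)^{|S|}
   =y^{|\EA(F)|}\prod_{T\in c(F)}z_{|V(T)|},
\]
by the binomial theorem. Since $|\EA(F)|=\eps(F)$, summing over $F$ yields~\eqref{eq:U-forest}.

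No step is a genuine obstacle, because the partition lemma does all the work. The only points that need care are that loops are handled uniformly (a loop is always externally active and contributes $1$ to $\nu$ of its component), and that every edge counted in $\nu(D)$ comes from $S$. The latter holds because the edges of $D$ are exactly the tree edges of $T$ together with the chosen edges of $S$ assigned to $T$.
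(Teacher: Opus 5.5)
Your proof is correct and is essentially the paper's own argument. Both group the subset sum~\eqref{eq:ordinary-U} by the activity intervals of Lemma~\ref{lem:activity-intervals}, observe that adding active edges leaves the component vertex sets unchanged while raising the total cyclomatic number by exactly $|S|$, and then sum $(y-1)^{|S|}$ over each interval to obtain $y^{\eps(F)}$.
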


\begin{proof}
Fix an edge order and apply Lemma~\ref{lem:activity-intervals}, which writes
$2^{E(G)}$ as the disjoint union of the intervals
$[F,F\cup\EA(F)]$ over spanning forests $F$.  Let
$A=F\cup J$ with $J\subseteq\EA(F)$.  Every edge of $J$ has both
endpoints in a single component of $F$, so $(V(G),A)$ and $F$ have the same
components. In particular
\[
  \prod_{D\in c(A)}z_{|V(D)|}=\prod_{T\in c(F)}z_{|V(T)|} \qquad \text{and} \qquad  \sum_{D\in c(A)}\nu(D)=|A|-|F|=|J| .
\]
Summing~\eqref{eq:ordinary-U} over the interval therefore gives
\[
   \Bigl(\prod_{T\in c(F)}z_{|V(T)|}\Bigr)
   \sum_{J\subseteq\EA(F)}(y-1)^{|J|}
   =
   y^{\eps(F)}\prod_{T\in c(F)}z_{|V(T)|},
\]
since $\eps(F)=|\EA(F)|$.  Summing over $F$
gives~\eqref{eq:U-forest}.
\end{proof}

Formula~\eqref{eq:U-forest} is the spanning-forest activity expansion of Noble and Welsh~\cite{NW},
written with our activity convention, see also~\cite{Nob}.
Compared with the forest expansion~\eqref{eq:forests} of $\LL_G$, it shows how differently the two invariants record activity.
We return to this in Section~\ref{subsec:refined-extended}, where both appear as specializations of a single object.

\subsection{The polychromate}\label{subsec:polychromate}

There is another classical incarnation of the $U$-polynomial that is important for this comparison.
Brylawski introduced the \emph{polychromate} in his intersection theory for graphs~\cite{Bry}.  We write it as
\begin{equation}\label{eq:polychromate}
   \eta_G(\bq,y)
   :=
   \sum_{\pi\vdash V(G)}
   y^{e_G(\pi)}
   \prod_{B\in\pi}q_{|B|},
\end{equation}
where the sum is over all set partitions of $V(G)$ and $e_G(\pi)$ is the
number of edges of $G$ whose two endpoints lie in the same block of $\pi$.
Edges are counted with multiplicity, and a loop is internal to every block
containing its vertex.  Thus, unlike~\eqref{eq:ordinary-U}, which is organized
by spanning subgraphs and their connected components, the polychromate is
organized by partitions of the vertex set and records the sizes of their
blocks together with the number of edges internal to the blocks.

Sarmiento~\cite{Sar} proved that the polychromate and the $U$-polynomial determine one another.
Her paper is stated for simple graphs.  Merino and Noble~\cite{MN}
subsequently recovered the equivalence in a framework that
allows loops and multiple edges, see also~\cite{Nob}.  The equivalence is quite nontrivial: the
change of coordinates is triangular with respect to refinement of set
partitions, and the coefficients count coarsenings of a fixed partition type.
In the Merino--Noble formulation it may also be understood as a change
between two natural bases of the corresponding Tutte symmetric function.

Both sides of this equivalence admit natural refinements in which the
component or block data are retained separately.  These refinements will
be discussed in Section~\ref{sec:refined}.  In particular, the extended
polychromate of Bollob\'as and Riordan~\cite{BR}, called the strong
polychromate in their terminology, will provide the partition-side
description of the refined loopy polynomial, see
Section~\ref{subsec:block-statistics}.

The two ordinary descriptions are complementary: the $U$-side is adapted
to spanning subgraphs, whereas the polychromate is adapted to vertex
partitions.  Some graph-theoretic information that is not at all
transparent in the spanning-subgraph expression for $U$ becomes immediate
on the polychromate side.

A basic example is the degree sequence.  If $G$ is loopless and $n\ge 3$,
the partitions $\{\{v\},V(G)\setminus\{v\}\}$ contribute
$y^{m-d(v)}q_1q_{n-1}$, so the corresponding polychromate coefficient
recovers the degree multiplicities.  The cases $n\le2$ are immediate.
Hence $\eta_G$, and therefore $U_G$, determines the degree sequence, see
also~\cite[Proposition~26.56]{Nob}.  On the loopy side the same information came out of the vertex-deletion
identity of Section~\ref{sec:vertex-deletion}, by an entirely different
route.  The agreement of two such different mechanisms is one reason for
expecting the two invariants to be closely related.

\subsection{Chromatic symmetric functions, forests, and matchings}\label{subsec:X-and-matchings}

We finish with several useful consequences of the forest expansion that are
most naturally stated directly in terms of the loopy polynomial.

Let $X_G$ denote Stanley's chromatic symmetric function~\cite{Sta}, with the convention that $X_G=0$ when $G$ has a loop.
Its power-sum expansion is
\begin{equation}\label{eq:X-powersum}
   X_G
   =
   \sum_{A\subseteq E(G)}
   (-1)^{|A|}
   \prod_{D\in c(A)}p_{|V(D)|}.
\end{equation}
The activity intervals of Lemma~\ref{lem:activity-intervals} immediately give
an external activity version of this formula:
\begin{equation}\label{eq:X-zero-activity}
   X_G
   =
   \sum_{\substack{F\in\cF(G)\\ \eps(F)=0}}
   (-1)^{|F|}
   \prod_{T\in c(F)}p_{|V(T)|}.
\end{equation}
Indeed, the contributions from the interval of $F$ are multiplied by
$\ds \sum_{J\subseteq\EA(F)}(-1)^{|J|}$,
which vanishes unless $\EA(F)=\varnothing$.

The zero-activity part can be recognized inside the ordinary loopy polynomial.
A forest $F$ contributing a monomial
$\ds M=\prod_{i\ge 0}x_i^{a_i}$ to~\eqref{eq:forests} satisfies
\begin{equation}\label{eq:activity-from-monomial}
   \eps(F)=w(M)+\ell(M)-|V(G)|, \qquad \text{where} \quad
   \ell(M)=\sum_i a_i,
   \quad  \text{and} \quad
   w(M)=\sum_i i a_i.
\end{equation}
Thus total external activity of $F$ is determined by the monomial itself.

\begin{theorem}\label{thm:L-determines-X}
The loopy polynomial determines the chromatic symmetric function. Specifically,
\begin{equation}\label{eq:X-from-L}
   X_G    = (-1)^m
   \sum_{\substack{\ba:\\
      \sum_{i\ge 0}(i+1)a_i=n}}
   c_{\ba} \prod_{i\ge 0}p_{i+1}^{a_i},
\end{equation}
where $n=|V(G)|$, $m=|E(G)|$ and the coefficients $c_\ba$ are given by
\[
  \LL_G(-1,\bx)=\sum_{\ba}c_{\ba}
  \prod_{i\ge 0}x_i^{a_i}.
\]
\end{theorem}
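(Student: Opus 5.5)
The plan is to read off the zero-activity forest sum~\eqref{eq:X-zero-activity} directly from the forest expansion of Theorem~\ref{thm:forests-sum} evaluated at $t=-1$. The summation constraint $\sum_i(i+1)a_i=n$ in~\eqref{eq:X-from-L} should select exactly the monomials coming from forests with $\eps(F)=0$. Once that is established, the remaining work is a sign check.

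\emph{Step 1: the constraint isolates activity-free forests.} Take a spanning forest $F$ and let $M=\prod_i x_i^{a_i}$ be its monomial in~\eqref{eq:forests}. By~\eqref{eq:activity-from-monomial},
\[
\sum_i (i+1)a_i=w(M)+\ell(M)=n+\eps(F).
\]
So $M$ satisfies $\sum_i(i+1)a_i=n$ if and only if $\eps(F)=0$. This condition depends only on $M$. Hence the coefficient $c_\ba$ of such a monomial in $\LL_G(-1,\bx)$ is a sum over activity-free forests only; it can never mix contributions from active and inactive forests. This well-definedness at the level of monomials is the one point that needs care, and~\eqref{eq:activity-from-monomial} settles it.

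\emph{Step 2: translate activity-free forests into power sums.} For a forest with $\eps(F)=0$, every component has $\eps_F(T)=0$. Its variable is therefore $x_{|E(T)|}=x_{|V(T)|-1}$, and the substitution $x_i\mapsto p_{i+1}$ turns $\prod_T x_{|E(T)|}$ into $\prod_T p_{|V(T)|}$. The $t$-exponent is $m-|F|$. At $t=-1$ the forest contributes $(-1)^{m-|F|}\prod_T x_{|V(T)|-1}$. Multiplying by the global factor $(-1)^m$ gives $(-1)^{|F|}\prod_T p_{|V(T)|}$, which is exactly the summand of~\eqref{eq:X-zero-activity}.

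\emph{Step 3: assemble the sum.} Summing the contributions from Step 2 over all activity-free forests, grouped by monomial via Step 1, yields~\eqref{eq:X-from-L}.

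\emph{Loops.} If $G$ has a loop, that loop is externally active for every forest, so $\eps(F)\ge1$ for all $F$. No monomial then satisfies the constraint, and the right-hand side is $0$, which agrees with the convention $X_G=0$. No separate argument is needed.

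\emph{Conclusion.} The data $n=|V(G)|$ and $m=|E(G)|$ needed in the formula are themselves determined by $\LL_G$ (Proposition~\ref{prop:extremal-terms}). Therefore $\LL_G$ determines $X_G$.
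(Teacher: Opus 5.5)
Your proposal is correct and follows the paper's proof essentially step for step. You identify the constraint $\sum_i(i+1)a_i=n$ with $\eps(F)=0$ via \eqref{eq:activity-from-monomial}, and then combine the sign $(-1)^{m-|F|}$, the global factor $(-1)^m$ and the substitution $x_i\mapsto p_{i+1}$ to reproduce \eqref{eq:X-zero-activity}; your remarks on loops and on recovering $n$ and $m$ from $\LL_G$ are correct additions that the paper leaves implicit.
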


\begin{proof}
  For a monomial $\ds \prod_{i\ge 0}x_i^{a_i}$
arising from a spanning forest $F$, the condition
  $\ds  \sum_i(i+1)a_i=n$
is equivalent by~\eqref{eq:activity-from-monomial} to
$\eps(F)=0$.  For such a forest, variable $x_i$ comes from a component with $i+1$ vertices.
Its contribution to $\LL_G(-1,\bx)$ has the sign  $(-1)^{m-|F|}.$
Multiplying by $(-1)^m$ changes this to $(-1)^{|F|}$, and replacing
$x_i$ by $p_{i+1}$ gives exactly~\eqref{eq:X-zero-activity}.
\end{proof}

\begin{remark}\label{rem:X-zero-activity-part}
In terms of the refined loopy polynomial the zero-activity mechanism is visible in one line:
\[
   X_G=
   \left.\widehat{\LL}_G\right|_{\,u_{s,0}=(-1)^{s-1}p_s,\ \ u_{s,r}=0\ (r>0)} .
\]
The specialization kills every forest with positive external activity, while
for a zero-activity forest $F$ the product of the signs $(-1)^{|V(T)|-1}$ over
its components gives $(-1)^{|F|}$.  Thus $X_G$ sees only the zero-activity part of the componentwise data, whereas $\LL_G$ retains all of it.  The passage through $\LL_G(-1,\bx)$ in the theorem is the way to isolate that part after
the two indices have been replaced by $s-1+r$.
\end{remark}

The matching polynomial gives another direct example.  Write
\[
   M_G(q)=\sum_{k\ge 0}m_k(G)q^k,
\]
where $m_k(G)$ is the number of matchings of size $k$.

\begin{corollary}\label{cor:matching}
If $G$ is a simple graph with $n$ vertices and $m$ edges, then
\begin{equation}\label{eq:matching-from-L}
   [x_1^k x_0^{n-2k}]\,\LL_G(t,\bx)
   =m_k(G)t^{m-k}.
\end{equation}
In particular, $\LL_G$ determines the matching polynomial.
\end{corollary}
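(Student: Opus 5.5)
The plan is to read off the coefficient directly from the forest expansion of Theorem~\ref{thm:forests-sum}. The monomial $x_1^kx_0^{n-2k}$ has ordinary $\bx$-degree $n-k$, so it can only come from spanning forests $F$ with exactly $n-k$ components. We need to determine which forests produce exactly this monomial and what power of $t$ each carries.

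First we identify the contributing forests. A factor $x_0$ comes from a component $T$ with $|E(T)|+\eps_F(T)=0$, that is, an isolated vertex with no assigned active edges. Since $G$ is simple, hence loopless, every isolated vertex automatically has $\eps_F(T)=0$. A factor $x_1$ comes from a component with $|E(T)|+\eps_F(T)=1$. This means either a single edge with no active edge assigned, or an isolated vertex with one active edge. The latter is impossible, because the only edges assigned to a singleton are loops.

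Next we rule out activity on the edge components. A single-edge component $T=\{u,v\}$ could have an active edge assigned only if some other edge joined $u$ and $v$, which simplicity forbids. So each $x_1$ comes from a single-edge component with $\eps_F(T)=0$.

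Conversely, we check that every matching gives the monomial. Take a $k$-edge matching $F$. Its components are the $k$ matching edges and $n-2k$ singletons. No non-forest edge has both endpoints in one component: for a singleton this would be a loop, and for an edge component a parallel edge. Hence $\eps(F)=0$, and $F$ contributes exactly $x_1^kx_0^{n-2k}$.

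This establishes a bijection between forests yielding the monomial and $k$-matchings. For each of them $|F|=k$ and $\eps(F)=0$, so the weight~\eqref{eq:forest-weight} is $t^{m-k}x_1^kx_0^{n-2k}$. Summing over the $m_k(G)$ matchings gives~\eqref{eq:matching-from-L}. Reading off these coefficients for all $k$ then recovers $M_G(q)$, using that $n$ and $m$ are determined by $\LL_G$ (Proposition~\ref{prop:extremal-terms}).

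The only delicate point is excluding contributions of $x_1$ from activity rather than size. This is exactly where simplicity enters: loops are excluded for singletons and multi-edges for edge components.
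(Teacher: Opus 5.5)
Your proposal is correct and follows essentially the same route as the paper's proof. Both read the coefficient off the forest expansion: only $k$-edge matchings produce the monomial $x_1^kx_0^{n-2k}$, simplicity excludes any externally active edge (no loops at singletons and no parallel edges in two-vertex components), and so each matching contributes exactly $t^{m-k}$.
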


\begin{proof}
A spanning forest contributes the monomial $x_1^k x_0^{n-2k}$ only if its
nontrivial components are $k$ disjoint edges.  Since $G$ is simple, such a
forest has no externally active edges: there is no loop and no second edge
inside any two-vertex component.  Thus these forests are precisely the
$k$-edge matchings, and each contributes $t^{m-k}$.
\end{proof}

We have thus encountered two rather different invariants built from the
same kinds of forest data: the $U$-polynomial records component
size and total activity, while the loopy polynomial combines component
size and componentwise activity into a single index.  In the next section
we introduce a common refinement that retains both pieces of component
data separately.  The question of how much information is lost in the two
specializations will be taken up in Section~\ref{sec:tutte-symmetric}.

\section{The refined loopy polynomial}\label{sec:refined}

Keeping the two coordinates $\bigl(|V(T)|,\eps_F(T)\bigr)$ apart,
instead of combining them into the single index
$|E(T)|+\eps_F(T)$, refines the loopy polynomial.
We show in this section that the refinement is equivalent to the extended $U$-polynomial~\cite{MN}.
This places $\LL_G$ and $U_G$ as two
specializations of one object, and the results collected here are those
that need the refinement rather than the loopy polynomial itself.

\subsection{Definition and order-independence}\label{subsec:refined-loopy}

Here we introduce a refinement of the loopy polynomial to a doubly infinite set of variables that accounts separately for component sizes and activities.

\begin{definition}\label{def:refined-loopy}
Fix an ordering of $E(G)$ and introduce commuting variables
\[
   u_{s,r},\qquad s\ge 1,\quad r\ge 0.
\]
The \emph{refined loopy polynomial} of $G$ is, a priori with respect to the
chosen order,
\begin{equation}\label{eq:refined-loopy}
   \widehat{\LL}_G(\bu)
   :=
   \sum_{F\in\cF(G)}
   \prod_{T\in c(F)}
   u_{|V(T)|,\eps_F(T)}.
\end{equation}
\end{definition}

\begin{remark}[A clash of notation]\label{rem:V-EMM}
  Tutte's universal $V$-function should not be confused with the later $V$-polynomial of Ellis--Monaghan and Moffatt (which we will denote $V^{\mathrm{EMM}}$), used in the study of the Potts model in an external field, see, for example, McDonald and Moffatt~\cite{MM}. The latter is a different, vertex- and edge-weighted invariant with a different contraction rule.  It nevertheless belongs to the same family as the Noble--Welsh $U$- and $W$-polynomials~\cite{NW,Nob}: when
all vertex weights are $1$ and all edge weights are specialized to a common value $y-1$, one has
\[
 V_G^{\mathrm{EMM}}(\bOne;\bx,\gamma_e=y-1)
 :=(y-1)^{|V(G)|}
 U_G\bigl(\{x_i/(y-1)\}_{i\ge 1},y\bigr),
\]
so on ordinary unweighted graphs this specialization carries exactly the same information as $U_G$ (compare~\cite{Nob,MM}).
\end{remark}

The two indices retain separately the two pieces of information that are combined in~\eqref{eq:forests}.  For later use, note already that
\begin{equation}\label{eq:refined-to-V}
   \cV_G(\bv)
   =
   \left.
      \widehat{\LL}_G(\bu)
   \right|_{u_{s,r}=v_r},
\end{equation}
by Theorem~\ref{thm:V-forest}, while
\begin{equation}\label{eq:refined-to-loopy}
   \LL_G(t,\bx)
   =
   t^{|E(G)|}
   \left.
      \widehat{\LL}_G(\bu)
   \right|_{
      u_{s,r}=t^{-(s-1+r)}x_{s-1+r}
   }.
\end{equation}
At this point the right-hand side of~\eqref{eq:refined-loopy} could still appear to depend on the chosen order.  The next theorem shows that it does not, and identifies it with a previously studied invariant.

For a spanning subgraph $(V(G),A)$ and a component $D\in c(A)$, let
\[
   \nu(D)=|E(D)|-|V(D)|+1
\]
be its cyclomatic number.  Following Merino and Noble~\cite{MN}, we use the term \emph{extended $U$-polynomial} for
\begin{equation}\label{eq:extended-U}
   U_G^{\ext}(\bz)
   :=
   \sum_{A\subseteq E(G)}
   \prod_{D\in c(A)}
   z_{|V(D)|,\nu(D)}.
\end{equation}
Thus the extended $U$-polynomial records both the size and the cyclomatic
number of every component of every spanning subgraph.  It was introduced by
Welsh in a 2005 lecture, together with the question of whether the
equivalence between the $U$-polynomial and the polychromate carries over to
the extended versions.  Merino and Noble~\cite{MN} answered this affirmatively.
The same invariant is called the \emph{strong} $U$-polynomial, and denoted $\cU$,
by Markstr\"om~\cite{Mar} and in Noble's  survey~\cite[Definition~26.13]{Nob}.
Bollob\'as and Riordan likewise call their refinement of the polychromate the
\emph{strong} polychromate~\cite[\S3]{BR}.  We keep \emph{extended} throughout, following~\cite{MN}.

\subsection{Equivalence with the extended \texorpdfstring{$U$}{U}-polynomial}\label{subsec:refined-extended}

We can now compare the refined loopy and extended $U$-polynomials.

\begin{theorem}\label{thm:refined-extended-U}
For every graph $G$,
\begin{equation}\label{eq:refined-to-extended-U}
   U_G^{\ext}(\bz)
   =
   \left.
      \widehat{\LL}_G(\bu)
   \right|_{
      u_{s,r}=\sum_{j=0}^r\binom{r}{j}z_{s,j}
   }.
\end{equation}
Conversely,
\begin{equation}\label{eq:extended-U-to-refined}
   \widehat{\LL}_G(\bu)
   =
   \left.
      U_G^{\ext}(\bz)
   \right|_{
      z_{s,r}=\sum_{j=0}^r(-1)^{r-j}\binom{r}{j}u_{s,j}
   }.
\end{equation}
In particular, $\widehat{\LL}_G$ is independent of the edge ordering, and
$\widehat{\LL}$ and the extended $U$-polynomial contain exactly the same information.
\end{theorem}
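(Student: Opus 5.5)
The plan is to use the activity intervals of Lemma~\ref{lem:activity-intervals} exactly as in the proofs of Corollary~\ref{cor:tutte-all-forests} and Proposition~\ref{prop:U-forest}, but componentwise. Fix an edge order and write $2^{E(G)}$ as the disjoint union of the intervals $[F,F\cup\EA(F)]$. For $A=F\cup J$ with $J\subseteq\EA(F)$, every edge of $J$ has both endpoints in one component of $F$ (loops included), so the components of $(V(G),A)$ are in bijection with those of $F$. A component $T\in c(F)$ becomes a component $D$ with $V(D)=V(T)$ and $E(D)=E(T)\cup J_T$, where $J_T=J\cap\EA_T(F)$ is the set of chosen active edges assigned to $T$. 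Hence $|V(D)|=|V(T)|$ and $\nu(D)=|J_T|$.

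The key step is that the choice of $J$ factors as an independent choice of $J_T\subseteq\EA_T(F)$ for each component $T$, since $\EA(F)$ is the disjoint union of the sets $\EA_T(F)$ of active edges assigned to $T$. Summing the terms of \eqref{eq:extended-U} over one interval therefore gives
\[
\prod_{T\in c(F)}\sum_{J_T\subseteq \EA_T(F)} z_{|V(T)|,|J_T|}
=\prod_{T\in c(F)}\sum_{j=0}^{\eps_F(T)}\binom{\eps_F(T)}{j}z_{|V(T)|,j}.
\]
This is precisely the monomial $\prod_T u_{|V(T)|,\eps_F(T)}$ under the substitution $u_{s,r}=\sum_j\binom rj z_{s,j}$. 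Summing over $F$ yields \eqref{eq:refined-to-extended-U} for the chosen order.

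For the converse, observe that for each fixed $s$ the substitution $u_{s,r}=\sum_{j\le r}\binom rj z_{s,j}$ is a unitriangular linear change of variables. It therefore defines a ring automorphism of the polynomial ring in the $u$'s and $z$'s, and its inverse is given by binomial inversion, $z_{s,r}=\sum_{j\le r}(-1)^{r-j}\binom rj u_{s,j}$. Applying the inverse automorphism to \eqref{eq:refined-to-extended-U} gives \eqref{eq:extended-U-to-refined}.

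Order independence then follows: the right-hand side of \eqref{eq:extended-U-to-refined} is defined without reference to any edge order, so $\widehat{\LL}_G$ computed for any order equals it. The mutual determination of the two invariants is immediate from the two explicit invertible substitutions. The only point needing care, and the one I would check first, is the componentwise factorization. One must confirm that each active edge is assigned to exactly one component, which is the component containing both its endpoints, and that loops are handled consistently: a loop has cyclomatic contribution $1$ in $\nu(D)$ and is always in $\EA(F)$. Both facts follow directly from the conventions fixed before Lemma~\ref{lem:activity-intervals}.
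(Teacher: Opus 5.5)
Your proposal is correct and follows the paper's own proof essentially step by step: it applies the activity-interval decomposition of Lemma~\ref{lem:activity-intervals} componentwise to obtain the binomial transform, uses binomial inversion in the second index for the converse, and inherits order independence from $U^{\ext}_G$. Your explicit observation that $\EA(F)$ splits disjointly into the sets assigned to the individual components, so that the interval sum factors over $c(F)$, is the same point the paper makes when it notes that adding active edges never merges components.
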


\begin{proof}
Fix an edge order and apply Lemma~\ref{lem:activity-intervals}.  Let $F$ be a
spanning forest and let $T$ be one of its components.  Put
\[
   s=|V(T)|,
   \qquad
   r=\eps_F(T).
\]
Every edge in $\EA(F)$ assigned to $T$ has both endpoints in $T$.
If $j$ of these $r$ active edges are added to $F$, then the corresponding component of the resulting spanning subgraph
still has $s$ vertices and has cyclomatic number $j$: the tree $T$ contributes $s-1$ edges and the
additional active edges contribute $j$ independent excess edges.
Note that adding externally active edges never merges components, since both
endpoints of such an edge already lie in the same component of $F$.  This is
what makes the component sizes constant along the interval.
Therefore the total contribution of the interval
$[F,F\cup\EA(F)]$ to~\eqref{eq:extended-U} is
\[
   \prod_{T\in c(F)}
   \left(
      \sum_{j=0}^{\eps_F(T)}
      \binom{\eps_F(T)}{j}
      z_{|V(T)|,j}
   \right).
\]
Summing over $F$ gives~\eqref{eq:refined-to-extended-U}.

For each fixed first index $s$, the substitution in
\eqref{eq:refined-to-extended-U} is the ordinary binomial transform in the
second index.  Binomial inversion therefore gives
\eqref{eq:extended-U-to-refined}.  Since the extended $U$-polynomial is
independent of the edge order, the same is true of $\widehat{\LL}_G$.
\end{proof}

Theorem~\ref{thm:refined-extended-U} is the component-size refinement of the
classical transform~\eqref{eq:V-Z-binomial}.  Indeed, forgetting the first
index in~\eqref{eq:extended-U} gives Tutte's spanning-subgraph sum $Z_G$~\eqref{eq:Tutte-Z}, while forgetting the first index in~\eqref{eq:refined-loopy} gives the
Wang--Sachs forest expansion~\eqref{eq:V-forest}.  Thus the same binomial
transform occurs before and after component sizes are retained.
This gives a conceptual explanation for the transform in Theorem~\ref{thm:refined-extended-U}.

Combining~\eqref{eq:refined-to-loopy} with
\eqref{eq:extended-U-to-refined} gives a direct specialization that will occasionally be useful.

\begin{corollary}\label{cor:extended-U-to-L}
Let $m=|E(G)|$.  Then
\begin{equation}\label{eq:extended-U-to-L}
   t^{-m}\LL_G(t,\bx)
   =
   \left.
      U_G^{\ext}(\bz)
   \right|_{
      z_{s,r}=
      \sum_{j=0}^r
      (-1)^{r-j}\binom{r}{j}
      t^{-(s-1+j)}x_{s-1+j}
   }.
\end{equation}
In particular, at $t=1$,
\begin{equation}\label{eq:extended-U-to-L-t1}
   \LL_G(1,\bx)
   =
   \left.
      U_G^{\ext}(\bz)
   \right|_{
      z_{s,r}=
      \sum_{j=0}^r
      (-1)^{r-j}\binom{r}{j}
      x_{s-1+j}
   }.
\end{equation}
\end{corollary}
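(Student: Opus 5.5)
The plan is to compose two substitutions already established above. Equation~\eqref{eq:refined-to-loopy} expresses $\LL_G$ as a specialization of the refined loopy polynomial:
\[
t^{-m}\LL_G(t,\bx)=\widehat{\LL}_G(\bu)\big|_{u_{s,r}=t^{-(s-1+r)}x_{s-1+r}}.
\]
Theorem~\ref{thm:refined-extended-U}, in the form~\eqref{eq:extended-U-to-refined}, expresses $\widehat{\LL}_G$ as a specialization of $U_G^{\ext}$:
\[
z_{s,r}=\sum_{j=0}^r(-1)^{r-j}\binom rj u_{s,j}.
\]
I will first specialize $U_G^{\ext}$ to $\widehat{\LL}_G$. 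Then I will apply the second substitution $u_{s,j}\mapsto t^{-(s-1+j)}x_{s-1+j}$ to both sides.

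The first key step is to justify that substitutions compose. Both sides of~\eqref{eq:extended-U-to-refined} are polynomial identities in the variables $u_{s,j}$. Applying a ring homomorphism to both sides preserves the identity. Here the homomorphism sends $u_{s,j}$ to the Laurent monomial $t^{-(s-1+j)}x_{s-1+j}$, in the ring $\ZZ[t^{\pm1},x_0,x_1,\ldots]$, in accordance with the paper's convention on negative powers of $t$.

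The second step is the bookkeeping. The composite substitution sends $z_{s,r}$ to
\[
\sum_{j=0}^r(-1)^{r-j}\binom rj t^{-(s-1+j)}x_{s-1+j},
\]
which is exactly the substitution in~\eqref{eq:extended-U-to-L}. Hence the right-hand side becomes $\widehat{\LL}_G$ evaluated at $u_{s,r}=t^{-(s-1+r)}x_{s-1+r}$. By~\eqref{eq:refined-to-loopy} this equals $t^{-m}\LL_G(t,\bx)$.

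Setting $t=1$ then gives~\eqref{eq:extended-U-to-L-t1} directly. This is legitimate because both sides are Laurent polynomials in $t$, and evaluation at $t=1$ is a ring homomorphism.

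There is no real obstacle. The only point needing care is to verify that the index shift is consistent. The variable $u_{s,j}$ must carry the combined index $s-1+j$ with the binomial summation variable $j$ in place of $r$, and not $s-1+r$.
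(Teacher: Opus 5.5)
Your proposal is correct and follows the paper's own route: the paper obtains the corollary by composing the binomial-inversion substitution \eqref{eq:extended-U-to-refined} with the specialization \eqref{eq:refined-to-loopy}, and then setting $t=1$. Your bookkeeping is the needed point, namely that the composite substitution sends $u_{s,j}$ to $t^{-(s-1+j)}x_{s-1+j}$, indexed by the summation variable $j$ rather than by $r$.
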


\begin{corollary}\label{cor:ordinary-U-refined}
For every graph $G$,
\begin{equation}\label{eq:ordinary-U-refined}
   U_G(\bz,y)
   =
   \left.
      \widehat{\LL}_G(\bu)
   \right|_{u_{s,r}=z_s y^r}.
\end{equation}
\end{corollary}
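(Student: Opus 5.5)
The plan is to substitute $u_{s,r}=z_s y^r$ directly into the definition~\eqref{eq:refined-loopy} of $\widehat{\LL}_G$ and then compare with the spanning-forest form of the $U$-polynomial in Proposition~\ref{prop:U-forest}.

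First fix an edge order. By Theorem~\ref{thm:refined-extended-U} the polynomial $\widehat{\LL}_G$ does not depend on this choice, so any order will do. For a spanning forest $F$, the specialization sends its monomial to
\[
   \prod_{T\in c(F)} z_{|V(T)|}\,y^{\eps_F(T)}
   =
   y^{\sum_{T}\eps_F(T)}\prod_{T\in c(F)}z_{|V(T)|}.
\]
Since $\sum_{T\in c(F)}\eps_F(T)=\eps(F)$, the exponent of $y$ is exactly the total external activity.

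Summing over $F\in\cF(G)$ gives precisely the right-hand side of~\eqref{eq:U-forest}. Proposition~\ref{prop:U-forest} identifies that sum with $U_G(\bz,y)$ for the same edge order, which is the claim.

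As a cross-check, one can also go through $U^{\ext}_G$. First substitute into~\eqref{eq:refined-to-extended-U} the values $z_{s,j}=z_s(y-1)^j$. The binomial theorem then collapses each factor to $\sum_j\binom{r}{j}z_s(y-1)^j=z_sy^r$. On the other hand, $U^{\ext}_G$ at $z_{s,j}=z_s(y-1)^j$ is exactly~\eqref{eq:ordinary-U}. This second route confirms the identity without any reference to activities.

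There is no real obstacle here. The only point requiring care is that the same edge order is used on both sides, and that is harmless because both sides are order-independent.
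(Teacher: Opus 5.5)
Your proposal is correct. Your cross-check is exactly the paper's proof. The paper substitutes $z_{s,j}=z_s(y-1)^j$ into \eqref{eq:refined-to-extended-U} and collapses the binomial sum to $z_sy^r$, so the corollary becomes a change of variables composed with the equivalence $\widehat{\LL}_G\leftrightarrow U^{\ext}_G$ of Theorem~\ref{thm:refined-extended-U}.

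Your primary route is different. You specialize the forest sum \eqref{eq:refined-loopy} term by term and match it with Proposition~\ref{prop:U-forest}. The paper mentions this connection only afterwards, remarking that Proposition~\ref{prop:U-forest} is the case that can be seen without the refinement.

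What the direct route buys is an identity forest by forest, for each fixed edge order. You therefore do not need the order-independence of $\widehat{\LL}_G$ at all. Proposition~\ref{prop:U-forest} holds for every order, so the identity holds for every order, and it even shows that the specialization $u_{s,r}=z_sy^r$ is order-independent. The paper's route instead places $U_G$ directly inside the picture in which $U_G$ and $\LL_G$ are two specializations of the single object $U^{\ext}_G\leftrightarrow\widehat{\LL}_G$.

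One small correction: the second route is not really ``without any reference to activities''. Theorem~\ref{thm:refined-extended-U} is itself proved through the activity intervals of Lemma~\ref{lem:activity-intervals}, just as Proposition~\ref{prop:U-forest} is, so both routes ultimately rest on that lemma.
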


\begin{proof}
Under $z_{s,j}\mapsto z_s(y-1)^j$, the binomial transform
in~\eqref{eq:refined-to-extended-U} becomes
$\ds \sum_{j=0}^r\binom{r}{j}z_s(y-1)^j=z_sy^r$.
\end{proof}

Comparing~\eqref{eq:ordinary-U-refined} with~\eqref{eq:refined-to-loopy} shows
that $U_G$ and $\LL_G$ are two specializations of $\widehat{\LL}_G$, sending a component with data $(s,r)$ to $z_sy^{r}$ and to $x_{s-1+r}$ respectively.
The first keeps the component size $s$ and remembers only the \emph{total} activity, through the product of the powers $y^{r}$. The second forgets $s$
and $r$ separately and keeps only their diagonal combination $s-1+r$.
Proposition~\ref{prop:U-forest} is the case of the first that can be seen without the refinement.

\subsection{The extended polychromate and block statistics}\label{subsec:block-statistics}

For a graph $G$ and $S\subseteq V(G)$ the two statistics of the preceding
results can be combined into
\begin{equation}\label{eq:gdp}
   \GD_G(q,r,t)
   \;=\;
   \sum_{S\subseteq V(G)}
      q^{|S|}\,r^{\,e_G(S)}\,t^{\,\kp_G(S)-e_G(S)},
\end{equation}
the exponent of $t$ counting the edges with exactly one endpoint in $S$.  This
is the \emph{generalized degree polynomial} of Crew~\cite{Crew}.  Setting $t=1$
returns the induced edge count profile of
Theorem~\ref{thm:induced-edge-profile} and setting $r=t$ returns the generating
function of Corollary~\ref{cor:kappa-profile}. Thus~\eqref{eq:gdp} is the joint
statistic combining the above two cases.
Corollary~\ref{cor:gdp} shows that it is determined by $\widehat{\LL}_G$.

\begin{proposition}[Block statistics]\label{prop:block-statistics}
Let $G$ be a graph, possibly with loops and multiple edges, and let $b\ge 1$.
Then $\widehat{\LL}_G$ determines the number of partitions of $V(G)$ into $b$
blocks realizing any prescribed multiset
\[
   \bigl\{(|B|,\,e_G(B))\bigr\}_{B\in\pi}
\]
of block orders and induced edge counts.
\end{proposition}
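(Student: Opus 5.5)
The plan is to pass from $\widehat{\LL}_G$ to the extended $U$-polynomial, and from there to the \emph{extended polychromate}
\[
   \eta_G^{\ext}(\bw)=\sum_{\pi\vdash V(G)}\prod_{B\in\pi}w_{|B|,e_G(B)},
\]
with independent commuting variables $w_{s,j}$. The coefficient of a monomial $\prod_{B}w_{|B|,e_G(B)}$ with $b$ factors is exactly the number of partitions into $b$ blocks realizing the prescribed multiset $\{(|B|,e_G(B))\}$. Distinct multisets give distinct monomials, so it suffices to show that $\eta^{\ext}_G$ is determined. By Theorem~\ref{thm:refined-extended-U}, $\widehat{\LL}_G$ determines $U_G^{\ext}$. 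Since distinct multisets $\{(|V(D)|,\nu(D))\}$ again give distinct monomials, $U_G^{\ext}$ in turn determines, for every multiset $\mu$ of pairs, the number $a_\mu$ of edge sets $A\subseteq E(G)$ whose component data is $\mu$.

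The key step is a binomial change of variables in the second index. Fix $s$ and define new variables $c_{s,i}$ by $w_{s,j}=\sum_{i=0}^{j}\binom{j}{i}c_{s,i}$. This change is unitriangular, hence invertible. With it, for a block $C$,
\[
   w_{|C|,e_G(C)}=\sum_{A\subseteq E(G[C])}c_{|C|,|A|},
\]
where loops and multiple edges are simply counted among the edges of $G[C]$. For such an $A$ we have
\[
   |A|=\sum_{D\in c(A)}\bigl(|V(D)|-1+\nu(D)\bigr),
\]
a quantity read off from the component data of $A$. Expanding the product over the blocks of $\sigma$ and interchanging sums gives
\[
   \eta_G^{\ext}=\sum_{A\subseteq E(G)}\ \sum_{\sigma\ \ge\ \pi(A)}\ \prod_{C\in\sigma}c_{|C|,\,\sum_{D\subseteq C}(|V(D)|-1+\nu(D))}.
\]
Here $\pi(A)$ is the partition of $V(G)$ into components of $A$, and $\sigma$ ranges over its coarsenings. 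This uses the fact that a choice of a partition $\sigma$ together with edge sets inside its blocks is the same as a choice of $A$ together with a coarsening $\sigma$ of $\pi(A)$.

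Next I observe that the inner sum depends only on the multiset $\mu$ of labels $(|V(D)|,\nu(D))$ of the blocks of $\pi(A)$. Coarsenings of $\pi(A)$ are set partitions of its set of blocks, and the summand depends only on the labels of the blocks merged into each $C$. Calling this polynomial $P_\mu(\bc)$, we get $\eta^{\ext}_G=\sum_\mu a_\mu P_\mu(\bc)$. This is determined by $U_G^{\ext}$, through a linear (not multiplicative) map on monomials. Inverting the triangular change $c\leftrightarrow w$ then recovers $\eta_G^{\ext}(\bw)$, and extracting coefficients of monomials with $b$ factors finishes the proof.

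The main obstacle is the middle step. One has to realize that the global block statistic $e_G(C)$ is not itself a function of the component data of a single subgraph. It becomes one only after the binomial transform $w\to c$, which converts "$j$ internal edges" into "sum over subsets of internal edges". Once this is done, the bookkeeping of coarsenings is routine. I would double-check that loops cause no trouble: a loop in $G[C]$ is an edge of $A$ raising $\nu$ of its component, which is consistent with both $|A|$ and $e_G(C)$.
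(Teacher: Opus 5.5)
Your proof is correct, and it differs from the paper's only in the middle step. Your outer steps coincide with the paper's: Theorem~\ref{thm:refined-extended-U} passes from $\widehat{\LL}_G$ to $U^{\ext}_G$, and the $b$-block counts are read off as the coefficients of the $b$-factor monomials of $\eta^{\ext}_G$.

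For the middle step, the paper simply cites the Merino--Noble equivalence $U^{\ext}\leftrightarrow\eta^{\ext}$ \cite[Corollary~4.13]{MN}. You instead prove the direction you need from scratch, and each part checks out:
\begin{enumerate}
\item The unitriangular substitution $w_{s,j}=\sum_i\binom{j}{i}c_{s,i}$ rewrites each block weight $w_{|C|,e_G(C)}$ as a sum over edge subsets of $G[C]$.
\item Pairs $\bigl(\sigma,(A_C)_{C\in\sigma}\bigr)$ correspond bijectively to pairs $(A,\sigma)$ with $\sigma$ coarsening $\pi(A)$, via $A_C=A\cap E(G[C])$. Loops and parallel edges cause no trouble here.
\item The identity $|A_C|=\sum_{D\subseteq C}\bigl(|V(D)|-1+\nu(D)\bigr)$ makes the inner coarsening sum a polynomial $P_\mu(\bc)$ that depends only on the label multiset $\mu$. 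Hence $\eta^{\ext}_G=\sum_\mu a_\mu P_\mu(\bc)$ is a function of the coefficients of $U^{\ext}_G$ alone.
\end{enumerate}

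The paper's route is shorter and rests on a two-way equivalence, of which you need only one direction. Your route is self-contained and yields an explicit recovery of $\eta^{\ext}_G$ from $U^{\ext}_G$. That recovery is linear in the coefficients, not a substitution of variables, which is all that ``determines'' requires. This is worth noting because Remark~\ref{rem:crew-trees} points out that the Merino--Noble equivalence is obtained by transitivity and is not written down explicitly.
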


\begin{proof}
By Theorem~\ref{thm:refined-extended-U} and the equivalence of the extended
$U$-polynomial with the extended polychromate~\cite[Corollary~4.13]{MN}, the
polynomial $\widehat{\LL}_G$ determines
\[
   \eta^{\ext}_G(\bq)
   =\sum_{\pi\vdash V(G)}\ \prod_{B\in\pi}q_{|B|,\,e_G(B)} .
\]
A partition into $b$ blocks contributes a monomial of $q$-degree $b$, so the
terms of $q$-degree $b$ are exactly those coming from $b$-block partitions, and
the monomial contributed by $\pi$ records precisely the multiset in the
statement.  Reading off the coefficient of that monomial in the degree-$b$ part
gives the assertion.
\end{proof}

The content of Proposition~\ref{prop:block-statistics} lies in the
identification $\widehat{\LL}\leftrightarrow\eta^{\ext}$.  Once this
identification is known, the $b$-block statistics are simply the
degree-$b$ part of the extended polychromate.  We record the result because
these block statistics include several graph invariants that have been
studied independently.

\begin{corollary}\label{cor:gdp}
For every graph $G$, the refined loopy polynomial $\widehat{\LL}_G$
determines the generalized degree polynomial $\GD_G$.  More generally, it
determines the higher-order generalized degree polynomials of~\cite{LT}.
\end{corollary}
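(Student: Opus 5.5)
The plan is to read $\GD_G$ off the two-block layer of the extended polychromate, using Proposition~\ref{prop:block-statistics} with $b=2$ together with the trivial $b=1$ layer. By that proposition, $\widehat{\LL}_G$ determines, for every multiset $\{(s_1,j_1),(s_2,j_2)\}$, the number of unordered partitions $\{S,V\setminus S\}$ with $|S|=s_1$, $e_G(S)=j_1$, $|V\setminus S|=s_2$ and $e_G(V\setminus S)=j_2$. It also determines $n$ and $m$, for instance from the one-block term $q_{n,m}$.

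For a nonempty proper subset $S$, the edges of $G$ split into three groups: those induced on $S$, those induced on $V\setminus S$, and those with exactly one endpoint in $S$. Loops lie in the induced groups. Hence
\[
\kp_G(S)-e_G(S)=m-e_G(S)-e_G(V\setminus S),
\]
so each monomial of $\GD_G$ is a function of the data $(|S|,e_G(S),e_G(V\setminus S))$ and $m$. Each unordered two-block partition $\{B_1,B_2\}$ contributes exactly two ordered choices of $S$, namely $S=B_1$ and $S=B_2$. Together they give
\[
q^{|B_1|}r^{e(B_1)}t^{m-e(B_1)-e(B_2)}+q^{|B_2|}r^{e(B_2)}t^{m-e(B_1)-e(B_2)}.
\]
This expression depends only on the multiset $\{(|B_1|,e(B_1)),(|B_2|,e(B_2))\}$. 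Summing over the degree-$2$ part therefore recovers the terms with $\varnothing\neq S\neq V$. The two remaining terms are $S=\varnothing$, which gives $t^0$, i.e.\ the constant $1$, and $S=V$, which gives $q^nr^m$. Both are known from $n$ and $m$. This proves the first assertion.

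For the higher-order generalized degree polynomials of~\cite{LT}, I would run the same argument with $b$ blocks. I expect these record, for ordered or labeled tuples of disjoint vertex sets, the sizes, the internal edge counts, and the edge counts between the sets, possibly refined further. If the statistic is of the form ``$k$ disjoint sets and their complement,'' it is a function of the block multiset of the $(k+1)$-block partition only if the between-block edge counts are aggregated, e.g.\ as the total $m-\sum e(B_i)$. This is the main obstacle: I must check that the definition in~\cite{LT} uses only block sizes, internal counts, and aggregated crossing counts. The pairwise counts between blocks are not encoded in $\eta^{\ext}_G$, and the paper notes that $P_3$ is not determined. Assuming the aggregated form, each ordered selection of nonempty blocks is recovered by summing over the $b$-block partitions and their orderings, since the number of orderings depends only on the multiset. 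Empty parts are handled by reducing to a smaller $b$.
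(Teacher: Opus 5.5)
Your proposal is correct and follows the paper's argument. You use the $b=2$ layer of Proposition~\ref{prop:block-statistics}, the identity $\kp_G(S)-e_G(S)=m-e_G(S)-e_G(\overline S)$, and the separate terms $1$ and $q^nr^m$ for $S=\varnothing$ and $S=V$; the only difference is that you read $m$ off the one-block term $q_{n,m}$ rather than from Corollary~\ref{th:cor}(4), and both work. For the higher-order case the paper likewise uses $(k+1)$-block partitions with the aggregated boundary statistic $m-\sum_i e_G(B_i)$, so it takes as given exactly the form of the definition in~\cite{LT} that you flag as still needing to be checked.
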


\begin{proof}
Take $b=2$ in Proposition~\ref{prop:block-statistics}.  A two-block
partition is $\{S,\overline S\}$, and its statistics are
\[
   \bigl(|S|,e_G(S)\bigr), \qquad  \bigl(|\overline S|,e_G(\overline S)\bigr).
\]
Since
\[
   \kp_G(S)-e_G(S)  =  m-e_G(S)-e_G(\overline S)
\]
and $m$ is determined by Corollary~\ref{th:cor}(4), these data determine
the contribution of both $S$ and $\overline S$ to~\eqref{eq:gdp}.
The subsets $\varnothing$ and $V(G)$ contribute the remaining terms
$1$ and $q^nr^m$.  Thus $\widehat{\LL}_G$ determines $\GD_G$.

More generally, the order-$k$ invariant of~\cite{LT} records $k$ ordered
pairwise disjoint vertex subsets, together with the complementary remainder.
These give a partition of $V(G)$ into $k+1$ blocks.
Proposition~\ref{prop:block-statistics} determines the corresponding unordered block data.
If $B_0$ denotes the complementary remainder and
$B_1,\ldots,B_k$ the chosen subsets, then their total boundary statistic is
\[
  m-\sum_{i=0}^k e_G(B_i).
\]
Thus it too is determined by the block data of
Proposition~\ref{prop:block-statistics}.  Choosing one block as the
complementary remainder and ordering the other $k$ blocks, with the
appropriate multiplicities when block statistics coincide, recovers the
order-$k$ invariant.
\end{proof}

\begin{remark}[Crew's conjecture]\label{rem:gdp-ordinary}
 The ordinary invariants do not suffice. The pairs of
Example~\ref{ex:loop-necessary} and Theorem~\ref{thm:MN-solved} have equal
$U$-polynomials, and hence equal Tutte symmetric functions, but distinct $\GD$.
Thus $\GD_G$ is not determined by $U_G$ on graphs with loops, nor on loopless multigraphs.

The question for simple graphs appears to have been formulated by
Crew~\cite{Crew}.  In a footnote to the conjecture about trees that closes
his paper, she asks whether $\XB_G$ determines $\GD_G$ for ``all'' graphs,
which we interpret as referring to simple graphs.

The two pairs above show that the simplicity assumption cannot be relaxed.
Call a pair of graphs with equal $U$-polynomials but distinct extended
$U$-polynomials a \emph{Merino--Noble pair}; see~\eqref{eq:MN-open-problem}.
If no Merino--Noble pair exists among simple graphs, then $U_G$ determines
$U^{\ext}_G$ and hence $\widehat{\LL}_G$, so
Corollary~\ref{cor:gdp} gives Crew's conjecture.

In particular, Proposition~\ref{prop:computational-evidence} settles it
affirmatively for all simple graphs on at most eleven vertices.
\end{remark}

\subsection{The Ising and homomorphism polynomials}
\label{subsec:ising}

The two-block statistics of Proposition~\ref{prop:block-statistics} have
been studied from a quite different direction, and identifying them with what is done there
connects the loopy polynomial to a question of Markstr\"om.
For $q\ge 2$ let
\begin{equation}\label{eq:hom-poly}
 P_q(G)  = \sum_{\varphi:V(G)\to[q]} \ \prod_{v\in V(G)}x_{\varphi(v)} \ \prod_{uv\in E(G)}y_{\varphi(u)\varphi(v)}
\end{equation}
denote the \emph{homomorphism polynomial of order $q$}~\cite{Mar}: the
generating function for homomorphisms of $G$ into the complete graph on $q$
vertices with a loop at every vertex, all vertices and edges carrying formal weights.
For $q=2$ it also determines the \emph{bivariate Ising polynomial} of Andr\'en and Markstr\"om~\cite{AM},
\begin{equation}
  \label{eq:ising}
  Z(G,x,y) = \sum_{\sigma:V(G)\to\{\pm1\}}x^{E(\sigma)}y^{M(\sigma)},
   \qquad
   E(\sigma)=\sum_{uv\in E(G)}\sigma(u)\sigma(v),
   \qquad
   M(\sigma)=\sum_{v\in V(G)}\sigma(v),
 \end{equation}
the partition function of the Ising model in an external field.
Neither of these is a specialization of the Tutte polynomial,
nor conversely: $Z$ determines the degree sequence, which $T_G$ does not,
while $T_G$ determines the chromatic polynomial, which $P_2$ does not~\cite{AM,Mar}.

\begin{proposition}\label{prop:ising}
 For every graph $G$, the homomorphism polynomial $P_2(G)$ and the generalized degree polynomial $\GD_G$
 determine one another. Consequently the refined loopy polynomial   $\widehat{\LL}_G$
 determines $P_2(G)$ and $Z(G,x,y)$.
\end{proposition}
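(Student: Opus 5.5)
We will pass through an intermediate specialization of $P_2(G)$ that records which vertices receive color $1$. For a map $\varphi:V(G)\to[2]$ put $S=\varphi^{-1}(1)$. Each vertex contributes $x_1$ or $x_2$, and each edge contributes $y_{11}$, $y_{22}$ or $y_{12}=y_{21}$ according to whether it lies inside $S$, inside $\overline S$, or across the cut. (For the homomorphism polynomial we take $y$ symmetric; a loop at $v$ contributes $y_{\varphi(v)\varphi(v)}$.) Hence
\[
 P_2(G)=\sum_{S\subseteq V(G)} x_1^{|S|}x_2^{\,n-|S|}\,y_{11}^{\,e_G(S)}\,y_{22}^{\,e_G(\overline S)}\,y_{12}^{\,\kp_G(S)-e_G(S)} .
\]
The three edge classes partition $E(G)$, so $e_G(\overline S)=m-e_G(S)-(\kp_G(S)-e_G(S))$.

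For the direction $\GD_G\Rightarrow P_2(G)$, first recover $n$ and $m$ from $\GD_G$: $n$ is the top $q$-degree, and the $S=V(G)$ term is $q^nr^m$. Then substitute
\[
q\mapsto x_1/x_2,\qquad r\mapsto y_{11}/y_{22},\qquad t\mapsto y_{12}/y_{22},
\]
and multiply by $x_2^ny_{22}^m$. This reproduces the displayed formula for $P_2(G)$ term by term.

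For the converse, set $x_2=y_{22}=1$, $x_1=q$, $y_{11}=r$, $y_{12}=t$ in $P_2(G)$. This is exactly $\GD_G$. The two polynomials are therefore related by an invertible monomial change of variables together with homogenization, which is available because $n$ and $m$ are known.

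For the consequences, Corollary~\ref{cor:gdp} shows that $\widehat{\LL}_G$ determines $\GD_G$, and hence $P_2(G)$ by the first step. For $Z(G,x,y)$, identify $\sigma=+1$ with color $1$ and specialize
\[
x_1=y,\quad x_2=y^{-1},\quad y_{11}=y_{22}=x,\quad y_{12}=x^{-1},
\]
working in the Laurent ring. Then each configuration contributes $x^{E(\sigma)}y^{M(\sigma)}$, so $Z$ is a specialization of $P_2$.

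The main point needing care is bookkeeping in the definition of $P_2$ rather than any real obstacle. One must confirm that loops and multiple edges are counted consistently between $e_G$ and $\kp_G$: a loop at $v\in S$ is induced in $S$ and lies in $\kp_G(S)$, so it contributes $0$ to the cut exponent. One must also confirm that $y_{12}$ and $y_{21}$ are identified, or summed symmetrically, for undirected edges. If the paper's $P_2$ instead keeps $y_{12}$ and $y_{21}$ distinct, the same argument goes through after setting $y_{21}=y_{12}$, or after noting that each edge contributes a canonical symmetric factor.
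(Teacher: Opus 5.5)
Your proposal is correct and follows essentially the same route as the paper: you identify $\varphi$ with $S=\varphi^{-1}(1)$, use that the cut count equals $m-e_G(S)-e_G(\overline S)$, and read $n$ and $m$ off the $S=V(G)$ term, which makes the passage between $P_2(G)$ and $\GD_G$ invertible. The only difference is cosmetic: you obtain $Z(G,x,y)$ as a Laurent specialization of $P_2(G)$, whereas the paper obtains it from $\GD_G$ as $Z(G,x,y)=x^{m}y^{-n}\GD_G(y^{2},1,x^{-2})$.
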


\begin{proof}
A map $\varphi:V(G)\to\{1,2\}$ is the same as a subset
$S=\varphi^{-1}(1)$, and its contribution to~\eqref{eq:hom-poly} is
\[
   x_1^{|S|}\,x_2^{\,n-|S|}\,
   y_{11}^{\,e_G(S)}\,y_{22}^{\,e_G(\overline S)}\,
   y_{12}^{\,\kp_G(S)-e_G(S)} .
\]
Hence $P_2(G)$ records the joint distribution of
$\bigl(|S|,e_G(S),e_G(\overline S)\bigr)$ over vertex subsets $S\subseteq V(G)$.
Since
\[
  \kp_G(S)-e_G(S)=m-e_G(S)-e_G(\overline S),
\]
we only need to show that $m$ is determined by $P_2$ and $\GD$.
This is immediate: the set $S=V(G)$  contributes $q^n r^m$  to $\GD$ and $x_1^n y_{11}^m$ to $P_2$.
Thus  $P_2(G)$ and $\GD_G$ determine one another.

To connect them with $Z$, for $\sigma:V(G)\to\{\pm 1\}$ in~\eqref{eq:ising} put  $S=\sigma^{-1}(+1)$.
Then $M(\sigma)=2|S|-n$ and $E(\sigma)=m-2\bigl(\kp_G(S)-e_G(S)\bigr),$
since an edge contributes $+1$ to $E(\sigma)$ when its endpoints lie on
the same side of the partition and $-1$ otherwise. Therefore
\begin{equation}\label{eq:ising-from-gdp}
    Z(G,x,y)=x^{m}y^{-n}\,\GD_G\bigl(y^{2},\,1,\,x^{-2}\bigr),
\end{equation}
so the bivariate Ising polynomial is the specialization $r=1$ of~\eqref{eq:gdp}, up to a monomial factor.

Corollary~\ref{cor:gdp} now gives the last assertion.
\end{proof}

\begin{remark}[Both steps lose information]\label{rem:ising-strict}
Proposition~\ref{prop:ising} exhibits the chain
\[
 \widehat{\LL}_G \ \Longrightarrow \
 P_2(G)\equiv\GD_G  \ \Longrightarrow \    Z(G,x,y),
\]
and both implications are strict. For the first, the graphs \verb+G?qbrg+ and \verb+G?ovdW+,
non-isomorphic on eight vertices with twelve edges and degree
sequence $(2,2,3,3,3,3,4,4)$, have the same $P_2$ but different loopy polynomials.
For the second, \verb+F?rDo+ and \verb+F?qaw+, non-isomorphic on seven vertices with eight edges
and degree sequence $(1,1,2,2,3,3,4)$, have
the same bivariate Ising polynomial but different generalized degree polynomials:
the subsets $S$ with $|S|=3$, $e_G(S)=0$ and $\kp_G(S)-e_G(S)=5$ number $2$ and $3$ respectively.
What $Z$ discards is exactly $e_G(S)$; complementarity forces $e_G(S)+e_G(\overline S)=m-\bigl(\kp_G(S)-e_G(S)\bigr)$, but not $e_G(S)$
itself.
On seven vertices there are $853$ connected simple graphs, $853$ distinct generalized
degree polynomials, and only $811$ distinct bivariate Ising polynomials.
\end{remark}

Andr\'en and Markstr\"om~\cite[Theorem~4.3]{AM} list graph properties determined by $Z$. By Proposition~\ref{prop:ising} each of them is also
determined by $\widehat{\LL}_G$.  Most are consequences of the Tutte specialization
of Corollary~\ref{cor:tutte-specialization}, but the size of a maximum edge cut is not, and it is
visible in $\GD_G$ as the largest exponent of $t$.

\begin{corollary}\label{cor:crew-markstrom}
On any class of graphs, $U_G$ determines $\GD_G$ if and only if $U_G$ determines $P_2(G)$.
In particular, on simple graphs the conjecture of Crew in Remark~\ref{rem:gdp-ordinary}
and Problem~7.6.4 of Markstr\"om~\cite{Mar} are the same question.
\end{corollary}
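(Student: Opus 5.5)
The corollary follows at once from Proposition~\ref{prop:ising}, which shows that $P_2(G)$ and $\GD_G$ determine one another for every graph $G$. The plan is to check that this mutual determination is by maps that depend only on the invariants themselves, not on $G$. Then the two statements ``$U_G$ determines $\GD_G$'' and ``$U_G$ determines $P_2(G)$'' can be transported across it on any class $\cG$.

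First I would make the bijection explicit. The proof of Proposition~\ref{prop:ising} shows that $P_2(G)$ is the generating function of the triples $\bigl(|S|,e_G(S),e_G(\overline S)\bigr)$ over $S\subseteq V(G)$, while $\GD_G$ is the generating function of the triples $\bigl(|S|,e_G(S),m-e_G(S)-e_G(\overline S)\bigr)$. The numbers $n$ and $m$ can be read off either polynomial from the term of $S=V(G)$: it is the unique monomial $x_1^ny_{11}^m$ with no $x_2$ in $P_2$, and the top $q$-degree term $q^nr^m$ in $\GD$. So there are functions $\Phi,\Psi$ on polynomials with $\GD_G=\Phi(P_2(G))$ and $P_2(G)=\Psi(\GD_G)$ for every graph $G$. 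Concretely, $\Phi$ sends $x_1^ax_2^{n-a}y_{11}^by_{22}^cy_{12}^d$ to $q^ar^bt^d$, and $\Psi$ reconstructs $c=m-b-d$.

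Next I would transport the determination. ``$U_G$ determines $\GD_G$ on $\cG$'' means that for all $G,H\in\cG$, $U_G=U_H$ implies $\GD_G=\GD_H$. If this holds, then $U_G=U_H$ gives $P_2(G)=\Psi(\GD_G)=\Psi(\GD_H)=P_2(H)$. The converse direction is symmetric, using $\Phi$. Specializing $\cG$ to simple graphs identifies Crew's question from Remark~\ref{rem:gdp-ordinary} (phrased via $\XB_G$, which is equivalent to $U_G$ by~\cite{NW,Sar,MN,Nob}) with Markstr\"om's Problem~7.6.4, which asks whether $U_G$ determines $P_2$.

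The only point needing care is that $n$ and $m$ are extracted uniformly from each invariant, so that $\Phi$ and $\Psi$ are genuinely graph-independent. I would also confirm that the equivalence $U_G\leftrightarrow\XB_G$ is used only on simple graphs, where it is cited. Neither point is a serious obstacle.
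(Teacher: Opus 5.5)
Your proposal is correct and follows the paper's own route. The paper's proof is just Proposition~\ref{prop:ising} plus the equivalence $U_G\leftrightarrow\XB_G$; you spell out that the mutual determination $P_2(G)\leftrightarrow\GD_G$ is given by graph-independent maps (reading $n,m$ off the $S=V(G)$ term), which is exactly what legitimizes the transport across any class $\cG$, and your caveat about using $U_G\leftrightarrow\XB_G$ only on simple graphs is harmless but unnecessary, since that equivalence holds for all graphs.
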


\begin{proof}
Immediate from Proposition~\ref{prop:ising} and the equivalence of $U_G$ and $\XB_G$~\cite{NW,Sar,MN,Nob}.
\end{proof}

\begin{remark}
  \label{rem:markstrom-problem}
By the above corollary, the solution of Markstr\"om's Problem~\cite[7.6.4]{Mar}
for simple graphs would follow if no Merino--Noble pair exists among them.
In particular, Proposition~\ref{prop:computational-evidence} settles Markstr\"om's problem affirmatively
for all simple graphs on at most eleven vertices.

The implication is strict in the other direction: $P_2$ is genuinely weaker than $\LL$.
  The eight-vertex graphs \verb+G?qbrg+ and \verb+G?ovdW+ (in graph6 notation) have the same degree sequence $(2,2,3,3,3,3,4,4)$ and the same homomorphism polynomial $P_2$ but different loopy polynomials.
  On eight vertices there are $29$ classes of non-isomorphic graphs with equal $P_2$,
  and only $8$ with equal $\LL$.
\end{remark}

\begin{remark}[$P_2$ is the boundary]\label{rem:P3-boundary}
The relation $\widehat{\LL}_G\to P_2$ of Proposition~\ref{prop:ising} does not extend to $P_q$ with $q\ge 3$.  A map $\varphi:V(G)\to[q]$ is
  an ordered partition $V_1,\dots,V_q$ of $V(G)$ into possibly empty blocks, and $P_q(G)$ records
the block sizes $|V_i|$, the internal edge counts $e_G(V_i)$, \emph{and} the
individual cross counts $\bigl|[V_i,V_j]\bigr|$ for every pair $i<j$.  When
$q=2$ there is a single cross count and it is forced, being
$m-e_G(V_1)-e_G(V_2)$.  When $q\ge 3$ only the total
$\sum_{i<j}\bigl|[V_i,V_j]\bigr|=m-\sum_i e_G(V_i)$ is forced, and the individual counts are new data. The extended polychromate, which retains
only the pairs $\bigl(|B|,e_G(B)\bigr)$ blockwise, cannot supply them.

This is not merely a gap in the argument.  Each of the eight collision
classes of $\widehat{\LL}$ on eight vertices is separated by $P_3$, so
$\widehat{\LL}_G$ does not determine $P_3(G)$ (
compare~\cite[Observation~7.6.3]{Mar}).  Thus $q=2$ is exactly the order at
which the homomorphism polynomials cease to be loopy invariants.
\end{remark}

\subsection{Degree sequences of graphs with loops}\label{subsec:degree-loops}

Throughout this discussion $\ell_G(v)$ denotes, as in
Proposition~\ref{prop:extremal-terms}, the number of loops at $v$, and
\[
   \delta_G(v):=d_{G}(v)-2\ell_G(v)
\]
denotes the number of non-loop edges at $v$, counted with multiplicity, so
that $d_G(v)=\delta_G(v)+2\ell_G(v)$.

It is convenient to restore to $\widehat{\LL}_G$ the grading that
$\LL_G$ carries.  Put
\begin{equation}\label{eq:graded-refined}
   \widetilde{\LL}_G(t,\bu)
   :=
   \sum_{F\in\cF(G)}
      t^{\,m(G)-|F|-\eps(F)}
      \prod_{T\in c(F)}u_{|V(T)|,\,\eps_F(T)} ,
\end{equation}
so that $\LL_G(t,\bx)=\widetilde{\LL}_G(t,\bu)\big|_{u_{s,r}=x_{s-1+r}}$
and, by~\eqref{eq:refined-to-loopy},
$\widetilde{\LL}_G(t,\bu)=t^{m(G)}\widehat{\LL}_G(\bu)
 \big|_{u_{s,r}\mapsto t^{-(s-1+r)}u_{s,r}}$.
This is not a new invariant.  Since $|F|=n(G)-|c(F)|$ and
$\eps(F)=\sum_{T}\eps_F(T)$, the exponent of $t$ attached to a
monomial $\prod_{T}u_{s_T,r_T}$ equals
$m(G)-n(G)+\sum_{T}(1-r_T)$ and is therefore determined by the monomial
together with $n(G)$ and $m(G)$. Both of them$n(G)$ and $m(G)$ are determined by
$\widehat{\LL}_G$, so $\widetilde{\LL}_G$ and $\widehat{\LL}_G$ determine one
another.  The point of~\eqref{eq:graded-refined} is only that it displays the
two markers at once.

\begin{proposition}[Refined vertex-deletion identity]
\label{prop:refined-vertex-deletion}
Let $G$ be a graph, possibly with loops and multiple edges, and let $j\ge 0$.
Then
\begin{equation}\label{eq:refined-vertex-deletion}
   \frac{\partial}{\partial u_{1,j}}\widetilde{\LL}_G(t,\bu)
   =
   \sum_{\substack{v\in V(G)\\ \ell_G(v)=j}}
      t^{\,\delta_G(v)}\,\widetilde{\LL}_{G-v}(t,\bu).
\end{equation}
\end{proposition}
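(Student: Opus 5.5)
The plan is to mimic the proof of Proposition~\ref{prop:vertex-deletion}, now working with the graded refined expansion~\eqref{eq:graded-refined}. The first step is to identify which factors the derivative sees. A factor $u_{1,j}$ in a forest monomial comes exactly from a singleton component $\{v\}$ of $F$ with $\eps_F(\{v\})=j$. Relative to a singleton component the only edges that can be assigned to it are loops at $v$. A non-loop edge at $v$ joins $\{v\}$ to a different component, so it is never active. Every loop is always active. Hence $\eps_F(\{v\})=\ell_G(v)$, and the variable $u_{1,j}$ marks precisely the singleton components at vertices with $\ell_G(v)=j$. This is exactly why the ambiguity described after Theorem~\ref{thm:L-degree-sequence} disappears for $\widetilde{\LL}_G$.

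Next, differentiating $\widetilde{\LL}_G$ with respect to $u_{1,j}$ produces a sum over pairs $(F,v)$, where $v$ is a singleton component of $F$ with $\ell_G(v)=j$. Each forest is counted once for each such marked vertex, which matches the exponent of $u_{1,j}$ in its monomial. For a fixed $v$, removing $\{v\}$ gives a bijection $F\mapsto F'$ from spanning forests of $G$ having $v$ as an isolated vertex to spanning forests of $G-v$. Every edge not incident with $v$ has the same fundamental cycle and the same component assignment in $F$ and in $F'$, so its activity status is unchanged. Consequently all other components keep their data $(|V(T)|,\eps_F(T))$, and we get $\eps_G(F)=\eps_{G-v}(F')+j$.

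Then I will compare the powers of $t$. We have $|F|=|F'|$ and $m(G)-m(G-v)=\delta_G(v)+\ell_G(v)=\delta_G(v)+j$. The exponent therefore becomes
\[
m(G)-|F|-\eps_G(F)=\bigl(m(G-v)+\delta_G(v)+j\bigr)-|F'|-\bigl(\eps_{G-v}(F')+j\bigr)=\delta_G(v)+m(G-v)-|F'|-\eps_{G-v}(F').
\]
The $j$ active loops cancel the $j$ removed loops, which is why $\delta_G(v)$ rather than $d_G(v)$ appears. Summing over $v$ and $F'$ then gives~\eqref{eq:refined-vertex-deletion}.

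The only point needing care is the first step: checking that no non-loop edge, including a parallel edge at $v$, can be assigned to a singleton component, and that no nonsingleton component can produce a variable with first index $1$. Both facts follow directly from the definition of activity, so I do not expect any real obstacle.
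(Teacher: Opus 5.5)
Your proposal is correct and follows essentially the same route as the paper's proof. Both identify the $u_{1,j}$-factors with singleton components $\{v\}$ satisfying $\eps_F(\{v\})=\ell_G(v)=j$, use the bijection $F\mapsto F'$ with spanning forests of $G-v$, and cancel the $\ell_G(v)$ active loops against the $\ell_G(v)$ deleted loops in the exponent of $t$ to leave $t^{\delta_G(v)}$, including the same multiplicity check for forests with several marked singletons.
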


\begin{proof}
Differentiate~\eqref{eq:graded-refined}.  A component $T$ of a spanning
forest $F$ contributes the variable $u_{1,j}$ precisely when $T=\{v\}$ is a
singleton with $\eps_F(\{v\})=j$.  Now if $v$ is isolated in $F$, the
edges assigned to $\{v\}$ that are externally active are exactly the loops at
$v$: a loop is always active and is assigned to the component of its vertex,
whereas a non-loop edge at $v$ has its endpoints in distinct components of
$F$ and so is neither active nor assigned to $\{v\}$.  Hence
$\eps_F(\{v\})=\ell_G(v)$ for every such $F$, and the singleton
components contributing $u_{1,j}$ are exactly the vertices with
$\ell_G(v)=j$.  This is the step that fails for $x_0$ in $\LL_G$: the first
index of $u_{1,j}$ identifies the component as a singleton, and the second is
then free to record the loops.

Fix a vertex $v$ with $\ell_G(v)=j$.  Marking the component $\{v\}$ and
deleting it leaves an arbitrary spanning forest $F'$ of $G-v$, and this
correspondence is a bijection.  Every edge with both endpoints in
$V(G)\setminus\{v\}$ has the same external activity status with respect to
$F$ in $G$ as with respect to $F'$ in $G-v$, since its fundamental cycle
avoids $v$. The componentwise activities of the components inherited from
$F'$ are unchanged.  Therefore
\[
   \eps(F)=\ell_G(v)+\eps(F'),
   \qquad
   |F|=|F'| .
\]
Deleting $v$ removes the $\ell_G(v)$ loops at $v$ together with the
$\delta_G(v)$ non-loop edges at $v$, so
$m(G)=m(G-v)+\delta_G(v)+\ell_G(v)$, and the two occurrences of $\ell_G(v)$
cancel:
\[
   m(G)-|F|-\eps(F)
   =
   \delta_G(v)+\bigl(m(G-v)-|F'|-\eps(F')\bigr).
\]
The term of $F$ in $\widetilde{\LL}_G$ is thus
$u_{1,\ell_G(v)}\,t^{\delta_G(v)}$ times the term of $F'$ in
$\widetilde{\LL}_{G-v}$.  Summing over $v$ with $\ell_G(v)=j$ and over $F'$
gives~\eqref{eq:refined-vertex-deletion}.  A forest with several such
singleton components occurs once for each choice of the marked one, exactly
as differentiation requires.
\end{proof}

For loopless $G$ only $j=0$ occurs, $u_{1,0}$ specializes to $x_0$ and
$\delta_G(v)=d_G(v)$, so~\eqref{eq:refined-vertex-deletion} reduces
to~\eqref{eq:vertex-deletion}.

\begin{theorem}\label{thm:degree-sequence-loops}
Let $G$ be a graph on $n$ vertices, possibly with loops and multiple edges.
For every $j\ge 0$,
\begin{equation}\label{eq:loop-degree-generating-function}
   \sum_{\substack{v\in V(G)\\ \ell_G(v)=j}} t^{\,\delta_G(v)}
   =
   (1-t)^{\,n-1}
   \left.
      \frac{\partial}{\partial u_{1,j}}\widetilde{\LL}_G(t,\bu)
   \right|_{u_{s,r}=(1-t)^{-1}} .
\end{equation}
Therefore the refined loopy polynomial $\widehat{\LL}_G$ determines the
joint distribution of the pair $\bigl(\ell_G(v),\delta_G(v)\bigr)$ over
$v\in V(G)$, and in particular the degree sequence of $G$.
\end{theorem}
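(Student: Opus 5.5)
The plan mirrors the proof of Theorem~\ref{thm:L-degree-sequence}, with Proposition~\ref{prop:refined-vertex-deletion} in place of Proposition~\ref{prop:vertex-deletion}. The one new ingredient is a constant specialization for $\widetilde{\LL}_G$, analogous to Lemma~\ref{lem:constant-specialization}.

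The first step is to show that for every graph $H$ on $n(H)$ vertices,
\[
   \left.\widetilde{\LL}_H(t,\bu)\right|_{u_{s,r}=(1-t)^{-1}}=(1-t)^{-n(H)}.
\]
This is immediate from the relation $\LL_H(t,\bx)=\widetilde{\LL}_H(t,\bu)\big|_{u_{s,r}=x_{s-1+r}}$. Setting every $u_{s,r}=c:=(1-t)^{-1}$ is the same as first sending $u_{s,r}\mapsto x_{s-1+r}$ and then setting every $x_i=c$. Lemma~\ref{lem:constant-specialization} gives the value $c^{n(H)}$, and it holds for arbitrary graphs, loops included.

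Next, I would substitute $u_{s,r}=(1-t)^{-1}$ into both sides of \eqref{eq:refined-vertex-deletion}. Each $G-v$ has $n-1$ vertices, so every term $\widetilde{\LL}_{G-v}$ on the right becomes the same factor $(1-t)^{-(n-1)}$, uniformly in $v$. The right-hand side therefore collapses to $(1-t)^{-(n-1)}\sum_{\ell_G(v)=j}t^{\delta_G(v)}$. Multiplying by $(1-t)^{n-1}$ gives \eqref{eq:loop-degree-generating-function}.

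One formal point needs checking: substitution must commute with differentiation in $u_{1,j}$. This is fine because we differentiate first and then substitute, exactly as the statement prescribes. The computation should be carried out in $\QQ(t)$, or in formal power series in $t$.

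For the consequences, recall from the discussion after \eqref{eq:graded-refined} that $\widetilde{\LL}_G$ and $\widehat{\LL}_G$ determine one another, since $n(G)$ and $m(G)$ are determined. Hence $\widehat{\LL}_G$ determines, for each $j$, the polynomial $\sum_{\ell_G(v)=j}t^{\delta_G(v)}$. The coefficient of $t^\delta$ in this polynomial is the number of vertices $v$ with $(\ell_G(v),\delta_G(v))=(j,\delta)$, which gives the joint distribution. The degree sequence follows from $d_G(v)=\delta_G(v)+2\ell_G(v)$.

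I expect no real obstacle. The only step needing care is confirming that $n(G)$ and $m(G)$ are recoverable from $\widehat{\LL}_G$ itself, not merely from $\LL_G$. For this, $n$ is the maximal $u$-degree, attained by the empty forest. Also, $\LL_G(1,\bx)$ is the specialization $u_{s,r}=x_{s-1+r}$ of $\widehat{\LL}_G$, so Corollary~\ref{th:cor}(4) recovers $m$.
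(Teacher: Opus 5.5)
Your proposal is correct and follows the paper's proof essentially step for step: you obtain the constant specialization of $\widetilde{\LL}_H$ by passing through $u_{s,r}\mapsto x_{s-1+r}$ and Lemma~\ref{lem:constant-specialization} (loops allowed), substitute it into Proposition~\ref{prop:refined-vertex-deletion}, and read off the joint distribution using the fact that $\widehat{\LL}_G$ determines $\widetilde{\LL}_G$. Your explicit recovery of $n$ and $m$ from $\widehat{\LL}_G$ (the top $u$-degree, coming from the empty forest, and Corollary~\ref{th:cor}(4) applied to $\LL_G(1,\bx)$) fills in a point that the paper only asserts.
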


\begin{proof}
Under $u_{s,r}=x_{s-1+r}$ the polynomial $\widetilde{\LL}_H$ becomes $\LL_H$,
so setting every $u_{s,r}$ equal to $(1-t)^{-1}$ is the specialization of
Lemma~\ref{lem:constant-specialization}, and
\[
   \widetilde{\LL}_H(t,\bu)\big|_{u_{s,r}=(1-t)^{-1}}=(1-t)^{-N}
\]
for every graph $H$ on $N$ vertices.  Loops are permitted here: the proof of
Lemma~\ref{lem:constant-specialization} uses only loopy deletion-contraction
along non-loop edges together with the initial condition
$\LL_{L_m}=x_m$, and a graph with no non-loop edges contributes
$\prod_{v}x_{\ell(v)}=c^{N}$.

Apply this to each summand on the right-hand side
of~\eqref{eq:refined-vertex-deletion}.  Each $G-v$ has $n-1$ vertices, so
\[
   \left.
      \frac{\partial}{\partial u_{1,j}}\widetilde{\LL}_G(t,\bu)
   \right|_{u_{s,r}=(1-t)^{-1}}
   =
   (1-t)^{-(n-1)}
   \sum_{\substack{v\in V(G)\\ \ell_G(v)=j}} t^{\,\delta_G(v)} ,
\]
which is~\eqref{eq:loop-degree-generating-function}.  The coefficient of
$t^{\delta}$ on the left is the number of vertices $v$ with $\ell_G(v)=j$ and
$\delta_G(v)=\delta$. Letting $j$ vary gives the joint distribution, and
$d_G(v)=\delta_G(v)+2\ell_G(v)$ gives the degree sequence.  Finally
$\widehat{\LL}_G$ determines $\widetilde{\LL}_G$, as observed
after~\eqref{eq:graded-refined}.
\end{proof}

\begin{remark}[Loops as vertex weights]\label{rem:loops-as-weights}
The mechanism behind Proposition~\ref{prop:refined-vertex-deletion} can be
stated once and for all.  Let $G^{-}$ be $G$ with all loops deleted.  No loop
lies in a spanning forest, every loop is externally active, and every loop is
assigned to the component of its vertex. Since also
$m(G)=m(G^{-})+\ell(G)$, the two occurrences of $\ell(G)$ cancel in the
exponent of $t$ and we obtain
\begin{equation}\label{eq:loops-as-weights}
   \LL_G(t,\bx)
   =
   \sum_{F\in\cF(G^{-})}
      t^{\,m(G^{-})-|F|-\eps(F)}
      \prod_{T\in c(F)}x_{|E(T)|+\eps_F(T)+\ell_G(T)},
   \quad \text{where} \quad
   \ell_G(T):=\sum_{v\in V(T)}\ell_G(v).
\end{equation}
Thus the loopy polynomial of a graph with loops is the loopy polynomial
of its underlying loopless graph with each vertex weighted by its loop
multiplicity, the index of each component being shifted by the total weight
of that component.  Theorem~\ref{thm:degree-sequence-loops} is in this
reading a statement about vertex-weighted graphs: it recovers the joint
distribution of weight and degree.
\end{remark}

Two comparisons are worth making.  Taking $j=0$ for a loopless graph
recovers Theorem~\ref{thm:L-degree-sequence}, so
Theorem~\ref{thm:degree-sequence-loops} is a strict extension of it.  And it
is a strict refinement of the second assertion of
Proposition~\ref{prop:extremal-terms}: the top of the forest expansion
gives the multiset $\{\ell_G(v)\}$ of loop multiplicities, while the
present argument gives that multiset together with the non-loop degree attached to each of its elements.

\begin{remark}[The degree sequences for $\LL$ and  $\widehat{\LL}$]
\label{rem:degree-L-vs-Lhat}
Theorem~\ref{thm:degree-sequence-loops} is a statement about
$\widehat{\LL}_G$, and it has to be: the marker $u_{1,j}$ is not available
in $\LL_G$, where a singleton component with $j$ loops contributes $x_j$
which, for $j\ge 1$, is also produced by nonsingleton components.
Whether $\LL_G$ alone determines the degree sequence of an arbitrary graph is another special case of Problem~\ref{prob:extended}. We record it as
Problem~\ref{prob:loop-degree}.
\end{remark}

A partial result is available.
Proposition~\ref{prop:extremal-terms} already extracts
from $\LL_G$ the multiset $\{\ell_G(v)\}$ of loop multiplicities, and the
next layer of the forest expansion goes one step further.  Write
$\mu_G(u,v)$ for the number of edges joining distinct vertices $u$ and $v$.

\begin{lemma}\label{lem:second-layer}
Let $G$ be a graph on $n$ vertices with $m$ edges.  The part of $\LL_G$ of
ordinary $\bx$-degree $n-1$ is
\begin{equation}\label{eq:second-layer}
   \sum_{\{u,v\}}\ \sum_{i=1}^{\mu_G(u,v)}
      t^{\,m-\ell(G)-i}\;
      x_{\,\ell_G(u)+\ell_G(v)+i}
      \prod_{w\ne u,v}x_{\ell_G(w)},
\end{equation}
the outer sum being over unordered pairs of distinct adjacent vertices.
So $\LL_G$ determines the multiset of triples
$\bigl(\{\ell_G(u),\ell_G(v)\},\,\mu_G(u,v)\bigr)$ over such pairs, and in particular the sums
$\ds \sum_{\substack{v\in V(G)\\ \ell_G(v)=j}}
\delta_G(v)$ for all $j\ge 0$.
\end{lemma}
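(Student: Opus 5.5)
The plan is to read the formula directly off the forest expansion of Theorem~\ref{thm:forests-sum}, just as in the proof of Proposition~\ref{prop:extremal-terms}. A forest term has ordinary $\bx$-degree equal to $|c(F)|=n-|F|$. Hence degree $n-1$ arises exactly from the one-edge forests $F=\{e\}$ with $e$ a non-loop edge. Fix an edge order in which, for every pair $\{u,v\}$, the $\mu_G(u,v)$ parallel edges joining $u$ and $v$ are listed as $b_1<\dots<b_\mu$; this is allowed since the expansion is order-independent.

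For $F=\{b_i\}$, we determine the externally active edges as follows.
\begin{itemize}
\item Every loop is active and is assigned to the component of its vertex.
\item A non-loop edge is active only if its endpoints lie in one component. The only nonsingleton component is $\{u,v\}$, so the only candidates are the other $b_j$.
\item Each such $b_j$ forms a two-edge cycle with $b_i$, so it is active exactly when $j<i$.
\end{itemize}
Thus the component $\{u,v\}$ has $|E(T)|+\eps_F(T)=1+(i-1)+\ell_G(u)+\ell_G(v)$, and each other vertex $w$ is a singleton carrying $x_{\ell_G(w)}$. The total activity is $\eps(F)=\ell(G)+i-1$, so the power of $t$ is $m-1-\eps(F)=m-\ell(G)-i$. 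Summing over pairs and over $i$ gives \eqref{eq:second-layer}, exactly as in the computation of Proposition~\ref{prop:multiedge}.

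For the consequences, the step needing care is that distinct triples can produce coinciding monomials, so we cannot naively read off one triple per monomial. We handle this in three steps.
\begin{itemize}
\item The multiset $\{\ell_G(w)\}$ is known from Proposition~\ref{prop:extremal-terms}, and $m$, $\ell(G)$ and $n$ are known as well.
\item A term of \eqref{eq:second-layer} with $t$-exponent $m-\ell(G)-i$ and monomial $x_a\prod_{w\ne u,v}x_{\ell_G(w)}$ records $i$ through the $t$-exponent. Dividing $\prod_{w}x_{\ell_G(w)}$ by the monomial's singleton part recovers the unordered pair $\{\ell_G(u),\ell_G(v)\}$. This division is unambiguous because the monomial equals $x_a\cdot\prod_w x_{\ell_G(w)}/(x_{\ell_G(u)}x_{\ell_G(v)})$.
\item For each such pair $p$ we then get $a-i=\ell_G(u)+\ell_G(v)$, and we let $N_i(p)$ be the number of vertex pairs of loop type $p$ with $\mu\ge i$.
\end{itemize}
The counts $N_i(p)$ are obtained by grouping the terms with $t$-exponent $m-\ell(G)-i$ according to $p$. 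One subtlety remains: one monomial could come from two different type-pairs $p\neq p'$ with the same product of the remaining $x$'s. But removing $x_{\ell(u)}x_{\ell(v)}$ from a fixed multiset determines $\{\ell(u),\ell(v)\}$, so distinct $p$ give distinct monomials and no collision occurs.

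The multiplicities are then $\#\{\text{pairs of type }p\text{ with }\mu=i\}=N_i(p)-N_{i+1}(p)$, which yields the multiset of triples. Finally, $\sum_{\ell_G(v)=j}\delta_G(v)$ is the sum of $\mu_G(u,v)$ over pairs, counting twice when both endpoints have $j$ loops. This follows from the multiset of triples by summing $\mu\cdot|\{x\in p: x=j\}|$ with multiplicity.
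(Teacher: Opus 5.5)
Your derivation of \eqref{eq:second-layer} from the forest expansion is correct and is the paper's argument. So is the final bookkeeping: the counts $N_i(p)$, their differences, and the degree sums. The gap is the one step that carries the second assertion: showing that a monomial of \eqref{eq:second-layer} determines the loop-type pair $p=\{\ell_G(u),\ell_G(v)\}$.

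Write $\Lambda=\{\ell_G(w)\}$. As a multiset of indices, a term's monomial is $(\Lambda\setminus p)\cup\{a\}$ with $a=\ell_G(u)+\ell_G(v)+i$. The monomial does not mark which factor is $x_a$, so ``the monomial's singleton part'' is not available to divide by. Your fallback, that removing $x_{\ell(u)}x_{\ell(v)}$ from a fixed multiset determines the pair, ignores the added factor $x_a$, whose index changes with $p$. Your argument never uses $i\ge 1$, yet the claim fails without it. With $i=0$ and $\Lambda=\{0,1,2\}$, the pairs $\{0,1\}$ and $\{0,2\}$ both give $\{1,2\}$.

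The missing ingredient is the strict inequality $a>\ell_G(u),\ell_G(v)$, which holds because $i\ge 1$. The paper uses it as follows. Suppose two pairs $\{a,b\}$ and $\{c,d\}$ give the same monomial. Adjoin $\{a,b\}\cup\{c,d\}$ to both sides and cancel $\Lambda$, which leaves $\{c,d,a+b+i\}=\{a,b,c+d+i\}$. Since $a+b+i$ exceeds both $a$ and $b$, it must equal $c+d+i$, and then $\{a,b\}=\{c,d\}$.

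Alternatively, your division idea becomes rigorous with one added observation. In the Laurent monomial $\prod_w x_{\ell_G(w)}/M=x_{\ell_G(u)}x_{\ell_G(v)}x_a^{-1}$, the factor $x_a$ cannot cancel against the numerator because $a>\ell_G(u),\ell_G(v)$. Hence the positive-exponent part is exactly $x_{\ell_G(u)}x_{\ell_G(v)}$. With either fix, the rest of your argument goes through.
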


\begin{proof}
  A forest term has ordinary $\bx$-degree equal to
  $n-|F|$,
  its number of components, so degree $n-1$ forces $F=\{e\}$ for a single non-loop edge
$e=uv$.  The components of $F$ are $T_e=\{u,v\}$, with
$|E(T_e)|=1$, and the singletons $\{w\}$ for $w\ne u,v$.  Relative to $F$
every loop is externally active and is assigned to the component of its
vertex. A non-loop edge other than $e$ is externally active only if its two
endpoints lie in one component of $F$, that is, only if it joins $u$ to $v$,
and then it is active precisely when it precedes $e$ in the chosen order.
If $e$ is the $i$-th smallest of the $\mu_G(u,v)$ edges joining $u$ to $v$,
there are $i-1$ such, so
\[
   \eps_F(T_e)=\ell_G(u)+\ell_G(v)+i-1,
   \qquad
   \eps_F(\{w\})=\ell_G(w),
   \qquad
   \eps(F)=\ell(G)+i-1 .
\]
Substituting into~\eqref{eq:forests} gives~\eqref{eq:second-layer}.

For the second assertion, note that $\ell(G)$ and the multiset
$\Lambda=\{\ell_G(v)\}$ are known by Proposition~\ref{prop:extremal-terms},
so the exponent of $t$ in a monomial of~\eqref{eq:second-layer} determines
$i$.  Suppose two pairs of loop numbers $\{a,b\}$ and $\{c,d\}$ drawn from
$\Lambda$ yield the same monomial, that is
$(\Lambda\setminus\{a,b\})\cup\{a+b+i\}
 =(\Lambda\setminus\{c,d\})\cup\{c+d+i\}$
as multisets.  Adjoining $\{a,b\}\cup\{c,d\}$ to both sides and cancelling
$\Lambda$ leaves
$\{c,d\}\cup\{a+b+i\}=\{a,b\}\cup\{c+d+i\}$.
Since $i\ge 1$ we have $a+b+i>a$ and $a+b+i>b$, so the element $a+b+i$ of the
left-hand side can only be matched by $c+d+i$. Thus, $a+b=c+d$ and, after
cancelling, $\{a,b\}=\{c,d\}$.  The pair $\{\ell_G(u),\ell_G(v)\}$ is
therefore determined by the monomial, and the coefficients
of~\eqref{eq:second-layer} then give, for each such pair and each $i$, the
number of adjacent pairs of that type with $\mu_G(u,v)\ge i$, hence the
multiset of triples.  Finally each triple
$\bigl(\{a,b\},\mu\bigr)$ contributes $\mu$ to $\delta_G(v)$ for each of its
endpoints, so the stated sums follow.
\end{proof}

What is missing in Problem~\ref{prob:loop-degree} is thus only the pairing:
$\LL_G$ supplies the loop multiplicities, and the total non-loop degree of the vertices carrying any given number of loops, but not which non-loop
degree goes with which loop multiplicity.  The two layers above do not settle it.
Among connected graphs on four vertices with edge multiplicities at most two and at most one loop per vertex, the data of
Proposition~\ref{prop:extremal-terms} and Lemma~\ref{lem:second-layer}
together fail to determine the joint distribution of
$\bigl(\ell_G(v),\delta_G(v)\bigr)$ in $165$ of $319$ cases. A positive answer would have to use the deeper layers of the forest expansion.

\subsection{All the invariants agree on forests}\label{subsec:forests}

\begin{proposition}[All the invariants agree on forests]\label{cor:forest-L-X}
Let $G$ be a forest.  Then
\[
   \LL_G,\qquad \widehat{\LL}_G,\qquad U_G,\qquad U^{\ext}_G,
   \qquad \eta^{\ext}_G,\qquad X_G
\]
determine one another.
\end{proposition}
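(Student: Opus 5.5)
Throughout, $G$ is a forest; I take it to have no loops, since a loop is a cycle. The plan is to show that each of the six invariants is an invertible relabeling of one polynomial,
\[
   \Phi_G(\bw):=\sum_{A\subseteq E(G)}\ \prod_{T\in c(A)}w_{|V(T)|},
\]
the generating function for the multisets of component orders over all edge subsets. The basic input is the observation already used in Corollary~\ref{cor:forest-L}. If $G$ is a forest, every $A\subseteq E(G)$ is a spanning forest. Every edge outside $A$ joins two distinct components of $A$, so $\EA(A)=\varnothing$ and $\eps_A(T)=0$ for every component $T$. In particular the intervals of Lemma~\ref{lem:activity-intervals} are singletons, and every component of every spanning subgraph has cyclomatic number $0$.

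I would now read off each invariant from Definition~\ref{def:refined-loopy}, \eqref{eq:extended-U}, Proposition~\ref{prop:U-forest}, Corollary~\ref{cor:forest-L} and \eqref{eq:X-zero-activity}. First, $\widehat{\LL}_G=\Phi_G|_{w_s=u_{s,0}}$, and $U^{\ext}_G=\Phi_G|_{w_s=z_{s,0}}$. Next, $U_G=\Phi_G|_{w_s=z_s}$, with no power of $y$ occurring. Finally, $\LL_G=\sum_A t^{m-|A|}\prod_T x_{|V(T)|-1}$ and $X_G=\sum_A(-1)^{|A|}\prod_T p_{|V(T)|}$. Each of $\widehat{\LL}_G$, $U^{\ext}_G$ and $U_G$ is the image of $\Phi_G$ under an injective renaming of variables, so each determines $\Phi_G$ and conversely. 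For $\LL_G$ and $X_G$ the extra factor depends only on $|A|$. Since $|A|=n-|c(A)|$, it is a function of the monomial $\prod_T w_{|V(T)|}$ together with $n$ and $m$: the ordinary degree of the monomial is $|c(A)|$.

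It then remains to check that $n$ and $m$ are recoverable from each invariant and from $\Phi_G$. From $\Phi_G$, the weighted degree $\sum_T|V(T)|$ of any monomial is $n$. The number $k$ of components of $G$ is the minimal ordinary degree, attained by $A=E(G)$, and $m=n-k$ because $G$ is a forest. From $\LL_G$ the numbers $n$ and $m$ come from Corollary~\ref{th:cor}. From $X_G$, the symmetric-function degree is $n$, and $k$ is the minimal number of power-sum factors in a monomial with nonzero coefficient. That coefficient is nonzero because all terms with the same number of components carry the same sign $(-1)^{n-|c(A)|}$, so there is no cancellation. The $p_s$ are algebraically independent, so $\Phi_G$ is recovered from $X_G$ by replacing $(-1)^{n-j}\prod_{i=1}^{j}p_{s_i}$ by $\prod_{i=1}^{j}w_{s_i}$. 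The same reasoning applied to the power of $t$ recovers $\Phi_G$ from $\LL_G$. Conversely, $\Phi_G$ together with $n$ and $m$ rebuilds $\LL_G$ and $X_G$.

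The extended polychromate is the one invariant not expressed directly through forests. For it I would invoke Theorem~\ref{thm:refined-extended-U} and the Merino--Noble equivalence $U^{\ext}_G\leftrightarrow\eta^{\ext}_G$ \cite[Corollary~4.13]{MN}, exactly as in the proof of Proposition~\ref{prop:block-statistics]}. That equivalence holds for all graphs, so in particular for forests. The only point I would treat as the main (mild) obstacle is the bookkeeping: the statement should be read as ``each invariant, restricted to forests, determines each other''. One must make sure that the recoveries of $n$, $m$ and $k$ use only the given invariant and the forest hypothesis, for instance $m=n-k$, and that no cancellation occurs in $X_G$. This is guaranteed by the sign observation above.
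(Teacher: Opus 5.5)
Your proposal is correct and follows essentially the same route as the paper. Every edge subset of a forest is a spanning forest with $\eps=0$, and every component has $\nu=0$. So $\widehat{\LL}_G$, $U^{\ext}_G$, $U_G$, $\LL_G$ and $X_G$ are invertible relabelings of one component-order generating function, and $\eta^{\ext}_G$ enters through $U^{\ext}_G$ by the Merino--Noble equivalence.

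The differences are cosmetic. You use an explicit hub polynomial $\Phi_G$ and check the recovery of $n$, $m$ and the signs in $X_G$ by hand. The paper instead pivots through $U_G$ and $\LL_G(1,\bx)$, using Corollary~\ref{th:cor}(4) for the $t$-grading and the specialization $X_G=(-1)^{|V(G)|}U_G|_{z_s=-p_s}$.
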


\begin{proof}
Every spanning subgraph of $G$ is a forest and has cyclomatic number zero.
Therefore $U_G$ is independent of $y$, and
\[
   \LL_G(1,\bx)
   =
   \left.
      U_G(\bz,y)
   \right|_{z_s=x_{s-1}}.
\]
The substitution $z_s\mapsto x_{s-1}$ is a bijection between the variable sets $\{z_s:s\ge 1\}$
and $\{x_i:i\ge 0\}$ actually occurring, so it can be inverted and the implication runs in both directions.
Since $\LL_G(t,\bx)$ is determined by $\LL_G(1,\bx)$ by Corollary~\ref{th:cor}(4), the loopy and $U$-polynomials determine one another.
For forests the standard specialization
\[
   X_G=(-1)^{|V(G)|}
   \left.U_G(\bz,y)\right|_{z_s=-p_s}
\]
is invertible at the level of coefficients in the power-sum basis, which brings in $X_G$.

The two extended invariants collapse for the same reason.
Since every component $D$ of every spanning subgraph has $\nu(D)=0$, the second index of $U^{\ext}_G$ is always $0$ and $U^{\ext}_G$ is the relabelling $z_{s,0}\mapsto z_s$ of $U_G$.
Dually, no edge outside a spanning forest $F$ of $G$ can close a cycle, so $\eps(F)=0$ for every $F$, and $\widehat{\LL}_G$ is the relabelling $u_{s,0}\mapsto x_{s-1}$ of $\LL_G(1,\bx)$.
Finally $\eta^{\ext}_G$ and $U^{\ext}_G$ determine one another for every  graph~\cite[Corollary~4.13]{MN}.
\end{proof}

For forests the two rows of the diagram~\eqref{eq:common-diagram-intro}
therefore collapse into one.
There is no activity or cyclomatic information to discard,
so the ordinary and refined invariants determine one another.
In particular, the equivalence conjectured in
Section~\ref{sec:tutte-symmetric} holds on forests.  The conjecture becomes nontrivial
only in the presence of cycles, when the $U$-polynomial and the
loopy polynomial retain different compressions of the additional activity data.

For the same reason Problem~\ref{prob:extended} is vacuous on forests.
Its first nontrivial case is the connected unicyclic one.  There every spanning
forest has activity at most one, and by
Lemma~\ref{lem:unicyclic-activity} the active forests are precisely those containing
all cycle edges except the least one.  This case is studied in
Section~\ref{subsec:unicyclic}.

\begin{remark}[The tree case of Crew's conjecture]\label{rem:crew-trees}
Proposition~\ref{cor:forest-L-X} bears on the conjecture discussed in
Remark~\ref{rem:gdp-ordinary}.  For a forest $G$ the chromatic symmetric
function determines $\eta^{\mathrm{ext}}_G$, and by
Corollary~\ref{cor:gdp} the extended polychromate determines
$\mathcal{GD}_G$.  Hence $X_G$ determines the generalized degree polynomial
of a forest, which for trees is one of the results proved by Aliste-Prieto,
Martin, Wagner and Zamora~\cite{AMWZ}.  The same chain gives the refinement
to $b$-tuples of vertex sets studied in~\cite{LT}.

Two qualifications belong with this.  First, this route is not effective.
The equivalence $U^{\mathrm{ext}}\leftrightarrow\eta^{\mathrm{ext}}$
of~\cite[Corollary~4.13]{MN} is obtained by transitivity, and Merino and
Noble observe that writing the substitution down explicitly ``would be very
complicated.''  What~\cite{AMWZ} supplies is an explicit \emph{linear}
recovery of the generalized degree sequence from $X_T$; that is what makes
the result usable, as the applications in~\cite{LT} show, and the argument
above does not provide it.  Second, the route is confined to forests.  By
Proposition~\ref{cor:forest-L-X} it is available exactly where the ordinary
and extended levels coincide, so it says nothing about graphs with cycles,
for which the corresponding statement rests on the open problem of
Remark~\ref{rem:gdp-ordinary}.  We have not found this route in the literature.
The extended invariants do not appear in~\cite{AMWZ}, whose methods are quite different,
and the equivalences the route uses, though classical, are not widely invoked.
The two circles of ideas seem simply not to have met.
\end{remark}

\section{Adding loops and weights}\label{sec:loops-weights}

Loops are built into the loopy polynomial from the outset, and this
section asks what they buy.  Adjoining loops to a graph is the same thing
as giving its vertices nonnegative integer weights, and the two extreme
cases behave very differently: adding enough loops uniformly at every vertex
recovers $\widehat{\LL}_G$ exactly, whereas adding a single loop at a
single vertex produces data that $\widehat{\LL}_G$ does not determine.
We begin with the observation that makes both statements available.

\subsection{Adding loops uniformly}\label{subsec:uniform-loops}

Problem~\ref{prob:extended} can be restated without reference to
$\widehat{\LL}$ at all.  The device is to add loops uniformly.

For a graph $G$ and an integer $j\ge 0$, let $G^{+j}$ denote the graph
obtained from $G$ by adjoining $j$ loops at every vertex, so that
$G^{+0}=G$.

\begin{proposition}[Uniform loop addition]\label{prop:uniform-loops}
Let $G$ be a graph on $n$ vertices with $m$ edges, possibly with loops and
multiple edges.  For every $j\ge 0$,
\begin{equation}\label{eq:uniform-loops}
   \LL_{G^{+j}}(t,\bx)
   =
   \sum_{F\in\cF(G)}
      t^{\,m-|F|-\eps(F)}
      \prod_{T\in c(F)}
         x_{(1+j)|V(T)|-1+\eps_F(T)} ;
\end{equation}
in particular $\LL_{G^{+j}}$ is a specialization of $\widehat{\LL}_G$.  If
moreover $j\ge m$, this specialization is invertible: $\LL_{G^{+j}}$
determines $\widehat{\LL}_G$.
\end{proposition}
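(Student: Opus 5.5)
The formula will come from the loops-as-weights identity \eqref{eq:loops-as-weights}, applied to $G^{+j}$. Strictly speaking, that identity is stated with $G^-$ the underlying loopless graph, while $G$ may already carry loops. I will therefore work directly with the forest expansion of Theorem~\ref{thm:forests-sum}, ordering the edges of $G^{+j}$ so that the new loops form a block of their own. The spanning forests of $G^{+j}$ are exactly those of $G$. Every new loop is externally active and is assigned to the component of its vertex, and the activity status and assignment of each original edge are unchanged. Hence for $F\in\cF(G)$ and $T\in c(F)$ the activity computed in $G^{+j}$ is $\eps_F(T)+j|V(T)|$, and the total is $\eps(F)+jn$. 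Since $m(G^{+j})=m+jn$, the exponent of $t$ becomes $m+jn-|F|-\eps(F)-jn=m-|F|-\eps(F)$. The index of $T$ is $|V(T)|-1+j|V(T)|+\eps_F(T)=(1+j)|V(T)|-1+\eps_F(T)$, which gives \eqref{eq:uniform-loops}.

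Comparing with the graded form \eqref{eq:graded-refined}, the identity \eqref{eq:uniform-loops} is the substitution $u_{s,r}\mapsto x_{(1+j)s-1+r}$ in $\widetilde{\LL}_G(t,\bu)$. Since $\widetilde{\LL}_G$ is determined by $\widehat{\LL}_G$, as noted after \eqref{eq:graded-refined}, $\LL_{G^{+j}}$ is a specialization of $\widehat{\LL}_G$.

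For invertibility when $j\ge m$, the plan is to show that the map $(s,r)\mapsto(1+j)s-1+r$ is injective on the pairs that can actually occur. These are pairs with $s\ge 1$ and $0\le r\le \eps_F(T)\le m$. Write $N=(1+j)s-1+r=(1+j)(s-1)+(j+r)$. Because $0\le r\le m\le j$, we have $j\le j+r\le 2j$. This is not quite a remainder range mod $1+j$, since $2j\ge 1+j$ when $j\ge1$, so I will instead use $N+1=(1+j)s+r$ with $0\le r\le j<1+j$. Then $s=\lfloor (N+1)/(1+j)\rfloor$ and $r=(N+1)\bmod(1+j)$ are recovered uniquely.

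The substitution $u_{s,r}\mapsto x_{(1+j)s-1+r}$ is therefore an injective relabelling of the variables appearing in $\widetilde{\LL}_G$. It sends distinct monomials to distinct monomials, so its inverse relabelling recovers $\widetilde{\LL}_G$, and hence $\widehat{\LL}_G$ by setting $t=1$, from $\LL_{G^{+j}}$.

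The step I expect to need care is checking the bound $\eps_F(T)\le m$, together with the fact that $j$ and $m$ are determined by the data. The bound is clear, since active edges are distinct edges of $G$. For the second point, the inversion presupposes that one knows which $j$ was used, and the statement is naturally read with $j$ given. Since $n$ is read from $\LL_{G^{+j}}$ by Corollary~\ref{th:cor}(5), knowing $j$ also gives $m=m(G^{+j})-jn$, which is needed to pass from $\widetilde{\LL}_G$ to $\widehat{\LL}_G$.
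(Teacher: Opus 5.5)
Your proof is correct and is essentially the paper's argument. Like the paper, you use the forest expansion of $G^{+j}$ with each new loop active and assigned to its component, cancel the $jn$ terms in the exponent of $t$, and recover $(s,r)$ as the quotient and remainder of $(1+j)s+r$ on division by $1+j$. Treating $j$ as given is in fact the safer reading: the paper reads $j$ off as the constant loop multiplicity of $G^{+j}$, which is valid only for loopless $G$ (e.g.\ $L_1^{+1}=L_0^{+2}=L_2$ although $\widehat{\LL}_{L_1}=u_{1,1}\ne u_{1,0}=\widehat{\LL}_{L_0}$). Note also that passing from $\widetilde{\LL}_G$ to $\widehat{\LL}_G$ is just setting $t=1$, so it needs no knowledge of $m$.
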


\begin{proof}
No loop lies in a spanning forest, so $\cF(G^{+j})=\cF(G)$.  Every loop is
externally active and is assigned to the component containing its vertex,
whence
\[
   \eps^{G^{+j}}_F(T)=\eps_F(T)+j\,|V(T)|,
   \qquad
   \eps^{G^{+j}}(F)=\eps(F)+jn .
\]
Since also $m(G^{+j})=m+jn$, the two occurrences of $jn$ cancel in the
exponent of $t$, while
$|E(T)|+\eps^{G^{+j}}_F(T)=(|V(T)|-1)+\eps_F(T)+j|V(T)|$.
Substituting into the forest expansion~\eqref{eq:forests} for $G^{+j}$
gives~\eqref{eq:uniform-loops}.  The right-hand side depends on $F$ only
through the numbers $|V(T)|$ and $\eps_F(T)$, so it is the
specialization $u_{s,r}\mapsto t^{-(s-1+r)}x_{(1+j)s-1+r}$ of
$\widehat{\LL}_G$, up to the factor $t^{m}$, exactly as
in~\eqref{eq:refined-to-loopy}.

Suppose now that $j\ge m$, and write $s=|V(T)|$ and $r=\eps_F(T)$, so
that the index appearing in~\eqref{eq:uniform-loops} is $(1+j)s-1+r$.  Then
$0\le r\le\eps(F)\le m<1+j$, so $s$ and $r$ are respectively the
quotient and the remainder of $(1+j)s+r$ upon division by $1+j$.  Each
monomial of $\LL_{G^{+j}}$ therefore determines the multiset
$\bigl\{(|V(T)|,\eps_F(T))\bigr\}_{T\in c(F)}$, that is, the
corresponding monomial of $\widehat{\LL}_G$.  Finally the integers $n$, $m$
and $j$ are themselves read off from $\LL_{G^{+j}}$ by
Proposition~\ref{prop:extremal-terms}, which gives the order of $G^{+j}$,
its size $m+jn$, and its multiset of loop multiplicities, here constant
equal to $j$.
\end{proof}

\begin{corollary}\label{cor:extended-reformulated}
Problem~\ref{prob:extended} has an affirmative answer if and only if
\[
   \LL_{G}=\LL_{H}
   \quad\Longrightarrow\quad
   \LL_{G^{+j}}=\LL_{H^{+j}}
\]
for all loopless graphs $G,H$ and all $j\ge 0$; equivalently, for $j=m(G)$.
\end{corollary}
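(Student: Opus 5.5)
The plan is to derive the statement directly from Proposition~\ref{prop:uniform-loops}, reading Problem~\ref{prob:extended} as the question whether $\LL_G=\LL_H$ implies $\widehat{\LL}_G=\widehat{\LL}_H$ for loopless graphs. I will close a cycle of three implications: (a) an affirmative answer to Problem~\ref{prob:extended}; (b) the implication $\LL_G=\LL_H\Rightarrow\LL_{G^{+j}}=\LL_{H^{+j}}$ for all $j\ge0$; (c) the same implication for $j=m(G)$ only.

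For (a)$\Rightarrow$(b), suppose $\LL_G=\LL_H$. By Proposition~\ref{prop:extremal-terms}, or Corollary~\ref{th:cor}(4), this forces $m(G)=m(H)=:m$, and (a) gives $\widehat{\LL}_G=\widehat{\LL}_H$. Proposition~\ref{prop:uniform-loops} writes $\LL_{G^{+j}}$ as
\[
t^{m}\,\widehat{\LL}_G\big|_{u_{s,r}\mapsto t^{-(s-1+r)}x_{(1+j)s-1+r}} .
\]
The substitution depends only on $j$, and the prefactor depends only on $m$, which is common to $G$ and $H$. Hence $\LL_{G^{+j}}=\LL_{H^{+j}}$. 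The implication (b)$\Rightarrow$(c) is trivial.

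For (c)$\Rightarrow$(a), again let $\LL_G=\LL_H$, so $m(G)=m(H)=m$, and (c) gives $\LL_{G^{+m}}=\LL_{H^{+m}}$. The second half of Proposition~\ref{prop:uniform-loops}, applied with $j=m\ge m$, then recovers $\widehat{\LL}_G$ from $\LL_{G^{+m}}$ and $\widehat{\LL}_H$ from $\LL_{H^{+m}}$. Equal inputs therefore give $\widehat{\LL}_G=\widehat{\LL}_H$.

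The one point I expect to need care is this last step: the recovery in Proposition~\ref{prop:uniform-loops} must be a function of the polynomial $\LL_{G^{+j}}$ alone, not of the graph $G$. I will check this against the proof of that proposition. The parameters $n$, $m+jn$ and $j$ are all read off from $\LL_{G^{+j}}$ via Proposition~\ref{prop:extremal-terms}, and hence so is $m$. Each component index $(1+j)s-1+r$ is then decoded by division by $1+j$, which is valid because $r\le m<1+j$. The grading $t^{m-|F|-\eps(F)}$ is fixed by the monomial together with $n$ and $m$, as observed after~\eqref{eq:graded-refined}. With this uniformity checked, equal loopy polynomials of $G^{+m}$ and $H^{+m}$ yield equal refined polynomials, and the cycle of implications is closed.
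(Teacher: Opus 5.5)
Your proof is correct and follows the paper's argument. Equal loopy polynomials force equal order and size; the forward direction uses that $\LL_{G^{+j}}$ is a specialization of $\widehat{\LL}_G$ depending only on $j$ and $m$; and the converse uses the invertibility for $j=m$ from Proposition~\ref{prop:uniform-loops}. Your added check, that the inversion depends only on the polynomial $\LL_{G^{+j}}$ with $n$, $m$, $j$ read off via Proposition~\ref{prop:extremal-terms}, is exactly what the last paragraph of that proposition's proof supplies.
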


\begin{proof}
  If $\LL_G=\LL_H$ then $G$ and $H$ have the same order $n$ and the same size $m$, by Proposition~\ref{prop:extremal-terms}.
Should Problem~\ref{prob:extended} have an affirmative answer, then
$\widehat{\LL}_G=\widehat{\LL}_H$, and $\LL_{G^{+j}}$ and $\LL_{H^{+j}}$ are
the same specialization of these by
Proposition~\ref{prop:uniform-loops}, hence equal.  Conversely, if
$\LL_{G^{+m}}=\LL_{H^{+m}}$, the second assertion of
Proposition~\ref{prop:uniform-loops} gives
$\widehat{\LL}_G=\widehat{\LL}_H$.
\end{proof}

Thus the passage from $\LL$ to $\widehat{\LL}$ is not a passage to a different kind of object:
it is the passage from $G$ to $G^{+j}$ for $j$ large.
Problem~\ref{prob:extended} asks whether the loopy polynomial is stable under adding loops uniformly,
and a single value of $j$ suffices to decide it.
This also shows that the loopless and looped halves of that problem are not independent:
the loopless case is already a statement about graphs carrying loops.

\subsection{Adding a loop at one vertex}\label{subsec:single-loop}

Uniform loop addition is one extreme.  The other is to add loops at a single vertex, and it is
instructive that the two behave quite differently.

For $v\in V(G)$ and $j\ge 0$ let $G+j\lambda_v$ denote the graph obtained from $G$
by adjoining $j$ loops at $v$ and nothing elsewhere.
Taking the weight function $j\bOne_v$ in~\eqref{eq:loops-as-weights} gives
\begin{equation}\label{eq:single-loop}
 \LL_{G+j\lambda_v}(t,\bx)  =  \sum_{F\in\cF(G)}  t^{\,m-|F|-\eps(F)}
      \prod_{T\in c(F)} x_{|E(T)|+\eps_F(T)+j\,[v\in T]} ,
\end{equation}
where $[v\in T]$ is $1$ if $T$ contains $v$ and $0$ otherwise.  The
operation is therefore as local as it could be: in each forest term exactly
one component has its index raised, namely the component containing $v$,
and the exponent of $t$ is untouched.

This suggests recording the marked component separately.  Marking a vertex is
standard practice for invariants of this family: the rooted $U$-polynomial of
Aliste-Prieto, de Mier and Zamora~\cite{ADZ} is the corresponding
construction on the $U$-side.  The following is its loopy counterpart.

\begin{definition}\label{def:pointed-loopy}
Let $G$ be a graph and $v\in V(G)$, and let $\by=(y_i)_{i\ge 0}$ be a
second family of variables.  Writing $T_v$ for the component of $F$
containing $v$, the \emph{pointed loopy polynomial} of $(G,v)$ is
\begin{equation}\label{eq:pointed-loopy}
   \LL_{G,v}(t,\bx,\by)
   :=
   \sum_{F\in\cF(G)}
      t^{\,m-|F|-\eps(F)}\,
      y_{|E(T_v)|+\eps_F(T_v)}
      \prod_{T\ne T_v}x_{|E(T)|+\eps_F(T)} .
\end{equation}
\end{definition}

Then $\LL_{G,v}\big|_{y_i=x_i}=\LL_G$, while
$\LL_{G,v}\big|_{y_i=x_{i+j}}=\LL_{G+j\lambda_v}$ for every $j\ge 0$
by~\eqref{eq:single-loop}.  Thus the whole family of loop additions at $v$
is packaged in a single invariant of the pointed graph $(G,v)$, and the
multiset
\[
   \bigl\{\LL_{G,v}\ :\ v\in V(G)\bigr\}
\]
is an isomorphism invariant of $G$.

Individually the polynomials $\LL_{G+\lambda_v}$ depend on the vertex
chosen.  Their sum does not: it is obtained from $\widehat{\LL}_G$ by a
differential operator.

\begin{proposition}\label{prop:loop-derivation}
Let
\[
   \cD
   :=
   \sum_{s\ge 1}\ \sum_{r\ge 0}
      s\,u_{s,r+1}\,\frac{\partial}{\partial u_{s,r}} .
\]
Then for every graph $G$, possibly with loops and multiple edges,
\begin{equation}\label{eq:loop-derivation}
   \sum_{v\in V(G)}\widehat{\LL}_{G+\lambda_v}(\bu)
   =
   \cD\,\widehat{\LL}_G(\bu).
\end{equation}
\end{proposition}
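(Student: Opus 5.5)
The plan is to compute both sides of \eqref{eq:loop-derivation} term by term from the forest expansion~\eqref{eq:refined-loopy}, using a single fixed edge order on $E(G)$. Theorem~\ref{thm:refined-extended-U} guarantees that $\widehat{\LL}$ does not depend on that order, so I am free to choose it. For each $v$, I order $E(G+\lambda_v)$ by placing the new loop anywhere (say last), keeping the order of $E(G)$ unchanged.

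First I compare the two graphs. Adding a loop creates no new spanning forests, so $\cF(G+\lambda_v)=\cF(G)$. The new loop is externally active for every forest and is assigned to the component $T_v$ containing $v$. Non-loop edges keep their fundamental cycles, and the loop never lies on one, so every other activity and assignment is unchanged. Hence, for $F\in\cF(G)$,
\[
   \eps^{G+\lambda_v}_F(T)=\eps_F(T)+[v\in T].
\]
This is the $j=1$ case of~\eqref{eq:single-loop} at the refined level. It follows that the term of $F$ in $\widehat{\LL}_{G+\lambda_v}$ is obtained from $\prod_{T}u_{|V(T)|,\eps_F(T)}$ by replacing the single factor $u_{|V(T_v)|,\eps_F(T_v)}$ with $u_{|V(T_v)|,\eps_F(T_v)+1}$.

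Next I sum over $v$ with $F$ fixed. Grouping the vertices by the component $T$ that contains them, each component $T$ is hit exactly $|V(T)|$ times. So the $F$-contribution to the left side is
\[
   \sum_{T\in c(F)}|V(T)|\,u_{|V(T)|,\eps_F(T)+1}\prod_{T'\ne T}u_{|V(T')|,\eps_F(T')}.
\]
On the other side, $\cD$ is a derivation. Applied to the monomial $M_F=\prod_T u_{s_T,r_T}$, Leibniz' rule gives $\sum_T s_T\,u_{s_T,r_T+1}\,M_F/u_{s_T,r_T}$. This needs the right count when several components share the same pair $(s,r)$. If a variable $u_{s,r}$ occurs with exponent $a$, differentiation produces a factor $a$, which matches the $a$ components carrying that label, each counted with weight $s$. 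The two $F$-contributions therefore agree, and summing over $F$ gives \eqref{eq:loop-derivation}.

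The only delicate point is the multiplicity bookkeeping just described: the exponent of a repeated variable must correspond to distinct components. The combinatorial content itself is the observation that one loop raises exactly the activity of $T_v$ and nothing else, and that step is immediate. I also note that the argument uses the same edge order for $G$ and every $G+\lambda_v$, which is legitimate because of the order-independence in Theorem~\ref{thm:refined-extended-U}.
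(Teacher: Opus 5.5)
Your proposal is correct and follows essentially the same route as the paper's proof. Both use $\cF(G+\lambda_v)=\cF(G)$ and the fact that the new loop raises only $\eps_F(T_v)$ by one. Both then match the $|V(T)|$-fold count over vertices of each component with the action of $\cD$ on the forest monomial, including repeated variables. Your explicit remark that a common edge order may be used, by Theorem~\ref{thm:refined-extended-U}, is a small added precision that the paper leaves implicit.
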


\begin{proof}
A loop lies in no spanning forest, so $\cF(G+\lambda_v)=\cF(G)$, and the
added loop is externally active and assigned to the component containing
$v$, so $\eps^{G+\lambda_v}_F(T)=\eps_F(T)+[v\in T]$.  Hence
the monomial that $F$ contributes to $\widehat{\LL}_{G+\lambda_v}$ is the
monomial it contributes to $\widehat{\LL}_G$ with the single factor indexed
by $T_v$ replaced by $u_{|V(T_v)|,\,\eps_F(T_v)+1}$.

Summing over $v$, each forest $F$ contributes, for every component
$T_0\in c(F)$, the monomial of $F$ with the factor of $T_0$ so replaced,
once for each of the $|V(T_0)|$ vertices lying in $T_0$.  On the other side,
$\partial/\partial u_{s,r}$ applied to the monomial of $F$ counts its
factors equal to $u_{s,r}$ and removes one of them, and multiplication by
$s\,u_{s,r+1}$ restores such a factor with its second index raised by one
and weights the result by $s$.  Summing over $s$ and $r$ therefore produces,
for each component $T_0$ of $F$, the same monomial with multiplicity
$|V(T_0)|$.  The two sides agree term by term.
\end{proof}

Since $\cD$ does not depend on the graph and
$V(G+\lambda_v)=V(G)$, the proposition iterates:
\begin{equation}\label{eq:loop-derivation-iterated}
   \cD^{\,k}\,\widehat{\LL}_G
   =
   \sum_{(v_1,\dots,v_k)\in V(G)^k}
      \widehat{\LL}_{G+\lambda_{v_1}+\cdots+\lambda_{v_k}} ,
\end{equation}
the sum being over ordered $k$-tuples, repetitions allowed.

It is worth contrasting the two loop operations.  Adding a loop at every
vertex is the substitution $u_{s,r}\mapsto u_{s,r+s}$, and by
Proposition~\ref{prop:uniform-loops} it is invertible once enough loops have
been added.  Adding a loop at one vertex, summed over the vertex, is the
derivation $\cD$.  A substitution and a derivation are different
kinds of operation, and only the first has a chance of being undone.

Proposition~\ref{prop:loop-derivation} accounts for the sum.  It does not
account for the individual terms, and it cannot: the multiset
$\{\LL_{G+\lambda_v}\}_{v\in V(G)}$ records which vertices lie in which
component of which forest, and $\widehat{\LL}_G$ does not.  The gap is
already visible on the smallest collisions of
Section~\ref{sec:extended-U}.

\begin{proposition}\label{prop:single-loop-exceeds}
There are non-isomorphic loopless graphs $G_1,G_2$ with
$\widehat{\LL}_{G_1}=\widehat{\LL}_{G_2}$ for which the multisets
\[
   \bigl\{\LL_{G_i+\lambda_v}\ :\ v\in V(G_i)\bigr\},
   \qquad i=1,2,
\]
differ.  Hence this multiset is not determined by $\widehat{\LL}_G$,
although by Proposition~\ref{prop:loop-derivation} its sum is.
\end{proposition}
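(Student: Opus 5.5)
This is an existence statement, so the proof will be an explicit pair $G_1,G_2$ together with a certificate of the two properties. For the pair I would take one of the smallest collision pairs for $\widehat{\LL}$ among loopless graphs, the ones referred to at the end of Section~\ref{subsec:single-loop} as the smallest collisions of Section~\ref{sec:extended-U}. The natural candidates come from the census of simple graphs in Section~\ref{sec:tutte-symmetric}: pairs with equal $\LL$, hence conjecturally equal $U$. If the collision is checked at the level of $U^{\ext}$, Theorem~\ref{thm:refined-extended-U} transfers the equality $U^{\ext}_{G_1}=U^{\ext}_{G_2}$ to $\widehat{\LL}_{G_1}=\widehat{\LL}_{G_2}$ by an invertible binomial transform. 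Otherwise I would compute $\widehat{\LL}$ directly from the forest sum \eqref{eq:refined-loopy} for a fixed edge order.

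To separate the multisets, I would use formula \eqref{eq:single-loop}, or equivalently the pointed polynomial of Definition~\ref{def:pointed-loopy} specialized at $y_i=x_{i+1}$. This gives $\LL_{G+\lambda_v}$ for each vertex $v$ as a single forest sum over $\cF(G)$, in which only the index of the component containing $v$ is raised. To avoid listing all forests, I would first look for a cheap statistic that already differs.

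The first thing to try is the layer of ordinary $\bx$-degree $n-1$, given by Lemma~\ref{lem:second-layer} applied to $G+\lambda_v$. A one-edge forest $\{u,w\}$ contributes $x_{2}\prod x_0$ if $v\in\{u,w\}$, and $x_1x_{1}\prod x_0$ otherwise. So this layer of $\LL_{G+\lambda_v}$ equals $t^{m-1}\bigl(d(v)\,x_2x_0^{n-2}+(m-d(v))\,x_1^2x_0^{n-3}\bigr)$. Together with the top layer this recovers $d(v)$, which is useless here: equal $\widehat{\LL}$ already forces equal degree sequences by Theorem~\ref{thm:degree-sequence-loops}.

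The next candidate is the two-edge layer (degree $n-2$). It records, for each vertex, how many paths of length two have $v$ as their middle vertex, as an endpoint, or avoid it, and how many pairs of disjoint edges contain it. This amounts to the multiset of pairs $\bigl(d(v),\sum_{u\sim v}d(u)\bigr)$. If the two chosen graphs have equal degree sequences but different joint distributions of a vertex's degree and its neighbours' degree sum, this layer will already separate the multisets, and the proof reduces to a short hand count.

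I expect the main obstacle to be the choice of pair. Collisions of $\widehat{\LL}$ probably agree on many low-order local statistics, so the separating information may sit only in deeper layers of the forest expansion, involving components with cycles and activity. In that case I would fall back on a direct computer evaluation of all $\LL_{G_i+\lambda_v}$ by loopy deletion-contraction, reported as a table, and present the degree-$(n-2)$ or degree-$(n-3)$ coefficient that differs as a human-checkable witness. The final sentence of the statement then follows at once from Proposition~\ref{prop:loop-derivation}: since $\widehat{\LL}_{G+\lambda_v}$ specializes to $\LL_{G+\lambda_v}$, the sum of the multiset is determined by $\cD\,\widehat{\LL}_G$.
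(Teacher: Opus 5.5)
Your strategy is the one the paper uses. Its proof names a $\widehat{\LL}$-collision pair from the census, the ten-vertex, fourteen-edge graphs \texttt{I?bB@\_wYO} and \texttt{I?bB@\_wk\_}, and reports that evaluating \eqref{eq:single-loop} at every vertex gives ten distinct polynomials for each graph, with different multisets. For an existential statement, that named pair and that computation are the whole proof, and your proposal supplies neither. It describes how you would search for a witness rather than exhibiting one, so as it stands nothing has been established. Your closing remark that the sums agree by Proposition~\ref{prop:loop-derivation} is correct.

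The hand-checkable shortcut you hope for also fails on the paper's pair, and your description of the two-edge layer needs correcting. For simple $G$, a two-edge forest contributes to $\LL_{G+\lambda_v}$ one of the monomials $t^{m-2}x_3x_0^{n-3}$, $t^{m-3}x_4x_0^{n-3}$, $t^{m-2}x_2x_1x_0^{n-4}$, $t^{m-3}x_3x_1x_0^{n-4}$, $t^{m-2}x_1^3x_0^{n-5}$. A path of length two gives the same monomial whether $v$ is its middle vertex or an endpoint. A path inside a triangle that omits the triangle's least edge is externally active, so activity already enters at this layer, and the layer also records the number $\mathrm{tr}(v)$ of triangles through $v$. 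Consequently the layer of $\bx$-degree $n-2$ depends only on the triple $\bigl(d(v),\sum_{u\sim v}d(u),\mathrm{tr}(v)\bigr)$ together with global data. For \texttt{I?bB@\_wYO} and \texttt{I?bB@\_wk\_} these triples form the same multiset, namely
$\{(2,5,0),(2,5,0),(2,6,0),(2,7,0),(3,8,0),(3,9,0),(3,9,0),(3,10,1),(4,12,1),(4,13,1)\}$ in both graphs. Hence the layers of $\bx$-degree $n$, $n-1$ and $n-2$ coincide as multisets, and any separating certificate must involve forests with at least three edges. In effect you are left with your fallback, a full machine evaluation of the twenty polynomials, which is exactly what the paper relies on.
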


\begin{proof}
Take the graphs on ten vertices and fourteen edges with \texttt{graph6}
codes \verb+I?bB@_wYO+ and \verb+I?bB@_wk_+.  They form one of the five
smallest collision classes of Section~\ref{sec:extended-U}, so
$\LL_{G_1}=\LL_{G_2}$ and $\widehat{\LL}_{G_1}=\widehat{\LL}_{G_2}$.  Direct
computation from~\eqref{eq:single-loop} gives ten distinct polynomials
$\LL_{G_i+\lambda_v}$ for each $i$, and the two multisets of ten are not
equal.  Their sums do agree, as they must.
\end{proof}

Each of the five collision classes on ten vertices with fourteen edges
behaves in this way.  A single loop at a single vertex therefore separates
graphs that $\LL$, $\widehat{\LL}$ and $\XB$ do not, which is a sharper form
of the observation that the loop variable is not a formal convenience:
part of the collision phenomenon of Section~\ref{sec:tutte-symmetric} is an
artefact of restricting attention to loopless graphs.

\subsection{Weights are loops}\label{subsec:weights}

\begin{remark}[Weights are loops]\label{rem:no-weighted-loopy}
Invariants of this family are usually extended to vertex-weighted graphs by
hand.  Noble and Welsh introduced the $W$-polynomial for that
purpose~\cite{NW,Nob}, and Crew and Spirkl did the same for the chromatic
symmetric function~\cite{CS2}, in both cases obtaining a
deletion-contraction relation in which contraction adds the weights of the
two merged vertices, see also~\cite{ACSZ}.  One might expect the loopy
polynomial to need the same treatment.  It does not, and the reason is
worth recording.

Give $G$ a weight function $w:V(G)\to\ZZ_{\ge 0}$ and shift the index of
each component of each spanning forest by the total weight of its vertices.
By~\eqref{eq:loops-as-weights} the result is
\[
   \LL_{G,w}=\LL_{G+w},
\]
where $G+w$ is $G$ with $w(v)$ loops adjoined at each vertex $v$.  A
weighted loopy polynomial is thus the ordinary loopy polynomial of a looped
graph, and there is no new invariant to define.

For the $U$-polynomial the corresponding statement fails, which is why the
$W$-polynomial was needed there.  A loop raises by one the cyclomatic number
of the component containing it and changes nothing else, so summing
over the presence and absence of a single adjoined loop multiplies
by $1+(y-1)$:
\[
   U_{G+\lambda_v}=y\,U_G
   \qquad\text{for every }v\in V(G).
\]
On the $U$-side a loop is worth a scalar factor and records nothing about
where it sits, so vertex weights cannot be encoded as loops.  Loopy
contraction, which retains the contracted edge as a loop, is exactly what
makes the difference, and the two propositions above measure what is thereby
retained: constant weights reproduce $\widehat{\LL}_G$ exactly and nothing
beyond it, by Proposition~\ref{prop:uniform-loops}, whereas a single unit
weight $w=\bOne_v$ already yields data that $\widehat{\LL}_G$ does not
determine, by Proposition~\ref{prop:single-loop-exceeds}.  A weighted loopy theory would therefore
be neither a generalization of the present one nor a special case of it,
but a redescription. It is the decision to admit
loops from the outset, rather than to attach weights to a loopless graph,
that makes the redescription unnecessary.
\end{remark}

\section{The equivalence conjecture}\label{sec:tutte-symmetric}

By Theorem~\ref{thm:refined-extended-U}
$\widehat{\LL}_G$ is equivalent to the extended
$U$-polynomial, while $U_G$, Brylawski's polychromate, and Stanley's Tutte
symmetric function are equivalent~\cite{NW,Sar,MN,Nob}.  Thus the comparison
with $\LL_G$ is a comparison between two different specializations of one common refinement.

More precisely, at $t=1$ the two relevant specializations of $\widehat{\LL}_G$ are
\begin{equation}\label{eq:two-specializations}
   u_{s,r}\longmapsto z_s y^r
   \qquad\text{and}\qquad
   u_{s,r}\longmapsto x_{s-1+r}.
\end{equation}
These give $U_G$ and $\LL_G(1,\bx)$ respectively, as recorded after
Corollary~\ref{cor:ordinary-U-refined}, and the grading
in~\eqref{eq:refined-to-loopy} recovers $\LL_G(t,\bx)$.  The main conjecture
of this section asserts that, on simple graphs, these two very different
losses of information distinguish exactly the same graphs.

\subsection{The Tutte symmetric function and the main conjecture}
\label{subsec:main-conjecture}

We use Stanley's normalization of the Tutte symmetric function~\cite{Sta2}:
\begin{equation}\label{eq:XB-coloring}
   \XB_G(q;\bw)
   =
   \sum_{\kp:V(G)\to\ZZ_{>0}}
   (1+q)^{b_G(\kp)}
   \prod_{v\in V(G)}w_{\kp(v)},
\end{equation}
where $b_G(\kp)$ is the number of monochromatic edges.  Expanding the
factor $(1+q)^{b_G(\kp)}$ over subsets of monochromatic edges gives
\begin{equation}\label{eq:XB-subgraph}
   \XB_G(q;\bw)
   =
   \sum_{A\subseteq E(G)}q^{|A|}
   \prod_{D\in c(A)}p_{|V(D)|}.
\end{equation}
In particular, $\XB_G(-1;\bw)=X_G$.

The relation with the $U$-polynomial is explicit.  If
$n=|V(G)|$, then
\begin{equation}\label{eq:XB-from-U}
   \XB_G(q;\bw)
   =
   \left.
      U_G(\bz,1+q)
   \right|_{z_s=q^{s-1}p_s}.
\end{equation}
Conversely, the coefficient of each monomial $z_\lambda$ in $U_G$ is
recovered from the coefficient of $p_\lambda$ in $\XB_G$.  Hence the two
invariants are equivalent~\cite{NW,MN,Nob}.  Together with Sarmiento's theorem
and its Merino--Noble formulation, recalled in
Section~\ref{subsec:ordinary-U-polychromate}, this gives
\[
   U_G\ \longleftrightarrow\ \eta_G\ \longleftrightarrow\ \XB_G.
\]

Combining~\eqref{eq:XB-from-U} with
Corollary~\ref{cor:ordinary-U-refined} gives $\XB_G$ as a specialization
of the refined loopy polynomial.

\begin{corollary}\label{cor:refined-L-determines-XB}
For every graph $G$,
\begin{equation}\label{eq:XB-from-refined-L}
   \XB_G(q;\bw)
   =
   \left.
      \widehat{\LL}_G(\bu)
   \right|_{u_{s,r}=q^{s-1}(1+q)^r p_s}.
\end{equation}
Equivalently,
\begin{equation}\label{eq:XB-activity-expansion}
   \XB_G(q;\bw)
   =
   \sum_{F\in\cF(G)}
   q^{|F|}(1+q)^{\eps(F)}
   \prod_{T\in c(F)}p_{|V(T)|}.
\end{equation}
\end{corollary}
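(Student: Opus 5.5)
The plan is to compose two specializations that are already in place and then check the composite one forest at a time. Start from \eqref{eq:XB-from-U}, which writes $\XB_G(q;\bw)$ as $U_G(\bz,1+q)$ with $z_s=q^{s-1}p_s$. Corollary~\ref{cor:ordinary-U-refined} in turn writes $U_G(\bz,y)$ as $\widehat{\LL}_G$ under $u_{s,r}=z_sy^r$. Substituting the first into the second gives
\[
u_{s,r}\longmapsto z_s(1+q)^r\longmapsto q^{s-1}p_s(1+q)^r,
\]
and this is exactly \eqref{eq:XB-from-refined-L}. Both steps are ring homomorphisms on polynomial rings, so composing them needs no further justification.

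For \eqref{eq:XB-activity-expansion}, apply the substitution term by term to the defining sum \eqref{eq:refined-loopy}. A forest $F$ contributes
\[
\prod_{T\in c(F)}q^{|V(T)|-1}(1+q)^{\eps_F(T)}p_{|V(T)|}.
\]
We then collect the exponents over the components of $F$. Since $\sum_T(|V(T)|-1)=|F|$ (each component is a tree), the powers of $q$ combine to $q^{|F|}$. Since $\sum_T\eps_F(T)=\eps(F)$, the powers of $1+q$ combine to $(1+q)^{\eps(F)}$. This gives \eqref{eq:XB-activity-expansion}.

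As an independent check that avoids \eqref{eq:XB-from-U}, one can derive the second formula directly from the subgraph expansion \eqref{eq:XB-subgraph} using Lemma~\ref{lem:activity-intervals}. Write $A=F\cup J$ with $J\subseteq\EA(F)$. Adding $J$ merges no components, so $c(A)$ and $c(F)$ have the same vertex sets. The interval belonging to $F$ then contributes
\[
q^{|F|}\sum_{J\subseteq\EA(F)}q^{|J|}\prod_T p_{|V(T)|}=q^{|F|}(1+q)^{\eps(F)}\prod_T p_{|V(T)|},
\]
and summing over $F$ reproduces \eqref{eq:XB-activity-expansion}.

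No step is a real obstacle. The only point needing care is that \eqref{eq:XB-from-U} is applied for the paper's normalization \eqref{eq:XB-coloring}, and this is confirmed by the direct interval computation above.
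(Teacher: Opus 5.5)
Your proof is correct and follows the paper's argument: compose the substitution $z_s=q^{s-1}p_s$, $y=1+q$ from \eqref{eq:XB-from-U} with Corollary~\ref{cor:ordinary-U-refined}, then collect exponents forest by forest using $\sum_{T\in c(F)}(|V(T)|-1)=|F|$. Your extra derivation from \eqref{eq:XB-subgraph} via Lemma~\ref{lem:activity-intervals} is also valid. It is the same interval computation the paper uses for Proposition~\ref{prop:U-forest}, and it gives a self-contained route that does not need \eqref{eq:XB-from-U}.
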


\begin{proof}
Substitute $z_s=q^{s-1}p_s$ and $y=1+q$ in
\eqref{eq:ordinary-U-refined}.  Since
\[
   \sum_{T\in c(F)}(|V(T)|-1)=|F|,
\]
the contribution of $F$ becomes exactly the summand in
\eqref{eq:XB-activity-expansion}.
\end{proof}

Spanning-tree and spanning-forest expansions of the Tutte symmetric function
in a more general vertex-weighted setting have appeared previously, see, for
example,~\cite{ACSZ}.  Formula~\eqref{eq:XB-activity-expansion} is the form
adapted to the componentwise activity conventions used here.

We can now state the main conjecture in a form that includes all three incarnations of the $U$-polynomial.

\begin{conjecture}
\label{conj:L-XB-equivalence}
For simple graphs $G$ and $H$,
\begin{equation}\label{eq:L-XB-conjecture}
   \LL_G=\LL_H
   \quad\Longleftrightarrow\quad
   U_G=U_H
   \quad\Longleftrightarrow\quad
   \eta_G=\eta_H
   \quad\Longleftrightarrow\quad
   \XB_G=\XB_H.
\end{equation}
The hypothesis that $G$ and $H$ be simple cannot be dropped: loops already
give a counterexample in Example~\ref{ex:loop-necessary}, while multiple
edges give one in Theorem~\ref{thm:MN-solved}.
\end{conjecture}

We refer to Conjecture~\ref{conj:L-XB-equivalence} as the \emph{equivalence conjecture}.
The last three conditions are classically equivalent~\cite{NW,Sar,MN,Nob}.
The conjectural content is the comparison with $\LL_G$.
In view of~\eqref{eq:two-specializations}, the conjecture can be phrased without
mentioning any particular presentation of the ordinary invariant: on the
locus of refined polynomials arising from simple graphs, the two specializations
\[
   u_{s,r}\mapsto z_s y^r,  \qquad  u_{s,r}\mapsto x_{s-1+r}
\]
should distinguish the same graphs.

For the computational and structural comparison it is useful that the same
connected-factor phenomenon proved for $\LL$ also holds for the classical
ordinary invariants.

\begin{proposition}[Connectedness and irreducibility]
\label{prop:U-XB-irreducible}
Let $G$ be a loopless graph.  Then $U_G(\bz,y)$ is irreducible in
$\QQ[y,z_1,z_2,\ldots]$ if and only if $G$ is connected.  Likewise,
when the Tutte symmetric function is written in power-sum coordinates,
$\XB_G(q;\bw)$ is irreducible in
$\QQ[q,p_1,p_2,\ldots]$ if and only if $G$ is connected.
\end{proposition}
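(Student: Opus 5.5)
Following the model of Proposition~\ref{prop:L-irreducible}, the plan is to show that for connected loopless $G$ the polynomial is monic of degree one in a distinguished variable that appears with no other variable of its kind. The disconnected direction is immediate: both $U_G$ and $\XB_G$ are multiplicative over disjoint unions, since a spanning subgraph of $G_1\sqcup G_2$ is a pair of spanning subgraphs and its components are the union of the components. The factors are nonconstant because each contains a monomial $z_1^{n_i}$ (respectively $p_1^{n_i}$), coming from the empty edge set. Hence a disconnected $G$ gives a nontrivial factorization.

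For the converse, let $G$ be connected on $n$ vertices. In~\eqref{eq:ordinary-U}, every monomial is $\prod_{D}z_{|V(D)|}$, and $\sum_D|V(D)|=n$. Thus $z_n$ occurs only from edge sets $A$ with a single component, that is, connected spanning subgraphs, and then it occurs to the first power with no other $z_s$. Write
\[
U_G=z_n\,c_G(y)+Q(y,z_1,\dots,z_{n-1}),\qquad c_G(y)=\sum_{A\text{ connected spanning}}(y-1)^{|A|-n+1}.
\]
By Proposition~\ref{prop:U-forest}, $c_G(y)=\sum_{T}y^{\eps(T)}$ over spanning trees, which is $T_G(1,y)$. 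This is a nonzero polynomial with nonnegative coefficients.

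Suppose $U_G=AB$. The $z_n$-degree is one, so one factor, say $A$, is free of $z_n$, and comparing $z_n$-coefficients gives $A\,[z_n]B=c_G(y)$. The key extra step compared with Proposition~\ref{prop:L-irreducible} is that $c_G$ need not be a unit. I would handle this as follows. Since $A$ divides $c_G(y)$ in $\QQ[y,z_1,\dots]$, the factor $A$ lies in $\QQ[y]$. Then $A$ also divides $Q$, hence divides every coefficient of $U_G$ viewed as a polynomial in the $z$'s over $\QQ[y]$. The coefficient of $z_1^n$ is $1$, coming from the empty edge set only: loopless means every nonempty $A$ has a component with at least two vertices. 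So $A$ is a constant, which is a unit over $\QQ$. This is exactly where looplessness is used; with loops the term would be $y^{\ell}z_1^n$ and indeed $U_{G+\lambda_v}=yU_G$ is reducible.

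For $\XB_G$ I would run the same argument in $\QQ[q,p_1,p_2,\dots]$ using~\eqref{eq:XB-subgraph}. Here $p_n$ appears linearly, with coefficient $\sum_{A\text{ connected}}q^{|A|}$, a nonzero polynomial in $q$. Any factor free of $p_n$ lies in $\QQ[q]$ and divides the $p_1^n$ coefficient, which is $1$ (again using no loops). The only potential obstacle is confirming that both $p_1^n$ and $z_1^n$ have coefficient exactly $1$, which is the looplessness check just described. No appeal to the $U$–$\XB$ equivalence is needed, although one could instead transfer irreducibility through~\eqref{eq:XB-from-U}.
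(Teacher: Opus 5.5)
Your proposal is correct and follows essentially the same route as the paper. Both arguments use that the polynomial is linear in $z_n$ (resp.\ $p_n$) with nonzero coefficient over connected spanning subgraphs, which forces a $z_n$-free factor into $\QQ[y]$ (resp.\ $\QQ[q]$), and then use the coefficient $1$ of $z_1^n$ (resp.\ $p_1^n$), which is exactly where looplessness enters, to make that factor a unit.
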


\begin{proof}
If $G$ is disconnected, both polynomials factor nontrivially over the
connected components by their spanning-subgraph formulas.  Suppose now that
$G$ is connected and has $n$ vertices.  In the state-sum
\eqref{eq:ordinary-U}, the variable $z_n$ occurs only when the spanning
subgraph is connected, so
\[
   U_G(\bz,y)
   =C_G(y)z_n+Q(y,z_1,\ldots,z_{n-1}),
\]
where
\[
   C_G(y)=
   \sum_{\substack{A\subseteq E(G)\\(V(G),A)\text{ connected}}}
   (y-1)^{|A|-n+1}
\]
is nonzero.  Moreover, because $G$ is loopless, the coefficient of $z_1^n$
in $U_G$ is $1$, coming only from the empty spanning subgraph.

Suppose $U_G=AB$.  Since its degree in $z_n$ is one, one factor, say $A$,
is independent of $z_n$.  Comparing the coefficient of $z_n$ shows that
$A$ divides the polynomial $C_G(y)$.  Hence, up to a nonzero rational
constant, $A$ lies in $\QQ[y]$.  But then $A$ divides every coefficient
of $U_G$ as a polynomial in the $z_i$, in particular the coefficient $1$ of
$z_1^n$.  Thus $A$ is a unit.

The proof for $\XB_G$ is identical using~\eqref{eq:XB-subgraph}.  The
coefficient of $p_n$ is the nonzero polynomial
\[
   \sum_{\substack{A\subseteq E(G)\\(V(G),A)\text{ connected}}}q^{|A|},
\]
whereas the coefficient of $p_1^n$ is $1$.  Hence a factor independent of
$p_n$ must first depend only on $q$ and then must be a unit.
\end{proof}

\begin{remark}\label{rem:tutte-irreducibility}
This result parallels (but does not follow from) the classical irreducibility theorem for the
Tutte polynomial of a connected matroid, see~\cite{MMN}.
\end{remark}

\begin{corollary}[Reduction to connected graphs]
\label{cor:connected-reduction}
Conjecture~\ref{conj:L-XB-equivalence} for simple graphs is equivalent to its
restriction to connected simple graphs.
\end{corollary}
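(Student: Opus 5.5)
One direction is trivial: connected simple graphs are simple graphs, so the full conjecture implies its restriction to connected graphs. For the converse, assume \eqref{eq:L-XB-conjecture} holds for all pairs of connected simple graphs, and let $G,H$ be arbitrary simple graphs with connected components $G_1,\dots,G_k$ and $H_1,\dots,H_l$. Since $U\longleftrightarrow\eta\longleftrightarrow\XB$ holds classically, it suffices to show $\LL_G=\LL_H\Longleftrightarrow U_G=U_H$.

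The plan is to use unique factorization on both sides. By multiplicativity, $\LL_G=\prod_i\LL_{G_i}$ and $U_G=\prod_iU_{G_i}$. By Corollary~\ref{cor:L-component-factorization}, this is the irreducible factorization of $\LL_G$, so $\LL_G$ determines the multiset $\{\LL_{G_i}\}$. By Proposition~\ref{prop:U-XB-irreducible}, which applies since simple graphs are loopless, each $U_{G_i}$ is irreducible in $\QQ[y,z_1,z_2,\ldots]$, a UFD. Hence $U_G$ determines the multiset $\{U_{G_i}\}$ up to multiplication by nonzero rational constants. This normalization issue is the one point needing care. I would remove it by noting that each $U_{G_i}$ has coefficient $1$ on $z_1^{n(G_i)}$, coming from the empty spanning subgraph. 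So if $U_{G_i}=cU_{H_j}$, the two polynomials must have the same top $z_1$-power, since otherwise the monomial supports differ, and then comparing that coefficient forces $c=1$. Thus $U_G$ determines the multiset $\{U_{G_i}\}$ exactly.

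Now suppose $\LL_G=\LL_H$. Unique factorization gives $k=l$ and a bijection $\sigma$ with $\LL_{G_i}=\LL_{H_{\sigma(i)}}$. By the connected case, $U_{G_i}=U_{H_{\sigma(i)}}$ for each $i$, and taking products gives $U_G=U_H$. Conversely, if $U_G=U_H$, the normalized factorization above matches the irreducible factors $U_{G_i}$ and $U_{H_j}$ bijectively. The connected case then gives matching $\LL$ factors, and multiplicativity gives $\LL_G=\LL_H$. A potential subtlety is that an irreducible factor could a priori be a constant or lie in $\QQ[y]$, but no factor $U_{G_i}$ is constant since it contains $z_1^{n(G_i)}$. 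Also, irreducible factors are intrinsic to the polynomial, so no information about which graph they came from is needed.
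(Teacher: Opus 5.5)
Your proof is correct and is essentially the paper's argument. You pair the components via unique factorization of $\LL$ (Corollary~\ref{cor:L-component-factorization}) and of $U$ (Proposition~\ref{prop:U-XB-irreducible}), use the coefficient $1$ of $z_1^{n}$ to remove the scalar ambiguity among associated $U$-factors, and finish each direction with the connected case and multiplicativity.
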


\begin{proof}
Only the extension from connected to arbitrary simple graphs requires proof.
Suppose first that $\LL_G=\LL_H$.  By
Corollary~\ref{cor:L-component-factorization}, unique factorization pairs the
connected components of $G$ and $H$ so that the paired components have equal
loopy polynomials.  The conjecture for connected graphs then gives equal
$U$-polynomials for each pair, and multiplicativity gives $U_G=U_H$.

Conversely, if $U_G=U_H$, Proposition~\ref{prop:U-XB-irreducible} and unique
factorization pair the connected components so that their $U$-polynomials
are equal.  The normalization is fixed by the coefficient $1$ of $z_1^n$, so
associated irreducible factors are in fact equal.  Applying the connected
case and multiplying gives $\LL_G=\LL_H$.  The equivalence with $\eta$ and
$\XB$ is classical~\cite{NW,Sar,MN,Nob}.
\end{proof}

The results of Section~\ref{sec:relations} provide several independent checks
on this comparison: on forests the invariants are already equivalent, and in
general they recover many of the same classical specializations and induced
statistics.  These coincidences do not imply
Conjecture~\ref{conj:L-XB-equivalence}, but they make the comparison natural.

\subsection{Computational evidence and the Merino--Noble problem}\label{sec:extended-U}

The diagram~\eqref{eq:common-diagram-intro} puts Conjecture~\ref{conj:L-XB-equivalence}
next to an older open problem of Merino and Noble~\cite{MN}.
They asked for a pair of loopless graphs $G,H$ such that
\begin{equation}\label{eq:MN-open-problem}
   U_G=U_H
   \qquad\text{but}\qquad
   U_G^{\ext}\ne U_H^{\ext}.
\end{equation}
No such pair was known, and the problem is restated as open in Noble's
survey~\cite[\S26.7]{Nob}.  Theorem~\ref{thm:MN-solved} below gives a solution on five vertices.
Since the pair has multiple edges, the corresponding existence problem for \emph{simple} graphs remains open.
This is relevant to Conjecture~\ref{conj:L-XB-equivalence} and to the computation of Section~\ref{sec:the-computation}.

Because of the equivalence
  $ U_G^{\ext} \ \longleftrightarrow\ \widehat{\LL}_G $
of Theorem~\ref{thm:refined-extended-U}, the two questions may be displayed as
\[
\begin{tikzcd}[column sep=3em, row sep=2.6em]
  & U_G^{\ext}\ \longleftrightarrow\ \widehat{\LL}_G
      \arrow[dl] \arrow[dr] & \\
  U_G \arrow[rr, dashed, leftrightarrow] & & \LL_G(1,\bx),
\end{tikzcd}
\]
the dashed arrow being the conjectural equivalence.
The left-hand branch is exactly the Merino--Noble existence problem.
Theorem~\ref{thm:MN-solved} solves it for loopless multigraphs, yet no
solution is known for simple graphs.  Our conjecture thus asks whether the left-
and right-hand specializations distinguish the same simple graphs.
This makes the logical relation between the two problems clear.
If the specialization from the extended to the ordinary $U$-polynomial lost
no distinguishing power on simple graphs, that is, if
\[
   U_G=U_H\quad\Longrightarrow\quad
   U_G^{\ext}=U_H^{\ext},
\]
then one direction of our conjecture would follow immediately, namely
\[
   U_G=U_H\quad\Longrightarrow\quad \LL_G=\LL_H.
\]
Conversely, any simple pair with $U_G=U_H$ but $\LL_G\ne\LL_H$ would necessarily satisfy
$U_G^{\ext}\ne U_H^{\ext}$ and would therefore also solve the
Merino-Noble problem.  The reverse implication $\LL_G=\LL_H\Rightarrow U_G=U_H$
is independent of this issue and is, from this point of view,
the more specifically loopy part of the conjecture.

There is substantial computational evidence for both phenomena.
In his exhaustive computation for simple graphs on at most ten vertices,
Markstr\"om~\cite{Mar} found that every collision for the ordinary
$U$-polynomial is also a collision for the extended $U$-polynomial.

For connected simple graphs with at most seven vertices there are no
collisions for any of the three invariants.  Beyond that, write a
\emph{collision class} for a maximal set of pairwise non-isomorphic connected
simple graphs sharing the same value of the invariant.
The classes found are as follows.

\begin{center}
\begin{tabular}{rrrlrc}
\toprule
$n$ & connected simple graphs & classes & sizes & graphs & edge range \\
\midrule
$7$    & $853$              & $0$      & ---                                   & $0$      & ---        \\
$8$    & $11\,117$          & $8$      & $8\times2$                            & $16$     & $13$--$15$ \\
$9$    & $261\,080$         & $65$     & $65\times2$                           & $130$    & $13$--$23$ \\
$10$   & $11\,716\,571$     & $1\,285$ & $1\,281\times2,\ 4\times4$              & $2\,578$ & $14$--$32$ \\
$11$   & $1\,006\,700\,565$ & $22\,499$& $22\,491\times2,\ 4\times3,\ 4\times4$  & $45\,010$& $13$--$42$ \\
\bottomrule
\end{tabular}
\end{center}

Here ``sizes'' records how many classes have each cardinality and ``graphs''
the total number of graphs lying in a nonsingleton class.  In every case the
partitions induced by $\LL_G$, by $\widehat{\LL}_G$ and by $\XB_G$ coincide
exactly---not merely the number of classes, but the classes themselves.  This
gives the following statement, which supersedes the corresponding range in
Markstr\"om's computation.

\begin{proposition}[Computer-assisted verification]
\label{prop:computational-evidence}
For simple graphs $G,H$ with at most eleven vertices, the following
conditions are equivalent:
\begin{align*}
   \LL_G&=\LL_H, & U_G&=U_H, & \eta_G&=\eta_H, & \XB_G&=\XB_H,\\
   U_G^{\ext}&=U_H^{\ext}, & \eta_G^{\ext}&=\eta_H^{\ext},
   & \widehat{\LL}_G&=\widehat{\LL}_H. &&
\end{align*}
\end{proposition}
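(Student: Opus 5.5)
The plan is to reduce the statement to a finite computation organized by the known equivalences. Several arrows are already theorems, and only the remaining ones need machine checking. By Theorem~\ref{thm:refined-extended-U} we have $\widehat{\LL}_G\leftrightarrow U_G^{\ext}$. By Merino--Noble~\cite[Corollary~4.13]{MN} we have $U_G^{\ext}\leftrightarrow\eta_G^{\ext}$. Classically $U_G\leftrightarrow\eta_G\leftrightarrow\XB_G$~\cite{NW,Sar,MN,Nob}. By \eqref{eq:refined-to-loopy} and Corollary~\ref{cor:ordinary-U-refined}, $\widehat{\LL}_G$ determines both $\LL_G$ and $U_G$. So the seven conditions collapse to three fibrations: by $\widehat{\LL}$, by $\LL$, and by $\XB$, with the first refining the other two. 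It therefore suffices to check, for every pair of simple graphs on at most eleven vertices, that $\LL_G=\LL_H\Rightarrow\widehat{\LL}_G=\widehat{\LL}_H$ and $\XB_G=\XB_H\Rightarrow\widehat{\LL}_G=\widehat{\LL}_H$.

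Next I would reduce to connected graphs. By Corollary~\ref{cor:L-component-factorization} and Proposition~\ref{prop:U-XB-irreducible}, both $\LL$ and $\XB$ factor uniquely over connected components. The factor for a connected component is normalized by the coefficient $1$ of $x_m$, respectively of $p_1^n$. Equality for $G$ and $H$ therefore pairs their components with equal invariants, and $\widehat{\LL}$ is multiplicative. Hence it is enough to compare connected simple graphs with the same $n$ and $m$; both numbers are read off by Proposition~\ref{prop:extremal-terms}. For simple graphs on at most eleven vertices, components have at most eleven vertices, so the connected census up to $n=11$ covers everything.

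The computation itself proceeds as follows. Enumerate all connected simple graphs up to isomorphism for each $n\le 11$; this is about $10^9$ graphs at $n=11$, for example via \texttt{geng -c}. Compute a cheap hash invariant first, such as $\LL_G$ specialized at random integers modulo a large prime. Only graphs that collide in this hash need the exact polynomials $\LL_G$, $\XB_G$ and $\widehat{\LL}_G$. These exact polynomials come from deletion--contraction with memoization on canonical forms, or from the forest expansion \eqref{eq:forests} and Definition~\ref{def:refined-loopy} with a fixed edge order. Then check that the three partitions of each hash bucket coincide. The resulting classes should be those listed in the table.

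The main obstacle is the scale at $n=11$. Exact computation of $\widehat{\LL}$ for a billion graphs is infeasible, so correctness rests on a sound filter. The key point is that any two graphs equal under $\LL$ or $\XB$ must also agree under the hash, so no genuine collision can be missed. The hash must therefore be a specialization of $\LL$ and, separately, of $\XB$, and each of the two invariants has to be bucketed independently. Exact confirmation is then needed only within the small buckets, roughly $45\,000$ graphs. I would also validate the code against the $n\le10$ data of Markstr\"om~\cite{Mar} and against the explicit formulas of Section~\ref{sec:examples}.
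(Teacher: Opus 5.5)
Your logical reduction is the same as the paper's. The known equivalences collapse the seven conditions to three partitions, by $\widehat{\LL}$, by $\LL$ and by $\XB$, with $\widehat{\LL}$ the finest. Unique factorization (Corollary~\ref{cor:L-component-factorization} and Proposition~\ref{prop:U-XB-irreducible}, with your normalizations) reduces everything to connected graphs with fixed $(n,m)$. A fingerprint obtained by specializing the invariant, followed by exact checks, cannot miss a genuine collision. That part is fine as it stands.

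The weak point is the one you flag as the main obstacle, and the plan does not resolve it. You call $\LL_G$ evaluated at random integers modulo a prime a cheap hash. But the only ways you give to compute $\LL_G$ are the forest expansion and memoized loopy deletion--contraction, whose cost grows with the number of spanning forests; already $K_{11}$ has $11^9>2\cdot10^9$ spanning trees. Running that on all of the roughly $10^9$ connected graphs on eleven vertices, dense ones included, is not feasible, and the same applies to your separate $\XB$-hash.

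The missing idea is the paper's evaluation engine:
\begin{itemize}
\item Compute the connected-block polynomials $\cC_G(B;q)$ for all $2^n$ vertex subsets by a subset recursion in $O(3^n)$ ring operations.
\item Then $\XB_G$, $\LL_G(1,\bx)$ and $\widehat{\LL}_G$ are all set-partition sums over blocks, by \eqref{eq:XB-connected-block-expansion} and Proposition~\ref{prop:block-transform-L}.
\item By \eqref{eq:evaluation-trick}, random evaluations of all three come from a few evaluations of $q^{1-|B|}\cC_G(B;q)$, with no expansion in the basis $q^{s-1}(1+q)^r$.
\end{itemize}
The paper also places a sieve in front, using cheap invariants determined by both $\LL$ and $\XB$: the degree sequence and induced edge count profile, the numbers of spanning trees and triangles, and $X_G$. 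So one bucketing serves both partitions, and only a small fraction of the graphs ever reaches the $O(3^n)$ stage. The same block polynomials, computed over $\ZZ[q]$, also make the final exact recomputation of the candidate classes cheap, whereas forest enumeration on their denser members (up to $42$ edges) would be slow.

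With such an engine your plan becomes the paper's computation. Without it, the design is correct but is not practical at $n=11$.
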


The search was validated in four independent ways, described in
Appendix~\ref{sec:the-computation}.

\begin{proof}
The computation is carried out on connected simple graphs.  By
Corollary~\ref{cor:connected-reduction} and the multiplicativity of all seven
invariants, together with the fact that the irreducible factorizations of
$\LL$, of $U$ and of $\XB$ recover the connected components
(Corollary~\ref{cor:L-component-factorization} and
Proposition~\ref{prop:U-XB-irreducible}), this suffices for arbitrary simple
graphs on at most eleven vertices.  It establishes directly that $\LL$,
$\widehat{\LL}$ and $\XB$ induce the same partition in that range.  The
classical equivalences $U\leftrightarrow\eta\leftrightarrow \XB$ are due to
Noble--Welsh, Sarmiento and Merino--Noble~\cite{NW,Sar,MN,Nob}.
Merino--Noble~\cite{MN} identify $U^{\ext}$ with
$\eta^{\ext}$, and Theorem~\ref{thm:refined-extended-U} identifies it
with $\widehat{\LL}$.
\end{proof}

\begin{remark}\label{rem:markstrom-extended}
Two features of this statement deserve comment.  First, it is self-contained:
because the same run computes $\widehat{\LL}_G$ alongside $\LL_G$ and $\XB_G$,
the equivalence of the ordinary and extended invariants is verified directly,
rather than imported from~\cite{Mar}.  Second, since Markstr\"om's
Observation~7.6.6 covers simple graphs with at most ten vertices, the case
$n=11$ is new for the simple-graph version of the Merino--Noble problem as well:
through eleven vertices there is no pair of simple graphs with
equal ordinary $U$-polynomials but distinct extended ones.  Noble's 2022 survey
records the problem as open and remarks that examples are
nevertheless expected to exist~\cite[\S26.7]{Nob}.  Thus, for simple graphs
with up to eleven vertices, specializations $U^{\ext}\to U$ and
$\widehat{\LL}\to\LL$ do not lose distinguishing power.  This is stronger than
the numerical verification of Conjecture~\ref{conj:L-XB-equivalence} alone.
\end{remark}

\begin{remark}\label{rem:table-structure}
There are two notable features of the table. First, collision classes of size larger than two occur,
the smallest examples having ten vertices.  Through nine vertices every class is a pair,
so the phenomenon is invisible in the range examined previously.

Second, although the number of collisions grows quickly in absolute terms,
the proportion of connected simple graphs involved in one falls steadily:
one graph in approximately $695$ at $n=8$, then one in $2\,008$, one in $4\,545$,
and one in $22\,366$ at $n=11$.
Thus in the computed range, the invariants are becoming relatively stronger as $n$ grows.
\end{remark}

The remaining regularities in the table---that the classes are closed under complementation,
that they reappear at higher edge counts, and that the edge ranges are shaped
the way they are---all descend from a single elementary property of $\XB$.
It is the $\XB$ transcription of Brylawski's complementation formula~\cite[Proposition~4.3]{Bry},
and it bears directly on Problem~\ref{prob:complement}.

\subsection{Scope: loops and multiple edges}
\label{sec:why-loops}

Conjecture~\ref{conj:L-XB-equivalence} is stated for simple graphs.  We show here  that neither
part of the simplicity hypothesis can be dropped.
Loops break the equivalence on three vertices, and multiple edges break it on five.

\subsubsection{Loops}
\label{subsec:loops-obstruction}

The restriction in Conjecture~\ref{conj:L-XB-equivalence} is essential at least as far as loops are concerned.
In fact the standard example used by Merino and Noble to show that the extended $U$-polynomial can be stronger
than the ordinary one in the presence of loops also separates the loopy polynomial from the $U$-polynomial.

Let $P_3$ have vertices $1,2,3$ and edges $12,23$.  Let $G_1$ be obtained by
adding a loop at the leaf $1$, and let $G_2$ be obtained by adding a loop at
the middle vertex $2$, see Figure~\ref{fig:looped-P3}.

\begin{figure}[ht]
\centering
\begin{tikzpicture}[scale=.9]
  \coordinate (A) at (0,0);
  \coordinate (B) at (1.4,0);
  \coordinate (C) at (2.8,0);
  \draw (A)--(B)--(C);
  \draw (A) to[out=125,in=235,distance=1.1cm] (A);
  \filldraw (A) circle (2pt) node[below=3pt] {$1$};
  \filldraw (B) circle (2pt) node[below=3pt] {$2$};
  \filldraw (C) circle (2pt) node[below=3pt] {$3$};
  \node at (1.4,-.75) {$G_1$};
\end{tikzpicture}
\hspace{1.2cm}
\begin{tikzpicture}[scale=.9]
  \coordinate (A) at (0,0);
  \coordinate (B) at (1.4,0);
  \coordinate (C) at (2.8,0);
  \draw (A)--(B)--(C);
  \draw (B) to[out=55,in=125,distance=1.1cm] (B);
  \filldraw (A) circle (2pt) node[below=3pt] {$1$};
  \filldraw (B) circle (2pt) node[below=3pt] {$2$};
  \filldraw (C) circle (2pt) node[below=3pt] {$3$};
  \node at (1.4,-.75) {$G_2$};
\end{tikzpicture}
\caption{Looped graphs with equal $U$-polynomials and distinct loopy
polynomials.}
\label{fig:looped-P3}
\end{figure}

Since a loop contributes a factor $y$ to the $U$-polynomial, we have
\begin{equation}\label{eq:looped-P3-U}
   U_{G_1}(\bz,y)
   =U_{G_2}(\bz,y)
   =y\bigl(z_3+2z_2z_1+z_1^3\bigr).
\end{equation}
Equivalently, $\eta_{G_1}=\eta_{G_2}$ and $\XB_{G_1}=\XB_{G_2}$.  On the
Tutte-symmetric side this is also immediate from
\begin{equation}\label{eq:XB-loop-rule}
   \XB_{G+\ell_v}(q;\bw)
   =(1+q)\XB_G(q;\bw),
\end{equation}
which is independent of the vertex carrying the loop.

Merino and Noble observed that
$U_{G_1}^{\ext}\ne U_{G_2}^{\ext}$~\cite{MN}: the extended
invariant remembers to which component the loop belongs in each spanning
subgraph.  The ordinary loopy polynomial also retains enough of this
information.  Indeed, Theorem~\ref{thm:forests-sum} gives
\begin{equation}\label{eq:looped-P3-L}
   \LL_{G_1}
   =x_3+t x_0x_2+t x_1^2+t^2x_0^2x_1,
   \qquad
   \LL_{G_2}
   =x_3+2t x_0x_2+t^2x_0^2x_1.
\end{equation}
Hence
\[
   U_{G_1}=U_{G_2}
   \qquad\text{but}\qquad
   \LL_{G_1}\ne\LL_{G_2}.
\]

\begin{example}[Loops destroy the equivalence]
\label{ex:loop-necessary}
The pair $G_1,G_2$ above has the same $U$-polynomial,
polychromate, and Tutte symmetric function, but different extended
$U$-polynomials, refined loopy polynomials, and ordinary loopy polynomials.
Thus the specialization $U^{\ext}\to U$ is already non-injective if loops are allowed, and Conjecture~\ref{conj:L-XB-equivalence} fails there as well.
\end{example}

The example also clarifies the nature of the two specializations of the common refinement.
The $U$-polynomial remembers only the \emph{total} cyclomatic number through the
global power of $y-1$, so moving a loop from one vertex to another is invisible.
The loopy specialization keeps the quantities $s-1+r$ separately for each forest component,
and can therefore remember the location of a loop indirectly, through the component that contains it.

\subsubsection{Multiple edges and the Merino--Noble problem}
\label{subsubsec:multiple-edges}

The same mechanism shows that the simplicity hypothesis in
Conjecture~\ref{conj:L-XB-equivalence} cannot be weakened to looplessness either:
multiple edges alone already destroy the equivalence.

\begin{figure}[ht]
\centering
\begin{tikzpicture}[scale=1.35, every node/.style={inner sep=1.2pt}]
 \def\R{1.45}
 \foreach \nm/\ang in {a/90, b/162, c/234, d/306, e/18}
   \coordinate (\nm) at (\ang:\R);
 \draw (a) to[bend left=0] (b);
 \draw (b) to[bend left=11] (c);  \draw (b) to[bend right=11] (c);
 \draw (c) to[bend left=11] (d);  \draw (c) to[bend right=11] (d);
 \draw (d) to[bend left=16] (e);  \draw (d) -- (e); \draw (d) to[bend right=16] (e);
 \draw (e) to[bend left=0] (a);
 \draw (a) to[bend right=18] (c);
 \foreach \nm/\lab/\pos in {a/1/above, b/3/left, c/5/below, d/2/below, e/4/right}
   \filldraw[black] (\nm) circle (1.6pt) node[\pos] {\small $\lab$};
 \node at (0,-2.15) {$G_1$};
\end{tikzpicture}
\hspace{2.2cm}
\begin{tikzpicture}[scale=1.35, every node/.style={inner sep=1.2pt}]
 \def\R{1.45}
 \foreach \nm/\ang in {a/90, b/162, c/234, d/306, e/18}
   \coordinate (\nm) at (\ang:\R);
 \draw (a) to[bend left=0] (b);
 \draw (b) to[bend left=11] (c);  \draw (b) to[bend right=11] (c);
 \draw (c) to[bend left=0] (d);
 \draw (d) to[bend left=11] (e);  \draw (d) to[bend right=11] (e);
 \draw (e) to[bend left=16] (a);  \draw (e) -- (a); \draw (e) to[bend right=16] (a);
 \draw (a) to[bend right=18] (c);
 \foreach \nm/\lab/\pos in {a/1/above, b/3/left, c/5/below, d/2/below, e/4/right}
   \filldraw[black] (\nm) circle (1.6pt) node[\pos] {\small $\lab$};
 \node at (0,-2.15) {$G_2$};
\end{tikzpicture}
\caption{Two non-isomorphic loopless graphs on five vertices and ten
edges with $\XB_{G_1}=\XB_{G_2}$ and $U_{G_1}=U_{G_2}$, but
$\LL_{G_1}\ne\LL_{G_2}$.  They share the underlying simple graph, a $5$-cycle
with one chord, and differ only in the multiplicities along the path
$5,2,4,1$.}
\label{fig:parallel-counterexample}
\end{figure}

\begin{theorem}[Solution of the Merino--Noble problem]
\label{thm:MN-solved}
There exist non-isomorphic loopless multigraphs $G_1,G_2$ with
\[
   U_{G_1}=U_{G_2}
   \qquad\text{but}\qquad
   U^{\ext}_{G_1}\ne U^{\ext}_{G_2},
\]
answering the question~\eqref{eq:MN-open-problem} of Merino and Noble.  For
the same pair
$\LL_{G_1}\ne\LL_{G_2}$ while $\XB_{G_1}=\XB_{G_2}$, so the conclusion of
Conjecture~\ref{conj:L-XB-equivalence} also fails on loopless graphs.
\end{theorem}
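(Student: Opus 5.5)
The proof is by the explicit pair $G_1,G_2$ of Figure~\ref{fig:parallel-counterexample}. Both have vertex set $\{1,\dots,5\}$ and the same underlying simple graph: the $5$-cycle $1\,3\,5\,2\,4\,1$ with the chord $15$. In $G_1$ the edge multiplicities are $\mu(13)=1$, $\mu(35)=2$, $\mu(52)=2$, $\mu(24)=3$, $\mu(41)=1$, $\mu(15)=1$. In $G_2$ they are $\mu(13)=1$, $\mu(35)=2$, $\mu(52)=1$, $\mu(24)=2$, $\mu(41)=3$, $\mu(15)=1$. Each graph has ten edges.

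\textbf{Non-isomorphism.} This is immediate from the degree sequences. In $G_1$ the vertices $1,3,5,2,4$ have degrees $3,3,5,4,4$. In $G_2$ they have degrees $5,3,4,3,5$.

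\textbf{Equality of $U$.} By Proposition~\ref{prop:U-forest} it suffices to compare the multisets of pairs
\[
\Bigl(\{|V(T)|\}_{T\in c(F)},\ \eps(F)\Bigr),\qquad F\in\cF(G_i).
\]
Equivalently, by~\eqref{eq:ordinary-U}, compare over all $A\subseteq E$ the component-size partition together with the total nullity. I would organize this by underlying simple spanning subgraphs $S$ of the common $6$-edge graph. For each $S$, the generating polynomial in $y-1$ factors as
\[
\prod_{e\in S}\bigl((1+(y-1))^{\mu(e)}-1\bigr)
\]
times a correction for cycles in $S$. So $U$ depends only on the multiset of edge-multiplicity data attached to each isomorphism type of labelled subgraph. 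The identity then reduces to a finite check over the $2^6$ subsets $S$. I would carry it out by computer, or by hand by grouping subsets by the partition of $V$ they induce. This is the same as checking $\eta_{G_1}=\eta_{G_2}$: for every set partition $\pi$ of $V$, the block-size type together with $e(\pi)$ must be equidistributed. The polychromate form is the easiest to verify by hand, since there are only $52$ set partitions of $5$ vertices. Equality of $\XB$ then follows from~\eqref{eq:XB-from-U}.

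\textbf{Distinguishing $\LL$ and $U^{\ext}$.} To show $\LL_{G_1}\ne\LL_{G_2}$, I would find a cheap coefficient that differs, using the layers already analysed.

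The $x$-degree-$4$ layer, Lemma~\ref{lem:second-layer}, depends only on the multiset of multiplicities. That multiset is $\{1,2,2,3,1,1\}$ in both graphs, so this layer agrees.

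The $x$-degree-$3$ layer is the next candidate. The forests $F$ with two edges are considered. Either $F$ is a path on three vertices, giving a monomial $x_{2+\eps}x_0^2$, or $F$ is a matching, giving $x_{1+\eps}x_{1+\eps'}x_0$. For a matching, $\eps$ only counts earlier parallel copies. For a path $a\,b\,c$, the third side $ac$ also contributes its multiplicity $\mu(ac)$ if $ac$ is an edge, possibly together with earlier copies. I would tabulate the coefficient of $x_0^2x_j$ for $j=2,\dots,6$ in both graphs.

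If this layer agrees, I would fall back on the $x$-degree-$1$ spanning-tree layer, or simply compute both polynomials by computer.

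Since $\LL$ is a specialization of $\widehat{\LL}$ by~\eqref{eq:refined-to-loopy}, and $\widehat{\LL}$ is equivalent to $U^{\ext}$ by Theorem~\ref{thm:refined-extended-U}, the inequality $\LL_{G_1}\ne\LL_{G_2}$ immediately yields $U^{\ext}_{G_1}\ne U^{\ext}_{G_2}$.

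\textbf{Main obstacle.} The main obstacle is the equality $U_{G_1}=U_{G_2}$. It is a genuine coincidence rather than a structural consequence, so the clean route is a verified computation of $\eta$ over the $52$ set partitions.
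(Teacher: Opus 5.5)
Your overall route is the paper's: take the pair of Figure~\ref{fig:parallel-counterexample} and establish $U_{G_1}=U_{G_2}$ by a finite check. Checking $\eta$ over the $52$ set partitions is sound and feasible by hand, and it does go through; for instance, the values of $e_G(\pi)$ over partitions of type $(2,2,1)$ and of type $(3,2)$ have the same distributions in both graphs. Then obtain $\XB_{G_1}=\XB_{G_2}$ from \eqref{eq:XB-from-U}, and deduce $U^{\ext}_{G_1}\ne U^{\ext}_{G_2}$ from $\LL_{G_1}\ne\LL_{G_2}$ via Theorem~\ref{thm:refined-extended-U}.

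But your non-isomorphism argument is false. In $G_1$ vertex $2$ meets the double edge $52$ and the triple edge $24$, so its degree is $5$, not $4$. Both graphs have degree sequence $(3,3,4,5,5)$, as the paper records. Recounting cannot repair this. For loopless graphs $\eta_G$, hence $U_G$, determines the degree sequence (through the coefficient of $q_1q_{n-1}$). So any pair with $U_{G_1}=U_{G_2}$ has equal degree sequences, and your first step contradicts your second. Non-isomorphism has to come from elsewhere. It follows from $\LL_{G_1}\ne\LL_{G_2}$ once that is proved. It can also be seen directly: the triple edge joins the two vertices of degree $5$ in $G_2$ (namely $1$ and $4$), but joins a vertex of degree $5$ and one of degree $4$ in $G_1$ (namely $2$ and $4$).

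Second, the places where you plan to look for $\LL_{G_1}\ne\LL_{G_2}$ by hand all agree. The path coefficients $[x_0^2x_j]$ coincide. So does the spanning-tree layer: it records the external-activity distribution over spanning trees, so it is determined by $T_G$, which is equal for the two graphs. Also, in a path $a\,b\,c$ a copy of $ac$ is active only if it precedes both path edges. That is harmless here, because the only triangle, on $\{1,3,5\}$, carries the same multiplicities in both graphs.

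The difference in the $\bx$-degree-$3$ layer sits in the matching monomials $x_0x_ix_j$, which you set up but did not plan to tabulate. Put $S_\mu=\sum_{i=1}^{\mu}t^{1-i}x_i$. The matchings supported on two disjoint underlying edges of multiplicities $\mu,\nu$ contribute $t^{8}x_0S_\mu S_\nu$ in total. The disjoint pairs of underlying edges have multiplicity pairs $\{2,3\},\{1,2\},\{1,2\},\{1,2\},\{1,3\},\{1,3\}$ in $G_1$ and $\{2,3\},\{2,2\},\{1,3\},\{1,2\},\{1,2\},\{1,1\}$ in $G_2$. So the matching parts differ by
\[
 t^{8}x_0\bigl(S_1S_2+S_1S_3-S_1^2-S_2^2\bigr)=t^{6}x_0\bigl(x_1x_3-x_2^2\bigr),
\]
which is exactly the last term of the difference displayed in Example~\ref{ex:parallel-necessary}.

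This makes $\LL_{G_1}\ne\LL_{G_2}$ a short hand computation, and with it $U^{\ext}_{G_1}\ne U^{\ext}_{G_2}$ and $G_1\not\cong G_2$. The paper instead reports the full difference and the coefficient $[z_{2,1}z_{3,0}]U^{\ext}$ ($19$ versus $17$) from exact machine computation. The only remaining finite check is $U_{G_1}=U_{G_2}$, which your polychromate count handles.
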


\begin{proof}
  Take $G_1$ and $G_2$ to be the two graphs of Figure~\ref{fig:parallel-counterexample}.
  The assertions are verified in Example~\ref{ex:parallel-necessary}.
\end{proof}

\begin{example}[An explicit pair]
\label{ex:parallel-necessary}
Let $G_1$ and $G_2$ be the two graphs of Figure~\ref{fig:parallel-counterexample}.
Both have the $5$-cycle $1,3,5,2,4$ with the chord $\{1,5\}$ as underlying simple graph, all six
underlying edges being present, and the multiplicities
\[
\begin{array}{lcccccc}
     & \{1,3\} & \{3,5\} & \{5,2\} & \{2,4\} & \{4,1\} & \{1,5\}\\[1mm]
 G_1 &   1     &    2    &    2    &    3    &    1    &    1   \\
 G_2 &   1     &    2    &    1    &    2    &    3    &    1
\end{array}
\]
Then $G_1\not\cong G_2$, and
\[
   \XB_{G_1}=\XB_{G_2},
   \qquad
   U_{G_1}=U_{G_2},
   \qquad\text{but}\qquad
   \LL_{G_1}\ne\LL_{G_2}.
\]
Indeed
\[
   \LL_{G_1}-\LL_{G_2}
   =t^3\bigl(x_3x_4-x_2x_5\bigr)
    +t^4\bigl(x_3^2-x_2x_4\bigr)
    +t^5\bigl(x_1x_4-x_2x_3\bigr)
    +t^6\bigl(x_0x_1x_3-x_0x_2^2\bigr),
\]
so these graphs are separated already by the distribution of the statistic
$|E(T)|+\eps_F(T)$ between the components of a
two-component spanning forest.  They also have the same Tutte polynomial and
the same degree sequence $(3,3,4,5,5)$.

Since $\widehat{\LL}\leftrightarrow U^{\ext}$ by
Theorem~\ref{thm:refined-extended-U}, the pair simultaneously solves the
Merino--Noble problem~\eqref{eq:MN-open-problem}:
$U_{G_1}=U_{G_2}$ while
$U^{\ext}_{G_1}\ne U^{\ext}_{G_2}$.  A single coefficient
already witnesses the latter:
\[
   [z_{2,1}z_{3,0}]\,U^{\ext}_{G_1}=19
   \ne
   17=[z_{2,1}z_{3,0}]\,U^{\ext}_{G_2},
\]
that is, $G_1$ and $G_2$ have different numbers of spanning subgraphs with two
components, one of order $2$ and cyclomatic number $1$, the other of order
$3$ and cyclomatic number $0$.  All the assertions above
were checked exactly over $\ZZ$, by two independent implementations, and $G_1\not\cong G_2$
was verified over all $120$ relabelings.
\end{example}

We found this pair by an exhaustive search over loopless multigraphs with small
vertex count and bounded edge multiplicity, using the same fingerprints as in Section~\ref{sec:the-computation}.
Here $e_G(S)$ simply counts edges with multiplicity, and nothing else in the algorithm changes.
Writing $\mu$ for the maximum multiplicity allowed, the counts of connected loopless
multigraphs and of their nontrivial collision classes are as follows.

\begin{center}
\begin{tabular}{ccrrr}
\toprule
$n$ & $\mu$ & multigraphs & $\LL$-classes & $\XB$-classes\\
\midrule
$4$    & $9$    & $45\,210$    & $0$   & $0$\\
$5$    & $2$    & $712$        & $0$   & $0$\\
$5$    & $3$    & $10\,364$    & $0$   & $12$\\
$5$    & $4$    & $88\,985$    & $0$   & $58$\\
$5$    & $5$    & $530\,657$   & $0$   & $166$\\
$5$    & $6$    & $2\,431\,555$& $0$   & $360$\\
$6$    & $2$    & $24\,576$    & $15$  & $30$\\
$6$    & $3$    & $1\,590\,368$& $500$ & $1\,130$\\
$6$    & $4$    & $43\,477\,490$& $4\,848$ & $10\,164$\\
$7$    & $2$    & $2\,275\,616$& $1\,213$ & $1\,469$\\
\bottomrule
\end{tabular}
\end{center}

The ranges are nested --- the rows with the same $n$ and smaller $\mu$ are
contained in the largest one for that $n$, so the row totals must not be
added --- and the graphs covered are those of the four maximal rows $(4,9)$, $(5,6)$, $(6,4)$ and $(7,2)$.
These rows contain $48\,229\,871$ connected loopless multigraphs in all.
Every $\LL$-class found is a pair.  Two of the
$\XB$-classes at $(6,4)$ have three members and all others are pairs.

Only one part of this table carries genuinely new distinguishing-power information.
It is worth separating what is observed from what is forced.  The equivalences
$U\leftrightarrow\eta\leftrightarrow \XB$ hold for all graphs, loops and
multiple edges included~\cite{NW,Sar,MN,Nob}, so the agreement of the $U$- and
$\XB$-columns is a check on the implementation rather than a new result.  What
is observed is that throughout the tested range
\[
   \LL_G=\LL_H
   \quad\Longrightarrow\quad
   \widehat{\LL}_G=\widehat{\LL}_H .
\]
Since $\widehat{\LL}_G$ determines $\XB_G$ by
Corollary~\ref{cor:refined-L-determines-XB}, this forces
$\LL_G=\LL_H\Rightarrow \XB_G=\XB_H$ throughout the same range.  The
refinement is strict because Theorem~\ref{thm:MN-solved} gives pairs with
equal $\XB$ but distinct $\LL$.  Thus the implication
$U_G=U_H\Rightarrow\LL_G=\LL_H$ fails globally on loopless graphs,
whereas the reverse implication holds for every multigraph in the search and
remains open in general, see Problem~\ref{prob:extended}.  In other words,
passing from $\widehat{\LL}$ to $\LL$ lost no distinguishing
power on any of the $48\,229\,871$ connected loopless multigraphs tested, even
though the specialization $U^{\ext}\to U$ demonstrably does.

The surviving loopy specialization retains one number per forest component.
That number is not an arbitrary way to compress the pair
$\bigl(|V(T)|,\eps_F(T)\bigr)$ into a single index.
By Theorem~\ref{thm:polyhedral-parking-cells} it is the length of an interval
factor in the box~\eqref{eq:terminal-box}.  Thus the specialization for which
we observed no loss of distinguishing power on loopless multigraphs is also
the one with a direct polyhedral meaning.  The polyhedral interpretation is
independent of the computation above and
provides further evidence that the loopy specialization is the natural one.

The pair of Theorem~\ref{thm:MN-solved} has five vertices and ten edges.
It has the smallest vertex count found in our
bounded-multiplicity search.
There is no counterexample on four vertices with all multiplicities at most
nine, and none on five vertices with all multiplicities at most two, while
examples appear as soon as $\mu=3$ is allowed.  In the number of edges it is
not smallest.  A connected loopless multigraph with at most eight edges has at
most nine vertices, so that range is finite.  An exhaustive check over all
connected loopless multigraphs with at most nine edges---bounding the number of
edges rather than the multiplicity, and again exact over $\ZZ$---shows
that none with at most eight edges is a counterexample, whereas nine-edge
counterexamples do exist, on six, seven, eight and nine vertices.  The one
remaining case, a tree on ten vertices, is simple and hence separated by
Proposition~\ref{cor:forest-L-X}.  This check is the script \texttt{min\_edges.py}
of~\cite{code}.  The nine-edge examples have maximum
multiplicity two, so one of them already occurs in the $(6,2)$ row of the table
above.  We state Theorem~\ref{thm:MN-solved} with the five-vertex pair because
the number of vertices is the parameter that governs the search.

There is a structural reason to expect this asymmetry.  The two specializations of
the common refinement discard different things.
Passing from $U^{\ext}$ to $U$ replaces the multiset of pairs $\bigl(|V(D)|,\nu(D)\bigr)$
by the multiset of orders together with the \emph{total} cyclomatic number $\sum_D\nu(D)$,
so all record of how the cyclomatic number is distributed among the components is lost.
Passing from $\widehat{\LL}$ to $\LL$ instead retains one number per component,
the sum $|E(T)|+\eps_F(T)$.
A per-component statistic is a good deal more informative than a global total,
and the counterexample above is exactly of this shape: the terms $x_2x_5$ and $x_3x_4$ of
Example~\ref{ex:parallel-necessary} have the same total but different distributions.

Thus loops and multiple edges are obstructions of the same kind, and simplicity is a necessary hypothesis in
Conjecture~\ref{conj:L-XB-equivalence} rather than a convenience.
We emphasize that neither obstruction bears on the simple-graph case: both the equivalence conjecture
and the simple-graph version of the Merino--Noble problem remain open there, and
Proposition~\ref{prop:computational-evidence} verifies both through eleven vertices.

\section{Structural evidence}
\label{sec:structural-evidence}
Beyond the exhaustive verification of Section~\ref{sec:extended-U},
several structural results bear on the conjecture.
The first is about the collision classes themselves: they are closed under complementation and
under joins, which accounts for much of the shape of the census in Section~\ref{sec:extended-U}.
The second recasts the implication from $U$ to $L$
as a compatibility problem for block transforms, potentially weaker than recovering $U^\ext$ from $U$.
The third reduces the conjecture for connected unicyclic simple graphs to an injectivity
question for a rooted tree invariant, and settles it unconditionally when the cycle is long relative
to the graph.
The fourth shows that the loopy polynomial and the Tutte symmetric function satisfy
a common modular four-term relation.

\subsection{Complementation and joins}
\label{subsec:complementation}
It is convenient to shift the edge variable.
Write
\begin{equation}\label{eq:XB-tilde}
  \widetilde{\XB}_G(t;\bw):=\XB_G(t-1;\bw) = \sum_{\kp}t^{\,b_G(\kp)}\bw^{\kp},
\end{equation}
where $\kp$ runs over the colorings of $V(G)$ and $b_G(\kp)$ is the number of monochromatic edges of $\kp$.

\begin{proposition}[Complementation]\label{prop:XB-complement}
$\XB_G$ determines $\XB_{\overline G}$, and the passage from one to the other is
invertible.  Explicitly, for every monomial $\bw^{\alpha}$,
\[
   [\bw^{\alpha}]\,\widetilde{\XB}_{\overline G}(t)
   \;=\;
   t^{\,c(\alpha)}\;[\bw^{\alpha}]\,\widetilde{\XB}_{G}(t^{-1}),
   \qquad
   c(\alpha)=\sum_i\binom{\alpha_i}{2}.
\]
\end{proposition}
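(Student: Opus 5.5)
We work directly with the coloring form~\eqref{eq:XB-tilde} of $\widetilde{\XB}$. Here $G$ is simple, so that $\overline G$ is defined. The coefficient of a monomial $\bw^{\alpha}$ in $\widetilde{\XB}_G(t)$ is
\[
   [\bw^{\alpha}]\,\widetilde{\XB}_G(t)=\sum_{\kp:\ \bw^{\kp}=\bw^{\alpha}} t^{\,b_G(\kp)},
\]
where the sum runs over colorings $\kp$ with exactly $\alpha_i$ vertices of color $i$. The key observation is that the set of such colorings depends only on $\alpha$ and on $V(G)$, not on the edges. So $G$ and $\overline G$ are summed over the same index set, and it suffices to compare $b_G(\kp)$ with $b_{\overline G}(\kp)$ coloring by coloring.

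For a fixed coloring $\kp$ of type $\alpha$, the color classes $\kp^{-1}(i)$ have sizes $\alpha_i$. A pair of distinct vertices is monochromatic exactly when it lies inside one class, so there are $c(\alpha)=\sum_i\binom{\alpha_i}{2}$ monochromatic pairs. Since $G$ is simple, each monochromatic pair is an edge of exactly one of $G$ and $\overline G$. Hence
\[
   b_{\overline G}(\kp)=c(\alpha)-b_G(\kp).
\]
This is the same counting used in Corollary~\ref{cor:independence-clique}(3). Substituting gives
\[
   [\bw^\alpha]\widetilde{\XB}_{\overline G}(t)=\sum_\kp t^{c(\alpha)-b_G(\kp)}=t^{c(\alpha)}[\bw^\alpha]\widetilde{\XB}_G(t^{-1}),
\]
which is the stated formula.

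Determination and invertibility then follow. The number $c(\alpha)$ depends only on the monomial, and $\widetilde{\XB}_G$ and $\XB_G$ determine each other by the shift $q=t-1$. So the displayed formula computes $\XB_{\overline G}$ from $\XB_G$ coefficientwise. The same formula applied to $\overline G$, using $\overline{\overline G}=G$, shows that the map is an involution and hence invertible.

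The only point needing care is that the coefficientwise formula is a legitimate operation on a symmetric function, since $\XB$ is often written in the power-sum basis. The coefficients of the monomials $\bw^\alpha$ are symmetric in $\alpha$, and $c(\alpha)$ depends only on the partition sorted from $\alpha$. Therefore the formula maps symmetric functions to symmetric functions; equivalently, it acts diagonally on the monomial basis $m_\lambda$, with factor $t^{c(\lambda)}$ together with the inversion $t\mapsto t^{-1}$. No further obstacle is expected; the argument is elementary.
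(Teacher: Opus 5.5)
Your proposal is correct and follows the paper's proof essentially verbatim. Both fix a coloring $\kp$ of type $\alpha$ and observe that $b_{\overline G}(\kp)=c(\alpha)-b_G(\kp)$, since each monochromatic pair is an edge of exactly one of $G$ and $\overline G$; both then sum over the colorings contributing to $\bw^{\alpha}$ and note that the transformation is an involution up to $t\mapsto t^{-1}$. Your additional remark that the operation is well defined on symmetric functions, acting diagonally on the $m_\lambda$ basis, is a harmless clarification that the paper leaves implicit.
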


\begin{proof}
For a fixed coloring $\kp$ the color classes partition $V$, and every
pair inside a class is an edge of exactly one of $G$, $\overline G$.  Hence
$b_{\overline G}(\kp)=c(\alpha)-b_G(\kp)$, where $\alpha$ is the
multiset of color-class sizes.  Summing over the colorings contributing to
$\bw^{\alpha}$ gives the displayed identity.  The transformation is its
own inverse up to the substitution $t\mapsto t^{-1}$, so it is invertible.
\end{proof}

Joining a fixed graph to both members of a collision therefore preserves it,
and nothing is lost in the other direction either.

\begin{corollary}[Joins]\label{cor:join-equivalence}
Let $J$ be any graph, and write $G\vee J$ for the graph join.  Then for all
$G_1,G_2$,
\begin{equation}\label{eq:join-equivalence}
   \XB_{G_1\vee J}=\XB_{G_2\vee J}
   \quad\Longleftrightarrow\quad
   \XB_{G_1}=\XB_{G_2}.
\end{equation}
Thus joining $J$ maps the collision classes of $\XB$ bijectively to collision
classes of $\XB$.
\end{corollary}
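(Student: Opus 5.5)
The plan is to reduce the join to complementation. For simple graphs, the complement of a join is the disjoint union of the complements:
\[
   \overline{G\vee J}=\overline G\sqcup\overline J .
\]
Proposition~\ref{prop:XB-complement} makes $\XB_H\leftrightarrow\XB_{\overline H}$ an invertible transformation, and $\XB$ is multiplicative over disjoint unions, as is clear from~\eqref{eq:XB-subgraph}. So the equivalence~\eqref{eq:join-equivalence} becomes a statement about cancelling a common factor $\XB_{\overline J}$ in a polynomial ring.

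The forward chain is as follows. If $\XB_{G_1}=\XB_{G_2}$, then Proposition~\ref{prop:XB-complement} gives $\XB_{\overline{G_1}}=\XB_{\overline{G_2}}$. Multiplying both sides by $\XB_{\overline J}$ gives $\XB_{\overline{G_1\vee J}}=\XB_{\overline{G_2\vee J}}$. Applying Proposition~\ref{prop:XB-complement} once more gives $\XB_{G_1\vee J}=\XB_{G_2\vee J}$.

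For the converse, run the same chain backwards. Equality of $\XB_{G_i\vee J}$ gives
\[
   \XB_{\overline{G_1}}\,\XB_{\overline J}=\XB_{\overline{G_2}}\,\XB_{\overline J}.
\]
Since $\XB_{\overline J}$ is a nonzero element of the integral domain $\QQ[q,p_1,p_2,\ldots]$ (its coefficient of $p_1^{|V(J)|}$ is $1$), we may cancel it. Then invert the complementation once more.

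One bookkeeping point: the complementation formula involves $c(\alpha)$, which depends on the vertex count. So I would first observe that $\XB_H$ determines $n(H)$, as the degree in the $p$'s or the exponent of $p_1^n$. Then $\XB_{G_1}=\XB_{G_2}$ forces $n(G_1)=n(G_2)$, and likewise for the joins, so the two sides live in the same homogeneous component. The final sentence of the corollary, that joining with $J$ maps collision classes bijectively to collision classes, then follows formally. The map $G\mapsto G\vee J$ preserves and reflects $\XB$-equality, so it sends each class injectively into a class. Surjectivity onto the class of $G\vee J$ is best read as ``the image classes are classes restricted to graphs of the form $G\vee J$''. I would state it in that form, or note that the equivalence gives exactly this.

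The main obstacle is the cancellation step and the precise sense of the complement when $J$ or $G_i$ are not simple. Complementation as defined in the paper requires simple graphs, so I would restrict to simple $G_1,G_2,J$; this matches the context of the collision census. I would also check explicitly that $\overline{G\vee J}=\overline G\sqcup\overline J$ holds at the level of vertex labelings, so that the colorings in~\eqref{eq:XB-tilde} factor.
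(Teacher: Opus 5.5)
Your proof is correct and is essentially the paper's own argument: pass to complements via $\overline{G\vee J}=\overline G\sqcup\overline J$, use multiplicativity of $\XB$ and the invertibility in Proposition~\ref{prop:XB-complement}, and cancel the nonzero factor $\XB_{\overline J}$ in the integral domain $\QQ[q,p_1,p_2,\ldots]$. The paper's proof also tacitly assumes simple graphs for the complement, as you do; your nonvanishing witness, the coefficient $1$ of $p_1^{|V(J)|}$, is the precise form of the paper's ``constant term'' remark, and your vertex-count bookkeeping is harmless but unnecessary, since $c(\alpha)=\sum_i\binom{\alpha_i}{2}$ is determined by the monomial $\bw^{\alpha}$ itself.
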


\begin{proof}
The complement of a join is the disjoint union of the complements,
\[
  \overline{G\vee J}=\overline G\sqcup\overline J,
\]
so multiplicativity gives
\[
   \XB_{\overline{G_i\vee J}}
   =\XB_{\overline{G_i}}\cdot \XB_{\overline J},
   \qquad i=1,2.
\]
By Proposition~\ref{prop:XB-complement} the left-hand sides are equal if and
only if $\XB_{G_1\vee J}=\XB_{G_2\vee J}$, and $\XB_{\overline{G_1}}=
\XB_{\overline{G_2}}$ if and only if $\XB_{G_1}=\XB_{G_2}$.  Since all
coefficients of $\XB_{\overline J}$ are nonnegative and its constant term is
$1$, it is a nonzero element of the integral domain
$\QQ[q,p_1,p_2,\ldots]$ and may be canceled.
\end{proof}

\begin{remark}
 \label{rem:complementation}
Proposition~\ref{prop:XB-complement} and Corollary~\ref{cor:join-equivalence}
account for three further features of the computation.

\emph{(a) Closure.}  The transformation of the proposition is invertible, so
$\XB_G=\XB_H$ implies $\XB_{\overline G}=\XB_{\overline H}$: the collision classes
of $\XB$ are closed under complementation.  Since the computation shows that
the $\LL$- and $\XB$-classes coincide for $n\le11$, the same closure holds for
the loopy polynomial in that range, and we have verified it directly on all
$73$ classes with $n\le9$.  More generally
Conjecture~\ref{conj:L-XB-equivalence} implies the class-level form of
Problem~\ref{prob:complement}, so a counterexample to the latter would refute
the former as well.

\emph{(b) Joins.}
By Corollary~\ref{cor:join-equivalence}, joining a fixed graph carries
collision classes to collision classes bijectively.  In particular, if $G_1$
and $G_2$ with $n$ vertices and $m$ edges collide, then so do their cones
$K_1\vee G_1$ and $K_1\vee G_2$, with $n+1$ vertices and $m+n$ edges.  More
generally, a collision class is obtained from a nontrivial join exactly when
its members have disconnected complement.  This
gives a check on the search that does not pass through the fingerprints: all
eight classes at $n=8$ reappear in the $n=9$ list at $21\le m\le23$, and all
$1\,285$ connected classes at $n=10$ reappear in the $n=11$ list at $m+10$.

At $n=11$, joins account for $1\,447$ of the $22\,499$ connected collision
classes.  Of these, $1\,366$ are cones, i.e.\ have a $K_1$ join factor.  They
are obtained by coning the $1\,366$ nontrivial collision classes on arbitrary
ten-vertex simple graphs: the $1\,285$ connected classes and $81$ disconnected
classes.  By unique factorization, the latter decompose as
$81=65+8+8$: they are $G_0\sqcup K_1$ with $G_0$ one of the $65$ classes at
$n=9$, or $G_0\sqcup2K_1$ or $G_0\sqcup K_2$ with $G_0$ one of the $8$
classes at $n=8$.  The other $81=65+8+8$ join-generated classes at $n=11$
have no $K_1$ factor: $65$ are $G_0\vee\overline{K_2}$ with $G_0$ a class at
$n=9$, and $8$ each are $G_0\vee\overline{K_3}$ and
$G_0\vee\overline{P_3}$ with $G_0$ a class at $n=8$.
Thus $1\,447=1\,366+81$.  The remaining $21\,052=22\,499-1\,447$ classes have
connected complement and are not produced by a nontrivial join.

\emph{(c) Edge ranges.}  Complementation sends a class at $m$ to a class at
$\binom n2-m$, so the observed ranges would be symmetric were it not that the
search sees connected simple graphs only.
For $n=8,9,11$ they are symmetric: $13$--$15$, $13$--$23$, $13$--$42$.
The exception at $n=10$ is instructive. There the range is $14$--$32$ and
no connected simple graph with $13$ edges lies in a collision class, so complementing
the classes with $m=32$ must produce
collisions at $m=45-32=13$ between \emph{disconnected} simple graphs, invisible to the search.
That is precisely what happens, and the count is fully accounted for:
there are three classes with $(n,m)=(10,32)$, and their complements are
\[
   F\sqcup 2K_1 \ \ \text{for each of the two classes $F$ at }(8,13),
   \quad
   F\sqcup K_1 \ \ \text{for the single class $F$ at }(9,13).
\]
By multiplicativity and unique factorization
(Corollary~\ref{cor:L-component-factorization}), a disconnected simple graph
on ten vertices with thirteen edges lies in a collision class if and only if
its nontrivial components do.  There is no other way to assemble one: there
are no collisions at $(8,12)$ or at $(7,13)$, and none at all below eight
vertices.  Hence $3=2+1$, with nothing unexplained.  The asymmetry is thus a
real feature of the connected-graph data rather than a gap in it, and it
illustrates Corollary~\ref{cor:connected-reduction}: restricting to connected
graphs loses no information, but it does relocate some of it.
\end{remark}

\subsection{A blockwise reformulation}
\label{subsec:block-transform}
We next isolate one direction of Conjecture~\ref{conj:L-XB-equivalence} in a
form that does not attempt to reconstruct the full extended $U$-polynomial.
The point is that $\XB_G$ records, for each block-size profile, a symmetrized
sum of products of connected-block polynomials, whereas $\LL_G$ requires
applying a size-dependent linear transform to each block polynomial before
taking the corresponding symmetrized sum.  Thus the implication
\[
   U_G=U_H\quad\Longrightarrow\quad \LL_G=\LL_H
\]
becomes a concrete compatibility problem for these block transforms, and is potentially weaker
than recovering $U_G^{\ext}$ from $U_G$.  We now make this formulation precise.

Let $\Pi(V)$ denote the set of set partitions of a finite set $V$,
and let $\lambda(\pi)$ be the integer partition formed by the block sizes of $\pi$.
For a graph $H$, write $\ST(H)$ for its set of spanning trees, and write $\eps_H(\tau)$
for the external activity of a spanning tree $\tau$, computed in $H$ with the inherited edge order.

For a nonempty vertex set $B\subseteq V(G)$ define
\begin{equation}\label{eq:connected-block-polynomial}
 \cC_G(B;q) := \sum_{\substack{A\subseteq E(G[B])\\(B,A)\text{ connected}}} q^{|A|}.
\end{equation}
For a singleton we put $\cC_G(B;q)=1$.  Grouping the spanning-subgraph
expansion~\eqref{eq:XB-subgraph} according to its connected-component partition gives
\begin{equation}\label{eq:XB-connected-block-expansion}
   \XB_G(q;\bw)
   =
   \sum_{\pi\in\Pi(V(G))}
   \left(\prod_{B\in\pi}\cC_G(B;q)\right)
   \prod_{B\in\pi}p_{|B|}.
\end{equation}
For an integer partition $\lambda\vdash n$, set
\begin{equation}\label{eq:S-lambda-definition}
   S^G_\lambda(q):=[p_\lambda]\XB_G(q;\bw).
\end{equation}
Then
\begin{equation}\label{eq:S-lambda-blocks}
   S^G_\lambda(q)
   =
   \sum_{\substack{\pi\in\Pi(V(G))\\\lambda(\pi)=\lambda}}
   \prod_{B\in\pi}\cC_G(B;q).
\end{equation}
Thus $\XB_G$, or equivalently $U_G$ and $\eta_G$, remembers for every block
profile $\lambda$ a symmetrized sum of products of the connected-block
polynomials.

When $G[B]$ is connected and $|B|=s$, the polynomial $\cC_G(B;q)$ is divisible by $q^{s-1}$,
so it belongs to $q^{s-1}\QQ[1+q]$.  For $s\ge 1$, define a linear map
\begin{equation}\label{eq:block-transform}
   \fT_s:q^{s-1}\QQ[1+q]
   \longrightarrow
   \QQ[x_0,x_1,\ldots]
\end{equation}
by
\begin{equation}\label{eq:block-transform-on-basis}
   \fT_s\bigl(q^{s-1}(1+q)^r\bigr)
   =x_{s-1+r},
   \qquad r\ge 0.
\end{equation}
For $s=1$ this includes $\fT_1(1)=x_0$.

\begin{proposition}\label{prop:block-transform-L}
For every simple graph $G$,
\begin{equation}\label{eq:L-block-transform}
   \LL_G(1,\bx)
   =
   \sum_{\pi\in\Pi(V(G))}
   \prod_{B\in\pi}
   \fT_{|B|}\bigl(\cC_G(B;q)\bigr).
\end{equation}
\end{proposition}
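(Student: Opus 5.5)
We will group the forest expansion of Theorem~\ref{thm:forests-sum} at $t=1$ by the vertex partition that each forest induces. A spanning forest $F$ of $G$ determines the partition $\pi(F)=\{V(T):T\in c(F)\}$ of $V(G)$. Conversely, a forest with $\pi(F)=\pi$ is the same thing as a choice, for every block $B\in\pi$, of a spanning tree $\tau_B$ of $G[B]$. This requires $G[B]$ to be connected; otherwise $\cC_G(B;q)=0$ and both sides vanish for that $\pi$.

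The key check is that the activity of each component is intrinsic to its block. If $f\notin F$ is externally active and assigned to $T_B$, then both endpoints of $f$ lie in $B$, and its fundamental cycle lies in $\tau_B\cup\{f\}\subseteq G[B]$. Edges joining different blocks are never active. Hence, with the inherited edge order, $\eps_F(T_B)=\eps_{G[B]}(\tau_B)$. The $t=1$ forest weight therefore factors as $\prod_{B\in\pi}x_{|B|-1+\eps_{G[B]}(\tau_B)}$. Summing over $F$ gives
\[
\LL_G(1,\bx)=\sum_{\pi\in\Pi(V(G))}\prod_{B\in\pi}\ \sum_{\tau\in\ST(G[B])}x_{|B|-1+\eps_{G[B]}(\tau)}.
\]

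It then remains to identify each inner factor with $\fT_{|B|}(\cC_G(B;q))$. We apply Lemma~\ref{lem:activity-intervals} to the graph $G[B]$ and keep only the intervals $[\tau,\tau\cup\EA(\tau)]$ whose forest $\tau$ is a spanning tree of $G[B]$. These are exactly the intervals whose subsets are connected spanning subgraphs. Indeed, adding active edges never merges components, so a subset $F\cup J$ is connected if and only if $F$ is a spanning tree. Summing $q^{|A|}$ over such an interval gives $q^{s-1}(1+q)^{\eps(\tau)}$ with $s=|B|$. This yields the Crapo-type expansion
\[
\cC_G(B;q)=\sum_{\tau\in\ST(G[B])}q^{s-1}(1+q)^{\eps_{G[B]}(\tau)}.
\]
This expansion also shows directly that $\cC_G(B;q)\in q^{s-1}\QQ[1+q]$. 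Since the elements $q^{s-1}(1+q)^r$ are linearly independent, $\fT_s$ is well defined. Applying it term by term sends each summand to $x_{s-1+\eps(\tau)}$, which matches the inner factor. The singleton convention $\cC=1\mapsto x_0$ agrees with a singleton block of a simple graph, which has no loops and hence no activity.

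The only delicate point is order-independence. The inner sum is expressed through the order-free quantity $\cC_G(B;q)$, so any fixed global order may be used throughout. I expect the step needing most care to be the justification that activities computed in $G$ coincide with those computed in $G[B]$, together with the restriction of the interval partition to connected subsets. Simplicity is used only to exclude loops, which would otherwise contribute activity at singletons and in $G[B]$ without appearing in the same way. Multiple edges would in fact cause no problem.
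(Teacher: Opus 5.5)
Your proposal is correct and takes essentially the same route as the paper: apply Lemma~\ref{lem:activity-intervals} to $G[B]$ to get $\cC_G(B;q)=\sum_{\tau\in\ST(G[B])}q^{s-1}(1+q)^{\eps_{G[B]}(\tau)}$, then identify spanning forests of $G$ with set partitions together with spanning trees of the blocks, with external activity computed blockwise. You present the two steps in the opposite order and check more explicitly than the paper that activities computed in $G$ and in $G[B]$ agree.
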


\begin{proof}
Fix an edge order.  If $|B|=s$, the activity-interval decomposition
\eqref{eq:activity-intervals}, applied to connected spanning subgraphs of $G[B]$, gives
\begin{equation}\label{eq:block-C-activity}
   \cC_G(B;q)
   =
   \sum_{\tau\in\ST(G[B])}
   q^{s-1}(1+q)^{\eps_{G[B]}(\tau)},
\end{equation}
where the sum is empty when $G[B]$ is disconnected.  Hence
  \begin{equation}\label{eq:tau-sum}   \fT_s\bigl(\cC_G(B;q)\bigr)
   =
   \sum_{\tau\in\ST(G[B])}
   x_{s-1+\eps_{G[B]}(\tau)}.
 \end{equation}
A spanning forest of $G$ is the same as a set partition $\pi$ of $V(G)$
 with a spanning tree of $G[B]$ for every block $B\in\pi$.  External
activity is computed independently inside these components.  Multiplying~\eqref{eq:tau-sum}
 over the blocks and summing over $\pi$ therefore gives exactly the forest expansion of $\LL_G(1,\bx)$.
\end{proof}

Thus the ordinary invariant and the loopy invariant are obtained from the
same family of connected-block polynomials by two different operations:
$\XB_G$ first forms symmetrized products and records them through the
power-sum basis, while $\LL_G$ first applies a size-dependent linear transform
to each block polynomial and then forms the same symmetrized products.  This
leads to a concrete form of one half of the main conjecture.

\begin{problem}[Block-transform problem]
\label{prob:block-transform}
Show that for simple graphs $G$ and $H$ on the same number of vertices,
\[
   S^G_\lambda(q)=S^H_\lambda(q) \qquad\text{for every integer partition }\lambda
\]
implies
\begin{equation}\label{eq:block-transform-problem}
  \sum_{\pi\in\Pi(V(G))} \prod_{B\in\pi}\fT_{|B|}
  \bigl(\cC_G(B;q)\bigr) = \sum_{\pi\in\Pi(V(H))}
   \prod_{B\in\pi}\fT_{|B|} \bigl(\cC_H(B;q)\bigr).
\end{equation}
By~\eqref{eq:XB-connected-block-expansion} and
Proposition~\ref{prop:block-transform-L}, this is precisely the implication
$\XB_G=\XB_H\Rightarrow\LL_G=\LL_H$ in Conjecture~\ref{conj:L-XB-equivalence}.
\end{problem}

The first partition types in this problem are automatic.  For example, if
$\lambda=(s,1^{n-s})$, then
\begin{equation}\label{eq:one-block-component}
   S^G_{(s,1^{n-s})}(q)
   =
   \sum_{\substack{B\subseteq V(G)\\|B|=s}}
   \cC_G(B;q),
\end{equation}
and linearity of $\fT_s$ determines the corresponding part of $\LL_G(1,\bx)$.
Thus the first possible obstruction occurs only when the component partition has at least two nonsingleton blocks.
In that case $\XB_G$ knows a sum of products of block polynomials, whereas $\LL_G$
requires the sum of the products after different size-dependent linear maps have been applied.
For arbitrary polynomial-valued set functions this information is not sufficient, as the following example shows.
Thus any proof must use the special graphical origin of the block polynomials.

\begin{example}[Symmetrized sums do not control transformed sums]
\label{ex:block-transform-insufficient}
Let $V=\{1,2,3,4\}$ and let $f,g$ be $\ZZ[q]$-valued functions on the nonempty subsets
of $V$ with $f(B)=g(B)$ for all $|B|\ne 2$, with $f(\{v\})=g(\{v\})=1$, \ $f(B)= 0$ for $|B|=2$, and with
\[
  g(12)=g(34)=q, \ \  g(13)=1,\ \ g(24)=-q^{2}, \ \ g(14)=0,\ \ g(23)=q^{2}-2q-1 .
\]
Then $f$ and $g$ have the same symmetrized sums for every partition type:
the only types involving two-element blocks are $(2,1^2)$ and $(2,2)$, and
\[
   \sum_{|B|=2}g(B)=0,
   \qquad
   g(12)g(34)+g(13)g(24)+g(14)g(23)=q^{2}-q^{2}+0=0 .
\]
Now let $\fT_2=[q]$ extract the coefficient of $q$, and let $\fT_s$ be the identity for $s\ne2$.
These are linear, but not ring maps, and
\[
   \sum_{\pi\text{ of type }(2,2)}\prod_{B\in\pi}\fT_2\bigl(g(B)\bigr)
   =1\cdot1+0\cdot0+0\cdot(-2)=1,
\]
whereas the same sum for $f$ is $0$, so the two transformed sums differ.
(The type $(2,1^2)$ case still agrees, since
$\sum_{|B|=2}\fT_2(g(B))=0$.)  Thus the failure is exactly at the first type with two nonsingleton blocks,
and nothing about the symmetrized data alone can rule it out.
\end{example}

This formulation is complementary to the simple-graph recovery question left open after
Theorem~\ref{thm:MN-solved}.  Recovering $U_G^{\ext}$ from $U_G$ on simple graphs would prove
the implication in the block problem, but the block problem asks for less:
it seeks only the particular diagonal specialization needed to recover $\LL_G$.

\subsection{Unicyclic graphs: a pointed-forest reduction}
\label{subsec:unicyclic}

The equivalence conjecture is automatic for forests by
Proposition~\ref{cor:forest-L-X}.  The next natural case is that of connected
unicyclic graphs.  The activity expansion reduces this case to a single
concrete question about rooted trees, which we verify computationally in a
wide range and which we are able to prove outright for a substantial
subclass.

Throughout this subsection $G$ is a connected simple unicyclic graph with $n(G)=n$ and
the unique cycle $C\subseteq G$ of length
$   g:=|V(C)|\ \ge\ 3.$
Since $G$ is connected and unicyclic,
$m(G)=n$.  Fix an ordering of $E(G)$ and let $e_0$ be the least edge of $C$.
The order of the remaining edges will not matter.

\begin{lemma}\label{lem:unicyclic-activity}
For every spanning forest $F$ of $G$ one has $\eps(F)\in\{0,1\}$, and
\begin{equation}\label{eq:unicyclic-active}
   \eps(F)=1
   \quad\Longleftrightarrow\quad
   F\cap E(C)=E(C)\setminus\{e_0\}.
\end{equation}
In that case all vertices of $C$ lie in a single component $T_*(F)$ of $F$, and
\[
   \eps_F(T_*(F))=1,
   \qquad
   \eps_F(T)=0
   \quad\text{for }T\ne T_*(F).
\]
\end{lemma}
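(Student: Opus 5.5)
The plan is to classify the non-forest edges by which fundamental cycles they can close, using the fact that a connected unicyclic simple graph has exactly one cycle. Since $G$ is simple, it has no loops, so every externally active edge is a non-loop edge. Such an edge $f\notin F$ has both endpoints in one component of $F$, and $F\cup\{f\}$ then contains a cycle through $f$. That cycle is a cycle of $G$, and the only cycle of $G$ is $C$. Hence an externally active edge must lie in $E(C)$, and the fundamental cycle of $f$ must be $C$ itself. This in turn forces $E(C)\setminus\{f\}\subseteq F$.

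Next I would use the least-edge convention. The edge $f$ is active exactly when it is the least edge of $C$, that is, when $f=e_0$. Combining this with the first step, $e_0$ is the only possible active edge, so $\eps(F)\le 1$. Moreover, $\eps(F)=1$ holds precisely when $e_0\notin F$ and $E(C)\setminus\{e_0\}\subseteq F$; the condition $e_0\notin F$ is automatic, since $F$ is acyclic. This is equivalent to $F\cap E(C)=E(C)\setminus\{e_0\}$. Conversely, if $F$ contains this path, then the endpoints of $e_0$ are joined in $F$, and the unique cycle in $F\cup\{e_0\}$ is $C$, whose least edge is $e_0$. So $e_0$ is active and the equivalence \eqref{eq:unicyclic-active} holds.

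Finally, in the active case the path $E(C)\setminus\{e_0\}\subseteq F$ spans $V(C)$. All vertices of $C$ therefore lie in one component $T_*(F)$. The single active edge $e_0$ has both endpoints in $V(C)$, so it is assigned to $T_*(F)$. This gives $\eps_F(T_*(F))=1$ and $\eps_F(T)=0$ for every other component $T$. The order of the edges other than $e_0$ enters nowhere in the argument.

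There is no real obstacle here. The only point needing care is the first step, namely justifying that any cycle in $F\cup\{f\}$ equals $C$. This follows because $G$ has cyclomatic number $\nu(G)=m-n+1=1$, so it has exactly one cycle. Simplicity excludes loops and $2$-cycles, which might otherwise contribute additional active edges.
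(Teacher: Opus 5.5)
Your proof is correct and is essentially the paper's argument. The paper phrases the first step as ``every edge outside $C$ is a bridge, hence never active'' and then notes that an omitted cycle edge has connected endpoints in $F$ exactly when the rest of $C$ lies in $F$; your version, that any fundamental cycle must be the unique cycle $C$, amounts to the same thing, and the identification of $e_0$ via the least-edge convention and its assignment to the component containing the path $C-e_0$ are handled identically.
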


\begin{proof}
Every edge outside $C$ is a bridge of $G$.  If such an edge is omitted from a
spanning forest, its endpoints lie in different components, so it is not externally active.

Now let $e\in E(C)\setminus F$.  The endpoints of $e$ are connected in $F$ if
and only if all the other edges of $C$ belong to $F$.  Thus an omitted cycle
edge can be externally active only when it is the unique omitted edge of $C$,
and then the unique cycle of $F\cup\{e\}$ is $C$ itself, so $e$ is externally
active precisely when $e=e_0$.  This proves~\eqref{eq:unicyclic-active}.
When it holds, $C-e_0\subseteq F$ is a spanning path on the vertices of
$C$, so all cycle vertices lie in one component of $F$, and the unique active
edge $e_0$ is assigned to that component.
\end{proof}

By~\eqref{eq:activity-from-monomial} the total activity of a forest is already visible in its $\bx$-monomial.
Thus the active and inactive parts of $\LL_G$  can be separated without using the $t$-grading,
and so we will with $\LL_(1,\bx)$ from now on.

Introduce the \emph{inactive polynomial}
\begin{equation}\label{eq:unicyclic-inactive}
   F_G(\bz) := \sum_{\substack{F\in\cF(G)\\ \eps(F)=0}} \prod_{T\in c(F)} z_{|V(T)|},
\end{equation}
and, for the active forests, a second family of variables
$\bz^\bullet=(z_1^\bullet,z_2^\bullet,\ldots)$ recording the component
that carries the cycle:
\begin{equation}\label{eq:unicyclic-pointed}
   B_G^\bullet(\bz;\bz^\bullet) := \sum_{\substack{F\in\cF(G)\\ \eps(F)=1}}
   z^\bullet_{|V(T_*(F))|} \prod_{\substack{T\in c(F)\\T\ne T_*(F)}} z_{|V(T)|},
 \qquad  B_G(\bz):=B_G^\bullet(\bz;\bz).
\end{equation}
Although the definition refers to $e_0$, the polynomial $B_G^\bullet$ does not
depend on that choice.  Indeed, by Lemma~\ref{lem:unicyclic-activity} an
active forest is $C-e_0$ together with an arbitrary subset of the bridges of
$G$, and $C-e$ is a spanning path on $V(C)$ for \emph{every} $e\in E(C)$.
Hence the vertex sets of the resulting components are the same whichever cycle
edge is removed.

\begin{proposition}\label{prop:unicyclic-two-specializations}
For a connected unicyclic simple graph $G$, we have
\begin{equation}\label{eq:unicyclic-U-decomposition}
   U_G(\bz,y)
   =
   F_G(\bz)+y\,B_G(\bz),
\end{equation}
\begin{equation}\label{eq:unicyclic-L-decomposition}
   \LL_G(1,\bx)
   =
   \left.F_G(\bz)\right|_{z_s=x_{s-1}}
   +
   \left.B_G^\bullet(\bz;\bz^\bullet)\right|_{z_s=x_{s-1},\ z_s^\bullet=x_s}.
\end{equation}
Moreover $U_G$ and $\LL_G$ each determine $n$, $g$ and $F_G$.
\end{proposition}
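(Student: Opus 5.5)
The two decompositions follow directly from the forest expansions, split according to Lemma~\ref{lem:unicyclic-activity}. For $U_G$, start from Proposition~\ref{prop:U-forest}, $U_G=\sum_F y^{\eps(F)}\prod_T z_{|V(T)|}$. Since $\eps(F)\in\{0,1\}$, the forests with $\eps(F)=0$ contribute $F_G(\bz)$. The forests with $\eps(F)=1$ contribute $y\prod_T z_{|V(T)|}$, and this is $y\,B_G(\bz)$ after setting $z^\bullet=z$.

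For $\LL_G(1,\bx)$, use Theorem~\ref{thm:forests-sum} at $t=1$, where a component contributes $x_{|V(T)|-1+\eps_F(T)}$. In an inactive forest every component has index $|V(T)|-1$, which gives $F_G|_{z_s=x_{s-1}}$. In an active forest, the lemma says that only $T_*(F)$ carries activity, and it carries exactly $1$. So $T_*(F)$ contributes $x_{|V(T_*)|}$ and every other component contributes $x_{|V(T)|-1}$. This is exactly the substitution $z_s^\bullet=x_s$, $z_s=x_{s-1}$ in $B_G^\bullet$.

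For the \emph{moreover} part, the invariants $n$ and $m=n$ are read off from either polynomial. From $U_G$ they come through the $z_1^n$ term or the weighted degree; from $\LL_G$ they come from Proposition~\ref{prop:extremal-terms}.

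\textbf{Recovering $F_G$ and $g$ from $U_G$.} Separate $U_G$ into its $y^0$ and $y^1$ parts. The $y^0$ part is $F_G$ and the $y^1$ part is $B_G$. For $g$, note that an active forest has all cycle vertices in one component, so $|V(T_*)|\ge g$. The term in which $F$ contains $C-e_0$ and no bridge is the forest whose components are $V(C)$ together with $n-g$ singletons; it contributes $y\,z_g z_1^{n-g}$. I claim that no monomial $y\,z_s z_1^{n-s}$ with $s<g$ occurs. Indeed, every active forest has a component of size at least $g$, and the only active forest with exactly one non-singleton component of size $g$ is this one. So $g$ is the least $s$ such that $[y\,z_sz_1^{n-s}]U_G\neq0$.

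\textbf{Recovering $F_G$ and $g$ from $\LL_G(1,\bx)$.} By \eqref{eq:activity-from-monomial}, $\eps(F)$ is determined by the monomial: a monomial $\prod x_i^{a_i}$ is active exactly when $\sum(i+1)a_i=n+1$ and inactive when the sum is $n$. So the inactive part can be isolated, and replacing $x_{s-1}$ by $z_s$ inverts it to $F_G$. In the active part, the monomial $x_g x_0^{n-g}$ occurs, and no monomial $x_s x_0^{n-s}$ with $s<g$ does. A monomial $x_sx_0^{n-s}$ in the active part forces a single non-singleton component $T_*$ with $s=|V(T_*)|$, and $|V(T_*)|\ge g$. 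So $g$ is the least $s$ such that $x_sx_0^{n-s}$ appears among the active monomials.

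The only point needing care is that active monomials can be told apart without the grading: this is what \eqref{eq:activity-from-monomial} provides. One should also check that $x_gx_0^{n-g}$ cannot come from an inactive forest; it cannot, because its index sum is $n+1$, not $n$.
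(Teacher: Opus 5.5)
Your proof is correct. The two decompositions are obtained exactly as in the paper, by splitting the forest expansions \eqref{eq:U-forest} and \eqref{eq:forests} (at $t=1$) according to Lemma~\ref{lem:unicyclic-activity}. Your recovery of $F_G$ is also the paper's: the $y^0$ part of $U_G$ on one side, and the activity test \eqref{eq:activity-from-monomial} on the loopy side.

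Where you differ is in recovering $g$. The paper notes that both invariants determine the Tutte polynomial and reads $n$ and $g$ off $T_G(X,Y)=X^{n-g}(X^{g-1}+\cdots+X+Y)$. You instead read $g$ as the minimal order of the distinguished component. This minimum is witnessed by the active forest $C-e_0$, which gives the monomial $y\,z_gz_1^{n-g}$, respectively $x_gx_0^{n-g}$.

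Your route is self-contained and uses nothing beyond the decomposition just proved. It is the same observation the paper makes later for the term $J=\varnothing$ of $\Psi_{H,g}$ in \eqref{eq:root-shift-polynomial}.

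The paper's route is shorter once the Tutte specialization is available. It also has an extra advantage: $T_G$ certifies, for an \emph{arbitrary} graph $G'$ with the same invariant, that $G'$ is connected unicyclic with cycle length $g$. This is how the argument is reused in Theorem~\ref{thm:unicyclic-separated}. Your argument presupposes that the graph is already known to be unicyclic, which is all the present statement requires.
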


\begin{proof}
Formula~\eqref{eq:unicyclic-U-decomposition} followsfrom~\eqref{eq:U-forest} and
Lemma~\ref{lem:unicyclic-activity}: an inactive forest contributes $\ds \prod_T z_{|V(T)|}$
and an active one contributes $\ds y\prod_T z_{|V(T)|}$ obtained from~\eqref{eq:unicyclic-pointed}
by forgetting the distinguished component.

For $\LL_G(1,\bx)$ an inactive component $T$ contributes $x_{|V(T)|-1}$.
In an active forest the distinguished component has activity one and contributes $x_{|V(T_*)|}$,
while every other component contributes $x_{|V(T)|-1}$.  This gives~\eqref{eq:unicyclic-L-decomposition}.

Both invariants determine the ordinary Tutte polynomial,
and for a connected unicyclic graph
\begin{equation}\label{eq:unicyclic-Tutte}
   T_G(X,Y)=X^{n-g}\bigl(X^{g-1}+X^{g-2}+\cdots+X+Y\bigr),
\end{equation}
since the $n-g$ edges outside $C$ are bridges.  Hence each determines $n$ and
$g$.  Finally $F_G=\left.U_G\right|_{y=0}$, while on the loopy side the
inactive monomials are recognized by~\eqref{eq:activity-from-monomial} and the
relabeling $x_{s-1}\mapsto z_s$ returns the same polynomial.
\end{proof}

The content of the unicyclic case is therefore exactly the following.

\begin{corollary}\label{cor:unicyclic-reduction}
Let $G$ and $G'$ be connected unicyclic simple graphs.  Then
\begin{align}
   U_G=U_{G'}
   &\iff
   n=n',\ g=g',\ F_G=F_{G'}\ \text{ and }\ B_G=B_{G'},
   \label{eq:unicyclic-U-iff}\\
   \LL_G=\LL_{G'}
   &\iff
   n=n',\ g=g',\ F_G=F_{G'}\ \text{ and }\
   \left.B_G^\bullet\right|_{z_s^\bullet=z_{s+1}}
   =\left.B_{G'}^\bullet\right|_{z_s^\bullet=z_{s+1}} .
   \label{eq:unicyclic-L-iff}
\end{align}
The two invariants therefore differ only in the way the distinguished
component of an active forest is forgotten: $U_G$ substitutes
$z^\bullet_s\mapsto z_s$, whereas $\LL_G$ substitutes
$z^\bullet_s\mapsto z_{s+1}$.
\end{corollary}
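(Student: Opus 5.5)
The plan is to read both equivalences directly off Proposition~\ref{prop:unicyclic-two-specializations}, treating each direction separately and checking that the active and inactive parts can be split off from the invariant alone.

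For \eqref{eq:unicyclic-U-iff}, the direction ``$\Leftarrow$'' is immediate from \eqref{eq:unicyclic-U-decomposition}: if $F_G=F_{G'}$ and $B_G=B_{G'}$, then $U_G=F_G+yB_G=U_{G'}$. For ``$\Rightarrow$'', the proposition already shows that $U_G$ determines $n$, $g$ and $F_G=U_G|_{y=0}$. Since $U_G$ has degree at most one in $y$, we get $B_G=[y^1]U_G$, which is therefore also determined.

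For \eqref{eq:unicyclic-L-iff}, write $\widetilde B_G:=B_G^\bullet|_{z_s^\bullet=z_{s+1}}$. After the relabeling $z_s=x_{s-1}$, the active part of \eqref{eq:unicyclic-L-decomposition} is exactly $\widetilde B_G|_{z_s=x_{s-1}}$. Indeed, a pointed variable $z_s^\bullet$ goes to $x_s=z_{s+1}|_{z\to x}$, and every other variable goes to $x_{s-1}$. Because the relabeling $z_s\leftrightarrow x_{s-1}$ is a bijection of variables, we obtain
\[
\LL_G(1,\bx)=\bigl(F_G+\widetilde B_G\bigr)\big|_{z_s=x_{s-1}},
\]
so the $\bz$-polynomial $F_G+\widetilde B_G$ and $\LL_G(1,\bx)$ determine each other.

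The one point that needs care is separating the two summands. By \eqref{eq:activity-from-monomial}, a forest monomial $M$ has $\eps(F)=w(M)+\ell(M)-n$. Inactive forests give monomials with $w+\ell=n$, and active ones give $w+\ell=n+1$. Hence $F_G$ and $\widetilde B_G$ are the two parts of $F_G+\widetilde B_G$ of distinct homogeneous degrees under the grading $\deg z_s=s$. With $n$ known, they can be recovered individually, so no cancellation between them can occur.

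Now suppose $\LL_G=\LL_{G'}$. The proposition gives $n=n'$ and $g=g'$. The polynomials $\LL_G(1,\bx)$ agree, and splitting by the weighted degree yields $F_G=F_{G'}$ and $\widetilde B_G=\widetilde B_{G'}$. For the converse, equality of these data gives equal $\LL_G(1,\bx)$. Then Corollary~\ref{th:cor}(4) recovers $\LL_G(t,\bx)$ from $\LL_G(1,\bx)$; here $m=n$ is the same for both graphs, so this step applies equally to $G$ and $G'$.

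The final sentence of the corollary is then just a comparison of the two recipes: $B_G=B_G^\bullet|_{z^\bullet_s=z_s}$ by definition, while $\widetilde B_G$ uses $z^\bullet_s\mapsto z_{s+1}$. I do not expect a real obstacle. The only step needing attention is the degree-separation argument above, and it is fully covered by \eqref{eq:activity-from-monomial}.
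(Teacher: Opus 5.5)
Your proof is correct and follows the paper's route: the paper also reads the corollary directly off Proposition~\ref{prop:unicyclic-two-specializations}, after the relabeling $x_i\mapsto z_{i+1}$. Your explicit splitting of $F_G$ and $\widetilde B_G$ as the parts of weighted degree $n$ and $n+1$ (with $\deg z_s=s$, via \eqref{eq:activity-from-monomial}) and your appeal to Corollary~\ref{th:cor}(4) to pass from $\LL_G(1,\bx)$ back to $\LL_G(t,\bx)$ just spell out steps the paper leaves implicit.
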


\begin{proof}
Immediate from Proposition~\ref{prop:unicyclic-two-specializations}, after
relabeling $x_i\mapsto z_{i+1}$ in~\eqref{eq:unicyclic-L-decomposition}.
\end{proof}

There is some redundancy in the inactive part.  For every fixed subset of the
bridges of $G$ there are exactly $g$ forests containing $g-1$ edges of $C$,
one for each choice of the omitted cycle edge, and they all have the same
component vertex sets.  Exactly one of them, the one omitting $e_0$, is
active.  Hence
\begin{equation}\label{eq:unicyclic-inactive-splitting}
   F_G(\bz)=(g-1)\,B_G(\bz)+F^{\ge 2}_G(\bz),
\end{equation}
where $F^{\ge 2}_G$ collects the forests omitting at least two cycle edges.
Thus the unpointed active contribution already sits, with multiplicity $g-1$,
inside the inactive part.  The unresolved issue is not the unpointed component
partition, but the identification of the component containing the cycle.

\subsubsection*{A rooted-tree reformulation}

Collapse the cycle $C$ to a single vertex $\rho$ and discard the cycle edges.
The result is a rooted tree $(H,\rho)$ with $n_H:=|V(H)|=n-g+1 $ vertices.
By Lemma~\ref{lem:unicyclic-activity}, the active forests of $G$ are exactly the sets
\[
F_J:=(C-e_0)\cup J \quad \text{with} \quad J\subseteq E(H).
\]
If $R_J$ is the
component of $(V(H),J)$ containing the root $\rho$, the distinguished component $T_*$ of $F_J$
satisfies $|V(T_*)|=g-1+|V(R_J)|$, while every other component has the same
size in $G$ as in $H$.  Accordingly, put
\begin{equation}\label{eq:root-shift-polynomial}
   \Psi_{H,g}(\bz)
   :=
   \sum_{J\subseteq E(H)}
      z_{\,g-1+|V(R_J)|}
      \prod_{\substack{D\in c(J)\\D\ne R_J}} z_{|V(D)|}.
\end{equation}
Then
\begin{equation}\label{eq:unicyclic-active-rooted}
   B_G=\Psi_{H,g},
   \qquad
   \left.B_G^\bullet\right|_{z^\bullet_s=z_{s+1}}=\Psi_{H,g+1},
\end{equation}
so Corollary~\ref{cor:unicyclic-reduction} says that, given the common data
$(n,g,F_G)$, the comparison of $U_G$ with $\LL_G$ is precisely the comparison
of $\Psi_{H,g}$ with $\Psi_{H,g+1}$.

Both are specializations of the \emph{rooted $U$-polynomial} of the rooted tree $(H,\rho)$,
\begin{equation}\label{eq:rooted-U}
 \Omega_H(w;\bz)    :=
 \sum_{J\subseteq E(H)} w^{\,|V(R_J)|}
     \prod_{\substack{D\in c(J)\\D\ne R_J}} z_{|V(D)|}
\end{equation}
introduced by Aliste-Prieto, de Mier and Zamora~\cite{ADZ},
which records the size of the root
component separately from the sizes of the remaining components and which
determines the rooted tree up to isomorphism~\cite[Theorem~9]{ADZ}.
The corresponding statement for the rooted
polychromate is due to Bollob\'as and Riordan~\cite{BR}, who introduced that
invariant for this purpose.  By construction
\begin{equation}\label{eq:Psi-from-Omega}
   \Psi_{H,g}
   =
   \left.\Omega_H(w;\bz)\right|_{w^a\mapsto z_{g-1+a}},
\end{equation}
so $\Psi_{H,g}$ is a specialization of $\Omega_H$, a priori a strictly weaker
invariant.  The difficulty in the unicyclic case is therefore not
reconstruction from the fully pointed rooted polynomial.  It is whether the two
unpointings $w^a\mapsto z_{g-1+a}$ and $w^a\mapsto z_{g+a}$ lose the same information.
We conjecture that in fact neither loses any.

\begin{conjecture}[Root injectivity]\label{conj:root-injectivity}
For every $g\ge 3$, the map $H\mapsto \Psi_{H,g}$ is injective on rooted trees
with a fixed number of vertices.
\end{conjecture}

\begin{proposition}\label{prop:root-injectivity-implies}
Conjecture~\ref{conj:root-injectivity} implies the equivalence conjecture for
connected unicyclic simple graphs.
\end{proposition}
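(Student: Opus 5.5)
Both directions reduce, via Corollary~\ref{cor:unicyclic-reduction} and \eqref{eq:unicyclic-active-rooted}, to a statement about the rooted trees $(H,\rho)$ and $(H',\rho')$ obtained by collapsing the cycles of $G$ and $G'$. Throughout, fix connected unicyclic simple graphs $G,G'$ and assume either $U_G=U_{G'}$ or $\LL_G=\LL_{G'}$. By Proposition~\ref{prop:unicyclic-two-specializations}, either hypothesis gives $n=n'$, $g=g'$ and $F_G=F_{G'}$. Consequently $n_H=n-g+1=n_{H'}$, so $H$ and $H'$ are rooted trees with the same number of vertices. This is exactly the setting in which Conjecture~\ref{conj:root-injectivity} applies.

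Suppose first that $U_G=U_{G'}$. Then \eqref{eq:unicyclic-U-iff} gives $B_G=B_{G'}$, and by \eqref{eq:unicyclic-active-rooted} this reads $\Psi_{H,g}=\Psi_{H',g}$. Conjecture~\ref{conj:root-injectivity}, applied with the given $g\ge3$, yields a root-preserving isomorphism $(H,\rho)\cong(H',\rho')$. Then $\Psi_{H,g+1}=\Psi_{H',g+1}$ holds trivially, because $\Psi_{H,g+1}$ is defined by \eqref{eq:root-shift-polynomial} purely in terms of the rooted tree. By \eqref{eq:unicyclic-active-rooted} this equality is $B_G^\bullet|_{z_s^\bullet=z_{s+1}}=B_{G'}^\bullet|_{z_s^\bullet=z_{s+1}}$. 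Together with the common data $(n,g,F_G)$, \eqref{eq:unicyclic-L-iff} then gives $\LL_G=\LL_{G'}$.

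The converse is symmetric. From $\LL_G=\LL_{G'}$ we obtain $\Psi_{H,g+1}=\Psi_{H',g+1}$. Conjecture~\ref{conj:root-injectivity} is now applied with $g+1\ge4$ in place of $g$, which is allowed because the conjecture is stated for every $g\ge3$. It gives $(H,\rho)\cong(H',\rho')$, hence $\Psi_{H,g}=\Psi_{H',g}$, i.e.\ $B_G=B_{G'}$. Then \eqref{eq:unicyclic-U-iff} gives $U_G=U_{G'}$. The equalities for $\eta$ and $\XB$ follow from the classical equivalences.

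The only step needing care is the identification \eqref{eq:unicyclic-active-rooted} itself: one must check that the rooted tree $(H,\rho)$ determines $\Psi_{H,g}$ and $\Psi_{H,g+1}$ for the fixed $g$. The rest is bookkeeping. Note that we never need $G\cong G'$. The conclusion is only equality of the invariants, and this is forced because, given $n$ and $g$, both $U_G$ and $\LL_G$ are functions of $(F_G,H,\rho)$.
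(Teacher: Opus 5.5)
Your proof is correct and is essentially the paper's argument: reduce via Corollary~\ref{cor:unicyclic-reduction} and \eqref{eq:unicyclic-active-rooted} to $\Psi_{H,g}=\Psi_{H',g}$ (resp.\ $\Psi_{H,g+1}=\Psi_{H',g+1}$), invoke Conjecture~\ref{conj:root-injectivity} to get $H\cong H'$ as rooted trees, and then transfer to the other shift. Your explicit remark that the converse applies the conjecture at $g+1\ge 4$ spells out what the paper compresses into ``the roles of $g$ and $g+1$ interchanged.''
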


\begin{proof}
Let $G,G'$ be connected unicyclic simple graphs and suppose $U_G=U_{G'}$.  By
Corollary~\ref{cor:unicyclic-reduction} we have $n=n'$, $g=g'$,
$F_G=F_{G'}$ and $\Psi_{H,g}=\Psi_{H',g}$, where $H,H'$ are the associated rooted
trees, both on $n_H=n-g+1$ vertices.  By
Conjecture~\ref{conj:root-injectivity}, $H\cong H'$ as rooted trees, whence
$\Psi_{H,g+1}=\Psi_{H',g+1}$, and \eqref{eq:unicyclic-L-iff} gives
$\LL_G=\LL_{G'}$.  The converse is identical with the roles of $g$ and $g+1$
interchanged.
\end{proof}

The rooted tree $H$ does not by itself determine $G$: collapsing the cycle forgets
how the attached branches are distributed among the cycle vertices and their cyclic arrangement.
This causes no problem in the preceding argument, because the inactive polynomial $F_G$
is retained separately as common data.
The proof does not require that this attachment information be reconstructed from $H$ alone.

\begin{remark}[Computational evidence, and sharpness at $g=2$]
\label{rem:g-two-fails}
Only a finite range of $g$ has to be tested.
For a rooted tree $H$ on $N$ vertices the case $g\ge N$ is automatic:
in every monomial of $\Psi_{H,g}$ the root-component factor has index at least $g$,
whereas every other factor has index at most $N-1$, so the root factor
is uniquely identifiable and $\Psi_{H,g}$ recovers $\Omega_H$.
It therefore suffices to test $3\le g\le N-1$.

We have verified Conjecture~\ref{conj:root-injectivity} exhaustively for all
rooted trees with at most $13$ vertices: for every such tree and every $g$ in that range,
no two rooted trees of the same order have equal $\Psi_{H,g}$.
The counts of rooted trees involved are
$1,2,4,9,20,48,115,286,719,1842,4766,12486$ for $2\le n_H\le13$.  In particular,
by Proposition~\ref{prop:root-injectivity-implies}, the above
computation implies that the equivalence conjecture holds for every connected
unicyclic simple graph with $ n-g+1\le13,$
and hence for every connected unicyclic simple graph on at most $15$ vertices.

The hypothesis $g\ge 3$ cannot be dropped.  For $g=2$ the map $H\mapsto \Psi_{H,2}$
is not injective: there is one colliding pair of rooted trees on $8$ vertices
and four on $11$ vertices.  Since a cycle in a simple graph has length at
least three, the case in which the statement fails is exactly the one that cannot arise here.
\end{remark}

\subsubsection*{Toward a proof}

The obstruction to recovering the pointing from $\Psi_{H,g}$ is entirely local: a
monomial of $\Psi_{H,g}$ is ambiguous only if two of its factors could play the
role of the distinguished one, that is, only if it has two factors of index at
least $g$.  This cannot happen when the monomial has many factors.

\begin{proposition}[High-factor terms]\label{prop:unambiguous-terms}
  Let $M$ be a monomial of degree $p$ occurring in $\Psi_{H,g}$.
  If
\[
   p\ \ge\ n_H-g+2,
\]
then $M$ has exactly one factor of index at least $g$, namely the one contributed by the root component.
\end{proposition}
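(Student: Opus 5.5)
Each monomial of $\Psi_{H,g}$ in \eqref{eq:root-shift-polynomial} comes from some $J\subseteq E(H)$. It has one factor $z_{g-1+|V(R_J)|}$ for the root component and one factor $z_{|V(D)|}$ for each other component $D$ of $(V(H),J)$. The degree is therefore $p=|c(J)|$, the number of components of $J$ on the $n_H$ vertices of $H$. The plan is to show that under $p\ge n_H-g+2$ no non-root component can have $g$ or more vertices. The root factor always has index $g-1+|V(R_J)|\ge g$, since $R_J$ contains $\rho$, so this is all that is needed.

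The key step is a counting bound. The $p$ components partition $V(H)$, and the root component has at least one vertex. So for any non-root component $D$ we have
\[
   |V(D)| \le n_H-(p-1)\quad\text{and}\quad |V(D)|\le n_H-|V(R_J)|-(p-2),
\]
the second because the other $p-2$ non-root components each have at least one vertex. Using $|V(R_J)|\ge 1$ gives $|V(D)|\le n_H-p+1$. Substituting the hypothesis $p\ge n_H-g+2$ yields $|V(D)|\le g-1<g$. Hence every non-root factor has index at most $g-1$, and the root factor is the unique factor of index $\ge g$.

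One subtlety needs checking. The same monomial $M$ may arise from several $J$, so the claim must hold for every realization. This is automatic: the degree $p$ is determined by $M$ itself, so the bound applies to every $J$ producing $M$. In each case the factor of index $\ge g$ is the root factor and it is unique. I do not expect a real obstacle. The only care needed is the off-by-one in the count, namely that the root component contributes at least one vertex and is excluded from the non-root count.
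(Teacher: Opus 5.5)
Your proof is correct and is essentially the paper's argument: the paper observes that every monomial of $\Psi_{H,g}$ has index sum $n_H+g-1$ and bounds the remaining $p-2$ factors below by $1$, which is your vertex-partition count shifted by $g-1$. Your remark that the bound holds for every $J$ realizing $M$, because $p$ is determined by $M$ alone, is a small precision the paper leaves implicit.
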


\begin{proof}
Every monomial of $\Psi_{H,g}$ has index sum
$(g-1+|V(R_J)|)+\bigl(n_H-|V(R_J)|\bigr)=n_H+g-1$.  The root factor has index
$g-1+|V(R_J)|\ge g$.  If some other factor also had index at least $g$, then,
bounding the remaining $p-2$ factors below by $1$, we would get
$n_H+g-1\ge 2g+(p-2)$, that is $p\le n_H-g+1$.
\end{proof}

The two largest factor counts are completely explicit, and show how the reconstruction begins.
Taking $J=\varnothing$ gives the unique monomial   $ z_g\,z_1^{\,n_H-1}$  with $p=n_H$
which already reads off $g$ and $n_H$.  Taking $|J|=1$ gives the two monomials of degree
with $p=n_H-1$,
\[
   z_{g+1}z_1^{\,n_H-2}
   \quad\text{with coefficient }\deg_H(\rho)
   \quad \text{and}  \quad  z_g z_2 z_1^{\,n_H-3}
   \quad\text{with coefficient }(n_H-1)-\deg_H(\rho),
\]
according to whether the chosen edge meets $\rho$ or not, so the root degree is
recovered immediately.  So a proof of Conjecture~\ref{conj:root-injectivity} by induction on $p$
starting from Proposition~\ref{prop:unambiguous-terms}, seems the natural route.

\subsubsection*{An unconditional subclass}
When the cycle length is large compared with the attached branches, every relevant monomial
is unambiguous and the conjecture follows outright.

\begin{theorem}\label{thm:unicyclic-separated}
Let $G$ be a connected unicyclic simple graph whose unique cycle $C$ has
length $g$, and suppose that every connected component of $G-V(C)$ has fewer
than $g$ vertices.  If $G'$ is any simple graph with $U_G=U_{G'}$, or with
$\LL_G=\LL_{G'}$, then $G'$ satisfies the same hypothesis, and
\[
   U_G=U_{G'}
   \quad\Longleftrightarrow\quad
   \LL_G=\LL_{G'} .
\]
\end{theorem}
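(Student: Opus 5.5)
The plan is to reduce everything to Corollary~\ref{cor:unicyclic-reduction} and the rooted-tree description~\eqref{eq:unicyclic-active-rooted}. The hypothesis on $G$ will be used in one way only: it lets us locate the distinguished (root) factor in every monomial of $\Psi_{H,g}$ and of $\Psi_{H,g+1}$. Once that factor is located, both polynomials are invertible relabelings of the rooted $U$-polynomial $\Omega_H$. We never need to reconstruct $H$ itself, so no appeal to \cite[Theorem~9]{ADZ} is required.

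\textbf{Step 1: $G'$ is connected, unicyclic, with the same $n$ and $g$.} Suppose $U_G=U_{G'}$ or $\LL_G=\LL_{G'}$. Either invariant determines the Tutte polynomial, by Corollary~\ref{cor:tutte-specialization} and the classical specialization of $U$, and hence determines $n$, $m$ and $k$. So $G'$ is connected with $m'=n'$, i.e.\ $G'$ is unicyclic; it is simple by assumption. By~\eqref{eq:unicyclic-Tutte}, $g'=g$. Proposition~\ref{prop:unicyclic-two-specializations} then applies to both graphs, and $F_G=F_{G'}$.

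\textbf{Step 2: the active part is visible in each invariant.} On the $U$-side, $B_G$ is the coefficient of $y$ in~\eqref{eq:unicyclic-U-decomposition}, and $B_G=\Psi_{H,g}$. On the loopy side, the active monomials are recognized by~\eqref{eq:activity-from-monomial}, and after the relabeling $x_i\mapsto z_{i+1}$ they form $\Psi_{H,g+1}$. In both cases the active part is therefore determined by the invariant.

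\textbf{Step 3: the hypothesis is a property of the active part.} Let $(H,\rho)$ be the rooted tree obtained by collapsing the cycle, and let $\Psi$ be either $\Psi_{H,g}$ or $\Psi_{H,g+1}$. I claim the hypothesis holds exactly when every monomial of $\Psi$ has exactly one factor of index at least $g$.

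For the forward direction, the root factor always has index at least $g$. Any non-root component $D$ of $J$ avoids $\rho$, so it lies inside one branch, i.e.\ inside a component of $H-\rho$. These branches are exactly the components of $G-V(C)$, so $|V(D)|<g$ and every non-root factor has index below $g$.

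For the converse, suppose some branch has $b\ge g$ vertices, and take $J$ to be its edge set. Then $R_J=\{\rho\}$, so the root factor is $z_g$ or $z_{g+1}$, and the branch contributes a second factor $z_b$ with $b\ge g$.

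The criterion is read off from the active part alone, and Step 2 shows that part is determined by the invariant. Hence $G'$ satisfies the hypothesis whenever $G$ does.

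\textbf{Step 4: conclusion.} Under the hypothesis, each monomial of $\Psi_{H,g}$ has a unique factor $z_c$ with $c\ge g$, and it is the root factor. Replacing it by $w^{c-g+1}$ inverts~\eqref{eq:Psi-from-Omega} term by term and recovers $\Omega_H$. The same argument with $w^{c-g}$ recovers $\Omega_H$ from $\Psi_{H,g+1}$. Consequently
\[
\Psi_{H,g}=\Psi_{H',g}\iff\Omega_H=\Omega_{H'}\iff\Psi_{H,g+1}=\Psi_{H',g+1}.
\]
Combined with the common data $n$, $g$ and $F_G=F_{G'}$ from Step 1, Corollary~\ref{cor:unicyclic-reduction} yields $U_G=U_{G'}\iff\LL_G=\LL_{G'}$.

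The main obstacle is Step 3. The hypothesis must be transferred to $G'$ using only the unpointed active polynomial, and the argument has to work for both shifts $g$ and $g+1$ with the same threshold $g$. The other steps are bookkeeping with results already established.
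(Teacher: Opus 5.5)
Your proof is correct and follows the paper's argument. You share its Step~1, its observation that under the hypothesis every active monomial has a unique factor of index at least $g$, its forest built from a branch's edge set to transfer the hypothesis to $G'$, and its term-by-term recovery of the pointing. Packaging the $\LL$-side transfer as one threshold-$g$ criterion, and recovering $\Omega_H$ rather than $B_G^\bullet$ (an invertible relabeling of it once $g$ is known), are cosmetic differences.

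One small slip: $n$ and $k$ come directly from $U_G$ and $\LL_G$, not from $T_G$, which does not see isolated vertices.
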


\begin{proof}
Both invariants determine $n$, the number $k$ of connected components, and the
ordinary Tutte polynomial (Corollary~\ref{th:cor} on the loopy side).
Since $G$ is connected unicyclic, $G'$ is connected with the same $n$, and
by~\eqref{eq:unicyclic-Tutte} its Tutte polynomial identifies it as connected
unicyclic with the same cycle length $g$.  Let $C'$ be its cycle and $H'$ its rooted tree.

For an active forest of $G$, the distinguished component contains all $g$
vertices of $C$, whereas every other component lies in a component of
$G-V(C)$ and therefore has fewer than $g$ vertices.  Hence every monomial of
$B_G=\Psi_{H,g}$ has exactly one factor of index at least $g$.

Suppose first that $U_G=U_{G'}$, so $B_G=B_{G'}$ by Corollary~\ref{cor:unicyclic-reduction}.
If some component $K$ of $G'-V(C')$ had $|V(K)|\ge g$, omit the edge joining $K$ to the cycle
and take all edges of $K$ in the corresponding subset $J\subseteq E(H')$.  The active
forest so obtained has the cycle component and $K$ as distinct components,
and its monomial in $B_{G'}$ contains the two factors $z_g$ and
$z_{|V(K)|}$, both of index at least $g$.  This cannot occur in $B_G$.
Therefore every component of $G'-V(C')$ has fewer than $g$ vertices.

Now suppose instead that $\LL_G=\LL_{G'}$.
After the relabeling $x_i\mapsto z_{i+1}$, the active part of $\LL_G$ is $\Psi_{H,g+1}$.
Under the hypothesis on $G$, each of its monomials has a unique factor of index at
least $g+1$, and every other factor has index at most $g-1$.
If a component $K$ of $G'-V(C')$ had $|V(K)|\ge g$, the same choice of forest would produce
in $\Psi_{H',g+1}$ the root factor $z_{g+1}$ together with the separate
factor $z_{|V(K)|}$.  If $|V(K)|=g$, this introduces a factor $z_g$, which
never occurs as a nondistinguished factor on the $G$-side. If $|V(K)|>g$,
the monomial has two factors of index at least $g+1$.  Either possibility
contradicts equality of the active parts.
Thus $G'$ again satisfies the same size hypothesis.

Under this hypothesis the pointing is recoverable term by term.  From
$B_G$, replace in each monomial the unique factor $z_s$ with $s\ge g$ by
$z_s^\bullet$.  This recovers $B_G^\bullet$.  From the active part of
$\LL_G(1,\bx)$, the distinguished component contributes the unique factor
$x_j$ with $j\ge g$, while every other component of size $a<g$ contributes
$x_{a-1}$ with $a-1\le g-2$.  Replacing that unique factor by $z_j^\bullet$
and every remaining $x_i$ by $z_{i+1}$ again recovers $B_G^\bullet$.
Together with the common inactive polynomial $F_G$, Proposition~\ref{prop:unicyclic-two-specializations}
and Corollary~\ref{cor:unicyclic-reduction} show that each invariant reconstructs the other.
\end{proof}

\begin{corollary}\label{cor:unicyclic-long-cycle}
The equivalence conjecture holds for connected unicyclic simple graphs whose unique cycle has length $g>n/2$.
\end{corollary}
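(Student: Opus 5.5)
The plan is to reduce the corollary directly to Theorem~\ref{thm:unicyclic-separated}, checking its size hypothesis and then handling a comparison graph that need not be connected or unicyclic a priori.

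First, fix a connected unicyclic simple graph $G$ on $n$ vertices whose cycle $C$ has length $g>n/2$. The graph $G-V(C)$ has $n-g$ vertices. Since $n-g<n/2<g$, every connected component of $G-V(C)$ has at most $n-g<g$ vertices. So $G$ satisfies the hypothesis of Theorem~\ref{thm:unicyclic-separated}.

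Next, let $G'$ be another simple graph with $U_G=U_{G'}$ or $\LL_G=\LL_{G'}$. By Theorem~\ref{thm:unicyclic-separated}, $G'$ is connected and unicyclic, with the same $n$ and $g$, and it satisfies the same size hypothesis. The theorem then gives $U_G=U_{G'}\iff\LL_G=\LL_{G'}$. In particular, either equality forces the other, so the partition into collision classes induced by $\LL$ and by $U$ agree on this class.

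Finally, the remaining equalities in Conjecture~\ref{conj:L-XB-equivalence}, namely those for $\eta$ and $\XB$, follow from the classical equivalences $U\leftrightarrow\eta\leftrightarrow\XB$.

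The only point needing care is that the comparison graph $G'$ is a priori arbitrary. This is already handled inside Theorem~\ref{thm:unicyclic-separated}, using the Tutte-polynomial identification~\eqref{eq:unicyclic-Tutte}, which shows that $G'$ is connected unicyclic with the same $g$. So I expect no real obstacle beyond the arithmetic $n-g<g$.
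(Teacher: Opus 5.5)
Your proposal is correct and follows the paper's proof: the paper also just observes that $G-V(C)$ has $n-g<g$ vertices in total, so every component has fewer than $g$ vertices, and then applies Theorem~\ref{thm:unicyclic-separated}. Your extra remarks, that an arbitrary comparison graph $G'$ is handled inside that theorem and that $\eta$ and $\XB$ follow from the classical equivalences, are accurate and only make explicit what the paper leaves implicit.
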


\begin{proof}
In this case the graph $G-V(C)$ has only $n-g<g$ vertices in total,
  so each of its components has fewer than $g$ vertices.
\end{proof}

\begin{remark}\label{rem:bicyclic}
The same decomposition should organize the bicyclic case.  There
$\eps(F)\in\{0,1,2\}$, and the active forests are pointed by an ordered pair
of components (or by a single component carrying both active edges), so the analogue
of $B_G^\bullet$ lives in three families of variables.
If Conjecture~\ref{conj:root-injectivity} can be proved,
the corresponding statement for the doubly pointed polynomial is the natural next target.
\end{remark}

\subsection{A common triangular relation}\label{subsec:further-common-structure}

We finish with one additional structural piece of evidence.
Section~\ref{sec:4-terms} shows that the normalized polynomial
\[
   \overline{\LL}_G=t^{-|E(G)|}\LL_G
\]
satisfies the triangular modular relation of Orellana and Scott.
The same relation is satisfied by the Tutte symmetric function.

\begin{proposition}[Common triangular modular relation]
\label{prop:common-modular-relation}
Let $u,v,w$ be distinct vertices and put
\[
   e_1=uv,\qquad e_2=vw,\qquad e_3=uw.
\]
Then
\begin{equation}\label{eq:XB-modular}
   \XB_{G+\{e_1,e_3\}}-\XB_{G+\{e_1\}}
   =
   \XB_{G+\{e_2,e_3\}}-\XB_{G+\{e_2\}}.
\end{equation}
Thus $\XB_G$ and $\overline{\LL}_G$ satisfy the same triangular four-term relation,
namely~\eqref{eq:XB-modular} and~\eqref{eq:normalized-4term}, respectively.
\end{proposition}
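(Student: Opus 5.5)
I would prove \eqref{eq:XB-modular} directly from the coloring definition~\eqref{eq:XB-coloring}, in the shifted form $\widetilde{\XB}_G(t;\bw)=\sum_{\kp}t^{b_G(\kp)}\bw^{\kp}$ of~\eqref{eq:XB-tilde}. Since $t=1+q$ and the substitution is invertible, the identity for $\widetilde{\XB}$ is equivalent to the stated one.

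The plan is to compare the two sides coloring by coloring. Fix a coloring $\kp:V(G)\to\ZZ_{>0}$ and write $b=b_G(\kp)$. Adding a new copy of an edge $e$ multiplies the coloring's weight by $t$ if $e$ is monochromatic under $\kp$, and by $1$ otherwise. Write $[e]$ for the indicator that $e$ is monochromatic. The contribution of $\kp$ to the left-hand side is then
\[
   t^{b}\bigl(t^{[e_1]+[e_3]}-t^{[e_1]}\bigr)
   = t^{b+[e_1]}\bigl(t^{[e_3]}-1\bigr),
\]
and its contribution to the right-hand side is $t^{b+[e_2]}\bigl(t^{[e_3]}-1\bigr)$.

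If $\kp(u)\ne\kp(w)$, then $e_3$ is not monochromatic and both contributions vanish. If $\kp(u)=\kp(w)$, then $e_1=uv$ is monochromatic exactly when $e_2=wv$ is, so $[e_1]=[e_2]$ and the two contributions agree. Summing over all $\kp$ gives \eqref{eq:XB-modular}. This is the coloring analogue of the observation in Theorem~\ref{thm:4term} that loopy contraction of $e_3$ identifies $u$ with $w$.

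As a cross-check, or an alternative route, I would use the subgraph expansion~\eqref{eq:XB-subgraph} and split according to whether the new $e_3$ belongs to $A$. The subsets avoiding $e_3$ cancel against the $G+\{e_1\}$ term. The subsets containing $e_3$ correspond to spanning subgraphs in which $u$ and $w$ are joined, and the bijection $e_1\leftrightarrow e_2$ preserves the component partition and $|A|$.

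The only point requiring care is the exact placement of the factor $q$ in that second route. The coloring argument avoids it entirely, so I expect no real obstacle.

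The final sentence of the proposition then follows by juxtaposing this identity with~\eqref{eq:normalized-4term}.
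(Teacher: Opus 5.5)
Your proof is correct and is essentially the paper's argument. The paper also works coloring by coloring from \eqref{eq:XB-coloring}: both differences vanish when $u$ and $w$ receive different colors, and $uv$ and $vw$ have the same monochromatic status when they receive the same color. The shift to $\widetilde{\XB}$ and the subgraph-expansion cross-check are harmless additions, and the final assertion follows by juxtaposing with \eqref{eq:normalized-4term}, exactly as the paper does.
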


\begin{proof}
The statement for $\overline{\LL}$ is
\eqref{eq:normalized-4term}.  For $\XB$, use~\eqref{eq:XB-coloring}.  If $u$
and $w$ receive different colors, both differences in~\eqref{eq:XB-modular} vanish.
If they receive the same color, the edges $uv$ and $vw$ have the same monochromatic status,
and the two differences are equal term by term.
\end{proof}

Thus any equality generated purely by the triangular modular relation is simultaneously an
equality for the two invariants.  This is useful evidence,
but it is weaker than Conjecture~\ref{conj:L-XB-equivalence}:
an arbitrary equality of Tutte symmetric functions need not be generated by this single
family of relations.
The kernel results of Crew and Spirkl~\cite{CS} suggest
a possible strengthening of this approach, but their framework naturally
allows loops and multiple edges and therefore introduces relations on which
$\LL$ need not vanish.  Example~\ref{ex:loop-necessary} shows that the loop
obstruction is genuine rather than merely technical.

The comparison developed in this section leaves two complementary routes toward the conjecture.
One may try to understand how much information is genuinely lost in the passage
$U_G^{\ext}\to U_G$, or one may work directly with the much smaller block-transform problem.
Either approach would explain why the diagonal component statistic $|E(T)|+\eps_F(T)$
kept by the loopy polynomial retains, on simple graphs, the same distinguishing
power as the $U$-polynomial and its partition and symmetric-function incarnations.

\section{Polyhedral and parking-cell interpretation}\label{sec:polyhedral}

In this section we give a polyhedral interpretation of the specialization
\[
   t=q,\qquad x_i=1+q+\cdots+q^i,
\]
which appears in the Hilbert series of the external bizonotopal algebra.
For a graph $G=(V,E)$ and a subset $S\subseteq V$, let $\kp_G(S)$ denote the number of edges
of $G$ incident to at least one vertex of $S$.  Define the \emph{score polytope} of $G$ as
\begin{equation}\label{eq:score-polytope}
P_G = \left\{\ba\in\RR_{\ge 0}^{V}:
      \ba(S)\le \kp_G(S)  \text{ for every }S\subseteq V \right\}.
\end{equation}
This function has already appeared in the combinatorial half of the paper: by
Remark~\ref{rem:kappa-bridge}, $\kp_G(S)$ is the exponent controlling
iterated vertex deletion in~\eqref{eq:higher-vertex-deletion}.
As shown in~\cite{KNSV}, the function $\kp_G$ is integral, nondecreasing, and submodular,
so $P_G$ is an integral polymatroid independence polytope. We recall the short submodularity argument below
because it controls the contraction fibers.
In the same paper the lattice points of $P_G$ were identified with the partial score vectors of $G$
and with a homogeneous basis of the external bizonotopal algebra $\BZ_G^e$. Therefore
\begin{equation}\label{eq:bizonotopal-lattice-enumerator}
   \Hil(\BZ_G^e;q)  = \sum_{\ba\in P_G\cap\ZZ^V}q^{|\ba|}.
\end{equation}

The aim of the section is stronger than the resulting Hilbert-series identity.
We first realize deletion and loopy contraction as a one-dimensional fiber
geometry inside the real polytope, and then read off the corresponding
decomposition of the lattice points of $P_G$.  Iterating the
construction along the deletion-contraction tree produces, for every spanning
forest $F$, a piecewise integral-affine embedded box whose lattice points are
exactly the parking cell attached to $F$.  The resulting pieces need not cover
all real points of $P_G$, but they are pairwise disjoint and contain every
lattice point.

We fix an ordering and an orientation of
the edges of $G$.  These choices are inherited by the intermediate graphs in
the deletion-contraction process.  The orientation is used only to decide
which endpoint receives the unit translation in a deletion step.

\subsection{Contraction fibers and the deletion embedding}
\label{subsec:contraction-fibers}

We begin with the elementary submodularity property that controls the fibers of
contraction.

\begin{lemma}\label{lem:kappa-submodular}
For all $A,B\subseteq V$,
\[
 \kp_G(A\cup B)+\kp_G(A\cap B)
 \le \kp_G(A)+\kp_G(B).
\]
\end{lemma}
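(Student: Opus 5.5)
The plan is to prove the inequality one edge at a time. By definition, $\kp_G(S)$ counts the edges having at least one endpoint in $S$. So I would write
\[
   \kp_G(S)=\sum_{e\in E(G)}\chi_e(S),
   \qquad
   \chi_e(S):=\begin{cases}1,& e\cap S\ne\varnothing,\\ 0,&\text{otherwise},\end{cases}
\]
where $e$ also denotes its set of endpoints (one point for a loop, two for a non-loop edge). Parallel edges are counted separately. Because both sides of the claimed inequality are sums over $e\in E(G)$, it is enough to prove
\[
   \chi_e(A\cup B)+\chi_e(A\cap B)\le\chi_e(A)+\chi_e(B)
\]
for every single edge $e$.

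For a fixed edge I would check the cases directly. Set $\alpha=\chi_e(A)$ and $\beta=\chi_e(B)$. The edge meets $A\cup B$ exactly when it meets $A$ or meets $B$, so $\chi_e(A\cup B)=\max(\alpha,\beta)$. Since $A\cap B\subseteq A$ and $A\cap B\subseteq B$, any edge meeting $A\cap B$ meets both $A$ and $B$, so $\chi_e(A\cap B)\le\min(\alpha,\beta)$. Adding these gives
\[
   \chi_e(A\cup B)+\chi_e(A\cap B)\le\max(\alpha,\beta)+\min(\alpha,\beta)=\alpha+\beta,
\]
which is the required single-edge inequality. Summing over $e\in E(G)$ then proves the lemma.

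There is no real obstacle here. The one point that needs care is that $\chi_e(A\cap B)$ can be strictly less than $\min(\alpha,\beta)$. This happens when one endpoint of $e$ lies in $A\setminus B$ and the other lies in $B\setminus A$. That case is exactly where the inequality is strict, so only an inequality, not an equality, can be claimed. For the later use in contraction fibers it may help to record that, for each edge, the deficit $\alpha+\beta-\chi_e(A\cup B)-\chi_e(A\cap B)$ equals $1$ precisely for such edges with $e\cap A\cap B=\varnothing$. This gives an exact formula for the defect of submodularity.
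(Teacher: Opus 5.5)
Your proof is correct and is essentially the paper's argument. The paper also writes $\kp_G(S)$ as a sum of per-edge incidence indicators and proves the single-edge inequality; it does this by a three-case check, which your $\max/\min$ bookkeeping packages more compactly. Your closing remark is also accurate: the only edges with positive defect are non-loop edges with one endpoint in $A\setminus B$ and the other in $B\setminus A$.
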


\begin{proof}
For an edge $f$, let $\delta_f(S)$ be $1$ if $f$ is incident to $S$ and $0$
otherwise.  Then
\[
   \kp_G(S)=\sum_{f\in E(G)}\delta_f(S).
\]
For each fixed edge $f$,
\[
 \delta_f(A\cup B)+\delta_f(A\cap B)
 \le \delta_f(A)+\delta_f(B).
\]
Indeed, if $f$ is incident to neither $A$ nor $B$, both sides are zero, if it
is incident to exactly one of them, both sides are one, and if it is incident to both,
the right-hand side is two while the left-hand side is at most two.
Summing over $f$ proves the claim.
\end{proof}

\begin{lemma}[Deletion embedding]\label{lem:deletion-embedding}
  Let $e=uv$ be a non-loop edge, oriented from $u$ to $v$ and
let $\be_u$ be the coordinate vector at $u$.
The translation
\[
   \delta_e:P_{G-e}\longrightarrow\RR^V,
   \qquad
   \delta_e(\bb)=\bb+\be_u,
\]
maps $P_{G-e}$ into $P_G$ and raises the coordinate sum by one.
\end{lemma}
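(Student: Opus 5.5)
The plan is to check the defining inequalities of $P_G$ directly for $\ba=\bb+\be_u$, where $\bb\in P_{G-e}$. The comparison rests on one elementary relation between the two rank functions. For every $S\subseteq V$,
\[
   \kp_G(S)=\kp_{G-e}(S)+[\,S\cap\{u,v\}\neq\varnothing\,],
\]
because $e$ is counted in $\kp_G(S)$ exactly when $S$ contains one of its endpoints, and no other edge is affected. This identity uses only the incidence description $\kp_G(S)=\sum_f\delta_f(S)$ from the proof of Lemma~\ref{lem:kappa-submodular}.

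Nonnegativity is immediate, since $\bb\ge 0$ and $\be_u\ge 0$. For a subset $S\subseteq V$ we have $\ba(S)=\bb(S)+[u\in S]$, and we split into two cases.

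\emph{Case $u\in S$.} Then $S$ meets $\{u,v\}$, so the identity gives $\kp_G(S)=\kp_{G-e}(S)+1$. Hence
\[
   \ba(S)=\bb(S)+1\le\kp_{G-e}(S)+1=\kp_G(S).
\]

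\emph{Case $u\notin S$.} Then $\ba(S)=\bb(S)\le\kp_{G-e}(S)\le\kp_G(S)$, using the fact that $\kp_{G-e}(S)\le\kp_G(S)$ holds for every $S$. It does not matter whether $v\in S$: the constraint can only loosen, never tighten.

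So $\ba\in P_G$. The statement about the coordinate sum is simply $|\ba|=|\bb|+1$.

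No step here is a real obstacle. The only point needing care is the identity relating $\kp_G$ and $\kp_{G-e}$. In particular, we must make sure that deleting a non-loop edge lowers $\kp$ by exactly one on every set meeting either endpoint, including sets that contain both endpoints. This holds because the edge is counted once, not twice, in $\kp_G(S)$. With that identity in hand, the translation by $\be_u$ uses exactly the one unit of slack that $e$ contributes on every constraint involving $u$. This is where the choice of orientation enters: translating by $\be_v$ would work equally well by symmetry, and the orientation only fixes which endpoint is used.
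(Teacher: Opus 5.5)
Your proof is correct and takes essentially the same route as the paper: both check the defining inequalities of $P_G$ for $\bb+\be_u$ according to whether $u\in S$, using that deleting $e$ lowers $\kp$ by exactly one on every set meeting an endpoint. The only differences are minor. The paper treats the cases ``$v\in S$, $u\notin S$'' and ``neither endpoint in $S$'' separately, whereas you handle both at once via $\kp_{G-e}(S)\le\kp_G(S)$, and you also make nonnegativity explicit.
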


\begin{proof}
If $S$ contains neither endpoint of $e$, deletion of $e$ does not change
$\kp_G(S)$ and the translation does not change the sum over $S$.  If $u\in S$,
then deleting $e$ lowers $\kp_G(S)$ by one, while adding $\be_u$
raises the left-hand side by one.  Thus
\[
 \bb(S)\le\kp_{G-e}(S)=\kp_G(S)-1
 \quad\Longrightarrow\quad
 (\bb+\be_u)(S)\le\kp_G(S).
\]
If $v\in S$ but $u\notin S$, deletion again lowers the right-hand side by one,
whereas the left-hand side is unchanged.  Hence the inequality for $P_{G-e}$
is even stronger than the corresponding inequality for $P_G$.  Therefore
$\delta_e(P_{G-e})\subseteq P_G$, and $|\bb+\be_u|=|\bb|+1$.
\end{proof}

Accordingly we set
\begin{equation}
 B_e=\be_u+(P_{G-e}\cap\ZZ^V),
 \qquad
 A_e=(P_G\cap\ZZ^V)\setminus B_e,
\label{eq:deletion-lattice-piece}
\end{equation}
so that $B_e$ is the deletion part of the lattice points of $P_G$ and, by the
lemma,
\begin{equation}
   \sum_{\ba\in B_e}q^{|\ba|}
      =q\sum_{\bb\in P_{G-e}\cap\ZZ^V}q^{|\bb|}.
\label{eq:deletion-enumerator}
\end{equation}

We turn to contraction.
Let $w$ be the vertex obtained by identifying $u$ and $v$, and let
\[
  V':=V(G/e)=(V\setminus\{u,v\})\cup\{w\}.
\]
Define
\begin{equation}
   \pi:\RR^V\longrightarrow\RR^{V'},
   \qquad
   \pi(\ba)_w=a_u+a_v,
   \qquad
   \pi(\ba)_z=a_z\quad(z\ne w).
\label{eq:contraction-projection}
\end{equation}
If $R\subseteq V'$ and $\widetilde R\subseteq V$ is its inverse image, then
loopy contraction gives
\[
   \kp_{G/e}(R)=\kp_G(\widetilde R),
   \qquad
   \pi(\ba)(R)=\ba(\widetilde R).
\]
Thus $\pi(P_G)\subseteq P_{G/e}$, and $\pi$ preserves total coordinate sum.
Next we describe the fibers of $\pi$.
 They are the geometric content of
loopy contraction, and everything in the rest of the section is built on them.

\begin{lemma}[Contraction fibers]\label{lem:contraction-fibers}
  For every $\bc\in P_{G/e}$ the fiber
  $\pi^{-1}(\bc)\cap P_G$ is the nonempty interval
\begin{equation}
   \pi^{-1}(\bc)\cap P_G
   =\{\ba(r):L(\bc)\le r\le U(\bc)\},
\label{eq:contraction-fiber}
\end{equation}
where $\ba(r)$ is the lift~\eqref{eq:fiber-parametrization} and the bounds $L(\bc)$ and $U(\bc)$ are given
by~\eqref{eq:fiber-endpoints} below.  If $\bc$ is integral, then $L(\bc)$ and $U(\bc)$ are integers.
\end{lemma}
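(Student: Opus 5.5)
The plan is to parametrize the fiber explicitly and reduce membership in $P_G$ to a family of one-variable inequalities in the free parameter. For $\bc\in P_{G/e}$ I take the lift $\ba(r)$ with $a_u=r$, $a_v=c_w-r$ and $a_z=c_z$ for $z\ne u,v$. Every point of $\pi^{-1}(\bc)$ has this form. The fiber is convex, being the intersection of a line with a polytope, so it is automatically an interval in $r$. What has to be shown is that it is nonempty and that its endpoints are the integers given by~\eqref{eq:fiber-endpoints}.

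\emph{Step 1: sort the defining inequalities of $P_G$.} Sort the inequalities $\ba(S)\le\kp_G(S)$ according to $S\cap\{u,v\}$. If $S$ contains both $u$ and $v$, or neither, then $S=\widetilde R$ for some $R\subseteq V'$. In that case the inequality reads $\bc(R)\le\kp_{G/e}(R)$, which holds because $\bc\in P_{G/e}$.

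The remaining cases are $S=T\cup\{u\}$ and $S=T\cup\{v\}$ with $T\subseteq V\setminus\{u,v\}$. The first gives $r\le\kp_G(T\cup\{u\})-\bc(T)$. The second gives $r\ge c_w+\bc(T)-\kp_G(T\cup\{v\})$. Nonnegativity of $a_u$ and $a_v$ adds $0\le r\le c_w$. Nonnegativity of the other coordinates is inherited from $\bc$. This yields
\[
U(\bc)=\min\Bigl(c_w,\ \min_{T}\bigl(\kp_G(T\cup\{u\})-\bc(T)\bigr)\Bigr),
\qquad
L(\bc)=\max\Bigl(0,\ \max_{T}\bigl(c_w+\bc(T)-\kp_G(T\cup\{v\})\bigr)\Bigr),
\]
which is what I would record as~\eqref{eq:fiber-endpoints}. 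Integrality for integral $\bc$ is then immediate, since $\kp_G$ is integer valued.

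\emph{Step 2: show $L(\bc)\le U(\bc)$.} This is the main point. I compare each lower candidate with each upper candidate.

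Three of the four comparisons are easy. First, $0\le c_w$. Second, $0\le\kp_G(T'\cup\{u\})-\bc(T')$, because $\bc(T')\le\kp_{G/e}(T')=\kp_G(T')\le\kp_G(T'\cup\{u\})$ by monotonicity. Third, $c_w+\bc(T)-\kp_G(T\cup\{v\})\le c_w$ by the same monotonicity argument.

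The real case is the inequality
\[
c_w+\bc(T)+\bc(T')\le\kp_G(T\cup\{v\})+\kp_G(T'\cup\{u\}).
\]
I would prove it with Lemma~\ref{lem:kappa-submodular} applied to $A=T\cup\{v\}$ and $B=T'\cup\{u\}$. Here $A\cup B=\widetilde{R}$ with $R=(T\cup T')\cup\{w\}$, and $A\cap B=T\cap T'$. Submodularity bounds the right-hand side from below by $\kp_{G/e}(R)+\kp_{G/e}(T\cap T')$. Modularity of $\bc$ gives $c_w+\bc(T)+\bc(T')=\bc(R)+\bc(T\cap T')$. The two inequalities $\bc(R)\le\kp_{G/e}(R)$ and $\bc(T\cap T')\le\kp_{G/e}(T\cap T')$, both valid since $\bc\in P_{G/e}$, then finish the argument.

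I expect this submodularity pairing to be the only nonroutine step. The care needed is in checking that the identifications $\kp_{G/e}(R)=\kp_G(\widetilde R)$ remain correct when the retained loop $e$ is counted on both sides. This is exactly where loopy, rather than ordinary, contraction is needed.
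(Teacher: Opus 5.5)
Your proof is correct and follows the paper's argument essentially step for step. You parametrize the fiber by one coordinate, sort the inequalities by $S\cap\{u,v\}$, get the comparisons against $0$ and $c_w$ from feasibility of $\bc$ plus monotonicity, and get the cross comparison from submodularity of $\kp_G$ together with inclusion--exclusion. The one difference is that you set $a_u=r$ whereas the paper sets $a_v=r$, which interchanges the roles of $u$ and $v$ in your formulas for $L(\bc)$ and $U(\bc)$. This does not matter for the lemma itself, but you should adopt $a_v=r$ to match the later use, where the maximal lift $\ba(U(\bc))$ must be the fiber endpoint in the direction $\mathbf{e}_v-\mathbf{e}_u$, away from the deletion translate $\mathbf{e}_u+P_{G-e}$.
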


\begin{proof}
Fix $\bc\in P_{G/e}$ and put $N=c_w$.  A lift of $\bc$ is determined by a real
parameter $r$:
\begin{equation}
   a_v=r,\qquad a_u=N-r,
   \qquad a_z=c_z\quad(z\notin\{u,v\}).
\label{eq:fiber-parametrization}
\end{equation}
Constraints for subsets containing both $u,v$, or neither of them, do not
depend on $r$.  If $A$ contains $u$ but not $v$, its inequality becomes
\[
   r\ge L_A:=N+\bc(A\setminus\{u\})-\kp_G(A),
\]
whereas for $B$ containing $v$ but not $u$ it becomes
\[
   r\le U_B:=\kp_G(B)-\bc(B\setminus\{v\}).
\]
Together with $0\le r\le N$, the fiber is therefore the
interval~\eqref{eq:contraction-fiber} with
\begin{equation}
 L(\bc)=\max\!\left(0,
       \max_{\substack{A\ni u\\v\notin A}}L_A\right),
 \qquad
 U(\bc)=\min\!\left(N,
       \min_{\substack{B\ni v\\u\notin B}}U_B\right).
\label{eq:fiber-endpoints}
\end{equation}

We check that this interval is nonempty.  Feasibility of $\bc$ immediately gives
$L_A\le N$ and $U_B\ge 0$.  It remains to prove $L_A\le U_B$.  For such $A$
and $B$, inclusion--exclusion for coordinate sums gives
\begin{align*}
 &N+\bc(A\setminus\{u\})+\bc(B\setminus\{v\})\\
 &\qquad =N+\bc((A\cup B)\setminus\{u,v\})+\bc(A\cap B).
\end{align*}
The first two terms on the right are the $\bc$-sum over the subset of $V'$
whose inverse image is $A\cup B$.  Hence feasibility of $\bc$, followed by
Lemma~\ref{lem:kappa-submodular}, gives
\begin{align*}
 N+\bc(A\setminus\{u\})+\bc(B\setminus\{v\})
 &\le \kp_G(A\cup B)+\kp_G(A\cap B)\\
 &\le \kp_G(A)+\kp_G(B).
\end{align*}
This is exactly $L_A\le U_B$.  Thus every $\bc\in P_{G/e}$ has a nonempty fiber.
If $\bc$ is integral, all quantities in~\eqref{eq:fiber-endpoints} are integers,
so both endpoints are integers.
\end{proof}

We write $s_e(\bc)$ for the \emph{maximal lift} of $\bc$, obtained by taking $r=U(\bc)$ in~\eqref{eq:fiber-parametrization}.

\subsection{The one-step polyhedral geometry}
\label{subsec:one-step-polyhedral}

We can now describe the decomposition at the level of the polytopes themselves.  Put
\[
   d_e:=\be_v-\be_u.
\]
The fibers of $\pi$ are parallel to $d_e$, and increasing the parameter $r$
in~\eqref{eq:fiber-parametrization} moves in the $d_e$-direction.

Define the \emph{upper contraction boundary}
\begin{equation}
 \Gamma_e(G):=
 \left\{\ba\in P_G:
 \ba+\lambda d_e\notin P_G\text{ for every }\lambda>0\right\}
\label{eq:upper-boundary}
\end{equation}
consists of the upper endpoints of all contraction fibers.
Also define the translated deletion polytope
\begin{equation}
   D_e(G):=\be_u+P_{G-e}.
\label{eq:deletion-polytope}
\end{equation}

\begin{proposition}[Deletion-contraction geometry]
\label{prop:polyhedral-section}
Let $e=uv$ be a non-loop edge oriented from $u$ to $v$.
\begin{enumerate}
\item[(i)] $\Gamma_e(G)$ is the support of a polyhedral subcomplex of
$\partial P_G$.  More explicitly,
\begin{equation}
 \Gamma_e(G)=
 \{\ba\in P_G:a_u=0\}
 \;\cup\!
 \bigcup_{\substack{B\subseteq V\\v\in B,\ u\notin B}}
 \{\ba\in P_G:\ba(B)=\kp_G(B)\}.
\label{eq:upper-boundary-faces}
\end{equation}

\item[(ii)] The restriction
\[
   \pi|_{\Gamma_e(G)}:\Gamma_e(G)\longrightarrow P_{G/e}
\]
is a piecewise integral-affine homeomorphism. Its inverse is the maximal-lift
section $s_e(\bc)=\ba(U(\bc))$.  In particular,
\begin{equation}
   \Gamma_e(G)\cap\ZZ^V
   =s_e(P_{G/e}\cap\ZZ^{V'})=A_e.
\label{eq:gamma-lattice-points}
\end{equation}

\item[(iii)] The deletion polytope has the intrinsic description
\begin{equation}
   D_e(G)=\{\ba\in P_G:\ba+d_e\in P_G\}.
\label{eq:deletion-movable}
\end{equation}
Therefore $D_e(G)\cap\Gamma_e(G)=\varnothing$ and  $D_e(G)\cap\ZZ^V=B_e.$

\item[(iv)] The residual region
  \[
   R_e(G):=P_G\setminus\bigl(D_e(G)\cup\Gamma_e(G)\bigr)
  \]
contains no lattice points.
\end{enumerate}
\end{proposition}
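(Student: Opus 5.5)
The plan is to work entirely inside the fiber picture of Lemma~\ref{lem:contraction-fibers}: every point $\ba\in P_G$ lies on the fiber $\pi^{-1}(\pi(\ba))\cap P_G$, which is a segment $\{\ba(r):L\le r\le U\}$ parallel to $d_e$. By definition $\Gamma_e(G)$ is the set of upper endpoints $r=U$ of these segments. All four parts will be read off from this.

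For (i), observe that moving $\ba$ along $d_e$ changes only the constraints $r\le N$, equivalently $a_u\ge0$, and the constraints indexed by $B\ni v$, $u\notin B$. These are exactly the upper bounds entering $U(\bc)$ in~\eqref{eq:fiber-endpoints}. So $\ba$ is an upper endpoint if and only if one of these inequalities is tight, which gives~\eqref{eq:upper-boundary-faces}. Each set on the right is a face of $P_G$, so the union is the support of a subcomplex of $\partial P_G$.

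For (ii), $\pi$ restricted to $\Gamma_e(G)$ is injective, because each fiber has a unique upper endpoint. It is surjective onto $P_{G/e}$ by the nonemptiness in Lemma~\ref{lem:contraction-fibers}. The inverse is $s_e(\bc)=\ba(U(\bc))$. Here $U(\bc)$ is a minimum of finitely many integral affine functions of $\bc$, so it is piecewise integral-affine and continuous. Hence $s_e$ is continuous and piecewise integral-affine, and $\pi$ is linear, so we have a homeomorphism. Integrality of $U$ on integral $\bc$ gives $\Gamma_e\cap\ZZ^V=s_e(P_{G/e}\cap\ZZ^{V'})$. The identification of this set with $A_e$ follows once (iii) is proved: lattice points on a fiber are $\ba(r)$ with $r\in[L,U]\cap\ZZ$, those with $r\le U-1$ form $B_e$, and the remaining one $r=U$ is the maximal lift.

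Part (iii) is the main step. For the inclusion $\subseteq$, take $\bb\in P_{G-e}$ and check $\bb+\be_u+d_e=\bb+\be_v\in P_G$. This is the same computation as in Lemma~\ref{lem:deletion-embedding} with the roles of $u$ and $v$ swapped: it costs one exactly when $v\in S$, and then $\kp$ dropped by one in $G-e$. For the inclusion $\supseteq$, suppose $\ba$ and $\ba+d_e$ both lie in $P_G$, and put $\bb=\ba-\be_u$. Then:
\begin{itemize}
\item $b_u\ge0$, because $(\ba+d_e)_u\ge0$.
\item For $S\ni u$, $v\notin S$: $\bb(S)=\ba(S)-1\le\kp_G(S)-1=\kp_{G-e}(S)$.
\item For $S\ni v$, $u\notin S$: $\bb(S)=(\ba+d_e)(S)-1\le\kp_{G-e}(S)$.
\item For $S\supseteq\{u,v\}$: $\bb(S)=\ba(S)-1\le\kp_{G-e}(S)$.
\item For $S$ meeting neither endpoint the constraint is unchanged.
\end{itemize}
So $\bb\in P_{G-e}$. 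Disjointness from $\Gamma_e$ is then immediate, since points of $D_e$ can move by $\lambda=1>0$ in direction $d_e$. The lattice statement $D_e\cap\ZZ^V=B_e$ holds because translation by $\be_u$ preserves integrality.

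For (iv), take a lattice point $\ba\in P_G$ on the fiber over the integral point $\bc=\pi(\ba)$. Its parameter $r$ is an integer in $[L,U]$, and $U$ is an integer. If $r=U$, then $\ba\in\Gamma_e$. If $r\le U-1$, then $\ba+d_e$ still lies on the segment, so $\ba\in D_e$ by (iii). Hence no lattice point lies in $R_e$.

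I expect the only real care to be needed in the reverse inclusion of (iii), namely in organizing the four cases of $S$ so that the combination of the two feasibility conditions $\ba,\ba+d_e\in P_G$ yields each required inequality for $\bb$.
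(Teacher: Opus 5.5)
Your proposal is correct and takes essentially the paper's route: the fiber intervals of Lemma~\ref{lem:contraction-fibers} for (i)--(ii), the two inclusions via the deletion-embedding computation for (iii), and integrality of $U(\bc)$ for (iv); the one small difference is that in (i) you read the upper endpoints off the minimum defining $U(\bc)$, where the paper uses a small-move argument. You are slightly more careful than the paper in deferring the identification $\Gamma_e(G)\cap\ZZ^V=A_e$ until after (iii)--(iv), since that equality genuinely depends on them.
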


\begin{proof}
Fix $\bc\in P_{G/e}$.  By~\eqref{eq:contraction-fiber}, its fiber is the interval
$L(\bc)\le r\le U(\bc)$ with the upper endpoint  $r=U(\bc)$.

For (i), a feasible point fails to move a positive distance in the
$d_e$-direction precisely when one of the inequalities that can obstruct an
increase of $r$ is tight.  These are the inequalities
\[
  a_u\ge 0 \quad \text{and} \quad  \ba(B)\le\kp_G(B),
   \qquad v\in B,\quad u\notin B.
\]
If none of them is tight, then all the gaps, as well as $a_u$, are positive.
Since only finitely many inequalities occur, a small positive move in the $d_e$-direction remains in $P_G$.
This proves \eqref{eq:upper-boundary-faces}.  The sets appearing there are faces of $P_G$.
Together with all their faces they form a polyhedral subcomplex of the boundary.

For (ii), every fiber has exactly one upper endpoint, so $\pi$ restricts
to a bijection from $\Gamma_e(G)$ to $P_{G/e}$.  Its inverse is
\begin{equation}
 s_e(\bc)_v=U(\bc),\qquad
 s_e(\bc)_u=c_w-U(\bc),\qquad
 s_e(\bc)_z=c_z\quad(z\ne w).
\label{eq:maximal-section}
\end{equation}
The function
\begin{equation}
 U(\bc)=\min\!\left(
 c_w,
 \min_{\substack{B\subseteq V\\v\in B,\ u\notin B}}
 \bigl(\kp_G(B)-\bc(B\setminus\{v\})\bigr)
 \right)
\label{eq:U-piecewise-linear}
\end{equation}
is the minimum of finitely many integral-affine functions.  Hence it is continuous and piecewise integral-affine.
Thus $s_e$ is continuous and piecewise integral-affine, while its inverse $\pi$ is integral linear.
If $\bc$ is integral, every term in~\eqref{eq:U-piecewise-linear} is an integer, so $s_e(\bc)$ is integral.
Conversely, $\pi$ sends integral points to integral points.  This proves~\eqref{eq:gamma-lattice-points}.

For (iii), suppose first that $\ba=\be_u+\bb$ with $\bb\in P_{G-e}$.
Applying Lemma~\ref{lem:deletion-embedding} with the opposite orientation of $e$ shows that $\bb+\be_v\in P_G$.
Since  $ \ba+d_e=\bb+\be_v, $
we obtain $D_e(G)\subseteq\{\ba\in P_G:\ba+d_e\in P_G\}$.

Conversely, suppose that $\ba\in P_G$ and $\ba+d_e\in P_G$.  The $u$-coordinate
of $\ba+d_e$ is $a_u-1$, so $a_u\ge 1$.  Put $\bb=\ba-\be_u$.  If $S$ contains $u$,
then  $\bb(S)=\ba(S)-1\le\kp_G(S)-1=\kp_{G-e}(S).$
If $S$ contains $v$ but not $u$, feasibility of $\ba+d_e$ gives
  $\ba(S)+1=(\ba+d_e)(S)\le\kp_G(S),$
so again  $\bb(S)=\ba(S)\le\kp_G(S)-1=\kp_{G-e}(S).$
If $S$ contains neither endpoint, nothing changes.  Hence $\bb\in P_{G-e}$,
which proves~\eqref{eq:deletion-movable}.  A point of $D_e(G)$ can move by one
unit in the positive $d_e$-direction, whereas a point of $\Gamma_e(G)$ cannot
move by any positive amount.  The two sets are therefore disjoint.  The
lattice-point statement is exactly~\eqref{eq:deletion-lattice-piece}.

Finally, let $\ba=\ba(r)$ lie over $\bc=\pi(\ba)$.  If $\ba\notin D_e(G)$, then
$r+1$ is not feasible, so $r>U(\bc)-1$.  If also $\ba\notin\Gamma_e(G)$, then
$r<U(\bc)$.  Thus every point of $R_e(G)$ satisfies
\begin{equation}
   U(\bc)-1<r<U(\bc).
\label{eq:lattice-free-strip}
\end{equation}
If $\ba$ were integral, then $\bc$ and $r=a_v$ would be integral and, by (ii),
$U(\bc)$ would be an integer.  But there are no integers strictly between
integers $U(\bc)-1$ and $U(\bc)$, so $R_e(G)$ contains no lattice points.
\end{proof}

The lattice-point content of the proposition is the recursion that the
Hilbert-series identity needs, and we record it separately because it can be
read without any geometry.

\begin{corollary}[Recursive lattice-point decomposition]
\label{lem:recursive-cell-decomposition}
There is a disjoint decomposition
\[
   P_G\cap\ZZ^V=A_e\sqcup B_e,
\]
where $B_e$ is as in~\eqref{eq:deletion-lattice-piece} and $A_e$ consists of
the maximal integral lifts $s_e(\bc)$, $\bc\in P_{G/e}\cap\ZZ^{V'}$.
The map $s_e$ is a degree-preserving bijection, so
\begin{equation}
   \sum_{\ba\in A_e}q^{|\ba|}
      =\sum_{\bc\in P_{G/e}\cap\ZZ^{V'}}q^{|\bc|},
   \qquad
   \sum_{\ba\in B_e}q^{|\ba|}
      =q\sum_{\bb\in P_{G-e}\cap\ZZ^V}q^{|\bb|}.
\label{eq:one-step-enumerators}
\end{equation}
\end{corollary}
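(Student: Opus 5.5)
The plan is to read the corollary off from Proposition~\ref{prop:polyhedral-section}, using only its lattice-point assertions. I will first establish the disjoint decomposition, then the bijection $s_e$, and finally the two generating-function identities.

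For the decomposition I will use the definitions in \eqref{eq:deletion-lattice-piece}. There $A_e$ is defined as the complement of $B_e$ in $P_G\cap\ZZ^V$, and Lemma~\ref{lem:deletion-embedding} gives $B_e\subseteq P_G\cap\ZZ^V$. So $P_G\cap\ZZ^V=A_e\sqcup B_e$ holds tautologically. The content lies in identifying $A_e$ with the set of maximal integral lifts.

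I will obtain this identification from parts (iii) and (iv) of Proposition~\ref{prop:polyhedral-section}. Part (iii) gives $D_e(G)\cap\ZZ^V=B_e$, and $D_e(G)$ is disjoint from $\Gamma_e(G)$. Part (iv) says that the residual region $R_e(G)$ contains no lattice points. Hence every lattice point of $P_G$ lies either in $D_e(G)$ or in $\Gamma_e(G)$, and the two possibilities are exclusive. It follows that $A_e=\Gamma_e(G)\cap\ZZ^V$. By \eqref{eq:gamma-lattice-points} this set equals $s_e(P_{G/e}\cap\ZZ^{V'})$.

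Next I will check that $s_e$ is a bijection onto $A_e$. It is injective because $\pi\circ s_e=\mathrm{id}$, which is part (ii). It sends integral points to integral points because $U(\bc)$ is an integer whenever $\bc$ is integral (Lemma~\ref{lem:contraction-fibers}), and surjectivity onto $A_e$ was shown above. It is degree-preserving because $\pi$ preserves the coordinate sum and $s_e$ is a section of $\pi$, so $|s_e(\bc)|=|\bc|$. This gives the first identity in \eqref{eq:one-step-enumerators}. The second identity is \eqref{eq:deletion-enumerator}: translation by $\be_u$ is a bijection from $P_{G-e}\cap\ZZ^V$ onto $B_e$ that raises the degree by one.

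The main obstacle is the identification $A_e=s_e(P_{G/e}\cap\ZZ^{V'})$, and it rests entirely on the lattice-free strip \eqref{eq:lattice-free-strip}. The key point there is integrality of $U(\bc)$ at integral $\bc$. I would re-verify this directly from \eqref{eq:U-piecewise-linear}, where every term is an integer when $\bc$ is integral. Once that is confirmed, the rest is bookkeeping.
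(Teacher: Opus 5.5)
Your proposal is correct and follows the paper's proof essentially step for step. Proposition~\ref{prop:polyhedral-section}(iii)--(iv) splits the lattice points of $P_G$ between $D_e(G)$ and $\Gamma_e(G)$, part~(ii) identifies $\Gamma_e(G)\cap\ZZ^V$ with $s_e(P_{G/e}\cap\ZZ^{V'})$, degree preservation comes from $\pi$, and the second identity is \eqref{eq:deletion-enumerator}. You derive $A_e=\Gamma_e(G)\cap\ZZ^V$ explicitly from (iii)--(iv) instead of reading it off the last equality in \eqref{eq:gamma-lattice-points}; this is slightly more careful than the paper, whose proof of part~(ii) only justifies the first equality there.
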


\begin{proof}
By Proposition~\ref{prop:polyhedral-section}(iii)--(iv), $P_G$ is the disjoint
union of $D_e(G)$, $\Gamma_e(G)$ and the lattice-free region $R_e(G)$, so the
lattice points of $P_G$ are those of $D_e(G)$ together with those of
$\Gamma_e(G)$. These are $B_e$ and, by~\eqref{eq:gamma-lattice-points}, $A_e$.
Part~(ii) identifies $A_e$ with $s_e(P_{G/e}\cap\ZZ^{V'})$ and $s_e$ is
inverse to $\pi$, which preserves the coordinate sum.  This gives the first
identity in~\eqref{eq:one-step-enumerators}.  The second
is~\eqref{eq:deletion-enumerator}.
\end{proof}

\begin{remark}\label{rem:one-step-geometry}
Proposition~\ref{prop:polyhedral-section} gives a geometric explanation for the
lattice-point deletion-contraction formula.  In each fiber of $\pi$, the
translated deletion polytope $D_e(G)$ occupies the portion from which one can
still move one full lattice step in the $d_e$-direction.  The contraction term
is represented by the upper endpoint of the fiber, lying on
$\Gamma_e(G)\cong P_{G/e}$.  Between them there may be a real residual interval, but by
\eqref{eq:lattice-free-strip} it is contained in the open interval
$U(\bc)-1<r<U(\bc)$ between two consecutive lattice levels whenever the
fiber contains lattice points.  Hence it contains no lattice point.  Thus
$D_e(G)$ and $\Gamma_e(G)$ do not in general form a subdivision of the whole
real polytope $P_G$, even though together they account for every lattice point.
\end{remark}

Figure~\ref{fig:contraction-fiber} summarizes the one-dimensional geometry in
a typical fiber.  When the deletion part is nonempty it occupies the interval up to
$U(\bc)-1$, the residual region lies in the open interval
$U(\bc)-1<r<U(\bc)$, and the contraction branch is the upper endpoint
$U(\bc)$.

\begin{figure}[ht]
\centering
\begin{tikzpicture}[x=1.6cm,y=1cm]
  \draw[->] (0,0) -- (5.25,0) node[right] {$r$};
  \draw[line width=2.2pt] (.55,0) -- (3.55,0);
  \draw[densely dashed,line width=1.2pt] (3.55,0) -- (4.45,0);
  \filldraw (.55,0) circle (2pt);
  \filldraw (3.55,0) circle (2pt);
  \filldraw (4.45,0) circle (2.4pt);
  \node[below=4pt] at (.55,0) {$L(\bc)$};
  \node[below=4pt] at (3.55,0) {$U(\bc)-1$};
  \node[below=4pt] at (4.45,0) {$U(\bc)$};
  \node[above=7pt] at (2.05,0) {$D_e(G)$};
  \node[above=7pt] at (4.0,0) {$R_e(G)$};
  \node[above right=7pt] at (4.45,0) {$\Gamma_e(G)$};
  \node at (2.5,1.05) {$\pi^{-1}(\bc)\cap P_G$};
\end{tikzpicture}
\caption{Schematic geometry of a contraction fiber.  The dashed portion is the
lattice-free residual strip; the upper endpoint represents loopy contraction.}
\label{fig:contraction-fiber}
\end{figure}

\subsection{Geometric parking complexes}
\label{subsec:geometric-parking}

We now iterate the one-step geometry along the same decreasing edge order used
in the deletion-contraction proof of the spanning-forest formula.  At each
deletion step in an intermediate graph $H$,
the child polytope is embedded by the integral-affine translation
\begin{equation}
   \delta_e:P_{H-e}\longrightarrow P_H,
   \qquad \bb\longmapsto \bb+\be_u.
\label{eq:deletion-embedding}
\end{equation}
At each contraction step, the child polytope is embedded by the maximal-lift section
\begin{equation}
   s_e:P_{H/e}\longrightarrow\Gamma_e(H)\subseteq P_H.
\label{eq:contraction-section}
\end{equation}
The first map raises total coordinate sum by one, whereas the second preserves it.

Fix a spanning forest $F$ and follow the corresponding leaf of the deletion-contraction tree.
The edges of $F$ are exactly the edges chosen for
contraction.  A nonforest edge is genuinely deleted unless, when its turn is
reached, its endpoints have already been identified.  Since edges are processed
from largest to smallest, this happens precisely when the edge is externally
active with respect to $F$.
Such an edge has already become a loop and is therefore retained.

So if $T$ is a component of $F$, the vertex corresponding to $T$ in
the terminal loop-only graph carries
\begin{equation}
   m_T=|E(T)|+\eps_F(T)
\label{eq:terminal-loop-number}
\end{equation}
loops.  The score polytope of the terminal graph is the box
\begin{equation}
   \Box_F:=
   \prod_{T\in c(F)}[0,m_T]
   =\prod_{T\in c(F)}
      [0,\,|E(T)|+\eps_F(T)]
   \ \subset\ \RR^{c(F)},
\label{eq:terminal-box}
\end{equation}
its coordinates being indexed by the components of $F$, in accordance with the
convention of Section~\ref{sec:intro}.
Starting from this box and reversing the leaf path, compose the maps
\eqref{eq:deletion-embedding} and~\eqref{eq:contraction-section}.  Once the edge
order and orientations have been fixed, this gives a canonically defined
continuous injective piecewise integral-affine map
\begin{equation}
   \Phi_F:\Box_F\longrightarrow P_G.
\label{eq:parking-map}
\end{equation}
We call its image
\[
   \fC_F:=\Phi_F(\Box_F)
\]
the \emph{geometric parking complex} associated with $F$.  Its lattice points
will be denoted
\begin{equation}  \label{eq:cells}
  C_F:=\fC_F\cap\ZZ^V.
\end{equation}

\begin{theorem}
  [Polyhedral parking cells]
  \label{thm:polyhedral-parking-cells} Fix an ordering and an orientation of the edges of $G$.
For every spanning
forest $F$, the geometric parking complex $\fC_F\subseteq P_G$ has the following properties.
\begin{enumerate}
\item[(i)] The map
\[
   \Phi_F:\Box_F\longrightarrow\fC_F
\]
is a piecewise-linear homeomorphism.  After subdividing $\Box_F$ by
the finitely many domains of linearity of the maximal-lift sections occurring  along the path,
$\fC_F$ is the support of a finite rational polyhedral complex.
On every cell, $\Phi_F$ and its inverse are integral-affine.

\item[(ii)] If $F\ne F'$, then
\begin{equation}
   \fC_F\cap\fC_{F'}=\varnothing.
\label{eq:geometric-cells-disjoint}
\end{equation}

\item[(iii)] The lattice points of the geometric parking complexes give a partition into parking cells~\eqref{eq:cells}
\begin{equation}
   P_G\cap\ZZ^V
   =\bigsqcup_{F\in\cF(G)}C_F.
\label{eq:parking-lattice-partition}
\end{equation}
Moreover $\Phi_F$ restricts to a lattice bijection
\begin{equation}
   \prod_{T\in c(F)}
      \{0,1,\ldots,|E(T)|+\eps_F(T)\}
   \xrightarrow{\ \sim\ } C_F.
\label{eq:parking-chain-product}
\end{equation}

\item[(iv)] Put
\begin{equation}
   d(F):=|E(G)|-|F|-\eps(F),
   \qquad
   \eps(F)=\sum_{T\in c(F)}\eps_F(T).
\label{eq:parking-shift}
\end{equation}
Then for every $x=(x_T)_{T\in c(F)}\in\Box_F$ we have
\begin{equation}
   \sum_{v\in V}(\Phi_F(x))_v  = d(F)+\sum_{T\in c(F)}x_T.
\label{eq:parking-grading}
\end{equation}
Therefore
\begin{equation}
 \sum_{\ba\in C_F}q^{|\ba|} = q^{d(F)} \prod_{T\in c(F)}\left(1+q+\cdots+ q^{|E(T)|+\eps_F(T)}\right).
\label{eq:parking-cell-enumerator}
\end{equation}

\item[(v)] The complement
\begin{equation}
   P_G\setminus\bigcup_{F\in\cF(G)}\fC_F
\label{eq:lattice-free-complement}
\end{equation}
is lattice-free.
\end{enumerate}
\end{theorem}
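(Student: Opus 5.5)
The plan is induction on the number of non-loop edges, following the deletion--contraction tree with the largest non-loop edge processed first. This is the same recursion used in the proof of Theorem~\ref{thm:forests-sum}. In the base case $G$ has only loops. Then $\kp_G(S)=\sum_{v\in S}\ell_G(v)$, so $P_G$ is the box $\prod_v[0,\ell_G(v)]$. There is a single forest, $F=\varnothing$, with $m_T=\ell_G(v)$ for each singleton component, and $\Phi_F$ is the identity. All five assertions are then immediate; in particular $d(\varnothing)=m-0-\ell(G)=0$.

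For the inductive step, let $e=uv$ be the largest non-loop edge. Split $\cF(G)$ into the forests omitting $e$ and those containing $e$. By the argument in Theorem~\ref{thm:forests-sum}, these correspond bijectively to $\cF(G-e)$ and $\cF(G/e)$, and the componentwise data $(|E(T)|+\eps_F(T))$ is preserved. So $\Box_F=\Box_{F}^{G-e}$ or $\Box_F=\Box_{F/e}^{G/e}$. By construction,
\[
\Phi_F=\delta_e\circ\Phi^{G-e}_F \qquad\text{or}\qquad \Phi_F=s_e\circ\Phi^{G/e}_{F/e}.
\]

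For (i), compose the inductive piecewise-linear homeomorphism with $\delta_e$, which is an integral translation, or with $s_e$. By Proposition~\ref{prop:polyhedral-section}(ii), $s_e$ is a piecewise integral-affine homeomorphism onto $\Gamma_e(G)$ with integral-linear inverse $\pi$. The common refinement of the finitely many linearity domains gives the rational polyhedral complex, and integrality of the inverses passes through composition.

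For (ii), if $F,F'$ lie on the same side, disjointness follows from the induction hypothesis and injectivity of $\delta_e$ and $s_e$. If they lie on opposite sides, the images lie in $D_e(G)$ and $\Gamma_e(G)$ respectively, and these are disjoint by Proposition~\ref{prop:polyhedral-section}(iii).

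For (iii), Corollary~\ref{lem:recursive-cell-decomposition} gives $P_G\cap\ZZ^V=A_e\sqcup B_e$. Here $B_e=\delta_e(P_{G-e}\cap\ZZ^V)$ and $A_e=s_e(P_{G/e}\cap\ZZ^{V'})$. The induction hypothesis partitions each preimage, and the lattice bijections from boxes compose with lattice bijections. The one point needing care is that $C_F=\fC_F\cap\ZZ^V$ equals the image of the lattice points of the box. For this I use that $\Phi_F^{-1}$ is integral-affine on each cell, so a lattice point of $\fC_F$ pulls back to a lattice point of $\Box_F$.

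For (iv), $\delta_e$ raises the coordinate sum by one and $s_e$ preserves it. Since $d(F)$ changes by exactly $1$ under deletion (because $|E|$ drops by one while $|F|$ and $\eps$ are unchanged) and is invariant under contraction, as computed in Theorem~\ref{thm:forests-sum}, the grading formula follows by induction. The enumerator then follows from the chain-product bijection.

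For (v), take a lattice point $\ba\in P_G$. If $\ba\in D_e(G)$, it is $\delta_e$ of a lattice point of $P_{G-e}$, which is covered by induction. Otherwise $\ba\in\Gamma_e(G)$ by Proposition~\ref{prop:polyhedral-section}(iv), so $\ba=s_e(\pi(\ba))$ with $\pi(\ba)$ integral, and it is covered by induction for $G/e$.

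The main obstacle I expect is the bookkeeping in (i). The maximal-lift sections $s_e$ at later stages are defined on intermediate polytopes, not on the boxes, so their linearity domains must be pulled back through earlier maps. I would handle this by inducting on the statement ``$\Phi_F$ is piecewise integral-affine with respect to a finite rational subdivision of $\Box_F$.'' Preimages of rational polyhedra under integral-affine maps are rational polyhedra, so this statement is stable under composition.
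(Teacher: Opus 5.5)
Your proposal is correct and is essentially the paper's proof: the paper composes the deletion translations $\delta_e$ and maximal-lift sections $s_e$ along the leaf path of $F$, which is your induction unrolled. It separates distinct forests at the first edge where their leaf paths diverge via $D_e(H)\cap\Gamma_e(H)=\varnothing$, and gets lattice compatibility and the grading by $d(F)$ step by step as you do; the only cosmetic difference is that the paper deduces (v) directly from the covering part of (iii), whereas you re-derive the covering from Proposition~\ref{prop:polyhedral-section}(iv).
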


\begin{proof}
The terminal polytope at the leaf corresponding to $F$ is exactly the box
$\Box_F$.  Reversing the leaf path, every deletion step applies an injective
integral-affine translation, while every contraction applies the injective maximal-lift section of Proposition~\ref{prop:polyhedral-section}.  Thus their
composition $\Phi_F$ is continuous and injective.

Each maximal-lift section is piecewise integral-affine because its upper coordinate
is the minimum of finitely many integral-affine functions, as in~\eqref{eq:U-piecewise-linear}.
Subdivide $\Box_F$ by pulling back all the domains of linearity that occur along the path
and taking their common refinement.
This gives a finite rational polyhedral subdivision on which $\Phi_F$ is integral-affine cell by cell.
Because $\Phi_F$ is globally injective, for any two cells $C_1,C_2$ of this subdivision one has
\[
   \Phi_F(C_1)\cap\Phi_F(C_2)=\Phi_F(C_1\cap C_2).
\]
The intersection $C_1\cap C_2$ is a common face of $C_1$ and $C_2$, and the restrictions
of $\Phi_F$ to the cells are affine bijections.
Hence its image is a common face of $\Phi_F(C_1)$ and $\Phi_F(C_2)$.
The image cells therefore form a finite rational polyhedral complex.
The inverse at each step is either a translation by $-\be_u$ or the integral linear projection $\pi$.
Therefore the inverse on every image cell is integral-affine as well.  This proves (i).

For (ii), take distinct forests $F$ and $F'$ and compare their leaf paths.
Let the first difference occur when a non-loop edge $e$ is processed in some intermediate graph $H$.
One path takes deletion, so at that stage its image is contained in
\[
   D_e(H)=\be_u+P_{H-e},
\]
whereas the other takes contraction, so its image is contained in
$\Gamma_e(H)$.  These two sets are disjoint by
Proposition~\ref{prop:polyhedral-section}(iii).  Up to that stage the two leaf
paths coincide, so the sequence of maps already applied is the same for both.
Therefore the remaining maps carrying the two images back to $P_G$ are
literally the same composition, and it is injective.
An injective map carries disjoint sets to disjoint sets, so the final images $\fC_F$ and $\fC_{F'}$ remain disjoint.

For (iii), every map used in the construction respects the relevant integer
lattices in both directions.  This is immediate for translations.  For a
maximal-lift section it follows from
Proposition~\ref{prop:polyhedral-section}(ii): an integral point of the
contracted polytope has an integral maximal lift, and the inverse projection
sends integral points to integral points.  Hence
\[
   \fC_F\cap\ZZ^V
   =\Phi_F(\Box_F\cap\ZZ^{c(F)}),
\]
and the lattice points of $\Box_F$ are exactly the product in
\eqref{eq:parking-chain-product}.

It remains to see that every lattice point of $P_G$ occurs.  At one
non-loop edge, Proposition~\ref{prop:polyhedral-section}(iii)--(iv) says that
every lattice point belongs to exactly one of the deletion image $D_e(H)$ and
the contraction boundary $\Gamma_e(H)$.  Repeating this decision down the
deletion-contraction tree eventually sends every lattice point to a unique
loop-only leaf.  The leaf is indexed by a unique spanning forest, proving
\eqref{eq:parking-lattice-partition}.

For (iv), a contraction section preserves total coordinate sum, whereas every
genuine deletion translation raises it by one.  Along the leaf associated with
$F$, the contracted edges are the $|F|$ forest edges, and the nonforest edges
that have become loops are precisely the $\eps(F)$ externally active
edges.  Hence the number of genuine deletions is
\[
   |E(G)|-|F|-\eps(F)=d(F).
\]
At the terminal box the total coordinate sum is $\sum_Tx_T$, which proves
\eqref{eq:parking-grading}.  Restricting to the lattice points of the box gives
\eqref{eq:parking-cell-enumerator}.

Finally, (v) follows immediately from (iii): every lattice point of $P_G$ lies
in one of the $\fC_F$, so their complement contains none.
\end{proof}

\begin{remark}\label{rem:not-subdivision}
  Theorem~\ref{thm:polyhedral-parking-cells} upgrades the parking cell partition from a statement about graded finite sets to a genuine polyhedral realization.  It is important, however, not to overstate what the theorem gives.
  The set $\fC_F$ is naturally a piecewise-linearly embedded box and hence the support of a finite rational polyhedral complex. The argument does not show that it is a single convex polytope.
  Moreover, the union of the $\fC_F$ need not be all of $P_G$.  The residual strips from Proposition~\ref{prop:polyhedral-section}(iv), together with their recursive images, may contain real points.
  What is sharp is the lattice statement
\[
 \left(
 P_G\setminus\bigcup_F\fC_F
 \right)\cap\ZZ^V=\varnothing.
\]
Thus the geometric parking complexes give a pairwise disjoint polyhedral realization
of all lattice points of $P_G$, without asserting a subdivision of the whole real polytope.
\end{remark}

\subsection{The Hilbert-series specialization}
\label{subsec:hilbert-specialization}

We can now read the loopy specialization directly from the geometric parking complexes.

\begin{corollary}\label{cor:polytope-specialization}
For every graph $G$,
\begin{equation}
   \sum_{\ba\in P_G\cap\ZZ^V}q^{|\ba|}
   =\LL_G(q,1,1+q,1+q+q^2,\ldots).
\label{eq:polytope-specialization}
\end{equation}
Equivalently,
\[
   \Hil(\BZ_G^e;q)
   =\LL_G(q,1,1+q,1+q+q^2,\ldots).
\]
\end{corollary}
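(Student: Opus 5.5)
The plan is to sum the parking-cell enumerators of Theorem~\ref{thm:polyhedral-parking-cells} over all spanning forests. By part~(iii) of that theorem, the lattice points of $P_G$ are the disjoint union of the parking cells $C_F$, $F\in\cF(G)$. Hence
\[
   \sum_{\ba\in P_G\cap\ZZ^V}q^{|\ba|}=\sum_{F\in\cF(G)}\sum_{\ba\in C_F}q^{|\ba|}.
\]
By part~(iv), specifically~\eqref{eq:parking-cell-enumerator}, the inner sum equals
\[
   q^{\,|E(G)|-|F|-\eps(F)}\prod_{T\in c(F)}\bigl(1+q+\cdots+q^{|E(T)|+\eps_F(T)}\bigr).
\]

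Next, substitute $t=q$ and $x_i=1+q+\cdots+q^i$ into the forest expansion of Theorem~\ref{thm:forests-sum}. The weight $\wt_G(F)$ of~\eqref{eq:forest-weight} becomes exactly the expression displayed above. Summing over $F$ then gives $\LL_G(q,1,1+q,1+q+q^2,\ldots)$, which proves~\eqref{eq:polytope-specialization}. The same edge order must be used for the forest expansion as for the construction of the parking complexes. This is harmless: Theorem~\ref{thm:forests-sum} holds for every order, and the parking theorem holds for any fixed order and orientation.

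The second identity then follows from the first together with~\eqref{eq:bizonotopal-lattice-enumerator}, the result of~\cite{KNSV} identifying $\Hil(\BZ_G^e;q)$ with the lattice-point enumerator of $P_G$. Alternatively, it can be read off by comparison with Corollary~\ref{th:cor}(8).

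There is essentially no obstacle, because all of the content lies in Theorem~\ref{thm:polyhedral-parking-cells}. The only point deserving a sentence is that the grading in~\eqref{eq:parking-grading} matches the $t$-exponent of the forest weight. This holds because genuine deletions are exactly the non-forest, non-active edges, which is the identification made in the proof of part~(iv).

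As a sanity check independent of the geometry, one could instead argue recursively. Corollary~\ref{lem:recursive-cell-decomposition} shows that the enumerator $E_G(q)$ satisfies $E_G=E_{G/e}+qE_{G-e}$ and is multiplicative over components. For a single vertex with $m$ loops, $P_{L_m}=[0,m]$, so $E_{L_m}=1+q+\cdots+q^m$. By Proposition~\ref{prop:well-def} these are the defining properties of the specialized $\LL_G$, and uniqueness then gives the identity.
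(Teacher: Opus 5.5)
Your proposal is correct and follows the paper's own proof: you sum the parking-cell enumerators of Theorem~\ref{thm:polyhedral-parking-cells}(iii)--(iv), identify the result with the specialized forest expansion of Theorem~\ref{thm:forests-sum}, and obtain the Hilbert-series form from~\eqref{eq:bizonotopal-lattice-enumerator}. Your recursive sanity check is essentially Remark~\ref{rem:polytope-recursion-check}. The only difference is that you drive the recursion by the lattice-point decomposition of Corollary~\ref{lem:recursive-cell-decomposition}, together with $P_{G_1\sqcup G_2}=P_{G_1}\times P_{G_2}$, instead of by the bizonotopal recurrence of~\cite{KNSV}; both versions are valid.
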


\begin{proof}
By Theorem~\ref{thm:polyhedral-parking-cells},
\begin{align*}
 \sum_{\ba\in P_G\cap\ZZ^V}q^{|\ba|}
 &=\sum_{F\in\cF(G)}
   q^{|E(G)|-|F|-\eps(F)}
   \prod_{T\in c(F)}
   \left(1+q+\cdots+
   q^{|E(T)|+\eps_F(T)}\right).
\end{align*}
By the spanning-forest formula of Theorem~\ref{thm:forests-sum}, the right-hand side is precisely
\[
   \LL_G(q,1,1+q,1+q+q^2,\ldots).
\]
The Hilbert-series statement follows from
\eqref{eq:bizonotopal-lattice-enumerator}.
\end{proof}

\begin{remark}\label{rem:polytope-recursion-check}
The equality in Corollary~\ref{cor:polytope-specialization} also has an
independent verification that does not use the geometric parking complexes.
After the substitution
\[
   t=q,\qquad x_m=1+q+\cdots+q^m,
\]
the loopy recurrence becomes
\[
   H_G(q)=H_{G/e}(q)+qH_{G-e}(q),
\]
with initial value $H_{L_m}(q)=1+q+\cdots+q^m$ on a one-vertex graph with $m$
loops.  These are exactly the external bizonotopal deletion-contraction
relation and initial values of~\cite{KNSV}.  Thus the Hilbert-series identity is
already forced recursively.  The additional content of this section is the
explicit lattice-point partition and, more strongly, its piecewise
integral-affine polyhedral realization.
\end{remark}

Thus the roles of the variables in the loopy polynomial are visible in the geometry.
For a forest component $T$, the variable
$x_{|E(T)|+\eps_F(T)}$ records the interval factor in the terminal box
\eqref{eq:terminal-box}, while the power of $t$ records the genuine deletions,
the edges that are neither forest edges nor externally active ones.

\section{Outlook}\label{sec:outlook}

The results above leave several focused questions.  We list those that seem
most closely tied to the comparison of the loopy and $U$-polynomial
and to the geometry of the score polytope.

\subsection{The equivalence conjecture and the common refinement}
\label{subsec:outlook-equivalence}

The central problem is Conjecture~\ref{conj:L-XB-equivalence}.

\begin{problem}\label{prob:L-vs-U}
  Do $\LL_G$ and $U_G$ distinguish the same simple graphs?  Equivalently,
  is Conjecture~\ref{conj:L-XB-equivalence} true?
\end{problem}

Proposition~\ref{prop:computational-evidence} gives an exhaustive, exact verification
through eleven vertices, but does not explain why the fibers coincide.
Any explanation will have to account for the fact that,
among the three specializations of the common refinement, the loopy polynomial
is distinguished on two grounds: it is the one for which we observed no loss
of distinguishing power on loopless multigraphs
(Section~\ref{subsubsec:multiple-edges}), and its component index is an interval length
in the polyhedral decomposition of $P_G$
(Theorem~\ref{thm:polyhedral-parking-cells}).

We do not know how to connect these two observations, and regard doing so as one of the most promising routes
to Problem~\ref{prob:L-vs-U}.  The common
refinement~\eqref{eq:common-diagram-intro} also suggests a second, asymmetric
question.  Theorem~\ref{thm:MN-solved} solves the Merino--Noble existence problem by
showing that $U^{\ext}\to U$ loses distinguishing power on loopless
multigraphs.  By contrast, no loss was found for $\widehat{\LL}\to\LL$ in
the $48\,229\,871$ connected loopless multigraphs tested.

\begin{problem}\label{prob:extended}
Does
\[
   \LL_G=\LL_H \quad \Longrightarrow\quad  \widehat{\LL}_G=\widehat{\LL}_H
\]
hold for all loopless multigraphs?
More generally, when does $\LL$ determine  $\widehat{\LL}$ if loops are allowed?
\end{problem}

A positive answer for simple graphs would give the implication
$\LL_G=\LL_H\Rightarrow \XB_G=\XB_H$ by
Corollary~\ref{cor:refined-L-determines-XB}.  Two more concrete approaches to
Problem~\ref{prob:L-vs-U} are developed in Section~\ref{sec:tutte-symmetric}:
Problem~\ref{prob:block-transform} isolates the opposite implication, while
Conjecture~\ref{conj:root-injectivity} would settle the connected unicyclic
case by Proposition~\ref{prop:root-injectivity-implies}.  We regard the latter
as the natural rooted-tree problem arising from the present methods, so we do
not duplicate it as a separate Outlook problem.  The analogous bicyclic
analysis, involving two distinguished active components, is a possible next
step once the unicyclic case is understood.

Corollary~\ref{cor:extended-reformulated} suggests a way of accumulating
evidence.  Each invariant that $\widehat{\LL}_G$ is known to determine poses
a weaker question --- is it already determined by $\LL_G$? --- and a positive answer confirms Problem~\ref{prob:extended} on the corresponding piece of $\widehat{\LL}$.
These questions are worth stating because they may be provable one at a time,
and each proved case narrows the room in which a counterexample could live.
They are not, however, a route to a counterexample:
refuting any of them would require a pair of graphs with
equal loopy polynomials and unequal refinements, which is exactly what refuting
Problem~\ref{prob:extended} requires, and the weaker the invariant the more is being asked of such a pair.

Two instances are especially natural.  The first is the generalized degree polynomial.
By Corollary~\ref{cor:gdp}, it is determined by $\widehat{\LL}_G$, and the implication is strict:
the non-isomorphic eight-vertex graphs \texttt{G?qbrg} and \texttt{G?ovdW}, both with twelve
edges and degree sequence $(2,2,3,3,3,3,4,4)$, have the same generalized
degree polynomial but different loopy polynomials.  Thus one may ask the
weaker question whether $\LL_G$ already determines $\GD_G$.  A positive
answer to Problem~\ref{prob:extended} would imply this, while a negative
answer to this weaker question would also give a negative answer to
Problem~\ref{prob:extended}.  In the ranges of
Proposition~\ref{prop:computational-evidence} and
Section~\ref{subsubsec:multiple-edges} the answer is positive, but only
because the $\LL$- and $\widehat{\LL}$-classes coincide there, so these
computations provide no independent evidence for this weaker question.

\begin{problem}\label{prob:L-gdp}
  Does $\LL_G$ determine the generalized degree polynomial $\GD_G$?
\end{problem}

The second instance is the degree sequence of a graph with loops.  By
Theorem~\ref{thm:degree-sequence-loops} it is determined by
$\widehat{\LL}_G$, whereas Theorem~\ref{thm:L-degree-sequence} obtains it
from $\LL_G$ only in the loopless case.  This question seems more
approachable, so we single it out.

\begin{problem}\label{prob:loop-degree}
Does $\LL_G$ determine the degree sequence of an arbitrary graph $G$, loops included?
\end{problem}

\subsection{Complementation, induced statistics, and reconstruction}
\label{subsec:outlook-complementation}

Brylawski's polychromate, and hence $U$ and $\XB$, determines the corresponding
invariant of the complement~\cite{Bry,Nob,Mar}. Proposition~\ref{prop:XB-complement}
makes the $\XB$ transformation explicit.  This motivates a direct loopy question.

\begin{problem}\label{prob:complement}
Does $\LL_G$ determine $\LL_{\overline G}$ for every simple graph $G$?  Is
there a transformation intrinsic to the loopy polynomial?
\end{problem}

The class-level statement follows from Conjecture~\ref{conj:L-XB-equivalence}
and therefore holds through eleven vertices by
Proposition~\ref{prop:computational-evidence}.
Corollary~\ref{cor:independence-clique}(3) gives a direct partial answer:
$\LL_G$ determines the entire induced-edge profile of $\overline G$.

The higher vertex-deletion identity suggests a broader reconstruction question.
It encodes weighted sums of the polynomials $\LL_{G-S}$ and yields
the complete induced edge count profile, but not all induced-subgraph data.
For example, two eight-vertex graphs with the same profile have respectively
$6$ and $8$ induced copies of $P_4$ (graph6 strings
\texttt{GEzvV\char123} and \texttt{GEztv\char123}).  It would be interesting
to determine which symmetric functions of the vertex-deleted collection
$\{\LL_{G-v}:v\in V(G)\}$ are recoverable from $\LL_G$, and whether the loopy
polynomial is reconstructible from a vertex- or edge-deleted deck.

\subsection{Collision statistics}
\label{subsec:outlook-statistics}

The collision data of Section~\ref{sec:extended-U} raise questions independent
of which one of the equivalent invariants is used.  Over the computed range, the
proportion of connected simple graphs in nontrivial classes decreases from
approximately one in $695$ at $n=8$ to one in $22\,366$ at $n=11$.
Does this proportion tend to zero?
Collision classes of size greater than two first occur at ten vertices. Are their sizes bounded?
Complementation and joins generate some larger examples from smaller ones (see  Remark~\ref{rem:complementation}).
It would be useful to identify further operations producing collisions and to understand
to what extent the observed classes are generated from smaller ones.

\subsection{Geometry of the forest pieces}
\label{subsec:outlook-geometry}

Theorem~\ref{thm:polyhedral-parking-cells} realizes every parking cell~\eqref{eq:cells}
as the set of lattice points of a geometric parking complex $\fC_F\subseteq P_G$,
piecewise integral-affinely homeomorphic to the box~\eqref{eq:terminal-box}.
The complexes are pairwise disjoint and their complement is lattice point free, but
the construction does not show that $\fC_F$ are
convex or that their union is all of the real polytope.  Natural questions thus are
whether the residual regions can be assigned recursively
to obtain a subdivision or shelling of $P_G$,
whether  the dependence on the chosen edge order and orientations can be removed, and
whether there is a basis, filtration, or degeneration of the external
bizonotopal algebra whose graded pieces realize the forest factors in
\eqref{eq:parking-cell-enumerator}.

\subsection{Further structural questions}
\label{subsec:outlook-structural}

The normalized loopy polynomial satisfies the same triangular $4$-term
relation as the Tutte symmetric function, although it does not satisfy the
graph $4$-term relation arising from Vassiliev weight systems, see
Remark~\ref{rem:vassiliev-4term}.  It would be useful to understand the
triangular relation conceptually and to determine which specializations,
quotients, or refinements of $\LL$ give genuine graph weight systems.

Finally, since $\LL_G$ is not matroidal, an abstract duality law analogous to
$T_{G^*}(x,y)=T_G(y,x)$ should not be expected without embedding data.  A more
natural direction is a plane- or ribbon-graph refinement of the loopy/common-refinement picture,
perhaps carrying both vertex and face information, for which geometric duality becomes visible.
A dual interpretation of the bizonotopal polytope or of its forest decomposition
on the embedded side would be especially interesting.

\appendix

\section{Computational method}
\label{sec:the-computation}
This section describes how the search reported in
Section~\ref{sec:extended-U} was carried out.
Nothing elsewhere in the paper depends on the details of this computation,
so a reader willing to take Proposition \(\ref{prop:computational-evidence}\) on trust may omit this appendix.

Our computation is organized around the connected-block polynomials $\cC_G(B;q)$
of~\eqref{eq:connected-block-polynomial}, which we compute for all $2^{n}$ vertex subsets
simultaneously by the standard subset recursion
\[
   \cC_G(B;q)
   =
   (1+q)^{e_G(B)}
   -\!\!\sum_{\substack{S\subsetneq B\\ \min B\in S}}\!\!
   \cC_G(S;q)\,(1+q)^{e_G(B\setminus S)},
\]
at a cost of $O(3^{n})$ ring operations.  The three invariants are then read
off as set-partition sums over the blocks: $\XB_G$ by
\eqref{eq:XB-connected-block-expansion}, $\LL_G(1,\bx)$ by
\eqref{eq:L-block-transform}, and $\widehat{\LL}_G$ by the same formula with
the refined block transform, each a further $O(3^{n})$ subset dynamic program.

Evaluating $\LL_G$ and $\widehat{\LL}_G$ appears at first to require the
expansion of $\cC_G(B;q)$ in the basis $q^{s-1}(1+q)^r$, rather than merely
its values.  This expansion can be avoided.  If $s=|B|$, write
\[
  Q_G(B;q)  :=   q^{1-s}\cC_G(B;q)   =   \sum_r a_r(1+q)^r.
\]
The first expression is a polynomial, since $\cC_G(B;q)$ is divisible by $q^{s-1}$.
Hence, for every $\theta$,
\begin{equation}\label{eq:evaluation-trick}
  \sum_r a_r\theta^r   =   Q_G(B;\theta-1).
\end{equation}
Thus any substitution of the form
\[
  u_{s,r}\longmapsto  \sum_{k<K}\beta_{k,s}\theta_k^r
\]
is obtained from $K$ evaluations of the polynomials $Q_G(B;\,\cdot\,)$.
Choosing $\beta_{k,s}=\alpha_k\theta_k^{s-1}$ realizes the diagonal specialization
$u_{s,r}\mapsto x_{s-1+r}$ defining $\LL_G$, while independent random
$\beta_{k,s}$ give random evaluations of $\widehat{\LL}_G$.  The same
$K$ evaluations can therefore be reused for all three invariants,
so computing all three invariants costs little more than computing one.

All arithmetic is carried out in $\FF_p$ with $p=2^{61}-1$, and the substitution points are chosen at random.
We call the resulting tuple of finite-field evaluations the \emph{fingerprint} of the graph.
Because equal polynomials agree under every evaluation, a genuine polynomial collision
necessarily survives the fingerprint stage.  Unequal polynomials may accidentally have
the same finite-field fingerprint, but such false positives are removed at the end by recomputing
every reported class exactly over $\ZZ$.

Two reductions make the range attainable.  First, all three invariants
determine $m(G)$ (Corollary~\ref{th:cor}(4) for $\LL_G$ and the top power of $q$
for $\XB_G$), so collisions occur only within a fixed edge count and the
computation splits into independent jobs, one for each pair $(n,m)$.  Together
with Corollary~\ref{cor:connected-reduction} it suffices to run these on
connected graphs, which we generate with \texttt{geng} from
McKay and Piperno's \texttt{nauty}~\cite{nauty}.  Second, and more
importantly, most graphs can be discarded before any $O(3^{n})$ work is done,
using invariants that $\LL_G$ determines but that are far cheaper to compute.
We apply, in increasing order of cost and each time only to the survivors of the previous stage:
\begin{enumerate}[(i)]
\item $m$, the degree sequence, and the induced edge count profile
  $\bigl(\cE_{G,s}(q)\bigr)_{s}$ of~\eqref{eq:induced-edge-polynomial}, at a cost of $O(2^{n})$
  operations (Theorem~\ref{thm:induced-edge-profile});
\item the number of spanning trees and the number of triangles, $O(n^{3})$ operations (both are determined
  by $T_G$, Corollary~\ref{cor:tutte-specialization});
\item Stanley's chromatic symmetric function $X_G=\XB_G(-1;\bw)$, one further evaluation
  of the block recursion      (Theorem~\ref{thm:L-determines-X}).
\end{enumerate}
Every filter in this list is determined by $\LL_G$, so a graph discarded at any stage
cannot belong to a collision class.  Each is also determined by $\XB_G$,
so the same run computes the collision classes of both invariants.
This sieve substantially reduces the workload: on the complete list of connected
simple graphs with nine vertices, stage~(i) retains $9.2\%$ of the graphs,
stage~(ii) retains $0.18\%$, and stage~(iii) retains $0.058\%$,
so that fewer than one graph in a thousand ever requires the full fingerprints.
Without the sieve the eleven-vertex computation would be roughly an order of magnitude more expensive.

The implementation is in C and is parallelized over graphs.  On an Apple MacBook Pro with an M2 processor
and 16~GB of memory, the complete search takes about $1.5$ minutes for $n=10$
and about $7.5$ hours for $n=11$. The peak memory requirement is roughly $5$~GB,
and is governed by the largest single edge count bucket ($m=27$, some $9.6\times10^{7}$ graphs)
rather than by the total.
The code and the complete lists of colliding graphs are available at~\cite{code}.  As checks
on it we verified that the numbers of connected simple graphs processed agree with
\texttt{A001349}. The fingerprint computation was validated against an independent implementation directly
from the defining expansions on all simple graphs with at most seven vertices.
The whole search was repeated with independent random substitution points, giving identical classes.
For $n\le8$, the collision classes of $\XB_G$ reproduce those found by Markstr\"om~\cite{Mar}
for the $U$-polynomial by an unrelated method.

\medskip

\noindent
\textbf{Acknowledgements.} The third and fourth authors gratefully acknowledge the hospitality
of the Beijing Institute for Mathematical Sciences and Applications during
the initial and the final stages of this project.
We used the AI models ChatGPT 5.6 and Claude Opus 5 in the development and drafting of the paper
and in writing the accompanying computer code~\cite{code}.
All statements and proofs were independently checked and are the sole responsibility of the authors.

\end{document}